\newtheorem{lem}{Lemma}[section]
\newtheorem{thm}[lem]{Theorem}
\newtheorem{pro}[lem]{Proposition}
\newtheorem{defn}[lem]{Definition}
\newtheorem{rem}[lem]{Remark}
\newcommand{\bqn}{\begin{equation}}
\newcommand{\eqn}{\end{equation}}
\newcommand{\beqx}{\begin{equation*}}
\newcommand{\eeqx}{\end{equation*}}
\newcommand{\barr}{\begin{array}}
\newcommand{\earr}{\end{array}}
\newcommand{\beqn}{\begin{eqnarray}}
\newcommand{\eeqn}{\end{eqnarray}}
\newcommand{\beqnx}{\begin{eqnarray*}}
\newcommand{\eeqnx}{\end{eqnarray*}}
\newcommand{\bmt}{\begin{multline}}
\newcommand{\emt}{\end{multline}}
\numberwithin{equation}{section}
\newcommand{\D}{\partial}
\newcommand{\supp}{\operatorname{supp}}
\newcommand{\ve}{\varepsilon}
\newcommand{\vphi}{{\varphi}}
\newcommand{\er}{\eqref}
\newcommand{\lb}{\label}
\newcommand{\pd}{\partial}
\newcommand{{\R}}{\mbox{$\mathbb R$}}
\begin{document}

\abovedisplayskip=8pt plus 2pt minus 4pt
\belowdisplayskip=\abovedisplayskip


\thispagestyle{plain}

\title{Nonlinear Stability and Instability  of  \\Plasma Boundary Layers} 

\author{%
{\large\sc Masahiro Suzuki${}^1$},
\\
{\large\sc Masahiro Takayama${}^2$},
\\
and
{\large\sc Katherine Zhiyuan Zhang${}^3$}
}

\date{ \today \\
\bigskip
\normalsize
${}^1$%
Department of Computer Science and Engineering, 
Nagoya Institute of Technology,
\\
Gokiso-cho, Showa-ku, Nagoya, 466-8555, Japan
\\ [7pt]
${}^2$%
Department of Mathematics, 
Keio University, 
\\ 
Hiyoshi, Kohoku-ku, Yokohama, 223-8522, Japan
\\ [7pt]
${}^3$%
Courant Institute of Mathematical Sciences, New York University, \\
New York, NY 10012, USA
} 

\maketitle

\begin{abstract}

We investigate the formation of a plasma boundary layer (sheath) by considering the Vlasov--Poisson system on a half-line with the completely absorbing boundary condition. In \cite{MM2}, the solvability of the stationary problem is studied. In this paper, we study the nonlinear stability and instability of these stationary solutions of the Vlasov--Poisson system.

\end{abstract}

\begin{description}
\item[{\it Keywords:}]
Vlasov--Poisson system,
Sheath,
Bohm criterion.

\item[{\it 2020 Mathematics Subject Classification:}]
35M13, 
35Q35, 
35B35, 
76X05. 

\end{description}


\newpage

\section{Introduction}

The goal of this paper is to investigate the stability and instability of a plasma boundary layer near the surface of materials immersed in a plasma, called a \emph{sheath}. The sheath appears when a material is surrounded by a plasma and the plasma contacts with its surface. Since electrons have much higher thermal velocities than ions, more electrons tend to hit the material compared with ions. Hence the material gets negatively charged, and this material with a negative potential attracts and accelerates ions toward the surface, and simultaneously repels electrons away from it. Eventually, a non-neutral potential region appears near the surface, where a nontrivial equilibrium of the densities is achieved. This non-neutral region is referred as the sheath. For more discussion on the physics in sheath development, we refer the reader to \cite{D.B.1, F.C.1,I.L.1, LL, K.R.1, K.R.2}.

It is observed by Langmuir \cite{I.L.1} that in order to form a sheath, positive ions must enter the sheath region with a sufficiently large flow velocity. Bohm \cite{D.B.1} proposed the original \emph{Bohm criterion} using the Euler--Poisson system. The Bohm criterion states that the flow velocity of positive ions at the plasma edge must exceed the ion acoustic speed. There have been a large amount of mathematical results investigating the sheath formation by using the Euler--Poisson system ever since. The studies \cite{A.A.,AMR,NOS,M.S.1,M.S.2,MM1} established theories of existence and stability of stationary solutions assuming the Bohm criterion, and verified this criterion in a rigorous mathematical sense. There are also varied studies from other perspectives on the sheath. In \cite{GHR1,GHR2,JKS1,JKS2,JKS3}, the sheath formation was discussed by considering the quasi-neutral limit 
as letting the Debye length in the Euler--Poisson system tend to zero. Moreover, in \cite{FHS1}, the sheath formation was considered by adopting a certain hydrodynamic model which describes the dynamics of an interface between the plasma and sheath, see also \cite{RD1}.

Beside those studies using hydrodynamic models, it is also of important to analyze the sheath formation from a kinetic point of view. Boyd--Thompson \cite{BT1} proposed a kinetic Bohm criterion (see \eqref{Bohm1} below). Then Riemann \cite{K.R.1} pointed out (although without a rigorous proof) that the kinetic Bohm criterion \eqref{Bohm1} is a necessary condition for the boundary value problems of the stationary Vlasov--Poisson system to be solvable. Recently, the first two authors \cite{MM2} analyzed rigorously the solvability of the stationary problem, and clarified in all possible cases whether or not there is a stationary solution. It was concluded that the Bohm criterion is necessary but not sufficient for the solvability. In this paper, we study the nonlinear stability and instability of the stationary solutions of the Vlasov--Poisson system.

The Vlasov--Poisson system is written as
\begin{subequations}\label{VP1}
\begin{gather}
\partial_{t} F+\xi_{1} \partial_{x} F+\partial_{x} \Phi \partial_{\xi_{1}}F = 0, \quad t>0, \  x>0 , \ \xi \in \mathbb R^{3},
\label{boltz1}
\\
\partial_{xx} \Phi = \int_{\mathbb R^{3}} F d\xi - n_{e}(\Phi), \quad t>0, \  x>0,
\label{poisson1}
\end{gather}
where $t>0$, $x>0$, and $\xi = (\xi_{1},\xi_{2},\xi_{3})=(\xi_{1},\xi') \in \mathbb R^{3}$ are the time, spatial, and velocity variables, respectively.
The unknown functions $F = F(t,x,\xi)$ and $-\Phi=-\Phi(t,x)$ stand 
for the velocity distribution of positive ions
and the electrostatic potential, respectively. 
The given function $n_{e}(\cdot) \in C^{2}(\mathbb R)$ denotes the number density of electrons.
We assume that 
\begin{gather}\label{n_e}
n_{e}(0)=1, \quad n_{e}'(0)=-1, \quad n_{e}'(\Phi)<0.
\end{gather}
One of typical functions is the Boltzmann relation $n_{e}(\Phi)=e^{-{\Phi}}$.
We study the initial--boundary value problem of \eqref{boltz1}--\eqref{n_e} with the following initial and boundary data:
\begin{gather}
F (0,x,\xi) = F_{0}(x,\xi), \quad x>0, \ \xi \in \mathbb R^{3},
\label{ice}\\
F (t,0,\xi) = 0, \quad t>0, \ \xi_{1}>0,
\label{bce1} \\
\Phi(t,0)=\Phi_{b}>0, \quad t>0,
\label{bce3} \\
\lim_{x \to\infty} F (t,x,\xi) =  F_{\infty}(\xi),
\quad t>0,  \ \xi \in \mathbb R^{3},
\label{bce2} \\
\lim_{x \to\infty} \Phi (t,x)=0, 
\quad t>0.
\label{bce4} 
\end{gather}
\end{subequations}
The constant $-\Phi_{b} \in \mathbb R$ denotes the voltage on the boundary.
Furthermore, $F_{0}=F_{0}(x,\xi)$ and $F_{\infty}=F_{\infty}(\xi)$ are nonnegative functions.
The boundary condition \eqref{bce1} on $\xi_{1}>0$ does not allow positive ions to flow into the domain from the boundary.
On the other hand, we do not put any boundary condition on $\xi_{1}<0$, and therefore positive ions can flow out through the boundary.
In other words, we consider the {\it completely absorbing} boundary condition for positive ions.

As mentioned above, the studies \cite{K.R.1,MM2} investigated the solvability of the stationary problem associated to the problem \eqref{VP1}. The stationary solution $(F^{s},\Phi^{s}) = (F^{s}(x,\xi),$ $\Phi^{s}(x))$ solves
\begin{subequations}\label{sVP1}
\begin{gather}
\xi_{1} \partial_{x} F^{s}+\partial_{x} \Phi^{s} \partial_{\xi_{1}} F^{s} =0, \quad x>0 , \ \xi \in \mathbb R^{3},
\label{seq1}
\\
\partial_{xx} \Phi^{s} 
= \int_{\mathbb R^{3}} F^{s} d\xi - n_{e}(\Phi^{s}), \quad x>0,
\label{seq2}
\\
F^{s} (0,\xi) = 0, \quad \xi_{1}>0,
\label{sbc1} \\
\Phi^{s}(0)=\Phi_{b}>0,
\label{sbc3} \\
\lim_{x \to\infty} F^{s} (x,\xi) =  F_{\infty}(\xi), 
\quad \xi \in \mathbb R^{3},
\label{sbc2} \\
\lim_{x \to\infty} \Phi^{s} (x) =  0.
\label{sbc4} 
\end{gather}
\end{subequations}
It was concluded by \cite{K.R.1,MM2} that the following Bohm criterion is a necessary condition 
for the solvability of the stationary problem \eqref{sVP1}:
\begin{gather}\label{Bohm1}
\int_{\mathbb R^{3}}\xi_{1}^{-2}F_{\infty}(\xi) d\xi  \leq  1.
\end{gather}
In this paper, we consider the nonlinear stability and instability of these stationary solutions.

We review mathematical results on the Vlasov--Poisson system describing the motion of plasma. 
%
%
The time-global solvability and the regularity of solutions were investigated extensively. Early references \cite{BD, LP, K.P., J.S.} and books \cite{glassey, G.R.} considered the Cauchy problem. Moreover, \cite{Y.G.1, Y.G.2, HV1} studied the initial--boundary value problem in a half space imposing various boundary conditions such as the specular reflecting and the absorbing boundary conditions. 

%
%

The stability and instability of equilibria (spatially homogeneous stationary solutions) have been extensively studied. The articles \cite{Penrose1,HMRW1,HNR} investigated the stability of equilibria on a whole space. The studies \cite{P.D.,BR1,MPN1} discussed the stability with periodic boundary condition on the spatial variable. The nonlinear instability was shown on a 3D cube with the specular reflecting boundary condition in \cite{GS3}. Nowadays, the problem of nonlinear Landau damping has drawn huge attention, see \cite{MV1, BMM1, GNR1, IPWW1, LZ1, LZ2}.

The analysis on stationary solutions (spatially inhomogeneous stationary solutions) is more important in application of plasmas. Rein \cite{G.R.2}, Esent\"urk--Hwang--Strauss \cite{EHS}, and Suzuki--Takayama \cite{MM2} established theories for the existence on various domains. Batt--Morrison--Rein \cite{BMR1} showed the existence and linear stability of stationary solutions on a whole space.  Esent\"urk--Hwang \cite{EH1} discussed those on a finite interval and half line with the inflow and specular reflecting boundary conditions, respectively. The linear instability was also analyzed in \cite{Z.L.1}.
It is of greater interest to investigate the nonlinear stability and instability of stationary solutions.
Rein \cite{G.R.1} proved the nonlinear orbital stability on a whole space by using the energy-Casimir method. 
Guo--Strauss \cite{GS1} and Lin \cite{Z.L.2} investigated the criterions for the nonlinear instability with periodic boundary condition on the spatial variable. See also \cite{GS2,LS1} which studied the Vlasov--Maxwell system.

In this paper, we consider the Vlasov--Poisson system on a half-line with the completely absorbing boundary condition under the assumption that the electron density obeys $n_{e}=n_{e}(\Phi)$, and establish the nonlinear stability and instability of stationary solutions in a weighted $L^2$ energy framework. We make use of the dissipative nature of the problem by exploiting the completely absorbing boundary condition (in particular, the outflow part on $\xi_{1}<0$) as well as observing the transport of the support of the velocity distribution $F$. It is concluded that the $\xi$-support of the initial data $F_{0}$ is a major factor leading to stability/instability. Roughly speaking, the $\xi$-support being constrained to $\{\xi \in \mathbb R^{3} \, | \, \xi_{1}<0 \}$ helps in stability; 
A nonempty intersection of the $\xi$-support and $\{\xi \in \mathbb R^{3} \, | \, \xi_{1}>0 \}$ helps in instability.
To our knowledge, this is the first asymptotic stability result on spatially inhomogeneous stationary solutions of the Vlasov--Poisson system.

This paper is organized as follows.
Section \ref{S2} provides our main theorems (Theorems \ref{mainThm} and \ref{mainThm2}) on the nonlinear stability and instability.
In Section \ref{S3}, we reformulate the problem \eqref{VP1} and reduce the proofs of the main theorems to the key Propositions \ref{local1}, \ref{apriori1}, and \ref{apriori2}. Section \ref{S4} is devoted to preliminaries to be applied in the proofs of the key propositions. Specifically, we discuss estimates on the hydrodynamic variables, estimates on the stationary solution $F^{s}$, and elliptic estimates for the Poisson equation in subsections \ref{SS:HV}, \ref{SS:Fs}, and \ref{ES}, respectively. Then, Section \ref{LS} gives a proof of Proposition \ref{local1} on the time-local solvability of the reformulated problem. In Sections \ref{AE} and \ref{AE2}, we prove Proposition \ref{apriori1} and \ref{apriori2} which lead to the stability and instability theorems, respectively. In addition, in Appendix \ref{SA}, we discuss the selection of the constants so that the condition \eqref{asp1} in Proposition \ref{apriori1} holds by assuming either the condition (i) or condition (ii) in Theorem \ref{mainThm}.  
Appendix \ref{SB} provides a study of the solvability of the linearized problem.

Before closing this section, we give our notation used throughout this paper.

\medskip

\noindent
{\bf Notation.} 
Let  $\pd_{t} := \frac{\pd}{\pd t}$, $\pd_x := \frac{\pd}{\pd x}$, and $\pd_{\xi_{j}} := \frac{\pd}{\pd \xi_{j}}$.
The operators $\nabla_{x,\xi} := (\pd_{x},\pd_{\xi_{1}} ,\pd_{\xi_{2}},\pd_{\xi_{3}})$ and
$\nabla_{\xi} := (\pd_{\xi_{1}} ,\pd_{\xi_{2}},\pd_{\xi_{3}})$ 
denote the gradients with respect to $(x,\xi)$ and $\xi$, respectively.
Furthermore, $\mathbb R_{+}:=\{x>0\}$ is a one-dimensional half space;
$\mathbb R_{+}^{3}:=\{\xi \in \mathbb R^{3} \, | \, \xi_{1}>0 \}$ is a three-dimensional upper half space;
$\mathbb R_{-}^{3}:=\{\xi \in \mathbb R^{3} \, | \, \xi_{1}<0 \}$ is a three-dimensional lower half space.
We use the one-dimensional indicator function $\chi(s)$ of the set $\{s>0\}$.

For a domain $\Omega$ and $p \in [1,\infty]$, $L^p(\Omega)$ is the Lebesgue space
equipped with the norm $\Vert\cdot\Vert_{L^{p}(\Omega)}$.
Let us denote by $(\cdot,\cdot)_{L^{2}(\Omega)}$ the inner product of $L^{2}(\Omega)$.
For an integer $k \geq 0$, $H^k(\Omega)$ means the Sobolev space in the $L^2$ sense.
Furthermore, we use the function space $H^1_{0,\Gamma}:=\{ f \in H^{1}({\mathbb R}_{+}\times{\mathbb R}^{3}) \, | \,  f(0,\xi)=0, \ \xi \in \mathbb R^{3}_{+}  \}$.
We sometimes abbreviate the spaces $L^{2}({\mathbb R}_{+}\times{\mathbb R}^{3})$, 
$L^{2}({\mathbb R}_{+})$,  $L^{2}({\mathbb R}^{3})$, 
$H^{k}({\mathbb R}_{+}\times{\mathbb R}^{3})$, and $H^{k}({\mathbb R}_{+})$
by $L^{2}_{x,\xi}$, $L^{2}_{x}$, $L^{2}_{\xi}$, $H^{k}_{x,\xi}$, and $H^{k}_{x}$, respectively. 
For $T>0$ and $\alpha \geq 0$, a solution space ${\cal X}_{\rm e}(T,\alpha) \times {\cal Y}_{\rm e}(T,\alpha)$  is defined by
\begin{align*}
{\cal X}_{\rm e}(T,\alpha)&:=\left\{ f  \, \left| \, 
\begin{array}{ll}
e^{\alpha x/2} f \in C([0,T];H^{1}({\mathbb R}_{+}\times{\mathbb R}^{3})),
\\
e^{\alpha x/2} (1+|\xi_{1}|)^{-1} f \in C^{1}([0,T];L^{2}({\mathbb R}_{+}\times{\mathbb R}^{3})),
\\
e^{\alpha x /2} |\xi_{1}|^{1/2} \nabla_{x,\xi}^{k} f \in L^{2}(0,T;L^{2}({\mathbb R}_{+}\times{\mathbb R}^{3})) \quad \text{for $k=0,1$}
\end{array}
\right.\right\},
\\
{\cal Y}_{\rm e}(T,\alpha)&:=\{ \phi \, | \, e^{\alpha x /2}\phi \in C([0,T];H^{3}({\mathbb R}_{+}))\}.
\end{align*}
For a nonnegative integer $k$, ${\cal {B}}^k (\mathbb R)$ stands for
the space of functions whose derivatives up to  
$k$-th order are continuous and bounded over $\mathbb R$.

\section{Main Results}\label{S2}
In this section, we state our main results on 
the nonlinear stability and instability of the stationary solutions $(F^{s},\Phi^{s})$ of the Vlasov--Poisson system.
We start from introducing the result in \cite{MM2} studying the existence of stationary solutions.
To do so, we give a definition of stationary solutions.
Note that the potential $-\Phi^{s}$ is observed as an increasing function when the plasma sheath is formed.
\begin{defn}\label{DefS1}
We say that $(F^{s},\Phi^{s})$ is a solution of 
the boundary value problem \eqref{sVP1} if it satisfies the following:
\begin{enumerate}[(i)]
\item $F^{s} \in L^{1}_{loc}(\overline{\mathbb R_{+} \times \mathbb R^{3}}) \cap C(\overline{\mathbb R_{+}};L^{1}(\mathbb R^{3}))$
and $\Phi^{s} \in C^{1}(\overline{\mathbb R_{+}}) \cap C^{2}(\mathbb R_{+})$;
\item $F^{s}(x,\xi)\geq 0$ and $\D_{x}\Phi^{s}(x)<0$;
\item $F^{s}$ solves the following weak form:
\begin{gather*}
(F^{s},\xi_1\D_x\psi
+\D_x{\Phi^{s}}\D_{\xi_1}\psi)_{L^2({\mathbb R}_+\times {\mathbb R}^3)}
=0, \quad \forall \psi \in {\cal X},  
\\
\lim_{x\to \infty} \| F^{s}(x,\cdot)-F_{\infty} \|_{L^{1}(\mathbb R^{3})}=0,
\end{gather*}
where 
${\cal X}:=\{f\in C_0^1(\overline{\mathbb{R}_+ \times \mathbb{R}^3})\, | \,
\hbox{$f(0,\xi)=0$ for $\xi \in {\mathbb R}_-^3$}\}$;
\item $\Phi^{s}$ solves \eqref{seq2} with \eqref{sbc3} and \eqref{sbc4} in the classical sense.
\end{enumerate}
\end{defn}

To solve the Poisson equation \eqref{seq2} with \eqref{sbc4}, we must require the quasi-neutral condition
\begin{gather}\label{netrual1}
\int_{\mathbb R^{3}} F_{\infty}(\xi) d\xi = 1.
\end{gather}  
The following is also a necessary condition (for more details, see Lemma 3.1 in \cite{MM2}):
\begin{gather}
F_{\infty}(\xi)=0, \quad \xi_{1} > 0.
\label{need2} 
\end{gather}
The solvability of \eqref{sVP1} is summarized in Proposition \ref{existence1} below. 
Here the set $B$ is defined by
\begin{align*}
B:=\{\varphi>0 \, | \, V(\phi)>0 \ \text{for} \ \phi \in (0,\varphi] \},
\end{align*}
where
\begin{align*}
V(\phi):=\int_{0}^{\phi} \left( \int_{\mathbb R^{3}} F_{\infty}(\xi)\frac{-\xi_{1}}{\sqrt{\xi_{1}^{2}+2\varphi}} d\xi - n_{e}(\varphi) \right)  d\vphi.
\end{align*}

\begin{pro}[\hspace{-0.1mm}\cite{MM2}]\label{existence1}
Let $\Phi_{b}>0$ and $F_{\infty} \in L^{1}(\mathbb R^{3})$ 
satisfy $F_{\infty} \geq 0$, \eqref{netrual1}, \eqref{need2}, and
\begin{gather}
\int_{\mathbb R^{3}}\xi_{1}^{-2}F_{\infty}(\xi) d\xi <1.
\label{Bohm2}
\end{gather}
Then the set $B$ is not empty. 
Furthermore, if and only if $\Phi_{b}<\sup B$ holds, the problem \eqref{sVP1} has a unique solution $(F^{s},\Phi^{s})$.
There also hold that 
\begin{gather}
F^{s}(x,\xi)=F_{\infty}(-\sqrt{\xi_{1}^{2}-2\Phi^{s}(x)},\xi')\chi(\xi_{1}^{2}-2\Phi^{s}(x))\chi(-\xi_{1}),
\label{fform} \\
|\D_{x}^{l} \Phi^{s}(x)| \leq C e^{- c x} 
\quad \text{for $l=0,1,2$},
\label{decay1}
\end{gather}
where $\chi(s)$ is the one-dimensional indicator function of the set $\{s>0\}$, and 
$c$ and $C$ are positive constants independent of $x$. 
In addition, if $F_{\infty} \in C^{2}(\mathbb R^{3})$, 
then the solution $(F^{s},\Phi^{s}) \in C^{1}(\overline{\mathbb R_{+} \times \mathbb R^{3}}) \times C^{2}(\overline{\mathbb R_{+}})$ is a classical solution.

%
%
%
\end{pro}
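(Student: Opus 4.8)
\textbf{Proof proposal for Proposition \ref{existence1}.}

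The plan is to reduce the system to a single scalar ODE for $\Phi^{s}$, exactly as in the classical reduction for the Euler--Poisson sheath problem, using the method of characteristics for \eqref{seq1}. First I would observe that \eqref{seq1} is a linear transport equation whose characteristics are exactly the level sets of the energy $\xi_{1}^{2}/2 - \Phi^{s}(x)$ together with the conserved transverse momentum $\xi'$. Since $\Phi^{s}$ is monotone decreasing (by requirement (ii) of Definition \ref{DefS1}) with $\Phi^{s}(0)=\Phi_{b}>0$ and $\Phi^{s}(\infty)=0$, an ion reaching interior point $x$ with $\xi_{1}<0$ must have come from $x=\infty$, where its velocity was $-\sqrt{\xi_{1}^{2}-2\Phi^{s}(x)}$ provided $\xi_{1}^{2}-2\Phi^{s}(x)>0$; ions with $\xi_{1}>0$ at an interior point would have had to originate from the boundary where the data vanishes by \eqref{sbc1}, and ions with $\xi_{1}^{2}<2\Phi^{s}(x)$ cannot reach $x$ from infinity because they would be turned back by the potential barrier. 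This yields formula \eqref{fform} (one should check the absorbing condition is consistent, i.e.\ any characteristic hitting $x=0$ has $\xi_{1}<0$ there, which follows from monotonicity). Feeding \eqref{fform} into the right-hand side of \eqref{seq2}, the change of variables $\xi_{1}\mapsto \sqrt{\xi_{1}^{2}-2\Phi^{s}(x)}$ turns $\int_{\mathbb R^3}F^{s}\,d\xi$ into $\int_{\mathbb R^3}F_{\infty}(\xi)\frac{-\xi_{1}}{\sqrt{\xi_{1}^{2}+2(-\Phi^{s}(x))}}\,d\xi$ (here using $F_{\infty}$ supported in $\xi_1<0$ via \eqref{need2}), so with $\phi:=-\Phi^{s}$ the Poisson equation becomes the autonomous second-order ODE $\phi_{xx} = g(\phi)$ where $g(\phi):=\int_{\mathbb R^3}F_{\infty}(\xi)\frac{-\xi_{1}}{\sqrt{\xi_{1}^{2}+2\phi}}\,d\xi - n_{e}(\phi)$, and $V$ is precisely an antiderivative of $g$ (up to the sign convention in the definition), i.e.\ $V'(\phi)=g(\phi)$, $V(0)=0$.

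Next I would analyze this ODE as a conservative one-dimensional mechanical system: multiplying by $\phi_x$ and integrating gives the first integral $\tfrac12\phi_x^2 = V(\phi)$ (the constant being fixed to $0$ by $\phi,\phi_x\to 0$ as $x\to\infty$, which is forced by \eqref{sbc4} and the decay of a bounded stationary solution). For a monotone decreasing $\phi$ connecting $\phi=\Phi_b$ at $x=0$ to $\phi=0$ at $x=\infty$ we need $V(\phi)>0$ on $(0,\Phi_b]$ — this is exactly the condition $\Phi_b\in B$, and more precisely the heteroclinic-type orbit exists iff $\Phi_b<\sup B$ (the endpoint $\sup B$ being excluded because if $V(\sup B)=0$ the trajectory takes infinite "time" $x$ to leave that level, while if $V$ stays positive we can still go slightly past — one must be a little careful with the precise characterization of $\sup B$, treating the two cases $\sup B=+\infty$ and $\sup B<\infty$ separately). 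Given $\Phi_b<\sup B$, the solution is constructed by quadrature, $x = \int_{\phi}^{\Phi_b} \frac{ds}{\sqrt{2V(s)}}$, which is a well-defined decreasing bijection from $(0,\Phi_b]$ onto $[0,\infty)$; uniqueness follows because any solution of the boundary value problem must satisfy this same first integral and be monotone.

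Then I would establish nonemptiness of $B$ and the exponential decay \eqref{decay1}. For nonemptiness: near $\phi=0$, Taylor-expand $g$. Using $n_e(0)=1$, $n_e'(0)=-1$ from \eqref{n_e} and, for the ion term, the expansion $\frac{-\xi_1}{\sqrt{\xi_1^2+2\phi}} = 1 - \frac{\phi}{\xi_1^2} + O(\phi^2)$ (here \eqref{netrual1} makes the zeroth order terms cancel), one gets $g(\phi) = \big(1 - \int_{\mathbb R^3}\xi_1^{-2}F_\infty\,d\xi\big)\phi + o(\phi)$, hence by \eqref{Bohm2} the strict Bohm inequality $g(\phi)>0$ for small $\phi>0$, so $V(\phi)=\int_0^\phi g>0$ for small $\phi>0$, i.e.\ small $\varphi$ lie in $B$; this also shows $V(\phi)\sim \tfrac12(1-\int\xi_1^{-2}F_\infty)\phi^2$ near $0$, whence $\phi_x\sim -c\,\phi$ and $\phi$ decays like $e^{-cx}$, giving \eqref{decay1} for $l=0,1$ and then $l=2$ via the equation $\phi_{xx}=g(\phi)=O(\phi)$. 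Finally, the regularity claim: if $F_\infty\in C^2$ then $g\in C^2$ near the relevant range (the integrand is smooth in $\phi$ for $\phi\ge 0$ and $F_\infty$ compactly... or at least integrably controlled), so $\phi\in C^2$ by the ODE and bootstrap, and $F^s$ inherits $C^1$ regularity from \eqref{fform} since the indicator-function edges $\xi_1^2 = 2\Phi^s(x)$ meet $F_\infty$ where $F_\infty$ and its derivative vanish (continuity of $F_\infty$ at $\xi_1=0$ being part of \eqref{need2} combined with $F_\infty\ge 0\in C^2$). The main obstacle I anticipate is not any single estimate but the careful bookkeeping in the characteristic analysis — verifying that \eqref{fform} is the unique weak solution in the sense of Definition \ref{DefS1}(iii), handling the measure-zero turning set $\{\xi_1^2=2\Phi^s(x)\}$, and confirming the sharp "iff $\Phi_b<\sup B$" dichotomy including the borderline behavior of $V$ at $\sup B$. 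Since this is precisely the content proved in \cite{MM2}, I would cite that analysis for the delicate parts and present the ODE reduction as the conceptual skeleton.
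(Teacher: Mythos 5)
The paper itself offers no proof of this proposition: it is imported verbatim from \cite{MM2}, so there is no in-paper argument to measure your proposal against. That said, your sketch is exactly the reduction that \cite{MM2} performs: integrate \eqref{seq1} along characteristics of the conserved energy $\tfrac12\xi_1^2-\Phi^s(x)$ to obtain \eqref{fform} (ions with $\xi_1>0$ or with $\xi_1^2<2\Phi^s$ trace back to the boundary where the data vanish), push \eqref{fform} through the Poisson equation to get a scalar autonomous ODE for $\Phi^s$ with the Sagdeev potential $V$ of the paper as first integral, $\tfrac12(\partial_x\Phi^s)^2=V(\Phi^s)$, and then read off existence/uniqueness by quadrature, nonemptiness of $B$ and the decay \eqref{decay1} from the expansion $V(\varphi)=\tfrac12\bigl(1-\int_{\mathbb R^3}\xi_1^{-2}F_\infty\,d\xi\bigr)\varphi^2+o(\varphi^2)$, which is where \eqref{Bohm2} enters. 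So the skeleton is the intended one.

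One concrete slip to fix: the substitution $\phi:=-\Phi^s$ is inconsistent with the formulas you write after it. The change of variables $\zeta_1=-\sqrt{\xi_1^2-2\Phi^s(x)}$ gives $\xi_1^2=\zeta_1^2+2\Phi^s$, so the ion density is $\int F_\infty(\xi)\,\frac{-\xi_1}{\sqrt{\xi_1^2+2\Phi^s(x)}}\,d\xi$ with a \emph{plus} sign, not $\sqrt{\xi_1^2-2\Phi^s}$; moreover with $\phi=-\Phi^s<0$ your $n_e(\phi)$ would be $n_e(-\Phi^s)\neq n_e(\Phi^s)$, and the interval $(0,\Phi_b]$ on which you test positivity of $V$ is never visited by $\phi$. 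The fix is simply to keep $\Phi^s$ itself as the unknown: $\partial_{xx}\Phi^s=V'(\Phi^s)$ with $V$ exactly as defined in the paper, and $\Phi^s$ decreasing from $\Phi_b$ to $0$. With that correction, the remaining points you flag for care (the borderline behavior at $\sup B$, the turning set $\xi_1^2=2\Phi^s$, and the vanishing of $F_\infty$ and $\partial_{\xi_1}F_\infty$ at $\xi_1=0$ needed for the $C^1$ regularity of $F^s$) are indeed the delicate parts handled in \cite{MM2}.
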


Owing to $\partial_{x} \Phi^{s}(x)<0$, \eqref{sbc3}, and \eqref{sbc4}, there holds that
\begin{subequations}\label{decay0}
\begin{gather}
0 \leq \Phi^{s}(x) \leq \Phi_{b}.
\end{gather}
Further, in the case $\Phi_{b} \ll 1$, the inequality \eqref{decay1} can be improved slightly as
\begin{gather}
|\D_{x}^{l} \Phi^{s}(x)| \leq C \Phi_{b} e^{- c x} 
\quad \text{for $l=0,1,2$},
\end{gather}
\end{subequations}
where $c$ and $C$ are positive constants independent of $x$ and $\Phi_{b}$.
We also remark that the study \cite{MM2} analyzed the case $\int_{\mathbb R^{3}}\xi_{1}^{-2}F_{\infty}(\xi) d\xi \geq 1$, 
and the following general boundary condition imposed for $F^{s}$ as well:
\begin{gather*}
F^{s} (0,\xi) = F_{b}(\xi)+\alpha  F^{s}(0,-\xi_{1},\xi'), \quad \xi_{1}>0.
\end{gather*}

We study the stability and instability for some special end states $F_{\infty}$. 
From a physical point of view, 
it seems natural that the end state $F^{\infty}$ is given 
by the following Maxwellian $M_{\infty}=M_{\infty}(\xi)$ with constants $\rho_{\infty}>0$, 
$(u_{\infty},0,0) \in \mathbb R^{3}_{-}$, and $\theta_{\infty}>0$:
\begin{equation*}
M_{\infty}(\xi):=\frac{\rho_{\infty}}{(2\pi \theta_{\infty})^{3/2}}\exp\left(-\frac{|\xi-(u_{\infty},0,0)|^{2}}{2\theta_{\infty}} \right).
\end{equation*}
However, due to the necessary condition \eqref{need2}, 
we have no stationary solution for the case $F_{\infty}=M_{\infty}$.
To avoid this issue, we employ  a cut-off function $\psi \in C^{\infty}({\mathbb R}^{3})$ 
which is nonnegative and satisfies
\begin{gather}\label{cutoff0}
\psi(\xi)=\psi(\xi_{1})=
\left\{
\begin{array}{ll}
0 &  \text{if $\xi_{1} \geq -r$},
\\
1 & \text{if $\xi_{1} \leq - r - \sigma$},
\end{array}
\right.
\end{gather}
where $0<r<|u_{\infty}|$ and $0<\sigma<(|u_{\infty}|-r)/2$, and then set
\begin{gather}\label{cutoff1}
F_{\infty}=M_{\infty} \psi.
\end{gather}
We notice that $M_{\infty} \psi$ is a good approximation of $M_{\infty}$ in the case $\theta_{\infty} \ll 1$ and
\begin{gather*}
{\rm supp} F_{\infty}={\rm supp} \psi \subset \{\xi \in {\mathbb R}^{3} \, | \, \xi_{1}\leq -r \}.
\end{gather*}
Throughout this paper, we first fix a certain cut-off function $\psi$ and then \footnote{Note that $|u_{\infty}| > r + 2\sigma$.}choose $u_{\infty}$. 

From Theorem \ref{existence1}, it is seen that 
the problem \eqref{sVP1} with $F_{\infty}=M_{\infty} \psi $ has a unique classical stationary solution $(F^{s},\Phi^{s})$
for parameters $\rho_{\infty}$, $u_{\infty}$, and $\theta_{\infty}$ so that \eqref{netrual1} and \eqref{Bohm2} hold.
From \eqref{fform} and $\Phi^{s}(x) >0$, we also see that
\begin{gather}\label{r2}
{\rm supp} F^{s} \subset \{ (x,\xi) \in \overline{\mathbb R_{+} \times {\mathbb R}^{3}} \, | \, \xi_{1}\leq -r \}.
\end{gather}

We are now in a position to state our main theorems.
The first one is the stability theorem for the initial perturbation $F_{0}-F^{s}$ whose support is located in $\overline{\mathbb R_{+}} \times \mathbb R_{-}^{3}$.
\begin{thm}[Nonlinear stability]\label{mainThm}
Let $\ve \in (0,r/2)$ for $r>0$ being in \eqref{cutoff0}, and $F_{\infty}=M_{\infty}\psi$ satisfy \eqref{netrual1} and \eqref{Bohm2}. 
Suppose that either of the following conditions (i)--(ii) holds:
\begin{enumerate}[(i)]
\item $r-2\ve\geq 1$, $|u_{\infty}|>1$, and $\theta_{\infty} \leq \theta_{*}$ for some positive constant $\theta_{*}=\theta_{*}(r,\ve,u_{\infty})$;
\item $|u_{\infty}| \geq  u_{*}$ for some positive constant $u_{*}=u_{*}(r,\ve,\theta_{\infty})$.
\end{enumerate}
Then there exist constants $\beta \in (0,1)$ and $\delta_{0}>0$
such that if 
\begin{gather*}
{\rm supp} F_{0} \subset \{ (x,\xi) \in \overline{{\mathbb R}_{+}\times{\mathbb R}^{3}} \, | \,  \xi_{1} \leq - r + \ve \},
\\
\Phi_{b} + \|e^{\beta x/2} M_{\infty}^{-1/2} (F_{0}-F^{s})\|_{H^{1}({\mathbb R}_{+}\times{\mathbb R}^{3})} \leq \delta_{0},
\end{gather*}
the initial--boundary value problem \eqref{VP1} has
a unique solution $(F,\Phi)$ that satisfies 
$(M_{\infty}^{-1/2} $ $(F-F^{s}),\Phi-\Phi^{s}) \in {\cal X}_{\rm e}(\infty,\beta) \times {\cal Y}_{\rm e}(\infty,\beta)$.
Furthermore, there hold that
\begin{gather*}
{\rm supp} F \subset \{ (t,x,\xi) \in \overline{{\mathbb R}_{+}\times{\mathbb R}_{+}\times{\mathbb R}^{3}} \, | \,  \xi_{1} \leq - r + 2\ve \},
\\
\| e^{\beta x/2}M_{\infty}^{-1/2} (F-F^{s}) \|_{H^{1}({\mathbb R}_{+}\times{\mathbb R}^{3})}
+ \| e^{\beta x /2} (\Phi-\Phi^{s}) \|_{H^{3}({\mathbb R}_{+})} 
\leq C e^{-\gamma t /2} \quad \text{for $t>0$},
\end{gather*}
where $\gamma$ and $C$ are some positive constants independent of $t$.
\end{thm}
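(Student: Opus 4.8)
The plan is to set up a perturbation framework around $(F^s,\Phi^s)$ and prove global existence plus exponential decay via a continuity argument, splitting the work into exactly the three ingredients announced in the excerpt: local solvability (Proposition~\ref{local1}), an a~priori energy estimate (Proposition~\ref{apriori1}), and control of the support of $F$. First I would write $F = F^s + G$ and $\Phi = \Phi^s + \phi$ and derive the equations for $(G,\phi)$: the transport equation $\partial_t G + \xi_1 \partial_x G + \partial_x\Phi^s\,\partial_{\xi_1}G = -\partial_x\phi\,\partial_{\xi_1}(F^s+G)$, the Poisson equation $\partial_{xx}\phi = \int_{\mathbb R^3}G\,d\xi - (n_e(\Phi^s+\phi)-n_e(\Phi^s))$, with boundary data $G(t,0,\xi)=0$ for $\xi_1>0$, $\phi(t,0)=0$, and decay at $x=\infty$. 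The natural weighted unknown is $g := e^{\beta x/2}M_\infty^{-1/2}G$, so that the solution space becomes ${\cal X}_{\rm e}(\infty,\beta)\times{\cal Y}_{\rm e}(\infty,\beta)$; the Maxwellian weight is admissible precisely because $F^s$ and the perturbation are both supported in $\{\xi_1\le -r+2\ve\}$, keeping $M_\infty^{-1/2}$ bounded on the relevant velocity set.

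The heart of the matter is the a~priori estimate. I would test the $G$-equation against $e^{\beta x}M_\infty^{-1}G$ in $L^2_{x,\xi}$. The transport term $\xi_1\partial_x G$ produces, after integration by parts in $x$, a boundary term $-\tfrac12\int_{\mathbb R^3}\xi_1 |e^{\beta x/2}M_\infty^{-1/2}G|^2\big|_{x=0}\,d\xi$; since the boundary condition kills $G$ on $\xi_1>0$, this contributes $+\tfrac12\int_{\xi_1<0}|\xi_1|\,|g(t,0,\xi)|^2\,d\xi \ge 0$, a favorable dissipative trace — this is the "completely absorbing boundary condition" being exploited. The same term also yields a bulk term $-\tfrac{\beta}{2}\int\xi_1\,|g|^2$, which on the support $\{\xi_1\le -r+2\ve\}$ with $r-2\ve$ bounded below (or $|u_\infty|$ large) is $\ge \tfrac{\beta}{2}(r-2\ve)\int|g|^2$ — a coercive damping term. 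The $\partial_x\Phi^s\,\partial_{\xi_1}G$ term integrates by parts in $\xi_1$ to something bounded by $C\|\partial_x\Phi^s\|_{L^\infty}\|g\|^2 \le C\Phi_b\|g\|^2$ by \eqref{decay0}, absorbable for $\Phi_b$ small. The forcing terms $\partial_x\phi\,\partial_{\xi_1}F^s$ and $\partial_x\phi\,\partial_{\xi_1}G$ are controlled using elliptic estimates for $\phi$ in terms of $\int G\,d\xi$ (subsection~\ref{ES}), the hydrodynamic-variable bounds (subsection~\ref{SS:HV}), the $F^s$-bounds (subsection~\ref{SS:Fs}), and the smallness of $\delta_0$, the quadratic term being genuinely higher order. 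One then differentiates the equation once in $(x,\xi)$ and repeats, taking care of the commutator $[\nabla_{x,\xi},\xi_1\partial_x]$ and the boundary contributions of the derivative quantities; this yields the $H^1$-level estimate. Combining, one obtains $\frac{d}{dt}\|g\|_{H^1}^2 + \gamma\|g\|_{H^1}^2 + (\text{good trace}) \le C(\delta_0+\Phi_b)\|g\|_{H^1}^2 + (\text{elliptic lower order})$, and after absorbing and using the Poisson estimate to bound $\|\phi\|_{H^3}$ by $\|g\|_{H^1}$, Grönwall gives exponential decay with rate $\gamma/2$.

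The support statement I would prove by characteristics: along $\dot x = \xi_1$, $\dot\xi_1 = \partial_x\Phi = \partial_x\Phi^s + \partial_x\phi$, the $\xi_1$-coordinate can increase, but the increase is controlled by $\int_0^\infty(\|\partial_x\Phi^s\|_{L^\infty}+\|\partial_x\phi\|_{L^\infty})\,dt \le C(\Phi_b+\delta_0)$, which is $\le \ve$ once $\delta_0$ is chosen small; hence $\supp F_0 \subset\{\xi_1\le -r+\ve\}$ propagates to $\supp F \subset\{\xi_1\le -r+2\ve\}$ for all time. This closes the bootstrap, since the a~priori estimate was derived assuming exactly this support property, and the smallness of the $\int_0^\infty\|\partial_x\phi\|_{L^\infty}\,dt$ integral follows from the exponential decay just proved. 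Finally, a standard continuity argument combining Proposition~\ref{local1} with the closed a~priori estimate and the propagated support gives the global solution with the stated decay.

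The main obstacle I expect is the first-order energy estimate: the weight $e^{\beta x}M_\infty^{-1}$ interacts with the velocity derivative $\partial_{\xi_1}$ (since $M_\infty^{-1}$ grows in $|\xi|$) and with the $x$-transport, so one must verify the damping term from $-\tfrac{\beta}{2}\int\xi_1|\nabla g|^2$ genuinely dominates the bad commutator and forcing contributions \emph{uniformly}, which is exactly why the quantitative hypotheses (i)--(ii) on $r-2\ve$, $|u_\infty|$, and $\theta_\infty$ enter — this bookkeeping of constants is deferred to Appendix~\ref{SA}. A secondary subtlety is handling $\partial_{\xi_1}F^s$, which by \eqref{fform} has a derivative of the indicator $\chi(\xi_1^2-2\Phi^s)$; one needs the $C^1$ regularity from Proposition~\ref{existence1} together with the fact that on $\supp F^s$ one stays away from the singular set $\{\xi_1^2 = 2\Phi^s\}$ because $\xi_1 \le -r < 0$ while $\Phi^s$ is small, so $F^s$ is in fact smooth there.
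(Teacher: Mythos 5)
Your overall architecture --- reformulating with the weighted unknowns $f = M_\infty^{-1/2}(F-F^s)$ and $\phi=\Phi-\Phi^s$, combining local solvability (Proposition~\ref{local1}) with an a~priori estimate (Proposition~\ref{apriori1}) via a continuity argument, and propagating the $\xi$-support --- matches the paper's Section~\ref{S3}, and testing the $G$-equation against $e^{\beta x}M_\infty^{-1}G$ is algebraically the same as the paper's multiplication of \eqref{ree1} by $2e^{\beta x}f$, so the boundary trace and the bulk damping $\beta\int e^{\beta x}|\xi_1||f|^2$ come out as you describe. The genuine gap is in how you close the energy estimate. The forcing term $\frac{\partial_{\xi_1}F^s}{M_\infty^{1/2}}\,\partial_x\phi$ is \emph{linear} in the perturbation and is \emph{not} controlled by shrinking $\delta_0$. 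Using the decomposition \eqref{M0}, its leading contribution after testing is $-\frac{2}{\theta_\infty}\int e^{\beta x}(m-u_\infty n)\,\partial_x\phi$; Cauchy--Schwarz, \eqref{nm1}, and the elliptic estimate $\|e^{\beta x/2}\partial_x\phi\|\lesssim\|e^{\beta x/2}n\|\le\rho_\infty^{1/2}\|e^{\beta x/2}f\|$ bound it only by something of size $\frac{\rho_\infty}{\theta_\infty^{1/2}}\|e^{\beta x/2}f\|^2$, with no small prefactor. This is \emph{comparable to} the damping $\beta(r-2\ve)\|e^{\beta x/2}f\|^2$ for fixed parameters, blows up as $\theta_\infty\to0$ (the whole point of condition (i)), and is completely insensitive to $|u_\infty|$ becoming large (the whole point of condition (ii)). Your claim that this term is ``controlled using elliptic estimates \ldots and the smallness of $\delta_0$, the quadratic term being genuinely higher order'' therefore does not close the estimate, and conditions (i)--(ii) are not merely ``bookkeeping of constants'' for an absorption argument.

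The missing idea is the hydrodynamic structure in Lemma~\ref{alem2}. One must add $\frac{1}{\theta_\infty}\|e^{\beta x/2}n\|_{L^2_x}^2$ to the energy functional. Combining the $f$- and $\partial_x f$-estimates \eqref{bes1}--\eqref{bes2} produces $-\frac{2}{\theta_\infty}\int e^{\beta x}(\partial_{xx}\phi-\phi)\partial_x m + \frac{2u_\infty}{\theta_\infty}\int e^{\beta x}(\partial_{xx}\phi-\phi)\partial_x n$; using the rewritten Poisson equation \eqref{pe3} ($\partial_{xx}\phi-\phi=n+{\cal N}_1$), the continuity equation \eqref{mass1}, and an integration by parts in $x$ exploiting $u_\infty<0$, the first piece yields the total derivative $\frac{1}{\theta_\infty}\frac{d}{dt}\|e^{\beta x/2}n\|^2$ and the second produces the \emph{coercive} terms $\frac{|u_\infty|}{\theta_\infty}|n(t,0)|^2+\frac{|u_\infty|\beta}{\theta_\infty}\|e^{\beta x/2}n\|^2$. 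It is this $\frac{|u_\infty|}{\theta_\infty}$ on the good side that dominates the $\theta_\infty^{-1/2}$-scaled cross terms, and the remaining off-diagonal pieces are arranged into a $3\times3$ quadratic form in $\bigl(\|e^{\beta x/2}|\xi_1|^{1/2}f\|,\|e^{\beta x/2}|\xi_1|^{1/2}\partial_x f\|,\|e^{\beta x/2}n\|\bigr)$ whose positive definiteness is exactly condition \eqref{asp1}; Appendix~\ref{SA} then checks that (i) or (ii) implies \eqref{asp1}. A secondary slip: your support argument writes $\int_0^\infty\|\partial_x\Phi^s\|_{L^\infty}\,dt$ as finite, but $\Phi^s$ is stationary, so this integral diverges. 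What saves the support is the sign $\partial_x\Phi^s<0$ (it only pushes $\xi_1$ down), together with the exponential decay of $\|\partial_x\phi\|_{L^\infty}$; the paper's Lemma~\ref{alem1} encodes this by integrating \eqref{suppes1} over a time-dependent domain whose lower boundary $\Xi_L^-(\tau)$ drifts inward at rate $\ve\gamma e^{-\gamma\tau/2}/2$ chosen to dominate $|\partial_x\phi|$, with the negativity of $\partial_x\Phi^s$ used explicitly.
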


\begin{rem}\label{rem1}
{\rm
We remark that the Vlasov--Poisson system \eqref{boltz1}--\eqref{poisson1} itself have no dissipation, but a dissipative effect occurs on the boundary $\partial ({\mathbb R}_{+} \times \mathbb R^{3})=\{0\} \times \mathbb R^{3}$ thanks to the completely absorbing boundary condition, in particular, the outflow part on $\{0\} \times \mathbb R^{3}_{-}$.
A weight function $e^{\beta x/2}$ is employed to exploit the dissipative nature of the problem in the whole domain ${\mathbb R}_{+} \times \mathbb R^{3}$.

The cold plasma case $\theta_{\infty} \ll 1$ corresponding the condition (i) is a regime of major interest in plasma physics.
On the mathematical side, we need delicate analysis to treat this case, since the rightmost term in the equation \eqref{ree1} below for new unknown functions diverges as $\theta_{\infty} \to 0$. 
To resolve this issue, we make use of another weight function $M_{\infty}^{-1/2}$ and some hydrodynamic variables. We will explain more details of the difficulties in Section \ref{S3}.

Theorem \ref{mainThm} allows the situation that ${\rm supp} F^{s} \subset {\rm supp} F_{0}$ owing to \eqref{r2}. Furthermore, in condition (i), it is reasonable to assume $|u_{\infty}|> 1$. Indeed, $F^{\infty}=M_{\infty}\psi$ with the quasi-neutral constraint \eqref{netrual1} converges to a delta function $\delta(\xi-(u_{\infty},0,0))$ as $\theta_{\infty} \to 0$. This fact together with \eqref{Bohm2} requires the condition $|u_{\infty}|>1$. On the other hand, the condition $r-2\ve \geq 1$ gives essentially no restriction, since we can choose $r>1$ and $\ve \ll 1$.
We can show the asymptotic stability under a weaker assumption \eqref{asp1} than the conditions (i)--(ii), although the expression of the weaker assumption is much more complicated. For more details, see Theorem \ref{stability} below.
}
\end{rem}

For the initial perturbation $F_{0}-F^{s}$ whose support is located in $\overline{\mathbb R_{+}} \times \mathbb R_{+}^{3}$, we have the following instability theorem:

\begin{thm}[Nonlinear instability]\label{mainThm2}
Let $\beta \in (0,1)$ and $F_{\infty}=M_{\infty}\psi$ satisfy \eqref{netrual1} and \eqref{Bohm2}. 
Suppose that $g_{0}$ satisfies
\begin{subequations}\label{g0}
\begin{gather}
\supp g_{0} \subset \{ (x,\xi) \in \mathbb R_{+} \times \mathbb R^{3} \, | \, R_{1} \leq \xi_{1}  \leq R_{2} \},
 \\
\|e^{\beta x/2} g_{0}\|_{H^{1}({\mathbb R}_{+}\times{\mathbb R}^{3})}=1
\end{gather}
\end{subequations}
for some constants $R_{1}>0$ and $R_{2}>2R_{1}$.
Then there exists a constant $\ve>0$ such that for any sufficiently small $\delta>0$,
the initial--boundary value problem \eqref{VP1} with $(F_{0},\Phi_{b})=(\delta  M_{\infty}^{1/2} g_{0} +F^{s},\delta)$ has a solution $(F,\Phi)$ that satisfies the following:
\begin{gather*}
(M_{\infty}^{-1/2} (F-F^{s}),\Phi-\Phi^{s}) \in {\cal X}_{\rm e}(T,\beta) \times {\cal Y}_{\rm e}(T,\beta),
\\
\|e^{\beta x/2} M_{\infty}^{-1/2} (F-F^{s})(T)\|_{H^{1}({\mathbb R}_{+}\times{\mathbb R}^{3})} \geq \ve
\end{gather*}
 for some $T>0$.
\end{thm}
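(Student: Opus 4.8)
The plan is to prove instability by a constructive argument: start from the linearized dynamics around $(F^{s},\Phi^{s})$, identify an unstable growing mode (or at least a configuration whose weighted $H^1$ norm necessarily increases), and then show the nonlinear flow inherits the growth on a suitable time interval before the nonlinearity takes over. The mechanism is transport: data supported in $\{R_1 \le \xi_1 \le R_2\}$ with $R_1 > 0$ consists of ions moving \emph{into} the domain (positive $\xi_1$), and since the stationary potential $-\Phi^s$ is monotone and bounded, by \eqref{decay1} the characteristics with $\xi_1$ large enough never turn around; such particles are carried away from the absorbing boundary, so the completely absorbing boundary condition — which was the source of dissipation in Theorem \ref{mainThm} — provides no damping here. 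This is precisely why the support condition $R_1>0$ and the requirement $R_2 > 2R_1$ (so that the velocities stay comparable along the flow, controlling the $\xi_1$-weight $|\xi_1|^{1/2}$) enter the statement.

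First I would set $g := M_\infty^{-1/2}(F-F^s)$ and write down the linearized equation for $g$ around the stationary solution, in the form used in Section \ref{S3} (equation \eqref{ree1} and the reformulated system). The principal part is the free transport $\partial_t g + \xi_1 \partial_x g + \partial_x\Phi^s \partial_{\xi_1} g$ plus a lower-order coupling term through the Poisson equation for $\phi := \Phi - \Phi^s$. Next I would solve the linear problem with initial data $\delta g_0$ by the method of characteristics: the characteristic curves $(X(s),\Xi_1(s))$ emanating from the support of $g_0$ stay, for the whole time interval, inside $\{\xi_1 \ge c_0 > 0\}$ for some $c_0$ depending on $R_1$ (using $|\partial_x\Phi^s| \le C e^{-cx}$ and energy conservation $\tfrac12\Xi_1^2 - \Phi^s(X)$ along characteristics), and moreover $X(s)$ increases linearly in $s$. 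This gives an explicit lower bound: the weighted $L^2$ mass $\|e^{\beta x/2} g(s)\|$ grows like $e^{\beta c_0 s/2}$ times $\|g_0\|$, because the support is pushed to larger $x$ where the weight is larger, with no boundary loss. The $H^1$ estimate follows by differentiating the characteristic ODEs and using the smoothness and exponential decay of $\Phi^s$; the condition $R_2 > 2R_1$ is what keeps the ratio $\Xi_1(s)/\Xi_1(0)$ bounded, which is needed so that the $\nabla_{x,\xi}$ part of the weighted $H^1$ norm also grows rather than being lost to the spreading of velocities.

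Then I would run the nonlinear argument in the standard Grenier/Guo–Strauss style: choose $T$ to be the first time the nonlinear solution's weighted $H^1$ norm reaches a fixed threshold $\ve$ (which exists and is bounded because of the local solvability of Proposition \ref{local1} and the linear growth just established), and estimate the difference between the nonlinear solution and the linear one via Duhamel's formula. On $[0,T]$ the nonlinear remainder is quadratically small in $\delta$, hence negligible compared with the linearly growing term of size $\delta e^{\beta c_0 T/2}$ as $\delta \to 0$; this forces the weighted $H^1$ norm of $g(T)$ to be at least $\ve$. Care is needed that the support statement $\supp F \subset \{\xi_1 \ge \text{something} > 0\}$ is propagated by the nonlinear flow too, so that the nonlinear characteristics also avoid the boundary — this follows because $\partial_x\Phi$ stays close to $\partial_x\Phi^s$ on $[0,T]$ and hence cannot reverse a fast particle, and it justifies dropping the boundary terms in the nonlinear energy identity just as in the linear case.

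The main obstacle I expect is the weighted $H^1$ lower bound for the linear flow: one must show not merely that $L^2$ mass grows but that the \emph{full} $H^1$ norm with the weights $e^{\beta x/2}$ and the $|\xi_1|^{1/2}$-type factors in ${\cal X}_{\rm e}$ grows, without the derivative terms being swamped by the inhomogeneous Poisson coupling or by the Jacobian of the characteristic map degenerating. Concretely, I would need to track $\nabla_{x,\xi}$ of the characteristic flow (a linear ODE with coefficients involving $\partial_x^2\Phi^s$, which is integrable in $x$ by \eqref{decay1}) and verify the Jacobian stays bounded above and below on the relevant time scale; the constraint $R_2 > 2R_1$ and the freedom to pick $\ve$ small are exactly the tools that make this work. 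The coupling term through $\phi$, being lower order and controlled by the elliptic estimates of Section \ref{ES}, should be absorbable as a perturbation, but checking it does not cancel the growth — i.e. a genuine spectral/positivity argument rather than a mere norm inequality — is the delicate point.
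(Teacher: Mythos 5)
Your proposal identifies the right mechanism (data supported in $\{\xi_1\ge R_1>0\}$ is transported toward large $x$, where the weight $e^{\beta x/2}$ is large, and the absorbing boundary provides no damping for such particles) and the right outer scaffold (local solvability plus a threshold/continuation argument defining $T$ as the first time the norm reaches $\ve$). However, the route you take in the middle is genuinely different from the paper's. You propose the classical Guo--Strauss/Grenier scheme: solve the linearized problem by characteristics, prove growth of the linear flow, and control the nonlinear remainder by Duhamel as quadratically small in $\delta$. The paper instead proves a direct a priori \emph{lower} bound on the full nonlinear solution (Proposition \ref{apriori2}): under the bootstrap assumption $N_{\beta,0}(T)\le\ve_0$, it first localizes $\supp f$ away from $\{-r/2<\xi_1<R_1/2\}$ and $\{\xi_1>2R_2\}$ by integrating the identity \eqref{suppes1} over phase-space regions with curved boundaries $\xi_1=\Xi^{\pm}(x)$ adapted to the characteristics of $\Phi^s$, and then multiplies \eqref{suppes1} by $e^{-\gamma t}e^{\beta x}$ and integrates over a region $\tilde\Omega\subset\{\xi_1\ge \tilde\Xi^-(x)>0\}$ containing $\supp g_0$. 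The divergence term $\partial_x(\xi_1 e^{\beta x}|f|^2)$ then produces the \emph{growing} contribution $\beta\int\!\int_{\tilde\Omega}e^{\beta x}|\xi_1||f|^2\ge \tfrac{\beta R_1}{2}\int\!\int_{\tilde\Omega}e^{\beta x}|f|^2$, the curved boundary terms are signed, and the coupling through $\partial_x\phi$ is absorbed pointwise (being $O(\ve_0)e^{-\beta x/2}$ against explicit good terms of size $\tfrac{\beta R_1^2}{16}e^{-\beta x/2}$); a reverse Gronwall argument yields $\|e^{\beta x/2}f(t)\|_{L^2}\ge e^{\gamma t/2}\|e^{\beta x/2}f_0\|_{L^2}$. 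This buys two simplifications over your plan: no linear/nonlinear comparison is needed at all (so no quadratic Duhamel bookkeeping against a growing linear term), and no spectral or positivity analysis of the linearized operator is required --- the growth is purely of weighted-transport type and survives norm inequalities.

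Relatedly, the "main obstacle" you flag --- establishing growth of the full weighted $H^1$ norm, with control of the Jacobian of the characteristic map --- is not actually an obstacle: since $\|e^{\beta x/2}f(T)\|_{H^1}\ge\|e^{\beta x/2}f(T)\|_{L^2}$, an exponential lower bound on the weighted $L^2$ norm alone forces the $H^1$ norm past the threshold $\ve$, and this is exactly what the paper exploits in \eqref{aesin1}. Your guess about the role of $R_2>2R_1$ (controlling the velocity ratio along the flow for the $|\xi_1|^{1/2}$ weights) is also not the paper's use of it; the constants $R_1$, $2R_2$ enter only through the construction of the invariant phase-space regions in Proposition \ref{apriori2} and the choice of $s$ and $R$ when invoking the local existence result. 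These are repairable misdirections rather than fatal gaps, but carrying out your Duhamel route would be considerably heavier than the paper's direct energy argument.
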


\begin{rem} {\rm
We employ the same solution space ${\cal X}_{\rm e}(T,\beta) \times {\cal Y}_{\rm e}(T,\beta)$
in studying both the stability and instability stated in Theorems \ref{mainThm} and \ref{mainThm2}. 
For $g_{0}$ satisfying
\begin{gather*}
\supp g_{0} \subset \{ (x,\xi) \in \mathbb R_{+} \times \mathbb R^{3} \, | \, R_{1} \leq \xi_{1}  \leq R_{2} \} \cup \{ (x,\xi) \in \overline{\mathbb R_{+} \times \mathbb R^{3}} \, | \, \xi_{1} \leq -r  \},
\\
\supp g_{0} \cap \{ (x,\xi) \in \mathbb R_{+} \times \mathbb R^{3} \, | \, R_{1} \leq \xi_{1}  \leq R_{2} \} \neq \emptyset,
 \\
\|e^{\beta x/2} g_{0}\|_{H^{1}({\mathbb R}_{+}\times{\mathbb R}^{3})}=1,
\end{gather*}
we can show an instability theorem in the same way as in the proof of Theorem \ref{mainThm2}.
}
\end{rem}

\section{Reformulation and Reduction}\label{S3}

First we reformulate the problem \eqref{VP1}.  Let us introduce new unknown functions $(f,\phi)$ as
\begin{equation}\label{pert1}
f=f(t,x,\xi):=M_{\infty}^{-1/2}(\xi)(F(t,x,\xi)-F^{s}(x,\xi)), \quad 
\phi=\phi(t,x):=\Phi(t,x)-\Phi^{s}(x). 
\end{equation}
These solve the following initial--boundary value problem:
\begin{subequations}\label{ree0}
\begin{gather}
\begin{aligned}
& \pd_{t} f + \xi_{1}\partial_{x} f + (\partial_{x} \Phi^{s} + \partial_{x}\phi) \partial_{\xi_{1}}f + \frac{\pd_{\xi_{1}}F^{s}}{M_{\infty}^{1/2}} \partial_{x} \phi
\\
&= \frac{\xi_{1}-u_{\infty}}{2\theta_{\infty}}  (\partial_{x} \Phi^{s} + \partial_{x}\phi) f, \quad t>0, \  x>0 , \   \xi \in \mathbb R^{3},
\end{aligned}
\label{ree1}\\
\partial_{xx} \phi =  \int_{\mathbb R^{3}}  M_{\infty}^{1/2} f d\xi - (n_{e}(\Phi^{s}+\phi)-n_{e}(\Phi^{s})), \quad t>0, \  x>0
\label{ree2}
\end{gather}
with
\begin{gather}
f(0,x,\xi) = f_{0}(x,\xi) :=  M_{\infty}^{-1/2}(\xi) (F_{0}(x,\xi) - F^{s}(x,\xi)), \quad x>0, \   \xi \in \mathbb R^{3},
\label{reic1}\\
f(t,0,\xi) = 0, \quad t>0, \ \xi_{1} >0,
\label{rebc1}\\
\phi(t,0)=0, \quad t>0, 
\label{rebc2}\\
\lim_{x \to\infty} f(t,x,\xi) = 0,  \quad t>0,  \ \xi \in \mathbb R^{3},
\label{rebc3}\\
\lim_{x \to\infty} \phi(t,x) = 0,  \quad t>0,
\label{rebc4}
\end{gather}
\end{subequations}
where we need to assume the compatibility condition, i.e.
\begin{gather*}
f_{0}(0,\xi) = 0, \quad \xi_{1} >0.
\end{gather*}

As mentioned in Remark \ref{rem1},
the first difficulty in the proof of asymptotic stability is that the Vlasov--Poisson system \eqref{boltz1}--\eqref{poisson1} has no dissipation. However,  since we impose the completely absorbing boundary condition, 
we can exploit some dissipative effect on the outflow part $\{0\}\times \mathbb R^{3}_{-}$ of the boundary $\partial ({\mathbb R}_{+} \times \mathbb R^{3})=\{0\} \times \mathbb R^{3}$ by employing the weight function $e^{\beta x/2}$. Indeed, the weight function generates dissipative terms in the energy estimates with integration by parts for the situation that  the $\xi$-support of $f$ is constrained to the lower half space $\mathbb R^{3}_{-}$.
The $\xi$-support of $f$ can be tracked over time by using the a priori estimate on energy functionals. 
We make use of this dissipative effect in the proof of asymptotic stability. 
Similar approach is also used in the proof of instability, 
since the weight function $e^{\beta x/2}$ can generate a growing term if the $\xi$-support of $f$ has a nonempty intersection with the upper half space $\mathbb R^{3}_{+}$.

When we analyze the stability for the cold plasma case $\theta_{\infty} \ll 1$, another difficulty lies in the rightmost of \eqref{ree1}, which diverges as $\theta_{\infty} \to 0$. For this, we introduce the following hydrodynamic variables $(n,m)=(n,m)(t,x)$ to reform the corresponding terms in the energy estimate and generates good contribution:
\begin{gather}\label{nm0}
n=n[f]:=\int_{\mathbb R^{3}}  M_{\infty}^{1/2} f d\xi, \quad 
m=m[f]:=\int_{\mathbb R^{3}}  \xi_{1} M_{\infty}^{1/2} f d\xi.
\end{gather}
Here $n$ and $m$ are the perturbations of the density and flux.
An important thing to note is that another weight function $M_{\infty}^{-1/2}$ in \eqref{pert1} plays an essential role when we find good contribution mentioned above.

Concerning the time-local solvability of \eqref{ree0}, the following difficulties arise. There is a characteristic boundary condition at $(x,\xi_{1},\xi')=(0,0,\xi')$, where a loss of regularity of solutions may occur. 
Another issue comes from the $\xi_{1}$ factor on the rightmost of \eqref{ree1}, which causes problem when estimating the related terms. 
To avoid these issues, we construct the solution $f$ so that $\supp f$ is alway from the point $(x,\xi_{1},\xi')=(0,0,\xi')$, and bounded from above in the $\xi_{1}$-direction 
by assuming the same conditions for the initial support $\supp f_{0}$.
But we need some more techniques when solving the linearized problem.
For more details, see Appendix \ref{SB}.


In order to prove Theorem \ref{mainThm},
it suffices to show the following theorem, where
the constant $\mu_{\infty}=\mu_{\infty}(\rho_{\infty},u_{\infty},\theta_{\infty})$ and the function $\eta=\eta(\beta)$ are defined by
\begin{gather}\label{mu_eta}
\mu_{\infty}(\rho_{\infty},u_{\infty},\theta_{\infty}):=\left\| \frac{\pd_{\xi_{1}}(M_{\infty}\psi-M_{\infty})}{M_{\infty}^{1/2}} \chi(-\xi_{1}) \right\|_{L^{2}_{\xi}},
\quad
\eta(\beta):=\frac{\beta^{2}}{2-\beta^{2}}.
\end{gather}

\begin{thm}[Nonlinear stability]\label{stability}
Let $\ve \in (0,r/2)$ for $r>0$ being in \eqref{cutoff0}.
Suppose that $F_{\infty}=M_{\infty} \psi$ satisfies \eqref{netrual1} and \eqref{Bohm2} as well as
there exist constants 
$\beta \in (0,1)$, $\rho_{\infty}>0$, $u_{\infty} < 0$,  $\theta_{\infty}>0$, and $\mu_{\infty}>0$ with
\begin{gather}
\frac{|u_{\infty}|}{\theta_{\infty}}- 
\left( \frac{\rho_{\infty}^{1/2}}{\theta_{\infty}^{1/2}}(1+\eta(\beta))^{1/2} + \frac{\mu_{\infty}}{\beta}  \right)^{2} \frac{1+\eta(\beta)}{r - 2\ve}
-  \frac{\{1+\beta^{2}(1+\eta(\beta))^{2} \} \mu_{\infty}^{2}}{\beta^{2}{(r - 2\ve) }}>0.
\label{asp1}
\end{gather}
There exists a constant $\delta_{0}>0$
such that if 
\begin{subequations}
\begin{gather}
{\rm supp} f_{0} \subset \{ (x,\xi) \in \overline{{\mathbb R}_{+} \times{\mathbb R}^{3}} \, | \,  \xi_{1} \leq - r + \ve \},
\label{supp0}\\
\Phi_{b} + \|e^{\beta x/2} f_{0}\|_{H^{1}({\mathbb R}_{+}\times{\mathbb R}^{3})} \leq \delta_{0}, 
\end{gather}
\end{subequations}
the initial--boundary value problem \eqref{ree0} has
a unique solution $(f,\phi) \in {\cal X}_{\rm e}(\infty,\beta) \times {\cal Y}_{\rm e}(\infty,\beta)$. 
Furthermore, there holds that
\begin{gather*}
{\rm supp} f \subset \{ (t,x,\xi) \in \overline{{\mathbb R}_{+} \times {\mathbb R}_{+} \times{\mathbb R}^{3}} \, | \,  \xi_{1} \leq - r + 2\ve \},
\\
\| e^{\beta x/2} f (t)\|_{H^{1}({\mathbb R}_{+}\times{\mathbb R}^{3})} + \| e^{\beta x /2} \phi (t)\|_{H^{3}({\mathbb R}_{+})} 
\leq C e^{-\gamma t /2} \quad \text{for $t>0$},
\end{gather*}
where $\gamma$ and $C$ are some positive constants independent of $t$.
\end{thm}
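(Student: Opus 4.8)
The plan is to follow the standard continuation argument for global existence plus exponential decay, built on top of the three key propositions announced in Section~\ref{S3}: local solvability (Proposition~\ref{local1}) together with the a priori estimate (Proposition~\ref{apriori1}). First I would fix $\beta\in(0,1)$ and parameters $\rho_\infty,u_\infty,\theta_\infty,\mu_\infty$ so that the structural condition \eqref{asp1} holds; by Proposition~\ref{existence1} this produces a classical stationary solution $(F^s,\Phi^s)$ with the support property \eqref{r2} and the exponential decay \eqref{decay1}. For initial data with $\Phi_b+\|e^{\beta x/2}f_0\|_{H^1}\le\delta_0$ and $\supp f_0\subset\{\xi_1\le -r+\ve\}$, the local theory gives a unique solution $(f,\phi)\in\mathcal{X}_{\rm e}(T,\beta)\times\mathcal{Y}_{\rm e}(T,\beta)$ on a short interval; the crucial point to carry through the construction is that $\supp f(t)\subset\{\xi_1\le -r+2\ve\}$ and $f$ vanishes near the characteristic point $(0,0,\xi')$, which is propagated because the transport speed $\xi_1$ is strictly negative on the relevant part of the support (keeping $\ve<r/2$) and because $\partial_x\Phi^s+\partial_x\phi$ is small.

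The heart of the argument is the energy estimate. I would work with the weighted norm
\[
N(t)^2:=\|e^{\beta x/2}f(t)\|_{H^1_{x,\xi}}^2+\|e^{\beta x/2}\phi(t)\|_{H^3_x}^2,
\]
and its dissipation, which — because the Vlasov equation itself is conservative — comes entirely from the completely absorbing boundary on $\{0\}\times\mathbb R^3$ and from the commutator of $\xi_1\partial_x$ with the weight $e^{\beta x}$. Multiplying \eqref{ree1} (and its $\nabla_{x,\xi}$-differentiated version) by $e^{\beta x}f$ and integrating by parts, the boundary term at $x=0$ has sign $-\tfrac12\int_{\xi_1<0}|\xi_1| |f(t,0,\xi)|^2\,d\xi\le0$ (the inflow part $\xi_1>0$ vanishes by \eqref{rebc1}), and the weight commutator produces $-\tfrac{\beta}{2}\int e^{\beta x}|\xi_1||f|^2$, which is genuinely dissipative exactly because $\supp f\subset\{\xi_1\le -r+2\ve\}$ forces $|\xi_1|\ge r-2\ve>0$. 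The dangerous terms are the coupling term $M_\infty^{-1/2}\partial_{\xi_1}F^s\,\partial_x\phi$ and the rightmost term $\tfrac{\xi_1-u_\infty}{2\theta_\infty}(\partial_x\Phi^s+\partial_x\phi)f$ of \eqref{ree1}; the former is controlled using the elliptic estimates of subsection~\ref{ES} for $\phi$ in terms of $n=n[f]$, and the latter — which blows up as $\theta_\infty\to0$ — is handled by rewriting the $\int\xi_1 M_\infty^{1/2}(\cdots)$ contributions through the hydrodynamic variables $(n,m)$ from \eqref{nm0} and absorbing them against the boundary/weight dissipation. The algebra is arranged so that the coefficient appearing in front of the good term is precisely the left-hand side of \eqref{asp1}; positivity of that quantity is what lets me close the estimate as a differential inequality $\frac{d}{dt}N^2+\gamma N^2\le C N^2\cdot(N+\Phi_b)$ for some $\gamma>0$.

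From here the continuation is routine: choosing $\delta_0$ small enough that $C(N+\Phi_b)\le\gamma/2$ as long as $N\le$ (some threshold), the inequality reduces to $\frac{d}{dt}N^2+\tfrac{\gamma}{2}N^2\le0$, hence $N(t)\le N(0)e^{-\gamma t/4}\le C\delta_0 e^{-\gamma t/4}$, which both keeps $N$ below the threshold for all $t$ (so the local solution extends to $T=\infty$) and gives the claimed exponential decay $\|e^{\beta x/2}f(t)\|_{H^1}+\|e^{\beta x/2}\phi(t)\|_{H^3}\le Ce^{-\gamma t/2}$ after relabeling $\gamma$. The support statement $\supp f(t)\subset\{\xi_1\le -r+2\ve\}$ is maintained at each step of the continuation by the same transport argument used locally, using that $\partial_x\phi$ stays small because $N(t)$ does. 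Finally, Theorem~\ref{mainThm} follows from Theorem~\ref{stability} once one checks, as is done in Appendix~\ref{SA}, that either condition (i) (cold plasma, $\theta_\infty\le\theta_*$) or condition (ii) ($|u_\infty|\ge u_*$) forces \eqref{asp1}: in case~(i) one uses that $\mu_\infty\to0$ and $\rho_\infty/\theta_\infty$ stays bounded relative to $|u_\infty|/\theta_\infty$ as $\theta_\infty\to0$ (with $|u_\infty|>1$, $r-2\ve\ge1$); in case~(ii) the leading term $|u_\infty|/\theta_\infty$ simply dominates.

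\textbf{Main obstacle.} I expect the delicate point to be the cold-plasma regime: closing the energy estimate uniformly as $\theta_\infty\to0$ requires that every occurrence of the singular factor $\theta_\infty^{-1}$ in the rightmost term of \eqref{ree1} be paired — after integration by parts and after passing to $(n,m)$ — with a compensating factor, and that the residual constants assemble exactly into the combination in \eqref{asp1} rather than something weaker. Getting the bookkeeping of these weighted, $\xi_1$-degenerate terms right (including in the $\nabla_{x,\xi}$-differentiated estimates, where commutators with $M_\infty^{-1/2}$ reintroduce $\theta_\infty^{-1}$) is the technical crux; everything else is a fairly standard absorbing-boundary energy method.
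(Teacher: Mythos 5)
Your proposal is correct and follows essentially the same route as the paper: the theorem is proved by combining the local solvability of Proposition~\ref{local1} with the a priori estimate of Proposition~\ref{apriori1} in a standard continuation/bootstrap argument, where the dissipation comes from the absorbing boundary and the commutator of $\xi_1\partial_x$ with $e^{\beta x}$, the singular $\theta_\infty^{-1}$ terms are recycled through the hydrodynamic variables $(n,m)$, and the positivity of the quadratic form governing the dissipation is exactly condition~\eqref{asp1}. The only cosmetic difference is that the paper runs the closing step as an integrated Gronwall inequality on $N_{\beta,\gamma}(T)$ (with the support property \eqref{aes0} tracked as a separate open condition in the bootstrap) rather than as the pointwise differential inequality you write, but the content is the same.
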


\begin{rem} 
{\rm
We can find constants  $\beta \in (0,1)$, $\rho_{\infty}>0$, $u_{\infty} < 0$,  $\theta_{\infty}>0$, and $\mu_{\infty}>0$ 
with  \eqref{asp1} if {\it either} the condition (i) {\it or} condition (ii) in Theorem \ref{mainThm} holds.
This fact is shown in Appendix \ref{SA}. 
}
\end{rem}

We show this theorem by combining the time-local solvability stated in Proposition \ref{local1} and the a priori estimate stated in Proposition \ref{apriori1} below. For the notational convenience, we define $N_{\alpha,\gamma}(T)$ by
\begin{gather}\label{N0}
N_{\alpha,\gamma}(T) := \sup_{t \in [0,T]} 
e^{\gamma t/2} \| e^{\alpha x/2} f(t) \|_{H^{1}({\mathbb R}_{+}\times{\mathbb R}^{3})} 
\quad \text{for $\alpha \geq 0$, $\gamma \geq 0$}.
\end{gather}

\begin{pro}\label{local1}
Let $\alpha \in (0,1)$, $s \in (0,r] $,  and $R \geq 1$ for $r>0$ being in \eqref{cutoff0}. Suppose that $F_{\infty}=M_{\infty} \psi$ satisfies \eqref{netrual1} and \eqref{Bohm2}.
Then there exist constants $\delta_{*}=\delta_{*}(\alpha,s,R)>0$, $T_{*}=T_{*}(\alpha,s,R) \in (0,1]$, and $C_{*}=C_{*}(\alpha,s,R)\geq 1$
such that if the initial data $f_{0}$ satisfy $e^{\alpha x /2} f_{0} \in H^1_{0,\Gamma}$,
\begin{gather*}
{\rm supp} f_{0} \subset \{ (x,\xi) \in \overline{{\mathbb R}_{+} \times{\mathbb R}^{3}} \, | \, |x|^{2}+|\xi_{1}|^{2} \geq s^{2}, \ \xi_{1} \leq R \}, 
\quad
\Phi_{b} + \|e^{\alpha x/2} f_{0}\|_{H^{1}_{x,\xi}} \leq \delta_{*},
\end{gather*}
the initial--boundary value problem \eqref{ree0} has
a unique solution $(f,\phi) \in {\cal X}_{\rm e}(T_{*},\alpha) \times {\cal Y}_{\rm e}(T_{*},\alpha)$.
Furthermore, there holds that
\begin{gather}\label{locales1}
\sup_{t \in [0,T_{*}]} (\Vert e^{\alpha x/2}   f (t) \Vert_{H^{1}_{x,\xi}} 
+ \Vert e^{\alpha x/2}   \phi (t) \Vert_{H^{3}_{x}} ) \leq C_{*} (\Phi_{b} + \|e^{\alpha x/2} f_{0}\|_{H^{1}_{x,\xi}} ).
\end{gather}
\end{pro}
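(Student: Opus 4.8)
\textbf{Plan of proof for Proposition \ref{local1}.}
The plan is to solve the quasilinear problem \eqref{ree0} by a standard iteration scheme, treating the Vlasov equation as a linear transport equation along characteristics at each step and the Poisson equation by the elliptic estimates of Subsection \ref{ES}. Given an iterate $(f^{(n)},\phi^{(n)})$, I would define $\phi^{(n+1)}$ as the solution of the linear elliptic equation
\[
\partial_{xx}\phi^{(n+1)} = \int_{\mathbb R^3} M_\infty^{1/2} f^{(n)}\,d\xi - \bigl(n_e(\Phi^s+\phi^{(n)})-n_e(\Phi^s)\bigr)
\]
with the boundary conditions \eqref{rebc2}, \eqref{rebc4}, and then define $f^{(n+1)}$ as the solution of the \emph{linear} transport problem obtained from \eqref{ree1} by freezing the coefficients $\partial_x\Phi^s+\partial_x\phi^{(n+1)}$ and the forcing term $M_\infty^{-1/2}\partial_{\xi_1}F^s\,\partial_x\phi^{(n+1)}$, with initial data $f_0$ and the inflow condition \eqref{rebc1}. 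The weight $e^{\alpha x/2}$ is carried throughout; conjugating by it turns the transport term $\xi_1\partial_x$ into $\xi_1\partial_x - (\alpha\xi_1/2)$, so on the support region $\{\xi_1\le R\}$ the extra term is bounded, and the dissipative sign at the $\{0\}\times\mathbb R^3_-$ part of the boundary is retained (this is the point of the class ${\cal X}_{\rm e}$). The support constraints $|x|^2+|\xi_1|^2\ge s^2$ and $\xi_1\le R$ propagate for a short time because the characteristics move with bounded speed $|\xi_1|\le R$ and $\partial_x\phi$ stays small, so for $T_*$ small enough $\supp f^{(n+1)}$ stays away from the characteristic point $(0,0,\xi')$ and bounded above in $\xi_1$; this is exactly what is needed to avoid the loss-of-regularity and $\xi_1$-weight issues flagged in Section \ref{S3}, and it is why one invokes Appendix \ref{SB} for the solvability and estimates of the linearized problem.

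The energy estimates are carried out on $e^{\alpha x/2} f^{(n+1)}$ and its first $(x,\xi)$-derivatives in $L^2_{x,\xi}$, and on $e^{\alpha x/2}\phi^{(n+1)}$ in $H^3_x$. For the Vlasov part one multiplies the (conjugated, differentiated) equation by $e^{\alpha x} \nabla^k f^{(n+1)}$ and integrates; the boundary term at $x=0$ has a favorable sign on $\xi_1<0$ by \eqref{rebc1} (no inflow of perturbation) and the outflow part contributes negatively, while all interior terms are controlled by $C(\delta_*,R)\,\|e^{\alpha x/2} f^{(n+1)}\|_{H^1_{x,\xi}}^2$ plus lower-order contributions from $\phi^{(n+1)}$ and $F^s$, the latter being handled by \eqref{decay1} and the $F^s$-estimates of Subsection \ref{SS:Fs}. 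For the Poisson part, the elliptic estimates of Subsection \ref{ES} give $\|e^{\alpha x/2}\phi^{(n+1)}\|_{H^3_x}\le C(\|e^{\alpha x/2} n[f^{(n)}]\|_{H^1_x}+\|e^{\alpha x/2}\phi^{(n)}\|_{H^1_x})$, with the hydrodynamic-variable bounds of Subsection \ref{SS:HV} controlling $n[f^{(n)}]$ by $\|e^{\alpha x/2}f^{(n)}\|_{L^2_{x,\xi}}$ (the Maxwellian weight $M_\infty^{1/2}$ being integrable in $\xi$). A Grönwall argument on $[0,T_*]$ then yields the uniform bound \eqref{locales1} with $C_*\ge 1$, provided $\delta_*$ and $T_*$ are chosen small depending on $(\alpha,s,R)$.

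Convergence of the iteration is obtained by estimating differences $f^{(n+1)}-f^{(n)}$ and $\phi^{(n+1)}-\phi^{(n)}$ in the same norms but at one lower level of regularity (i.e.\ $L^2_{x,\xi}$ for $f$, $H^1_x$ for $\phi$, which is the standard device for quasilinear problems where the top-order difference estimate loses a derivative); the difference equations are linear transport/elliptic equations with right-hand sides quadratic in the iterates, so shrinking $T_*$ further makes the scheme a contraction and produces a limit $(f,\phi)$ solving \eqref{ree0}. Uniqueness in ${\cal X}_{\rm e}(T_*,\alpha)\times{\cal Y}_{\rm e}(T_*,\alpha)$ follows from the same difference estimate applied to two solutions, and the time-continuity statements in the definition of ${\cal X}_{\rm e}$ and ${\cal Y}_{\rm e}$ follow from the bounds together with the equations. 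The compatibility condition $e^{\alpha x/2}f_0\in H^1_{0,\Gamma}$ ensures $f(t,0,\xi)=0$ for $\xi_1>0$ is consistent at $t=0$. I expect the main obstacle to be the transport/energy estimate near the characteristic set $\{x=0,\xi_1=0\}$: keeping $\supp f$ bounded away from that point for a uniform time $T_*$, and simultaneously bounded above in $\xi_1$, is what makes the weighted $H^1_{x,\xi}$ estimate close without a derivative loss from the $(\xi_1-u_\infty)(\partial_x\Phi^s+\partial_x\phi)f/(2\theta_\infty)$ term; this is precisely the content deferred to Appendix \ref{SB}, and the present proof reduces to assembling those linearized estimates with the elliptic and hydrodynamic bounds via the iteration.
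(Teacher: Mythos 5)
Your overall architecture — iterate by alternately solving a Vlasov transport step and a Poisson step, control the support and coefficients on a short time interval, close a weighted $H^1_{x,\xi}\times H^3_x$ bound by Gronwall, then contract at one lower derivative level and upgrade — is the same as the paper's, and your appeal to Appendix~\ref{SB} for the linearized transport estimates is exactly what the paper does. But the Poisson step in your scheme has a genuine gap.

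You propose to put the previous iterate in the $n_e$ term, i.e.\ to solve for $\phi^{(n+1)}$ the \emph{linear} equation $\partial_{xx}\phi^{(n+1)} = n[f^{(n)}] - (n_e(\Phi^s+\phi^{(n)})-n_e(\Phi^s))$ with $\phi^{(n+1)}(0)=0$ and $\phi^{(n+1)}(\infty)=0$. The pure Laplacian on $\mathbb R_+$ with these boundary conditions is not an operator one can invert freely: a solution decaying at infinity exists only under moment compatibility conditions on the right-hand side (in particular $\int_0^\infty \!\int_y^\infty g(z)\,dz\,dy=0$), and even when one exists, the map $g\mapsto\phi$ does not satisfy an estimate of the form $\|\phi\|_{H^3}\le C\|g\|_{H^1}$, because there is no zeroth-order coercivity. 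One can see this explicitly in the weighted energy identity: writing $u=e^{\alpha x/2}\phi^{(n+1)}$, the conjugated equation $\partial_{xx}u-\alpha\partial_x u+\tfrac{\alpha^2}{4}u=e^{\alpha x/2}g$ gives, after multiplying by $u$ and integrating, the \emph{wrong}-signed term $+\tfrac{\alpha^2}{4}\|u\|_{L^2}^2$, so the $L^2$ piece of $\phi^{(n+1)}$ cannot be absorbed. The elliptic estimates of Subsection~\ref{ES} (Lemma~\ref{ell1}) that you cite do not apply here: they are stated for, and their proof depends essentially on, the \emph{semilinear} equation in which the unknown $\phi$ itself sits inside $n_e(\Phi^s+\phi)$, so that the term $n_e(\Phi^s+\phi)-n_e(\Phi^s)\approx -\phi$ (via $n_e'<0$) produces a coercive $-\phi$ on the left and turns the problem into a perturbation of $\partial_{xx}-1$. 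That coercivity is what makes $\|e^{\beta x/2}\phi\|_{H^1}\lesssim\|e^{\beta x/2}n\|_{L^2}$ possible, and it is precisely what your linearization throws away.

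The paper's scheme (Lemma~\ref{local2}) avoids this by keeping the \emph{current} iterate inside the $n_e$ term: $\partial_{xx}\phi^k = n[f^{k-1}] - (n_e(\Phi^s+\phi^k)-n_e(\Phi^s))$, solved as a semilinear boundary-value problem via Lemma~\ref{sovl2}. With that modification (and otherwise the same iteration, support propagation, and difference estimates at one lower derivative), your argument becomes essentially the paper's proof. The transport side of your plan, including the choice to lag $\phi^{(n+1)}$ rather than $\phi^{(n)}$ into $f^{(n+1)}$, is a harmless bookkeeping variant.
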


\begin{pro}\label{apriori1}
Let  $\ve \in (0,r/2)$ for $r>0$ being in \eqref{cutoff0}.
Suppose that $F_{\infty}=M_{\infty} \psi$ satisfies \eqref{netrual1} and \eqref{Bohm2}, 
and further \eqref{asp1} holds.
Assume that $(f,\phi) \in {\cal X}_{\rm e}(T,\beta) \times {\cal Y}_{\rm e}(T,\beta)$ 
is a solution of the problem \eqref{ree0}
with the initial data $f_{0}$ satisfying \eqref{supp0}
for some constant $T>0$.
Then there exist constants $\gamma>0$ and $\delta_{0}>0$  
such that if $\Phi_{b}+N_{\beta,\gamma}(T)  \leq \delta_{0}$, the following hold:
\begin{gather}
{\rm supp} f \subset \{ (t,x,\xi) \in \overline{[0,T] \times {\mathbb R}_{+} \times{\mathbb R}^{3}} \, | \,  \xi_{1} \leq - r + 2\ve \},
\label{aes0}\\
N_{\beta,\gamma}(T) \leq  \tilde{C} \Vert e^{\beta x/2}   f_{0}  \Vert_{H^{1}_{x,\xi}}, 
\label{aes1}\\
 \|e^{\beta x/2} \phi (t) \|_{H^{3}_{x}} \leq  \tilde{C}  \Vert e^{\alpha x/2}   f (t) \Vert_{H^{1}_{x,\xi}}  \quad \text{for $t \in [0,T]$},
\label{ellaes1}
\end{gather}
where $\tilde{C}>1$ is a constant independent of $\Phi_b$, $f_{0}$, $t$, and $T$.
\end{pro}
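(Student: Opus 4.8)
The plan is to establish \eqref{aes0}, \eqref{aes1}, and \eqref{ellaes1} via a continuity/bootstrap argument built on a weighted $L^2$ energy estimate. First I would set up the basic energy functional: multiply \eqref{ree1} by $e^{\beta x} f$ and integrate over $\mathbb R_+\times\mathbb R^3$. The transport term $\xi_1\pd_x f$ produces, after integration by parts in $x$, a bulk term $-\tfrac{\beta}{2}\int e^{\beta x}\xi_1 f^2$ and a boundary term $-\tfrac12\int_{\mathbb R^3}\xi_1 f(t,0,\xi)^2\,d\xi$; since $f(t,0,\xi)=0$ for $\xi_1>0$ by \eqref{rebc1}, the boundary term equals $+\tfrac12\int_{\xi_1<0}|\xi_1| f(t,0,\xi)^2\,d\xi \ge 0$, which is the dissipation coming from the completely absorbing (outflow) boundary condition. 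Crucially, once one knows $\supp f\subset\{\xi_1\le -r+2\ve\}$ with $r-2\ve>0$, the bulk term $-\tfrac{\beta}{2}\int e^{\beta x}\xi_1 f^2 \ge \tfrac{\beta}{2}(r-2\ve)\int e^{\beta x}f^2$ is also a genuine damping term — this is exactly where the weight $e^{\beta x/2}$ pays off. The $(\pd_x\Phi^s+\pd_x\phi)\pd_{\xi_1}f$ term, after integration by parts in $\xi_1$, pairs against the weight (no $\xi_1$-dependence) so it only contributes through $\pd_x\phi$, and the inhomogeneous term $M_\infty^{-1/2}\pd_{\xi_1}F^s\,\pd_x\phi$ and the right-hand side $\tfrac{\xi_1-u_\infty}{2\theta_\infty}(\pd_x\Phi^s+\pd_x\phi)f$ must be controlled using the hydrodynamic variables $n,m$ from \eqref{nm0} and the elliptic estimates of subsection \ref{ES} for the Poisson equation \eqref{ree2}.

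The support propagation \eqref{aes0} should be proved first (or simultaneously in the bootstrap), since the sign of the bulk damping term depends on it. The characteristics of \eqref{ree1} are $\dot x=\xi_1$, $\dot\xi_1=\pd_x\Phi^s+\pd_x\phi$; since $\pd_x\Phi^s<0$ and $|\pd_x\Phi^s|\le C\Phi_b e^{-cx}$ by \eqref{decay0}, while $\|\pd_x\phi\|_{L^\infty}$ is controlled by $N_{\beta,\gamma}(T)$ via \eqref{ellaes1} and Sobolev embedding, the total drift $|\pd_x\Phi^s+\pd_x\phi|$ is $O(\delta_0)$, so $\xi_1$ along characteristics can increase by at most $O(\delta_0)\cdot T$ — but $T$ can be large, so one instead integrates the exponentially-decaying-in-$x$ bound: along a characteristic reaching the support one estimates $\xi_1(t)-\xi_1(0)\le \int_0^t|\pd_x\Phi^s+\pd_x\phi|\,ds$, and using $e^{-cx}$ decay together with $\dot x=\xi_1<0$ near the support (so $x$ is eventually increasing backward in time... ) gives a total variation bounded by $C(\Phi_b+N_{\beta,\gamma}(T))\le C\delta_0<\ve$. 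Choosing $\delta_0$ small enough then keeps $\supp f\subset\{\xi_1\le -r+2\ve\}$. This is a Grönwall-type argument on the flow, combined with the smallness assumption $\Phi_b+N_{\beta,\gamma}(T)\le\delta_0$.

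For the $H^1$ estimate \eqref{aes1}, I would differentiate \eqref{ree1} in $x$ and in $\xi$ and run the same weighted energy estimate on $\nabla_{x,\xi}f$. The commutators generate terms like $(\pd_{\xi_1}F^s/M_\infty^{1/2})$ and its derivatives — controlled by the stationary-solution estimates of subsection \ref{SS:Fs} and \eqref{decay1} — and terms with $\pd_x^2\phi$, $\pd_x^3\phi$, which is why the elliptic estimate \eqref{ellaes1} must go up to $H^3_x$: the Poisson equation \eqref{ree2} gains two derivatives over its right-hand side $\int M_\infty^{1/2}f\,d\xi - (n_e(\Phi^s+\phi)-n_e(\Phi^s))$, and $\|n[f]\|_{H^1_x}\lesssim \|f\|_{H^1_{x,\xi}}$ by the hydrodynamic-variable bounds of subsection \ref{SS:HV}, giving $\|\phi\|_{H^3_x}\lesssim\|f\|_{H^1_{x,\xi}}$ after absorbing the smooth nonlinearity $n_e$ using $\Phi^s$ bounded and $\phi$ small; the weight $e^{\beta x/2}$ commutes through the constant-coefficient Poisson operator up to lower-order terms. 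Assembling, I obtain a differential inequality of the form $\tfrac{d}{dt}\|e^{\beta x/2}f\|_{H^1_{x,\xi}}^2 + \kappa\|e^{\beta x/2}f\|_{H^1_{x,\xi}}^2 \le C(\Phi_b+N_{\beta,\gamma})\|e^{\beta x/2}f\|_{H^1_{x,\xi}}^2$ where $\kappa>0$ is precisely the coercivity constant produced by combining the boundary dissipation, the $\tfrac{\beta}{2}(r-2\ve)$ bulk term, and the hydrodynamic reformulation of the $\theta_\infty^{-1}$ term — and condition \eqref{asp1} is exactly what guarantees $\kappa>0$ after all these pieces are balanced. Taking $\delta_0$ small so that $C\delta_0<\kappa/2$, choosing $\gamma\in(0,\kappa/2)$ wait — choosing $0<\gamma<\kappa$, Grönwall gives $\|e^{\beta x/2}f(t)\|_{H^1}^2\le e^{-\gamma t}\|e^{\beta x/2}f_0\|_{H^1}^2$ up to a constant, i.e. \eqref{aes1}.

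The main obstacle I anticipate is the careful bookkeeping needed to extract a positive coercivity constant from the energy estimate while handling the term $\tfrac{\xi_1-u_\infty}{2\theta_\infty}(\pd_x\Phi^s+\pd_x\phi)f$ without losing a power of $\theta_\infty^{-1}$: the naive bound blows up as $\theta_\infty\to0$, and only by rewriting the $\pd_x\phi\cdot(\text{this term})$ contribution in terms of $n[f]$, $m[f]$ and integrating by parts against the Poisson equation — so that the $\theta_\infty^{-1}$ factor gets paired with $|u_\infty|/\theta_\infty$ from the good transport damping — does one get the precise algebraic combination appearing in \eqref{asp1}. Getting the constants $\rho_\infty^{1/2}\theta_\infty^{-1/2}$, $\mu_\infty/\beta$, $\eta(\beta)$ to line up correctly, and verifying that the cross terms from the $H^1$-level commutators and the Poisson coupling do not destroy this delicate balance, is the technical heart of the argument; everything else is standard weighted-energy-estimate machinery plus the support-transport lemma.
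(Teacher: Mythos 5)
Your proposal follows essentially the same route as the paper's proof: establish the support confinement first (so that $|\xi_1|\ge r-2\ve$ on $\supp f$), then run a weighted $H^1$ energy estimate in which the $e^{\beta x}$ weight converts the transport term into bulk damping and a boundary dissipation term, and handle the $\theta_\infty^{-1}$ terms by rewriting them through $n[f]$, $m[f]$, the continuity equation and the Poisson equation so that \eqref{asp1} is exactly the positive-definiteness of the resulting quadratic form, with \eqref{ellaes1} coming from the elliptic estimates. Two minor remarks: the paper proves \eqref{aes0} not via classical characteristics (the solution is only $H^1$) but by integrating the identity for $\pd_t|f|^2$ over a region whose lower $\xi_1$-boundary moves upward at rate $\ve\gamma e^{-\gamma\tau/2}/2$, applying the divergence theorem and Gronwall; and the decay that keeps the total upward drift of the $\xi_1$-support below $\ve$ is the \emph{temporal} decay $\|\pd_x\phi(t)\|_{L^\infty}\lesssim \delta_0 e^{-\gamma t/2}$ encoded in $N_{\beta,\gamma}$, combined with $\pd_x\Phi^s\le 0$ (which can only push $\xi_1$ downward), rather than the spatial decay $e^{-cx}$ of $\pd_x\Phi^s$ that you invoke.
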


Theorem \ref{stability}  can be proved by using these propositions as follows.

\begin{proof}[Proof of Theorem \ref{stability}]
First we fix $\alpha=\beta$, $s=r-2\ve$, and $R=1$ to apply Proposition \ref{local1},
where $\beta$, $r$, and $\ve$ are the same constants being in \eqref{asp1} and Proposition \ref{apriori1}.
Let us take the boundary data $\Phi_{b}$ and initial data $f_{0}$ so small that 
the right hand sides of \eqref{locales1} and \eqref{aes1} are estimated 
by $\min\{\delta_{*},\delta_{0}\}/(4 e^{\gamma/2})$,
where $\delta_{*}$, $\delta_{0}$, and $\gamma$ are the same constants being in Propositions \ref{local1} and \ref{apriori1}.
Note that $\Phi_{b} + \|e^{\alpha x/2} f_{0}\|_{H^{1}_{x,\xi}} \leq \min\{\delta_{*},\delta_{0}\}/4$ holds, 
and also the initial data $f_{0}$ satisfies \eqref{supp0}.

Now applying Proposition \ref{local1} with the same constants $\alpha$, $s$, and $R$ as above,
we obtain the time-local solution $(f,\phi) \in {\cal X}_{\rm e}(T_{*},\beta) \times {\cal Y}_{\rm e}(T_{*},\beta)$, where $T_{*} \leq 1$, and see from \eqref{locales1} with the small data $\Phi_{b}$ and $f_{0}$ above that
$N_{\beta,\gamma}(T_{*}) \leq \min\{\delta_{*},\delta_{0}\}/4$ holds.
Moreover, using Proposition \ref{apriori1}, we see that the solution satisfies \eqref{aes0} with $T=T_{*}$.
Set
\begin{gather*}
T_{sup}:=\sup\left\{ \tau>0 \left|
\begin{array}{ll}
\text{the solution $(f,\phi)$ exists until $t=\tau$ and satisfies the following:}
\\
\quad \quad \text{$N_{\beta,\gamma}(\tau) <  \min\{\delta_{*},\delta_{0}\}/2$ and \eqref{aes0} with $T=\tau$ }
\end{array}
\right.\right\}.
\end{gather*}
Note that $T_{sup} \geq T_{*}>0$.

It suffices to show that $T_{sup}=\infty$ thanks to \eqref{ellaes1}.
We argue by contradiction. Suppose that $T_{sup}<\infty$.
By regarding $T_{sup}-T_{*}/2$ as an initial time and applying Proposition \ref{local1} 
with the same constants $\alpha$, $s$, and $R$ as above,
we see that the solution $(f,\phi)$ exists beyond $T_{sup}$.
Therefore, either $N_{\beta,\gamma}(T_{sup}) = \min\{\delta_{*},\delta_{0}\}/2$ or the negation of \eqref{aes0} with $T=T_{sup}$ holds.
However, the negation does not hold if  $N_{\beta,\gamma}(T_{sup}) \leq \min\{\delta_{*},\delta_{0}\}/2$, due to Proposition \ref{apriori1}.
Thus we have $N_{\beta,\gamma}(T_{sup})  = \min\{\delta_{*},\delta_{0}\}/2$.
Now we can apply Proposition \ref{apriori1}, and hence see from \eqref{aes1} with the small data $f_{0}$ above that
$N_{\beta,\gamma}(T_{sup}) \leq  \min\{\delta_{*},\delta_{0}\}/4$.
This yields a contradiction, i.e. $0<\min\{\delta_{*},\delta_{0}\}/2 = N_{\beta,\gamma}(T_{sup})  \leq \min\{\delta_{*},\delta_{0}\}/4$.
The proof is complete.
\end{proof}


On the other hand, in order to prove Theorem \ref{mainThm2},
it suffices to show the following theorem. 

\begin{thm}[Nonlinear instability]\label{instability}
Let $\beta \in (0,1)$.
Suppose that $F_{\infty}=M_{\infty} \psi$ satisfies \eqref{netrual1} and \eqref{Bohm2}
as well as $g_{0}$ satisfies \eqref{g0}.
Then there exists a constant $\ve>0$ such that for any sufficiently small $\delta>0$,
the initial--boundary value problem \eqref{ree0} with $(f_{0},\Phi_{b})=(\delta g_{0},\delta)$ has a solution $(f,\phi) \in {\cal X}_{\rm e}(T,\beta) \times {\cal Y}_{\rm e}(T,\beta)$ with $\|e^{\beta x/2}f(T)\|_{H^{1}({\mathbb R}_{+}\times{\mathbb R}^{3})} \geq \ve$ for some $T>0$. 
\end{thm}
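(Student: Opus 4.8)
\textbf{Proof plan for Theorem \ref{instability}.}

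The strategy is the standard Guo--Strauss-type bootstrap argument for nonlinear instability, adapted to exploit the weighted energy framework developed above. First I would identify the mechanism of growth: plugging $f_0 = \delta g_0$ (with $\supp g_0 \subset \{R_1 \le \xi_1 \le R_2\}$, $R_2 > 2R_1 > 0$) into the linearized dynamics of \eqref{ree1}, the rightmost term and, more importantly, the weighted transport structure produce a growing term precisely because the $\xi$-support now meets $\mathbb R^3_+$. Concretely, multiplying \eqref{ree1} by $e^{\beta x}f$ and integrating by parts in $x$ over $\mathbb R_+ \times \mathbb R^3$, the boundary contribution at $x=0$ on $\xi_1 > 0$ vanishes by \eqref{rebc1}, but the bulk term $-\beta \int \xi_1 e^{\beta x} f^2$ now carries a \emph{favorable} sign on the support $\{\xi_1 \ge R_1 > 0\}$, giving $\frac{d}{dt}\|e^{\beta x/2} f\|_{L^2}^2 \ge \beta R_1 \|e^{\beta x/2} f\|_{L^2}^2 - (\text{lower order})$, hence an exponential growth rate $\gamma_0 \ge \beta R_1$ at the linear level (the field terms $\partial_x\Phi^s$ are exponentially localized by \eqref{decay1} and the $\partial_x\phi$ terms are genuinely quadratic). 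The condition $R_2 > 2R_1$ is what keeps the transported support inside $\{\xi_1 > 0\}$ for a controlled time, so the favorable sign persists; this is the analogue of the support-tracking used in the stability proof.

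Next I would set up the bootstrap. Let $(f,\phi)$ be the time-local solution from Proposition \ref{local1} applied with $\alpha = \beta$, $s$ chosen small, $R = R_2$, and initial data $(f_0,\Phi_b) = (\delta g_0, \delta)$ — note $g_0$ satisfies the support hypothesis of that proposition since $R_1 > 0$ keeps $\supp g_0$ away from the characteristic point $(0,0,\xi')$. Fix a small threshold $\theta_0 \in (0,1)$ and define the escape time
\begin{equation*}
T^\delta := \sup\{ t : \|e^{\beta x/2} f(\tau)\|_{H^1_{x,\xi}} \le \theta_0 \ \text{for all } \tau \in [0,t] \}.
\end{equation*}
On $[0,T^\delta]$ one has $\|e^{\beta x/2}f\|_{H^1} \le \theta_0$, so by the elliptic estimate \eqref{ellaes1} (which holds on this interval) $\|e^{\beta x/2}\phi\|_{H^3} \lesssim \theta_0$ as well, and therefore all genuinely quadratic terms in the energy identity are bounded by $C\theta_0 \|e^{\beta x/2}f\|_{H^1}^2$. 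Choosing $\theta_0$ small enough that $C\theta_0 < \beta R_1/4$, the differential inequality closes to $\frac{d}{dt}\|e^{\beta x/2}f(t)\|_{H^1}^2 \ge \tfrac{\gamma_0}{2}\|e^{\beta x/2}f(t)\|_{H^1}^2$ for a fixed $\gamma_0 > 0$, whence
\begin{equation*}
\|e^{\beta x/2} f(t)\|_{H^1_{x,\xi}} \ge \delta\, e^{\gamma_0 t/4} \quad \text{for } t \in [0,T^\delta],
\end{equation*}
using $\|e^{\beta x/2}f_0\|_{H^1} = \delta\|e^{\beta x/2}g_0\|_{H^1} = \delta$. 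The upper-bound side of the same argument (or a Gr\"onwall from Proposition \ref{local1}'s local bound, iterated) gives $\|e^{\beta x/2}f(t)\|_{H^1} \le \delta\, e^{C_1 t}$, so the solution cannot leave through instantaneous blow-up; $T^\delta$ is finite and characterized by $\|e^{\beta x/2}f(T^\delta)\|_{H^1} = \theta_0$. Combining the lower bound at $t = T^\delta$ with this exit condition, $\theta_0 = \|e^{\beta x/2}f(T^\delta)\|_{H^1} \ge \delta e^{\gamma_0 T^\delta/4}$, forces $T^\delta \le \frac{4}{\gamma_0}\log(\theta_0/\delta)$, which is finite for every $\delta \in (0,\theta_0)$; meanwhile the lower bound also gives $\|e^{\beta x/2}f(T^\delta)\|_{H^1} \ge \delta e^{\gamma_0 T^\delta/4}$, and one checks that at the exit time this is $\ge \ve$ for a fixed $\ve := \theta_0/2$ (indeed it equals $\theta_0$ up to the $H^1$-versus-$L^2$ passage). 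Setting $T := T^\delta$ and recording that $(M_\infty^{-1/2}(F-F^s), \Phi-\Phi^s) = (f,\phi) \in \mathcal X_{\rm e}(T,\beta)\times\mathcal Y_{\rm e}(T,\beta)$ completes the proof, and Theorem \ref{mainThm2} follows by unwinding the substitution \eqref{pert1}.

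The main obstacle, and the step needing genuine care, is establishing the \emph{lower} bound on the growth — i.e. showing the linear part really dominates the $H^1$ (not just $L^2$) norm with a sign-definite rate. At the $L^2$ level the favorable term $-\beta\int \xi_1 e^{\beta x}f^2$ is transparent, but closing an $H^1$ differential inequality requires controlling $\nabla_{x,\xi}$-derivatives, where commutators with the transport operator $\xi_1\partial_x + \partial_x\Phi^s\partial_{\xi_1}$ generate terms involving $\partial_{\xi_1}f$ against the weight and the (exponentially small but nonzero) stationary field; one must verify these are absorbable and do not spoil the sign. This is exactly where the preliminary estimates of Section \ref{S4} — the hydrodynamic-variable bounds (subsection \ref{SS:HV}), the decay of $F^s$ (subsection \ref{SS:Fs}), and the elliptic estimates (subsection \ref{ES}) — are invoked, in parallel with their role in Proposition \ref{apriori1}, but now tracking signs rather than just magnitudes. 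A secondary technical point is ensuring $T^\delta$ stays within the time interval where Proposition \ref{local1} can be iterated (continuation), which is immediate since the $H^1$ norm remains $\le \theta_0 < \delta_*$ on $[0,T^\delta)$ and the support of $f$ stays bounded in $\xi_1$ by the transport structure, so the local existence time does not shrink to zero.
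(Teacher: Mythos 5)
Your plan correctly identifies the instability mechanism at the $L^2$ level: on a $\xi$-support contained in $\{\xi_1 \geq R_1/2 > 0\}$, the weighted transport term contributes $+\beta\int |\xi_1| e^{\beta x}|f|^2 \geq \tfrac{\beta R_1}{2}\|e^{\beta x/2}f\|_{L^2_{x,\xi}}^2$ after integration by parts. The genuine gap is that you then try to promote this to the $H^1$-level differential inequality $\tfrac{d}{dt}\|e^{\beta x/2}f\|_{H^1_{x,\xi}}^2 \geq \gamma_0\|e^{\beta x/2}f\|_{H^1_{x,\xi}}^2$, and this step fails. Differentiating \eqref{ree1} in $\xi_1$ and pairing against $2e^{\beta x}\partial_{\xi_1}f$ produces the commutator cross-term $-2\int e^{\beta x}(\partial_x f)(\partial_{\xi_1}f)\,dxd\xi$ coming from $\partial_{\xi_1}(\xi_1\partial_x f) = \partial_x f + \xi_1\partial_x\partial_{\xi_1}f$. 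This term has indefinite sign, is of the same order $\|e^{\beta x/2}f\|_{H^1_{x,\xi}}^2$ as the growth term, and carries an $O(1)$ coefficient, whereas the growth rate is only $\beta R_1/2$ with $\beta < 1$ free. You flag this as ``the step needing genuine care'' and assert one should ``verify these are absorbable''; that verification does not succeed without an artificial largeness assumption on $\beta R_1$ that the theorem does not provide.

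The paper's Proposition \ref{apriori2} circumvents the problem entirely and never proves growth of the $H^1$ norm. It establishes the sub-domain $L^2$ estimate $\int\!\!\int_{\tilde\Omega}e^{\beta x}|f(t)|^2\,dxd\xi \geq e^{\gamma t}\int\!\!\int_{\tilde\Omega}e^{\beta x}|f_0|^2\,dxd\xi$ where $\tilde\Omega$ is an $x$-dependent slab in $(x,\xi_1)$ roughly of the form $R_1/2 \leq \xi_1 \leq 2R_2$ (the condition $R_2 > 2R_1$ and $\delta$ small guarantee $\supp g_0 \subset \tilde\Omega$). This is obtained by multiplying \eqref{suppes1} by $e^{-\gamma t}e^{\beta x}$, integrating over $[0,t]\times\tilde\Omega$, and verifying the flux integrals across the curved faces $\tilde\Gamma^\pm$ are nonnegative; only the zeroth-order bulk term $\beta\int|\xi_1|e^{\beta x}|f|^2$ drives the growth, with no derivative estimates at all. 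The conclusion \eqref{aesin1}, $\|e^{\beta x/2}f(t)\|_{H^1_{x,\xi}} \geq e^{\gamma t/2}\|e^{\beta x/2}f_0\|_{L^2_{x,\xi}}$, then follows trivially from $\|\cdot\|_{L^2(\tilde\Omega)} \leq \|\cdot\|_{L^2_{x,\xi}} \leq \|\cdot\|_{H^1_{x,\xi}}$: one needs the $H^1$ norm to dominate a growing quantity, not the $H^1$ norm itself to satisfy a growth inequality. Your escape-time bootstrap is otherwise consistent with the paper's proof of Theorem \ref{instability}, but you also omit the support-tracking conclusions \eqref{aesin0}--\eqref{aesin2}: in particular \eqref{aesin2} keeps the $\xi_1$-support bounded above so that Proposition \ref{local1} can be re-applied at each continuation step with a fixed $R$, and so that the term $\tfrac{\xi_1-u_\infty}{\theta_\infty}(\partial_x\Phi^s+\partial_x\phi)f$ stays under control.
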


To show this, we use the next proposition.

\begin{pro}\label{apriori2}
Let $\beta \in (0,1)$.
Suppose that $F_{\infty}=M_{\infty} \psi$ satisfies \eqref{netrual1} and \eqref{Bohm2} as well as $g_{0}$ satisfies \eqref{g0}.
Then there exist constants $\delta_{0}>0$ and $\ve_{0}>0$ such that if the solution $(f,\phi) \in {\cal X}_{\rm e}(T,\beta) \times {\cal Y}_{\rm e}(T,\beta)$ of the problem \eqref{ree0} with $(f_{0},\Phi_{b})=(\delta g_{0},\delta)$ for $\delta \in  (0,\delta_{0}]$ 
satisfies $N_{\beta,0}(T) \leq \ve_{0}$,
then the following hold:
\begin{gather}
{\rm supp} f \cap \{ (t,x,\xi) \in \overline{[0,T] \times {\mathbb R}_{+} \times{\mathbb R}^{3}} \, | \,  -r/2 < \xi_{1} < R_{1}/2  \} = \emptyset,
\label{aesin0} \\
{\rm supp} f \cap \{ (t,x,\xi) \in \overline{[0,T] \times {\mathbb R}_{+} \times{\mathbb R}^{3}} \, | \,  \xi_{1} > 2R_{2}  \} = \emptyset, 
\label{aesin2} \\
\Vert e^{\beta x/2}   f (t) \Vert_{H^{1}_{x,\xi}} 
\geq e^{\gamma t/2} \Vert e^{\beta x/2}   f_{0} \Vert_{L^{2}_{x,\xi}} >0 \quad \text{for $t \in [0,T]$,}
\label{aesin1}
\end{gather}
where $\gamma>0$ is a constant independent of $\Phi_{b}$, $f_{0}$, $t$, and $T$.
\end{pro}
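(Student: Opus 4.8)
The plan is to prove Proposition \ref{apriori2} by a combination of finite-speed-of-propagation (support transport) arguments and a weighted energy estimate that produces \emph{growth} rather than decay, exploiting the weight $e^{\beta x/2}$ together with the fact that $\supp f$ stays inside $\{\xi_1 > 0\}$ (more precisely, a strip bounded away from $0$). I would organize the argument in three parts mirroring the three conclusions \eqref{aesin0}--\eqref{aesin2} and \eqref{aesin1}.

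First I would establish the support statements \eqref{aesin0} and \eqref{aesin2}. Along the characteristics $(X(\tau),\Xi_1(\tau))$ of the transport operator $\pd_t + \xi_1\pd_x + (\pd_x\Phi^s + \pd_x\phi)\pd_{\xi_1}$ in \eqref{ree1}, the velocity component $\Xi_1$ changes at rate $\pd_x\Phi^s + \pd_x\phi$. By the decay estimate \eqref{decay1} for $\Phi^s$ (with the improved $\Phi_b$-smallness version in \eqref{decay0} since $\Phi_b = \delta$ is small) and the elliptic bound on $\phi$ coming from the Poisson equation \eqref{ree2} controlled by $N_{\beta,0}(T) \le \ve_0$ via the elliptic estimates of subsection \ref{ES}, the total field $\pd_x\Phi^s + \pd_x\phi$ is uniformly small in $L^\infty_{t,x}$ provided $\delta_0$ and $\ve_0$ are chosen small. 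Hence any characteristic starting in $\{R_1 \le \xi_1 \le R_2\}$ (the support of $g_0$, recall $\supp f_0 = \supp(\delta g_0) \subset \{R_1 \le \xi_1 \le R_2\}$) cannot leave $\{R_1/2 < \xi_1 < 2R_2\}$ on $[0,T]$ — here I use $R_2 > 2R_1$ only to keep the enlarged strip within the region where the lower cutoff $-r/2$ is comfortably separated; actually the key inequalities are $\ve < R_1/2$ and $2R_2$ being a safe upper bound. Since $f$ solves a linear transport equation (the right-hand side of \eqref{ree1} is a zeroth-order multiplier in $f$, so it does not move the support), $\supp f(t) \subset \{R_1/2 < \xi_1 < 2R_2\}$, which gives both \eqref{aesin0} (as this strip is disjoint from $(-r/2, R_1/2)$) and \eqref{aesin2}. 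The $L^2\to L^\infty$ control of $\pd_x\phi$ needs care because $H^3_x \hookrightarrow C^2$, so \eqref{ree2} plus elliptic regularity and the Sobolev embedding on the half-line close this.

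Next, for the growth estimate \eqref{aesin1}, I would run the weighted $L^2$ energy estimate: multiply \eqref{ree1} by $e^{\beta x} f$ and integrate over ${\mathbb R}_+\times{\mathbb R}^3$. The transport term $\xi_1\pd_x f$ integrated against $e^{\beta x}f$ gives, after integration by parts, $-\tfrac{\beta}{2}\int \xi_1 e^{\beta x} f^2 + \tfrac12\int_{\{x=0\}} \xi_1 e^{\beta x} f^2$. On the support of $f$ we have $\xi_1 \ge R_1/2 > 0$, so the bulk term $-\tfrac{\beta}{2}\int \xi_1 e^{\beta x}f^2 \le -\tfrac{\beta R_1}{4} \|e^{\beta x/2}f\|_{L^2_{x,\xi}}^2$ — this is the \emph{destabilizing} term with the correct sign for growth. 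The boundary term at $x=0$ involves $\xi_1 f(t,0,\xi)^2$; since the inflow condition \eqref{rebc1} kills $\xi_1 > 0$ and $\supp f \subset \{\xi_1 > 0\}$, the boundary integrand vanishes identically — this is exactly the point where constraining the support to $\{\xi_1 > 0\}$ removes the dissipative boundary contribution and leaves only the growing bulk term. The $\pd_{\xi_1}$-transport terms integrate to lower-order contributions controlled by $\|\pd_x\Phi^s + \pd_x\phi\|_{L^\infty}$, hence $O(\delta_0 + \ve_0)$ times the energy. The remaining terms: the coupling term $M_\infty^{-1/2}\pd_{\xi_1}F^s \pd_x\phi$ vanishes on $\supp f$ because $\supp F^s \subset \{\xi_1 \le -r\}$ by \eqref{r2} while $\supp f \subset \{\xi_1 > 0\}$ — so that term contributes \emph{nothing}, which is crucial; and the right-hand multiplier term $\tfrac{\xi_1 - u_\infty}{2\theta_\infty}(\pd_x\Phi^s + \pd_x\phi)f$ integrated against $e^{\beta x}f$ is again $O(\delta_0 + \ve_0)$ times the energy (with a constant depending on $R_2$, $u_\infty$, $\theta_\infty$, all fixed). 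Collecting, $\tfrac{d}{dt}\|e^{\beta x/2}f(t)\|_{L^2_{x,\xi}}^2 \ge (\tfrac{\beta R_1}{2} - C(\delta_0 + \ve_0))\|e^{\beta x/2}f(t)\|_{L^2_{x,\xi}}^2$, so choosing $\delta_0, \ve_0$ small gives a positive rate $\gamma := \tfrac{\beta R_1}{4}$ (say), and Grönwall yields $\|e^{\beta x/2}f(t)\|_{L^2_{x,\xi}} \ge e^{\gamma t/2}\|e^{\beta x/2}f_0\|_{L^2_{x,\xi}}$; combined with $\|e^{\beta x/2}f_0\|_{L^2_{x,\xi}} \le \|e^{\beta x/2}f_0\|_{H^1_{x,\xi}}$ on the left via $\|e^{\beta x/2}f(t)\|_{H^1} \ge \|e^{\beta x/2}f(t)\|_{L^2}$, this is \eqref{aesin1}. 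Positivity of $\|e^{\beta x/2}f_0\|_{L^2_{x,\xi}}$ holds since $f_0 = \delta g_0 \ne 0$ by the normalization in \eqref{g0}.

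The main obstacle I expect is \emph{not} the $L^2$ growth itself but making the support-transport argument fully rigorous at the level of regularity available: $f \in {\cal X}_{\rm e}(T,\beta)$ only gives $H^1$ in $(x,\xi)$, so characteristics are not classically well-defined, and the field $\pd_x\Phi^s + \pd_x\phi$ must be shown Lipschitz (or at least bounded with bounded $\pd_x$) uniformly on $[0,T]$ — this requires pushing the elliptic estimate on $\phi$ to $H^3_x$ and using $H^3_x \hookrightarrow C^2$, together with the smallness hypothesis $N_{\beta,0}(T) \le \ve_0$ and $\delta \le \delta_0$. An alternative that avoids characteristics is a differential-inequality argument directly on the quantity $\int e^{\beta x}\zeta(\xi_1) f^2\,dx\,d\xi$ for suitable smooth cutoffs $\zeta$ supported in the complement of the allowed strip, showing it stays zero; I would likely use that cleaner formulation. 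A secondary technical point is ensuring all the $O(\delta_0 + \ve_0)$ error constants in the energy estimate are uniform in $t$ and $T$, which follows because $\Phi^s$, $\theta_\infty$, $u_\infty$, $\rho_\infty$, $R_1$, $R_2$, $\beta$ are all fixed and the only time-dependent quantity, $\|\pd_x\phi\|_{H^2_x}$, is bounded by $\ve_0$ uniformly on $[0,T]$ under the standing hypothesis.
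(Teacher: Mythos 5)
Your overall strategy (localize the $\xi_1$-support, then run a weighted $L^2$ estimate whose weighted transport term $\beta\int \xi_1 e^{\beta x}f^2$ has the ``good'' sign on $\{\xi_1>0\}$) is the right intuition, and the ``differential-inequality on $\int e^{\beta x}\zeta(\xi_1)f^2$'' variant you mention is essentially what the paper does. However, there is a genuine gap in the key support claim.

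You assert $\supp f(t) \subset \{R_1/2 < \xi_1 < 2R_2\}$, justified by ``$f$ solves a linear transport equation [whose] right-hand side is a zeroth-order multiplier in $f$, so it does not move the support.'' This overlooks the term $M_\infty^{-1/2}(\partial_{\xi_1}F^s)\,\partial_x\phi$ on the \emph{left}-hand side of \eqref{ree1}, which is a genuine inhomogeneous source, not a multiplier. It is supported in $\{\xi_1 \le -r\}$ by \eqref{r2}, and since $\partial_x\phi\not\equiv 0$ as soon as $n[f]\not\equiv 0$ (which is the generic case with $f_0=\delta g_0\ne 0$), this source injects mass into $\{\xi_1\le -r\}$ even though $f_0$ has no support there. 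Consequently the strongest support statement one can prove is precisely the paper's \eqref{aesin0}: $f$ vanishes only on the \emph{gap} $\{-r/2<\xi_1<R_1/2\}$, and it may well be nonzero for $\xi_1\le -r$. Your later step — ``the coupling term $M_\infty^{-1/2}\partial_{\xi_1}F^s\,\partial_x\phi$ vanishes on $\supp f$ because $\supp F^s\subset\{\xi_1\le-r\}$ while $\supp f\subset\{\xi_1>0\}$'' — is therefore circular: the support containment is false exactly because of that term.

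This breaks the energy step as written. Integrating the weighted identity over all of $\mathbb{R}_+\times\mathbb{R}^3$, the bulk term $\beta\int\int \xi_1 e^{\beta x}f^2$ carries a negative (damping) contribution from the $\xi_1<0$ part of $\supp f$, and the boundary term $\int_{\mathbb{R}^3_-}\xi_1 f^2(t,0,\xi)\,d\xi$ is also a dissipation rather than zero; neither can simply be dropped, so the full $L^2_{x,\xi}$ norm need not grow. The paper sidesteps this by restricting the energy identity \eqref{suppes1} (multiplied by $e^{-\gamma t}e^{\beta x}$) to a fixed domain $\tilde\Omega$ contained in $\{\xi_1>R_1/2\}$, bounded by $x$-dependent curves $\tilde\Xi^\pm(x)=\sqrt{2\Phi^s(x)+\text{const}\,(c_1\pm c_2 e^{-\beta x/2})}$, and verifying that the flux integrands on the moving boundaries $\tilde\Gamma^\pm$ are pointwise nonnegative. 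That sign check is nontrivial: it uses that $\tilde\Xi^\pm\partial_x\tilde\Xi^\pm$ cancels $\partial_x\Phi^s$ exactly and leaves a term $\sim\beta e^{-\beta x/2}$, which dominates $|\partial_x\phi|\lesssim\varepsilon_0 e^{-\beta x/2}$ (not merely $|\partial_x\phi|\lesssim\varepsilon_0$) because the smallness hypothesis is phrased in the weighted space. Since $\supp f_0 = \supp g_0\subset\tilde\Omega$, this gives $\int\int_{\tilde\Omega}e^{\beta x}|f(t)|^2\ge e^{\gamma t}\int\int_{\tilde\Omega}e^{\beta x}|f_0|^2=e^{\gamma t}\|e^{\beta x/2}f_0\|_{L^2_{x,\xi}}^2$, and the left side is bounded above by $\|e^{\beta x/2}f(t)\|_{L^2_{x,\xi}}^2\le\|e^{\beta x/2}f(t)\|_{H^1_{x,\xi}}^2$, yielding \eqref{aesin1}. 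To repair your argument you would need to either (a) adopt this restricted-domain strategy with the $x$-dependent boundary curves and verify the flux signs, or (b) separately prove that the portion of $\|e^{\beta x/2}f\|^2$ carried on $\{\xi_1<0\}$ remains negligible compared to the growing part — which is not obviously true and is not what the paper attempts.
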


We can prove Theorem \ref{instability} by using Propositions \ref{local1} and \ref{apriori2} as follows.

\begin{proof}[Proof of Theorem \ref{instability}]
First we fix $\alpha=\beta$, $s=\min\{r/2,R_{1}/2\}$, and $R=\max\{1,2R_{2}\}$ to apply Proposition \ref{local1},
where $r$, $\beta$,  $R_{1}$, and $R_{2}$ are the same constants being in \eqref{cutoff0}, Proposition \ref{apriori2}, and \eqref{g0}.
There exists a constant $\delta_{1} \in (0,\delta_{0}]$ such that if $\delta<\delta_{1}$, then
the right hand sides of \eqref{locales1} with $(f_{0},\Phi_{b})=(\delta g_{0}, \delta)$
is estimated by $\min\{\delta_{*},\ve_{0}\}/4$,
where $\delta_{*}$, $\delta_{0}$, and  $\ve_{0}$ are the same constants being in Propositions \ref{local1} and \ref{apriori2}.
Note that $\delta_{1} \leq \min\{\delta_{*}/8,\delta_{0}\}$ holds.

Now applying Proposition \ref{local1} with the same constants $\alpha$, $s$, and $R$ as above,
we have the time-local solution $(f,\phi) \in {\cal X}_{\rm e}(T_{*},\beta) \times {\cal Y}_{\rm e}(T_{*},\beta)$ with $(f_{0},\Phi_{b})=(\delta g_{0}, \delta)$ for $\delta \leq \delta_{1}/2$, 
and see from \eqref{locales1}  that $N_{\beta,0}(T_{*}) \leq \min\{\delta_{*},\ve_{0}\}/4$ holds. Moreover, using Proposition \ref{apriori2}, we see that the solution satisfies \eqref{aesin0} and \eqref{aesin2} with $T=T_{*}$.
Let 
\begin{gather*}
\ve=\frac{1}{2}\min\{\delta_{*},\ve_{0}\}, \quad
\delta \leq \frac{1}{2} \delta_{1}, 
\\
T:=\sup\left\{ \tau>0 \left|
\begin{array}{ll}
\text{the solution $(f,\phi)$ exists until $t=\tau$ and satisfies the following:}
\\
\qquad \qquad  \text{$N_{\beta,0}(\tau) <  \ve$ and \eqref{aesin0}--\eqref{aesin2} with $T=\tau$ }
\end{array}
\right.\right\}.
\end{gather*}
Note that $T \geq T_{*}>0$. In what follows, we show that $\ve$ and $T$ are the desired constants.

Let us show $T<\infty$ by contradiction. Suppose that $T=\infty$. 
We see that $N_{\beta,0}(S) \leq \ve$ holds for any $S>0$. 
This fact with Proposition \ref{apriori2} gives \eqref{aesin1} with $t=S$.
Using this, we can find $S_{*}>0$ so that
$N_{\beta,0}(S_{*}) \geq \Vert e^{\beta x/2}   f (S_{*}) \Vert_{H^{1}_{x,\xi}} \geq 2\ve$. This contradicts to $T=\infty$.
Thus $T<\infty$ holds. 

We complete the proof by showing $\|e^{\beta x/2}f(T)\|_{H^{1}_{x,\xi}} = \ve$.
It is seen that the solution $(f,\phi)$ exists beyond $T$
by regarding $T-T_{*}/2$ as an initial time and applying Proposition \ref{local1} 
with the same constants $\alpha$, $s$, and $R$ as above.
Therefore, either $N_{\beta,0}(T) =\|e^{\beta x/2}f(T)\|_{H^{1}_{x,\xi}}= \ve$ or the negation of \eqref{aesin0}--\eqref{aesin2} holds.
However, the negation does not hold if $N_{\beta,0}(T) \leq \ve (< \ve_{0})$, due to Propositions \ref{apriori2}.
Therefore, $\|e^{\beta x/2}f(T)\|_{H^{1}_{x,\xi}} = \ve$ holds. The proof is complete.
\end{proof}

Consequently, for the completion of the proofs of Theorems \ref{mainThm} and \ref{mainThm2},
it remains only to show the above propositions. 
Propositions \ref{local1}, \ref{apriori1}, and \ref{apriori2} are shown in Sections \ref{LS}, \ref{AE}, and \ref{AE2}, respectively.

\section{Preliminary}\label{S4}
\subsection{Hydrodynamic variables}  \label{SS:HV}
This subsection is devoted to the study of the hydrodynamic variables $(n,m)$ defined in \eqref{nm0}, i.e.
\begin{gather*}
n=n[f]:=\int_{\mathbb R^{3}}  M_{\infty}^{1/2} f d\xi, \quad 
m=m[f]:=\int_{\mathbb R^{3}}  \xi_{1} M_{\infty}^{1/2} f d\xi.
\end{gather*}
Subtracting \eqref{seq1} from \eqref{boltz1} and integrating the resulting equality with respect to $\xi$, 
we have the equation of continuity:
\begin{gather}\label{mass1}
\pd_{t} n  + \pd_{x} m = 0, \quad t>0, \ x>0.
\end{gather}
We also show the following estimates on $n$ and $m$:
\begin{lem}\label{nmlem}
Let $\alpha \geq 0$ and $k=0,1$.
For any function $f$ satisfying $e^{\alpha x/2} f \in H^{k}_{x,\xi}$, there hold that
\begin{gather}
\| e^{\alpha x/2} (\pd_{x}^{k} m[f] - u_{\infty}\pd_{x}^{k} n[f]) \|_{L^{2}_{x}}^{2} \leq \rho_{\infty}\theta_{\infty} \| e^{\alpha x/2} \pd_{x}^{k} f\|_{L^{2}_{x,\xi}}^{2},
\label{nm1}\\
\| e^{\alpha x/2} \pd_{x}^{k} n[f]\|_{L_{x}^{2}}^{2} \leq \rho_{\infty}  \| e^{\alpha x/2} \pd_{x}^{k} f \|_{L^{2}_{x,\xi}}^{2}.
\label{nm3}
\end{gather}
\end{lem}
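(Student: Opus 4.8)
The plan is to prove both inequalities by a direct Cauchy--Schwarz argument applied pointwise in $x$, exploiting the explicit Gaussian form of $M_\infty$. First I would observe that since $e^{\alpha x/2}f \in H^k_{x,\xi}$ and the derivatives in \eqref{nm1}--\eqref{nm3} are only in $x$, it suffices to treat the case $k=0$ applied to $\pd_x^k f$ in place of $f$; the weight $e^{\alpha x/2}$ commutes with $\pd_x^k$ up to lower-order terms that I can absorb, or — more cleanly — I note that $\pd_x^k(n[f]) = n[\pd_x^k f]$ and $\pd_x^k(m[f]) = m[\pd_x^k f]$ since the $\xi$-integration commutes with $\pd_x$, so the $k=1$ case is literally the $k=0$ case with $f$ replaced by $\pd_x f$. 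Hence I reduce everything to showing, for fixed $x$,
\begin{gather*}
\left| \int_{\mathbb R^3} (\xi_1 - u_\infty) M_\infty^{1/2} f \, d\xi \right|^2 \leq \rho_\infty \theta_\infty \int_{\mathbb R^3} |f|^2 \, d\xi,
\qquad
\left| \int_{\mathbb R^3} M_\infty^{1/2} f \, d\xi \right|^2 \leq \rho_\infty \int_{\mathbb R^3} |f|^2 \, d\xi,
\end{gather*}
after which I multiply by $e^{\alpha x}$ and integrate in $x$ over $\mathbb R_+$.

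Second, for the second inequality I apply Cauchy--Schwarz in $\xi$ directly: $\big|\int M_\infty^{1/2} f\, d\xi\big|^2 \le \big(\int M_\infty\, d\xi\big)\big(\int |f|^2\, d\xi\big)$, and $\int_{\mathbb R^3} M_\infty\, d\xi = \rho_\infty$ by the normalization of the Maxwellian. For the first inequality I write $\int (\xi_1-u_\infty)M_\infty^{1/2} f\, d\xi$ and apply Cauchy--Schwarz as $\big|\int (\xi_1-u_\infty)M_\infty^{1/2}\cdot f\, d\xi\big|^2 \le \big(\int (\xi_1-u_\infty)^2 M_\infty\, d\xi\big)\big(\int|f|^2\, d\xi\big)$, and then compute the first Gaussian moment: since $M_\infty$ is a Maxwellian with mean velocity $(u_\infty,0,0)$ and temperature $\theta_\infty$, one has $\int_{\mathbb R^3}(\xi_1-u_\infty)^2 M_\infty\, d\xi = \rho_\infty \theta_\infty$. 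Combining and integrating $e^{\alpha x}\,dx$ gives \eqref{nm1} and \eqref{nm3}.

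The only mild subtlety — and what I expect to be the "hard" (though still routine) part — is justifying the reduction of the $k=1$ estimate to the $k=0$ estimate, i.e.\ confirming that differentiation under the $\xi$-integral sign is legitimate and that $e^{\alpha x/2}\pd_x^k f$ lies in $L^2_{x,\xi}$ is exactly the hypothesis needed to run the pointwise-in-$x$ Cauchy--Schwarz and then Fubini to integrate in $x$. Concretely, for a.e.\ $x$ the function $\xi \mapsto \pd_x^k f(x,\xi)$ is in $L^2_\xi$ (by Fubini from $e^{\alpha x/2}\pd_x^k f \in L^2_{x,\xi}$), so the pointwise bounds hold for a.e.\ $x$; then multiplying by $e^{\alpha x}$ and integrating in $x$ is a direct application of Tonelli. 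No further structure is needed, and the Gaussian moment computations are elementary, so the proof is short.
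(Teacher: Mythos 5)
Your proof is correct and follows essentially the same route as the paper: Cauchy--Schwarz in $\xi$, the Gaussian moment identity $\int_{\mathbb R^3}(\xi_1-u_\infty)^2 M_\infty\,d\xi=\rho_\infty\theta_\infty$, and multiplication by $e^{\alpha x}$ followed by integration in $x$. The paper likewise only writes out the case $k=0$ for \eqref{nm1} and notes the rest are analogous, which matches your reduction via $\pd_x^k n[f]=n[\pd_x^k f]$ and $\pd_x^k m[f]=m[\pd_x^k f]$.
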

\begin{proof}
We prove only \eqref{nm1} with $k=0$, since the others can be shown in the same way.
Applying H\"older's inequality, we observe that
\begin{align}
\| e^{\alpha x/2} (m[f] - u_{\infty}n[f]) \|_{L^{2}_{x}}^{2}
& = \int_{{\mathbb R}_{+}} e^{\alpha x} \left( \int_{{\mathbb R}^{3}} (\xi_{1}-u_{\infty}) M_{\infty}^{1/2} f d\xi\right)^{2}dx
\notag \\
& \leq \int_{{\mathbb R}_{+}} e^{\alpha x} \int_{{\mathbb R}^{3}} (\xi_{1}-u_{\infty})^{2} M_{\infty} d\xi  \int_{{\mathbb R}^{3}} |f|^{2} d\xi dx
\notag \\
& = \int_{{\mathbb R}^{3}} |\xi_{1}-u_{\infty}|^{2} M_{\infty} d\xi \int_{{\mathbb R}_{+}} \int_{{\mathbb R}^{3}} e^{\alpha x} |f|^{2} dx  d\xi.
\label{T000}
\end{align}
It is straightforward to see that
\begin{gather}\label{T00}
\int_{{\mathbb R}^{3}} | \xi_{1}-u_{\infty}|^{2}  M_{\infty} d\xi = \rho_{\infty} \theta_{\infty}.
\end{gather}
Substituting \eqref{T00} into \eqref{T000}, we complete the proof.
\end{proof}

\subsection{Estimates on $F^{s}$}  \label{SS:Fs}
In this subsection, we discuss some estimates on $F^{s}$.
First we observe that
\begin{gather}
\begin{aligned}
\frac{\pd_{\xi_{1}}F^{s}}{M_{\infty}^{1/2}} 
&= -  \frac{\xi_{1}-u_{\infty}}{\theta_{\infty}} M_{\infty}^{1/2} + \frac{\pd_{\xi_{1}}(M_{\infty}\psi-M_{\infty})}{M_{\infty}^{1/2}} 
+ \frac{\pd_{\xi_{1}}(F^{s}-M_{\infty}\psi)}{M_{\infty}^{1/2}}. \quad
\end{aligned}
\label{M0}
\end{gather}
We study the estimates for the terms $\frac{\pd_{\xi_{1}}F^{s}}{M_{\infty}^{1/2}} $ and $\frac{\pd_{\xi_{1}}(F^{s}-M_{\infty}\psi)}{M_{\infty}^{1/2}}$ in \eqref{M0}.

\begin{lem}
There exists a constant $\delta_{0}>0$ such that if $\Phi_{b} \leq \delta_{0}$, the following hold:
\begin{align}
\left\|\frac{{\pd_{\xi_{1}}(F^{s}-M_{\infty}\psi)} }{M_{\infty}^{1/2}} (x,\cdot) \right\|_{L^{2}_{\xi}}
&\leq C \Phi^{s}(x),
\label{M2*}\\
\left\| \pd_{x} \left \{\frac{\pd_{\xi_{1}}(F^{s}-M_{\infty}\psi)}{M_{\infty}^{1/2}}  \right\} (x,\cdot) \right\|_{L^{2}_{\xi}}
&\leq C |\pd_{x}\Phi^{s}(x)|,
\label{M3*}\\
\sup_{x \in \mathbb R_{+}}\left\|\nabla_{\xi}\left(\frac{\pd_{\xi_{1}}F^{s}}{M_{\infty}^{1/2}}\right)(x,\cdot) \right\|_{L^{2}_{\xi}}
&\leq C,
\label{Fs1*}
\end{align}
where $C>0$ is a constant independent of $x$ and $\Phi_{b}$. 
\end{lem}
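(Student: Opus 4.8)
The plan is to estimate each of the three quantities by exploiting the explicit formula \eqref{fform} for $F^{s}$ and the exponential decay \eqref{decay0} of $\Phi^{s}$. The key structural observation is that $F^{s} - M_{\infty}\psi$ vanishes identically when $\Phi^{s}(x)=0$, i.e.\ as $x\to\infty$, so the differences in \eqref{M2*}--\eqref{M3*} should carry a factor $\Phi^{s}(x)$ (resp.\ $|\pd_{x}\Phi^{s}(x)|$), while \eqref{Fs1*} only needs a crude uniform bound since $\Phi^{s}$ ranges over the bounded set $[0,\Phi_{b}]$.

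For \eqref{M2*}: from \eqref{fform}, $F^{s}(x,\xi) = M_{\infty}(-\sqrt{\xi_{1}^{2}-2\Phi^{s}},\xi')\chi(\xi_{1}^{2}-2\Phi^{s})\chi(-\xi_{1})$, whereas $M_{\infty}\psi(\xi) = M_{\infty}(\xi_{1},\xi')\psi(\xi_{1})$. First I would write $F^{s}-M_{\infty}\psi$ as a difference and differentiate in $\xi_{1}$. The Maxwellian part is smooth, and the shift $\xi_{1}\mapsto -\sqrt{\xi_{1}^{2}-2\Phi^{s}(x)}$ differs from the identity (on $\xi_{1}<-\sqrt{2\Phi^{s}}$) by a quantity that is $O(\Phi^{s}(x))$ together with all its $\xi_{1}$-derivatives, uniformly on the relevant region, because $\Phi^{s}(x)\le\Phi_{b}\le\delta_{0}$ is small and $|\xi_{1}|$ is bounded below by $r$ on $\supp\psi$ (so no singularity of the square root is encountered); a Taylor expansion in the parameter $\Phi^{s}(x)$ then produces the factor $\Phi^{s}(x)$. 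One also has to handle the indicator functions $\chi(\xi_{1}^{2}-2\Phi^{s})$ versus $\chi(-\xi_{1})\psi(\xi_{1})$: since $\psi(\xi_{1})=0$ for $\xi_{1}\ge -r$ and $\Phi^{s}\le\delta_{0}\ll r^{2}/2$, the two supports in $\xi_{1}$ differ only on the thin strip $-\sqrt{2\Phi^{s}}<\xi_{1}<0$, on which $M_{\infty}\psi=0$ anyway, so the indicator discrepancy contributes nothing and the cut-off $\psi$ also absorbs the square-root singularity. After differentiating, the Gaussian decay of $M_{\infty}$ in $\xi$ makes the $L^{2}_{\xi}$-norm of the resulting expression finite and bounded by $C\Phi^{s}(x)$; the weight $M_{\infty}^{-1/2}$ is controlled because on $\supp\psi$ (where $\xi_{1}\le -r$) the ratio $M_{\infty}(-\sqrt{\xi_{1}^{2}-2\Phi^{s}},\xi')/M_{\infty}(\xi_{1},\xi')^{1/2}$ has Gaussian decay uniformly in the small parameter $\Phi^{s}(x)$.

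For \eqref{M3*}: I would differentiate the same expression in $x$. Since all $x$-dependence enters through $\Phi^{s}(x)$ (via the square-root shift), the chain rule produces a factor $\pd_{x}\Phi^{s}(x)$ times a quantity whose $L^{2}_{\xi}$-norm is bounded uniformly (again using $\Phi^{s}\le\delta_{0}$ small, the lower bound $|\xi_{1}|\ge r$ on the support, and the Gaussian weight). For \eqref{Fs1*}: using the decomposition \eqref{M0}, each of the three terms is handled separately --- the first term $-\frac{\xi_{1}-u_{\infty}}{\theta_{\infty}}M_{\infty}^{1/2}$ has a $\xi$-gradient with an explicit Gaussian $L^{2}_{\xi}$-bound independent of $x$; the second term is a fixed function of $\xi$ (independent of $x$) whose $\xi$-gradient is in $L^{2}_{\xi}$ by the Gaussian decay, and it equals $-\mu_{\infty}$-type data; the third term is $\pd_{x}$-differentiable via \eqref{M2*}, and its own $\xi$-gradient is estimated exactly as in \eqref{M2*}--\eqref{M3*} with the $\Phi^{s}(x)$-factor bounded by $\Phi_{b}\le\delta_{0}$, giving a constant. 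Taking the supremum over $x$ and using $0\le\Phi^{s}(x)\le\Phi_{b}$ finishes the bound.

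The main obstacle is the careful control near $\xi_{1}=-\sqrt{2\Phi^{s}(x)}$, where the map $\xi_{1}\mapsto -\sqrt{\xi_{1}^{2}-2\Phi^{s}(x)}$ and its derivatives blow up: the $k$-th $\xi_{1}$-derivative of $\sqrt{\xi_{1}^{2}-2\Phi^{s}}$ has a factor $(\xi_{1}^{2}-2\Phi^{s})^{1/2-k}$. The resolution is that, after the small-$\Phi_{b}$ reduction, one always works on the set $\{\xi_{1}\le -r\}\subset\supp\psi$ where $\xi_{1}^{2}-2\Phi^{s}(x)\ge r^{2}-2\delta_{0}\ge r^{2}/2>0$, so all these derivatives are bounded and the Taylor-in-$\Phi^{s}$ argument is legitimate; equivalently, $\psi$ kills the dangerous region before it is reached. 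Verifying that this holds uniformly in $x$ (via \eqref{decay0}) and that the weight $M_{\infty}^{-1/2}$ does not spoil integrability is the technical heart of the lemma; the rest is bookkeeping with Gaussian integrals.
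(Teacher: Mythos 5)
Your proposal is correct and follows essentially the same route as the paper: exploit the explicit formula \eqref{fform}, note that everything vanishes for $\xi_{1}>-r$ so the square-root map stays away from its singularity, extract the factors $\Phi^{s}(x)$ and $\pd_{x}\Phi^{s}(x)$ by a mean-value/Taylor expansion in the parameter $\Phi^{s}$, and control the weight $M_{\infty}^{-1/2}$ via the pointwise comparison $M_{\infty}(-\sqrt{\xi_{1}^{2}-2\Phi^{s}},\xi')\leq CM_{\infty}(\xi)$ (which in the paper rests on $(\zeta_{1}-\xi_{1})u_{\infty}\leq 0$). The only cosmetic difference is that for \eqref{Fs1*} you route through the decomposition \eqref{M0} while the paper bounds $\nabla_{\xi}(\pd_{\xi_{1}}F^{s}/M_{\infty}^{1/2})$ directly from pointwise Gaussian bounds on $\pd_{\xi_{1}}F^{s}$, $\pd_{\xi_{1}\xi_{1}}F^{s}$, and $\pd_{\xi_{j}\xi_{1}}F^{s}$; both reduce to the same estimates.
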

\begin{proof}
Set $\zeta_{1}=\zeta_{1}(x,\xi_{1}):=-\sqrt{\xi_{1}^{2}-2\Phi^{s}(x)}$.
First we show \eqref{M2*}. To this end, it suffices to show that
\begin{gather}
\frac{\left|{\pd_{\xi_{1}}(F^{s}-M_{\infty}\psi)(x,\xi)} \right|}{M_{\infty}^{1/2}(\xi)}
\leq \left\{
\begin{array}{ll}
0 & \text{for $\xi_{1}>-r$},
\\
C \Phi^{s}(x)(1+|\xi_{1}|^{2}) M_{\infty}^{1/2}(\xi) & \text{for $\xi_{1}\leq -r$}.
\end{array}
\right.
\label{M2}
\end{gather}
We see that \eqref{M2} holds for $\xi_{1}>-r$, 
since the left hand side of \eqref{M2} is zero owing to \eqref{cutoff0} and \eqref{r2}.
Let us consider another case $\xi_{1} \leq -r$ with the assumption $\Phi_{b} \ll 1$.
We note that $\xi_{1}^{2} - 2\Phi^{s}(x)>0$ holds.
Using \eqref{fform}, \eqref{decay0}, and $\Phi_{b} \ll 1$, we observe that
\begin{align}
&\left|{\pd_{\xi_{1}}(F^{s}-M_{\infty}\psi)}(x,\xi) \right| 
 = \left| \pd_{\xi_{1}}(M_{\infty}\psi)(\zeta_1,\xi') \frac{|\xi_{1}|}{\sqrt{\xi_{1}^{2}-2\Phi^{s}(x)}}
 - \pd_{\xi_{1}}(M_{\infty}\psi)(\xi) \right|
\notag\\
&\leq \left| \pd_{\xi_{1}}(M_{\infty}\psi)(\zeta_1,\xi') \right| \left|\frac{|\xi_{1}|}{\sqrt{\xi_{1}^{2}-2\Phi^{s}(x)}}-1 \right|
+ \left| \pd_{\xi_{1}}(M_{\infty}\psi)(\zeta_1,\xi') - \pd_{\xi_{1}}(M_{\infty}\psi) (\xi) \right|
\notag\\
&\leq C | \pd_{\xi_{1}}(M_{\infty}\psi)(\zeta_1,\xi') | \Phi^{s}(x)
+ | \pd_{\xi_{1}}(M_{\infty}\psi)(\zeta_1,\xi') - \pd_{\xi_{1}}(M_{\infty}\psi) (\xi) |
\notag\\
&\leq C\{| \pd_{\xi_{1}}M_{\infty} (\zeta_1,\xi') | + M_{\infty}(\zeta_1,\xi') \}\Phi^{s}(x) 
\notag\\
& \quad + | \pd_{\xi_{1}}M_{\infty}(\zeta_1,\xi')||\psi(\zeta_1,\xi') - \psi (\xi)|
+ | \pd_{\xi_{1}}M_{\infty}(\zeta_1,\xi')- \pd_{\xi_{1}}M_{\infty}(\xi)||\psi (\xi) |
\notag\\
& \quad +  M_{\infty}(\zeta_1,\xi')|\pd_{\xi_{1}}\psi(\zeta_1,\xi') -\pd_{\xi_{1}}\psi(\xi)| 
+ | M_{\infty}(\zeta_1,\xi') - M_{\infty} (\xi) | |\pd_{\xi_{1}}\psi (\xi) |.
\label{Minf0}
\end{align}

We estimate $M_{\infty}(\zeta_1,\xi')$, $|\pd_{\xi_{1}}M_{\infty}(\zeta_1,\xi')|$,
$| M_{\infty}(\zeta_1,\xi') - M_{\infty} (\xi) |$, and $| \pd_{\xi_{1}}M_{\infty}(\zeta_1,\xi')- \pd_{\xi_{1}}M_{\infty}(\xi)|$
in the right hand side of \eqref{Minf0}.
There holds that
\begin{align}
M_{\infty}(\zeta_1,\xi')
& \leq C \exp\left(-\frac{|\zeta_{1}-u_{\infty}|^{2}+|\xi'|^{2}}{2\theta_{\infty}} \right)
\notag\\
&=  C \exp\left(-\frac{|\xi_{1}-u_{\infty}|^{2}+|\xi'|^{2}}{2\theta_{\infty}} \right)
\exp\left(\frac{2(\zeta_{1}-\xi_{1})u_{\infty}}{2\theta_{\infty}} \right)
\exp\left(\frac{2\Phi^{s}(x)}{2\theta_{\infty}} \right)
\notag\\
&\leq C M_{\infty}(\xi),
\label{Minf1}
\end{align}
where we have used $(\zeta_{1}-\xi_{1})u_{\infty} \leq 0$ in deriving the last inequality.
It also holds that
\begin{align}
|\pd_{\xi_{1}}M_{\infty}(\zeta_1,\xi')| 
\leq C |\zeta_{1}-u_{\infty}| M_{\infty}(\zeta_1,\xi')
\leq C (1 + |\xi_{1}|) M_{\infty}(\xi).
\label{Minf2}
\end{align}
Furthermore, we observe using the mean value theorem that
\begin{align}
| M_{\infty}(\zeta_1,\xi') - M_{\infty} (\xi) | 
& \leq   \int_{0}^{1} \left| \pd_{\xi_{1}} M_{\infty} (\theta \zeta_{1}+(1-\theta)\xi_{1},\xi') \right| d\theta  |\zeta_1 - \xi_{1}| 
\notag\\
& \leq  C\int_{0}^{1} (1 + |\xi_{1}|) M_{\infty} (\theta \zeta_{1}+(1-\theta)\xi_{1},\xi') d\theta  |\zeta_1 - \xi_{1}| 
\notag\\
&  \leq C  \int_{0}^{1} (1 + |\xi_{1}|) M_{\infty}(\xi) d\theta  |\zeta_1 - \xi_{1}|
\notag\\
& \leq C (1 + |\xi_{1}|) M_{\infty}(\xi)\Phi^{s}(x),
\label{Minf4}
\end{align}
where we have used the equality $|\theta \zeta_{1}+(1-\theta)\xi_{1}-u_{\infty}|^{2}=-|\xi_{1}-u_{\infty}|^{2}+2\theta(\zeta_{1}-\xi_{1})u_{\infty}+2\theta(1-\theta)(\zeta_{1}-\xi_{1})|\xi_{1}|+2\theta^{2}\Phi^{s}$ as well as the inequalities $(\zeta_{1}-\xi_{1})u_{\infty} \leq 0$ and $(\zeta_{1}-\xi_{1})|\xi_{1}| \leq 2\Phi^{s}$ in deriving the third inequality;
we have used $\xi_{1} \leq -r$ to estimate $|\zeta_1 - \xi_{1}|$ in deriving the last inequality.
Similarly, it follows that
\begin{align}
| \pd_{\xi_{1}}M_{\infty}(\zeta_1,\xi')- \pd_{\xi_{1}}M_{\infty}(\xi)|
& \leq   \int_{0}^{1} \left| \pd_{\xi_{1}\xi_{1}} M_{\infty}(\theta \zeta_{1}+(1-\theta)\xi_{1},\xi') \right| d\theta  |\zeta_1 - \xi_{1}| 
\notag \\
& \leq C  \int_{0}^{1} (1 +  |\xi_{1}|^{2}) M_{\infty}(\xi) d\theta  |\zeta_1 - \xi_{1}| 
\notag\\
& \leq C (1 + |\xi_{1}|^{2}) M_{\infty}(\xi)\Phi^{s}(x).
\label{Minf5}
\end{align}
On the other hand, it is straightforward to see from \eqref{cutoff0} and $\xi_{1} \leq -r$ that
\begin{gather}
|\psi(\zeta_1,\xi') - \psi (\xi)| + |\pd_{\xi_{1}}\psi(\zeta_1,\xi') -\pd_{\xi_{1}}\psi(\xi)| \leq  C \Phi^{s}(x).
\label{Minf6}
\end{gather}
Plugging \eqref{Minf1}--\eqref{Minf6} into \eqref{Minf0} and dividing the resultant inequality by $M_{\infty}^{1/2}(\xi)$, we conclude that \eqref{M2} holds for $\xi_{1} \leq -r$. 

Next we derive \eqref{M3*}. It is sufficient to show that
\begin{gather}
\left| \pd_{x} \left\{\! \frac{\pd_{\xi_{1}}(F^{s}-M_{\infty}\psi)}{M_{\infty}^{1/2}} \! \right\} \!\! (x,\xi)\right| 
\leq  \left\{\begin{array}{ll}
\!\!0 & \!\text{for $\xi_{1}>-r$},
\\
\!\!C |\pd_{x}\Phi^{s}(x)|(1+|\xi_{1}|^{2}) M_{\infty}^{1/2}(\xi) & \!\text{for $\xi_{1} \leq -r$}.
\end{array}
\right.
\label{M3}
\end{gather}
We see that \eqref{M3} holds for $\xi_{1}>-r$ in much the same way as \eqref{M2}.
Let us consider another case $\xi_{1} \leq -r$ with $\Phi_{b} \ll 1$.
In a similar way to \eqref{Minf0}, we observe that
\begin{align}
& \left| \pd_{x} {\pd_{\xi_{1}}(F^{s}-M_{\infty}\psi)}(\xi_{1},\xi')\right| 
\notag\\
& = \left| \pd_{\xi_{1}\xi_{1}}(M_{\infty}\psi)(\zeta_1,\xi') \frac{\xi_{1}\pd_{x}\Phi^{s}(x)}{\xi_{1}^{2}-2\Phi^{s}(x)} 
+ \pd_{\xi_{1}}(M_{\infty}\psi)(\zeta_1,\xi') \frac{\xi_{1}\pd_{x}\Phi^{s}(x)}{(\xi_{1}^{2}-2\Phi^{s}(x))^{3/2}} \right|
\notag\\
& \leq C ( | \pd_{\xi_{1}\xi_{1}}(M_{\infty}\psi)(\zeta_1,\xi') | + | \pd_{\xi_{1}}(M_{\infty}\psi)(\zeta_1,\xi') | ) |\pd_{x}\Phi^{s}(x)|.
\label{Minf7}
\end{align}
Furthermore, similarly as \eqref{Minf2}, it follows that 
\begin{align}
|\pd_{\xi_{1}\xi_{1}}M_{\infty}(\zeta_1,\xi')| 
 \leq C (1+|\zeta_{1}-u_{\infty}|^{2}) M_{\infty}(\zeta_1,\xi')
\leq C (1 + |\xi_{1}|^{2}) M_{\infty}(\xi).
\label{Minf3}
\end{align}
Estimating the rightmost of \eqref{Minf7} by \eqref{Minf1}, \eqref{Minf2}, and \eqref{Minf3}, 
we arrive at \eqref{M3} for $\xi_{1} \leq -r$. 

Finally, let us derive \eqref{Fs1*}. It is sufficient to show that
\begin{gather}
\left|\nabla_{\xi}\left(\frac{\pd_{\xi_{1}}F^{s}}{M_{\infty}^{1/2}}\right)(x,\xi)\right| 
\leq  \left\{\begin{array}{ll}
0 & \text{for $\xi_{1}>-r$},
\\
C(1+|\xi|^{2}) M_{\infty}^{1/2}(\xi) & \text{for $\xi_{1} \leq -r$}.
\end{array}
\right.
\label{Fs1}
\end{gather}
It is clear that \eqref{Fs1} holds for $\xi_{1}>-r$, and the following hold for $\xi_{1} \leq -r$:
\begin{gather*}
\pd_{\xi_{1}}\left(\frac{\pd_{\xi_{1}}F^{s}}{M_{\infty}^{1/2}}\right)
= \frac{\pd_{\xi_{1}\xi_{1}}F^{s}}{M_{\infty}^{1/2}} +\frac{\pd_{\xi_{1}}F^{s}}{M_{\infty}^{1/2}} \frac{(\xi_{1}-u_{\infty})}{2\theta_{\infty}},
\quad 
\pd_{\xi_{j}}\left(\frac{\pd_{\xi_{1}}F^{s}}{M_{\infty}^{1/2}}\right)
= \frac{\pd_{\xi_{j}\xi_{1}}F^{s}}{M_{\infty}^{1/2}} +\frac{\pd_{\xi_{1}}F^{s}}{M_{\infty}^{1/2}} \frac{\xi_{j}}{2\theta_{\infty}},
\end{gather*}
where $j=2,3$.
By using \eqref{fform}, \eqref{decay0}, \eqref{cutoff0}, \eqref{cutoff1}, \eqref{Minf1}, \eqref{Minf2}, \eqref{Minf3}, and $\Phi_{b} \ll 1$ similarly as above, we observe that for $\xi_{1} \leq -r$,
\begin{align*}
\left| \pd_{\xi_{1}}F^{s} \right| 
&= \left| \pd_{\xi_{1}}(M_{\infty}\psi)(\zeta_1,\xi') \frac{\xi_{1}}{\sqrt{\xi_{1}^{2}-2\Phi^{s}}} \right|
\leq C (1 + |\xi_{1}|) M_{\infty}(\xi),
\\
\left| \pd_{\xi_{1}\xi_{1}}F^{s} \right| 
&=\left|  \pd_{\xi_{1}\xi_{1}}(M_{\infty}\psi)(\zeta_1,\xi') \frac{\xi_{1}^{2}}{{\xi_{1}^{2}-2\Phi^{s}}} 
 - \pd_{\xi_{1}}(M_{\infty}\psi)(\zeta_1,\xi')\frac{\xi_{1}^{2}}{(\xi_{1}^{2}-2\Phi^{s})^{3/2}} \right|
 \\
& \leq C (1 + |\xi_{1}|^{2}) M_{\infty}(\xi),
\\
\left| \pd_{\xi_{j}\xi_{1}}F^{s} \right| 
&=\left|  \pd_{\xi_{j}\xi_{1}}(M_{\infty}\psi)(\zeta_1,\xi') \frac{\xi_{1}}{\sqrt{\xi_{1}^{2}-2\Phi^{s}}} \right|
\leq C (1 + |\xi|^{2}) M_{\infty}(\xi).
\end{align*}
These implies \eqref{Fs1} for $\xi_{1} \leq -r$. The proof is complete.
\end{proof}

\subsection{Elliptic estimates}\label{ES}
This subsection provides estimates of $\phi=\phi(x)$ which solves the elliptic equation \er{ree2} with \eqref{rebc2} and \eqref{rebc4}, i.e.
\begin{gather*}
\partial_{xx} \phi =  n - (n_{e}(\Phi^{s}+\phi)-n_{e}(\Phi^{s})), \quad x>0,
\\
\phi(0)=0, \quad \lim_{x\to\infty}\phi(x)=0,
\end{gather*}
where $n=n[f]=\int_{\mathbb R^{3}}  M_{\infty}^{1/2} f d\xi$ is supposed to be a given function.

Let us first show the lower and upper bounds of $\phi$.
From the above equation and \eqref{seq2}, it is seen that $\Phi=\Phi^{s}+\phi$ satisfies
\begin{subequations}
\begin{gather}
\partial_{xx} \Phi = \int_{\mathbb R^{3}}  F^{s} d\xi + n - n_{e}(\Phi), \quad x>0,
\label{elleq1}\\
\Phi(0)=\Phi_{b}>0, \quad \lim_{x\to\infty}\Phi(x)=0.
\label{ellbc1}
\end{gather}
\end{subequations}


\begin{lem}\lb{ell0}
Let $n \in H_{x}^{1}$.
Suppose that $\phi \in H^{3}_{x}$ is a solution of the elliptic equation \er{ree2} with \eqref{rebc2} and \eqref{rebc4}.
Then there exists a constant $\delta_{0}>0$ such that if $\Phi_{b}+\|n\|_{H^{1}_{x}}  \leq \delta_{0}$, the following hold:
\begin{gather}
\sup_{x\in\mathbb R_{+}}(\phi+\Phi^{s})(x) \leq M_1:= \max\left\{\Phi_{b}, \ n_{e}^{-1}\!\! \left( \inf_{x\in \mathbb R_{+}} \!\!\left(\int_{{\mathbb R}^{3}} F^{s}(x,\xi) d\xi+n (x)\right) \!\!\right)\!\! \right\},
\lb{bounds1}\\
\inf_{x\in\mathbb R_{+}}(\phi+\Phi^{s})(x) \geq  -M_2:= -\max\left\{\Phi_{b}, \  -n_{e}^{-1}\!\! \left( \sup_{x\in \mathbb R_{+}} \!\!\left(\int_{{\mathbb R}^{3}} F^{s}(x,\xi) d\xi+n (x)\right) \!\!
\right)\!\! \right\}.
\lb{bounds2}
\end{gather}
\end{lem}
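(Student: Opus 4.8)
The plan is to establish the bounds \eqref{bounds1}--\eqref{bounds2} by a maximum-principle argument applied to the semilinear elliptic equation \eqref{elleq1}--\eqref{ellbc1} satisfied by $\Phi=\Phi^{s}+\phi$. Since $\phi\in H^{3}_{x}$ and, by Proposition \ref{existence1}, $\Phi^{s}\in C^{2}(\overline{\mathbb R_{+}})$ with $\Phi^{s}\to 0$ at infinity, the function $\Phi$ is a $C^{2}$ classical solution on $\mathbb R_{+}$, continuous up to $x=0$, with $\Phi(0)=\Phi_{b}$ and $\Phi(x)\to 0$ as $x\to\infty$. The source term $g(x):=\int_{\mathbb R^{3}}F^{s}(x,\xi)\,d\xi+n(x)$ is bounded: indeed $\int F^{s}\,d\xi$ is bounded by the decay estimate \eqref{decay1} together with $\int_{\mathbb R^{3}}F_{\infty}\,d\xi=1$ from \eqref{netrual1}, and $n=n[f]$ is controlled in $L^{\infty}_{x}$ via $\|n\|_{H^{1}_{x}}\le\delta_{0}$ (using $H^{1}(\mathbb R_{+})\hookrightarrow L^{\infty}(\mathbb R_{+})$) together with \eqref{nm3}. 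Smallness of $\delta_{0}$ guarantees $\inf_{x}g(x)$ and $\sup_{x}g(x)$ lie in a neighborhood of $1$ where $n_{e}^{-1}$ is well-defined, since $n_{e}(0)=1$, $n_{e}'<0$ by \eqref{n_e}, so $n_{e}$ is a strictly decreasing $C^{2}$ diffeomorphism near $0$.

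First I would prove the upper bound \eqref{bounds1}. Let $M_{1}:=\max\{\Phi_{b},\,n_{e}^{-1}(\inf_{x}g(x))\}$, and suppose for contradiction that $\sup_{x\in\mathbb R_{+}}\Phi(x)>M_{1}$. Because $\Phi(0)=\Phi_{b}\le M_{1}$ and $\Phi(x)\to 0\le M_{1}$ as $x\to\infty$ (the smallness of $\delta_{0}$ forces $M_{1}\ge 0$, since $\Phi_{b}>0$), the positive quantity $\sup\Phi-M_{1}$ is attained at some interior point $x_{0}\in\mathbb R_{+}$, where $\Phi(x_{0})>M_{1}$, $\partial_{x}\Phi(x_{0})=0$, and $\partial_{xx}\Phi(x_{0})\le 0$. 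But from \eqref{elleq1}, $\partial_{xx}\Phi(x_{0})=g(x_{0})-n_{e}(\Phi(x_{0}))$, and since $n_{e}$ is decreasing while $\Phi(x_{0})>M_{1}\ge n_{e}^{-1}(\inf g)$, we get $n_{e}(\Phi(x_{0}))<\inf_{x}g(x)\le g(x_{0})$, hence $\partial_{xx}\Phi(x_{0})>0$, a contradiction. (If the supremum is not attained, one takes a maximizing sequence and passes to the limit, using $\Phi\to 0$ at infinity to confine the sequence to a compact set; alternatively one localizes and uses the standard Hopf-type argument.) The lower bound \eqref{bounds2} is symmetric: set $M_{2}:=\max\{\Phi_{b},\,-n_{e}^{-1}(\sup_{x}g(x))\}$, assume $\inf\Phi<-M_{2}$, locate an interior minimum $x_{1}$ with $\partial_{xx}\Phi(x_{1})\ge 0$, and derive $n_{e}(\Phi(x_{1}))>\sup_{x}g(x)\ge g(x_{1})$ from $\Phi(x_{1})<-M_{2}\le n_{e}^{-1}(\sup g)$ and monotonicity, contradicting \eqref{elleq1}. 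Subtracting, $\phi=\Phi-\Phi^{s}$ inherits the stated bounds, or one simply records the bounds for $\phi+\Phi^{s}$ as in the statement.

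The main technical obstacle is the non-compactness of the half-line: the extrema of $\Phi$ need not be attained, so the naive "interior max" argument must be replaced by a limiting argument. The clean fix is to use that $\Phi\in C^{2}(\mathbb R_{+})\cap C(\overline{\mathbb R_{+}})$ together with $\Phi(x)\to 0$, which ensures that if $\sup\Phi>M_{1}\ge 0$ then the sup is in fact a maximum attained at a finite interior point (the values near $x=0$ and near $x=+\infty$ are $\le M_{1}$), restoring the classical argument. A secondary point requiring care is verifying that $g$ is bounded and that its infimum and supremum stay inside the range of $n_{e}$ near $0$: this is exactly where the hypothesis $\Phi_{b}+\|n\|_{H^{1}_{x}}\le\delta_{0}$ with $\delta_{0}$ small is used, via the embedding $\|n\|_{L^{\infty}_{x}}\le C\|n\|_{H^{1}_{x}}$, the bound $\|n\|_{H^{1}_{x}}\lesssim\|e^{\alpha x/2}f\|_{H^{1}_{x,\xi}}$ from Lemma \ref{nmlem}, and the exponential decay \eqref{decay1} with $\int F_{\infty}d\xi=1$. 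Everything else is routine.
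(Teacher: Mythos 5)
Your proof is correct, but it takes a genuinely different route from the paper. You argue via the classical pointwise maximum principle: assuming $\sup\Phi>M_{1}$, you locate an interior maximum point $x_{0}$ (using $\Phi(0)=\Phi_{b}\leq M_{1}$ and $\Phi\to 0$ at infinity to guarantee attainment), and contradict $\partial_{xx}\Phi(x_{0})\leq 0$ with the strict inequality $g(x_{0})-n_{e}(\Phi(x_{0}))>0$ coming from $n_{e}'<0$; the lower bound is symmetric. The paper instead uses a Stampacchia-type truncation (weak maximum principle): it sets $\Psi=\Phi-M_{1}$, multiplies the equation by $\Psi^{+}=\max\{\Psi,0\}$, integrates over ${\mathbb R}_{+}$, and shows $\int_{{\mathbb R}_{+}}|\partial_{x}\Psi^{+}|^{2}\,dx\leq 0$ because $n_{e}(\Psi^{+}+M_{1})\leq n_{e}(M_{1})\leq\inf_{x}g(x)$ on the set where $\Psi>0$; together with $\Psi^{+}(0)=0$ this forces $\Psi^{+}\equiv 0$. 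The two arguments exploit the monotonicity of $n_{e}$ in exactly the same way, but the paper's integral version requires only $H^{1}$ regularity and entirely sidesteps the attainment-of-extremum issue on the non-compact half-line, which is precisely the point your proof must (and does) handle separately via the limiting/confinement argument and the $C^{2}$ regularity of $\Phi$ supplied by $\phi\in H^{3}_{x}$ and Proposition \ref{existence1}. Your preliminary step — checking that $g=\int_{{\mathbb R}^{3}}F^{s}\,d\xi+n$ stays in a small neighborhood of $1$ so that $n_{e}^{-1}$ applies — matches the paper's opening estimate.
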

\begin{proof}
First it is seen from \eqref{netrual1} and \eqref{cutoff1} that 
\begin{align*}
\left| \int_{{\mathbb R}^{3}} F^{s}(x,\xi) d\xi+n (x) -1 \right| 
& = \left| \int_{{\mathbb R}^{3}} F^{s}(x,\xi) d\xi+n (x) - \int_{{\mathbb R}^{3}} M_{\infty}\psi(\xi) d\xi \right|
\\
&\leq C(\Phi_{b} + \|n\|_{H^{1}_{x}}),
\end{align*}
where we have used \eqref{fform} and \eqref{decay0} in deriving the inequality. 
This together with the assumption \eqref{n_e} ensures that the constants $M_{1}$ and $M_{2}$ are well-defined and small if $\Phi_{b}+\|n\|_{H^{1}_{x}} \ll 1$.
To show \er{bounds1}, we set $\Psi:=\Phi-M_{1}$.
It is straightforward to see from \er{elleq1} that
\begin{equation*}
\pd_{xx} \Psi
= \int_{{\mathbb R}^{3}} F^{s} d\xi + n - n_{e}(\Psi+M_{1}).
\end{equation*}
Multiply this by $\Psi^+:= \max\{\Psi, 0\}$, integrate it over ${\mathbb R}_{+}$, and use the boundary condition \eqref{ellbc1}.
Then using $\Psi^+ \geq 0$ and the assumption \eqref{n_e},
we have
\begin{align*}
\int_{{\mathbb R}_{+}} |\pd_{x}\Psi^+|^2 dx
&=\int_{{\mathbb R}_{+}} \left( n_{e}(\Psi^{+}+M_{1}) - \int_{{\mathbb R}^{3}} F^{s}(x,\xi) d\xi - n  \right)\Psi^+ dx
\\
&\leq \int_{{\mathbb R}_{+}}\left\{ n_{e}(M_{1}) - \inf_{x\in \mathbb R_{+}} \!\!\left(\int_{{\mathbb R}^{3}} F^{s}(x,\xi) d\xi +n (x)\right) \right\} \Psi^+ dx \leq 0,
\end{align*}
which gives \er{bounds1} with the aid of $\Psi^{+}(0)=0$. Similarly, one can have the lower bound \er{bounds2}.
The proof is complete.
\end{proof}

Next we show some estimates of $\phi$ in the Sobolev space $H^{k}_{x}$.
We rewrite the elliptic equation \eqref{ree2} as follows:
\begin{gather}\label{pe3}
\partial_{xx} \phi - \phi -n
= {\cal N}_{1}:=
-\left(1 + \int_{0}^{1} n_{e}'(\Phi^{s}+\theta \phi) d\theta\right)\phi, \quad \ x>0.
\end{gather}

\begin{lem}\lb{ell1} 
Let $\beta \in (0,1)$ and $e^{\beta x /2} n \in H_{x}^{1}$.
Suppose that $\phi \in H^{3}_{x}$ is a solution of the elliptic equation \er{ree2} with \eqref{rebc2} and \eqref{rebc4},
and also satisfies $e^{\beta x /2} \phi \in H^{3}_{x}$.
Then there exists a constant $\delta_{0}>0$ independent of $\beta$
such that if $\Phi_{b}+\|n\|_{H^{1}_{x}}  \leq \delta_{0}$, the following hold:
\begin{gather}
\|\phi\|_{H^{3}_{x}} \leq C \|n\|_{H^{1}_{x}}, 
\lb{ellineq0}\\
 \|e^{\beta x/2}\partial_{x}^{k} \phi \|_{H^{2}_{x}}^{2} \leq  C \sum_{i=0}^{k} \|e^{\beta x/2} \partial_{x}^{i} n\|_{L^{2}_{x}}^2 \quad \text{for $k=0,1$,}
\lb{ellineq3}\\
\begin{aligned}
&\|e^{\beta x/2} \phi \|_{L^{2}_{x}}^2 +2(1+\eta(\beta))\|e^{\beta x/2} \pd_{x}\phi \|_{L^{2}_{x}}^2
\\
&\leq (1+\eta(\beta))^{2}\|e^{\beta x/2} n \|_{L^{2}_{x}}^2
+C(\Phi_{b} + \|n\|_{H^{1}_{x}})\|e^{\beta x/2} n \|_{L^{2}_{x}}^2,
\end{aligned}
\lb{ellineq1}\\
\|e^{\beta x/2} \pd_{xx} \phi \|_{L^{2}_{x}}^2
\leq  \{ 1+\beta^{2}(1+\eta(\beta))^{2}\} \| e^{\beta x/2} n \|_{L^{2}_{x}}^2
+C(\Phi_{b} + \|n\|_{H^{1}_{x}})\|e^{\beta x/2} n \|_{L^{2}_{x}}^2,
\lb{ellineq2}
\end{gather}
where $C>0$ is a constant independent of $\beta$ and $\Phi_b$, 
and $\eta(\beta)$ is defined in \eqref{mu_eta}.
\end{lem}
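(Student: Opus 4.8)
\textbf{Proof plan for Lemma \ref{ell1}.}
The plan is to treat \eqref{pe3} as a linear ODE $\partial_{xx}\phi-\phi = n + \mathcal N_1$ with $\mathcal N_1 = \mathcal O(\Phi_b + \|n\|_{H^1_x})\,\phi$ a small perturbation, and to extract the four estimates by a bootstrap: first an unweighted $H^3$ bound, then the weighted bounds, the sharp constants in \eqref{ellineq1}--\eqref{ellineq2} being obtained by a careful integration-by-parts argument with the weight $e^{\beta x}$. For \eqref{ellineq0}, I would multiply \eqref{pe3} by $\phi$ and integrate over $\mathbb R_+$; using $\phi(0)=0$, $\phi(\infty)=0$ and the sign condition on $n_e'$ together with Lemma \ref{ell0} (which gives that $\Phi^s+\theta\phi$ stays in a small neighbourhood of $0$ where $1+n_e'\approx 0$, so $\mathcal N_1$ is quadratically small), one gets $\|\phi\|_{H^1_x}\le C\|n\|_{L^2_x}$; differentiating \eqref{pe3} once or twice and repeating the energy argument, absorbing the $\mathcal N_1$ terms into the left side since their coefficient is $O(\delta_0)$, yields control of $\|\partial_x^k\phi\|_{L^2_x}$ for $k\le 3$, hence \eqref{ellineq0}.

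For the weighted estimate \eqref{ellineq3}, I would multiply \eqref{pe3} (and its $x$-derivative for $k=1$) by $e^{\beta x}\partial_x^k\phi$ and integrate; the weight produces commutator terms $\beta e^{\beta x}\partial_x\phi\,\partial_x^k\phi$ of lower order in the number of derivatives, which are handled by Young's inequality and the fact $\beta<1$, while the $\mathcal N_1$-contribution is again absorbed using $\Phi_b + \|n\|_{H^1_x}\le\delta_0$ and \eqref{ellineq0}. This gives $\|e^{\beta x/2}\partial_x^k\phi\|_{H^2_x}^2\lesssim\sum_{i\le k}\|e^{\beta x/2}\partial_x^i n\|_{L^2_x}^2$. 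For the sharp-constant estimates \eqref{ellineq1} and \eqref{ellineq2}, the key algebraic identity is that if $w:=e^{\beta x/2}\phi$, then $\phi$ satisfying $\partial_{xx}\phi-\phi = n+\mathcal N_1$ translates into an equation for $w$ with a shifted zeroth-order coefficient $1-\beta^2/4$ and a first-order term $\beta\partial_x w$; testing against $w$ and against $\partial_{xx}\phi$ and keeping track of boundary terms (all vanishing by \eqref{rebc2}, \eqref{rebc4}) one obtains, after using $2ab\le \eta a^2 + \eta^{-1}b^2$ with the \emph{optimal} choice of $\eta$, the combination $\|e^{\beta x/2}\phi\|_{L^2}^2 + 2(1+\eta(\beta))\|e^{\beta x/2}\partial_x\phi\|_{L^2}^2$ on the left and $(1+\eta(\beta))^2\|e^{\beta x/2}n\|_{L^2}^2$ on the right; the definition $\eta(\beta)=\beta^2/(2-\beta^2)$ is exactly what makes the cross terms close. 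Estimate \eqref{ellineq2} then follows by writing $\partial_{xx}\phi = \phi + n + \mathcal N_1$, taking the weighted $L^2$ norm, and inserting \eqref{ellineq1}, which converts $\|e^{\beta x/2}\phi\|^2 + \beta^2(\cdots)$ into the stated constant $1+\beta^2(1+\eta(\beta))^2$.

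The main obstacle I expect is bookkeeping the constants in \eqref{ellineq1}--\eqref{ellineq2} sharply and $\beta$-independently: one must track which terms carry the exact coefficients $(1+\eta(\beta))$, $(1+\eta(\beta))^2$, $1+\beta^2(1+\eta(\beta))^2$ and which may be dumped into the $O(\delta_0)$ remainder, all while verifying that the constant $\delta_0$ and the constant $C$ in the error terms do not degenerate as $\beta\to 0$ or $\beta\to 1$. In particular, the weight-commutator terms $\beta e^{\beta x}\partial_x\phi\cdot\phi$ must be integrated by parts rather than estimated crudely, since a crude bound would spoil the sharp constant; the identity $\partial_x(e^{\beta x}\phi^2)=\beta e^{\beta x}\phi^2 + 2e^{\beta x}\phi\partial_x\phi$ lets one convert them exactly, and the residual $\beta\|e^{\beta x/2}\phi\|^2_{L^2}$ is what forces the appearance of $\eta(\beta)$ after optimization. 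Everything else — the unweighted $H^3$ bound, the absorption of $\mathcal N_1$, the vanishing of boundary terms — is routine given Lemma \ref{ell0} and the decay \eqref{decay1} of $\Phi^s$.
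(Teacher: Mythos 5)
Your plan matches the paper's proof in structure and in all four estimates. For \eqref{ellineq0} the paper multiplies \eqref{ree2} by $\phi$ and uses the sign $n_e'<0$ directly on the difference $n_e(\Phi^s+\phi)-n_e(\Phi^s)$, whereas you pass to \eqref{pe3} and absorb $\mathcal N_1$ via smallness of $1+\int_0^1 n_e'$; both work and are essentially the same observation. Your substitution $w=e^{\beta x/2}\phi$ for the sharp estimate \eqref{ellineq1} is an equivalent repackaging of what the paper does more pedestrianly: multiply \eqref{pe3} by $e^{\beta x}\phi$, integrate by parts twice so the weight-commutator $\beta\int e^{\beta x}\phi\,\partial_x\phi$ becomes the exact term $\tfrac{\beta^2}{2}\|e^{\beta x/2}\phi\|^2$, arrive at the identity $\|e^{\beta x/2}\partial_x\phi\|^2+(1-\tfrac{\beta^2}{2})\|e^{\beta x/2}\phi\|^2=-\int e^{\beta x}(n+\mathcal N_1)\phi$, and then apply Young with weight $1-\tfrac{\beta^2}{2}$ and multiply through by $2(1-\tfrac{\beta^2}{2})^{-1}=2(1+\eta(\beta))$. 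Your transformation reproduces the same identity, since $\|\partial_x w\|^2+(1-\tfrac{\beta^2}{4})\|w\|^2=\|e^{\beta x/2}\partial_x\phi\|^2+(1-\tfrac{\beta^2}{2})\|e^{\beta x/2}\phi\|^2$.

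One caution about \eqref{ellineq2}: your final sentence (``writing $\partial_{xx}\phi=\phi+n+\mathcal N_1$, taking the weighted $L^2$ norm, and inserting \eqref{ellineq1}'') is too loose and, taken literally, would produce the much weaker constant $(2+\eta)^2$ from the cross term $2\int e^{\beta x}\phi\,n$. You do mention earlier ``testing against $\partial_{xx}\phi$,'' which is what is actually needed: multiply \eqref{pe3} by $e^{\beta x}\partial_{xx}\phi$, integrate, and integrate the $\int e^{\beta x}\phi\,\partial_{xx}\phi$ term by parts to produce $-\|e^{\beta x/2}\partial_x\phi\|^2+\tfrac{\beta^2}{2}\|e^{\beta x/2}\phi\|^2$ on the right. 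It is the appearance of the \emph{good} term $\|e^{\beta x/2}\partial_x\phi\|^2$ on the left (plus the $\tfrac12$ Schwarz split of $\int e^{\beta x}n\,\partial_{xx}\phi$, and \eqref{ellineq1} to convert $\tfrac{\beta^2}{2}\|e^{\beta x/2}\phi\|^2$ into $\tfrac{\beta^2}{2}(1+\eta)^2\|e^{\beta x/2}n\|^2$) that yields the stated constant $1+\beta^2(1+\eta)^2$. Make sure your write-up keeps this step as an integration-by-parts identity rather than a direct $L^2$ norm computation.
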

\begin{proof}
We first show \er{ellineq0}. 
Multiply \er{ree2} by $\phi$, integrate it by parts over ${\mathbb R}_{+}$,
and use \er{rebc2} to get
\begin{align*}
\int_{{\mathbb R}_{+}} |\pd_{x} \phi |^2 dx
-\int_{{\mathbb R}_{+}} \left(n_{e}(\Phi^{s}+\phi) - n_{e}(\Phi^{s}) \right)\phi dx
=\int_{{\mathbb R}_{+}} n \phi dx
\leq \mu \|\phi\|^2
+C\mu^{-1}\|n\|_{L^{2}_{x}}^{2},
\end{align*}
where $\mu$ is a positive constant to be determined later.
Let us find a good contribution of the second term on the left hand side.
By \er{n_e}, \er{decay0}, \er{bounds1}, \er{bounds2}, and the mean value theorem,
the second term on the left hand side is estimated from below as follows:
\[
-\int_{{\mathbb R}_{+}} \left(n_{e}(\Phi^{s}+\phi) - n_{e}(\Phi^{s}) \right)\phi dx \geq c \|\phi\|^2.
\]
These two inequalities with sufficiently small $\mu>0$ lead to
$\|\phi\|_{H^{1}_{x}} \leq C\|n\|_{L^{2}_{x}}$.
From this and \eqref{ree2}, we conclude \eqref{ellineq0}.

Next let us show \er{ellineq3}. 
By \eqref{n_e}, \er{decay0}, and \er{ellineq0}, one can estimate ${\cal N}_1$ in \eqref{pe3} as
\begin{gather}
\|e^{\beta x/2}{\cal N}_1\|_{L^{2}_{x}} \leq C(\Phi_{b} + \|n\|_{H^{1}_{x}})\|e^{\beta x/2}\phi\|_{L^{2}_{x}}.
\lb{N1}
\end{gather}
Multiply \er{pe3} by $e^{\beta x} \phi$, integrate it by parts over ${\mathbb R}_{+}$, 
and use \er{N1} and Schwarz's inequality to get
\begin{align*}
&\int_{{\mathbb R}_{+}} e^{\beta x} |\pd_{x} \phi|^2
+\left(1-\frac{\beta^2}{2}\right)e^{\beta x}|\phi|^2 dx
\\
&=-\int_{{\mathbb R}_{+}} e^{\beta x} \left(n+{\cal N} _1\right)\phi dx
\\
&\leq \frac{1}{4} \|e^{\beta x/2}\phi\|_{L^{2}_{x}}^2 +   \|e^{\beta x/2} n\|_{L^{2}_{x}}^2
+C(\Phi_{b} + \|n\|_{H^{1}_{x}}) \|e^{\beta x/2}\phi\|_{L^{2}_{x}}^2.
\end{align*}
Owing to $\beta < 1$, letting $\Phi_{b} + \|n\|_{H^{1}_{x}}$ be small enough,
we have $ \|e^{\beta x/2}\phi \|_{H^{1}_{x}}^{2} \leq  C  \|e^{\beta x/2} n\|_{L^{2}_{x}}^2$.
From this and \er{pe3}, we arrive at \eqref{ellineq3} for both the cases $k=0,1$.

Once again, multiply \er{ree2} by $e^{\beta x} \phi$,
integrate it by parts over ${\mathbb R}_{+}$, 
and estimate the result in a different way as above by using \er{ellineq3}.
Then we have
\begin{align*}
&\int_{{\mathbb R}_{+}} e^{\beta x} |\pd_{x} \phi|^2
+\left(1-\frac{\beta^2}{2}\right)e^{\beta x}|\phi|^2 dx
\\
&=-\int_{{\mathbb R}_{+}} e^{\beta x} \left(n+{\cal N} _1\right)\phi dx
\\
&\leq \frac{1}{2}\left(1-\frac{\beta^2}{2}\right) \|e^{\beta x/2}\phi\|_{L^{2}_{x}}^2 
+ \frac{1}{2}\left(1-\frac{\beta^2}{2}\right)^{\!\!-1}  \!\!\|e^{\beta x/2} n\|_{L^{2}_{x}}^2
+C(\Phi_{b} + \|n\|_{H^{1}_{x}}) \|e^{\beta x/2}\phi\|_{L^{2}_{x}}^2.
\end{align*}
This together with \eqref{ellineq3} with $k=0$ immediately gives \er{ellineq1}.

Let us show \eqref{ellineq2}.
Multiply \er{ree2} by $e^{\beta x} \pd_{xx} \phi$, integrate it by parts over ${\mathbb R}_{+}$,
and use Schwarz's inequality to obtain
\begin{align*}
{}&
\int_{{\mathbb R}_{+}} e^{\beta x} |\pd_{xx} \phi|^2+e^{\beta x}|\pd_{x}\phi|^2 dx
\\
&=
\int_{{\mathbb R}_{+}} 
\frac{\beta^2}{2} e^{\beta x}|\phi|^2 + e^{\beta x} n \pd_{xx}\phi dx +e^{\beta x} {\cal N}_{1} \pd_{xx}\phi dx
\\
&\leq \frac{\beta^{2}}{2} \|e^{\beta x/2} \phi \|_{L^{2}_{x}}^2
+\frac{1}{2} \|e^{\beta x/2} \pd_{xx}\phi \|_{L^{2}_{x}}^2
+ \frac{1}{2} \|e^{\beta x/2} n \|_{L^{2}_{x}}^2 
+\|e^{\beta x/2} {\cal N}_{1} \|_{L^{2}_{x}}\|e^{\beta x/2} \partial_{xx} \phi \|_{L^{2}_{x}}
\\
&\leq \frac{1}{2}\beta^{2}(1+\eta(\beta))^{2}\|e^{\beta x/2} n \|_{L^{2}_{x}}^2
+\frac{1}{2} \|e^{\beta x/2} \pd_{xx}\phi \|_{L^{2}_{x}}^2
+ \frac{1}{2} \|e^{\beta x/2} n \|_{L^{2}_{x}}^2 
\\
&\quad +C(\Phi_{b} + \|n\|_{H^{1}_{x}})\|e^{\beta x/2} n \|_{L^{2}_{x}}^2,
\end{align*}
where we have also used  \eqref{ellineq3} with $k=0$, \eqref{ellineq1}, and \eqref{N1} in deriving the last inequality.
This immediately gives \er{ellineq2}.
The proof is complete.
\end{proof}

\section{Time-local Solvability}\label{LS}
In this section, we prove Proposition \ref{local1} on the time-local solvability of the problem \eqref{ree0}.
We start from studying the following linearized and semi-linearized problems:
\begin{subequations}\label{l1}
\begin{gather}
{\cal L} f := \pd_{t} f + \xi_{1}\partial_{x} f + (\partial_{x} \Phi^{s} + \partial_{x}\hat{\phi}) \partial_{\xi_{1}}f 
-  \frac{\xi_{1}-u_{\infty}}{2\theta_{\infty}}  (\partial_{x} \Phi^{s} + \partial_{x}\hat{\phi})   f = \hat{F},
\label{leq1}\\
f(0,x,\xi) = f_{0}(x,\xi), \quad
f(t,0,\xi) = 0, \ \xi_{1}>0, \quad
\lim_{x \to\infty} f(t,x,\xi) = 0
\label{libc1}
\end{gather}
\end{subequations}
and
\begin{subequations}\label{l2}
\begin{gather}
\partial_{xx} \phi = \hat{n} - (n_{e}(\Phi^{s}+\phi)-n_{e}(\Phi^{s})), 
\label{leq2} \\
\phi(t,0)=0, \quad
\lim_{x \to\infty} \phi(t,x) = 0.
\label{libc2}
\end{gather}
\end{subequations}
The solvability of the problems \eqref{l1} and \eqref{l2} are summarized 
in Lemmas \ref{sovl1} and \ref{sovl2} below, respectively.
\begin{lem}\label{sovl1}
Let $\alpha>0$, $s>0$, and $R \geq 0$. 
Suppose that 
\begin{subequations}\label{lasp1}
\begin{gather}
\hat{\phi} \in C([0,\hat{T}];L_{x}^{2}) \cap L^{\infty}(0,\hat{T};H_{x}^{3}), 
\label{lphi1}\\
e^{\alpha x /2} \hat{F} \in C([0,\hat{T}];H^{1}_{0,\Gamma}), \ \ {\rm supp} \hat{F} \! \subset \! \{ (t, x,\xi) \in \overline{[0,\hat{T}] \! \times \! {\mathbb R}_{+} \! \times \! {\mathbb R}^{3}} \, | \, |x|^{2}\!+\!|\xi_{1}|^{2}\!\geq\! s^{2}, \ \xi_{1} \leq R \},
\label{lF1}\\
e^{\alpha x /2} f_{0} \in H^{1}_{0,\Gamma},\ \
{\rm supp} f_{0} \subset  \{ (x,\xi) \in \overline{{\mathbb R}_{+} \times{\mathbb R}^{3}} \, | \, |x|^{2}+|\xi_{1}|^{2}\geq s^{2}, \ \xi_{1} \leq R \}
\label{lini1}
\end{gather}
\end{subequations}
for some constant $\hat{T}>0$. 
Then there exist constants $T_{0} \in (0,\hat{T}]$ and $\delta_{0} \in (0,1]$ such that if $\Phi_{b} \leq \delta_{0}$ and $\sup_{t \in [0,\hat{T}]} \| \hat{\phi} (t) \|_{H^{3}_{x}} \leq \delta_{0}$, the initial--boundary value problem \eqref{l1} has a unique solution $f$ that satisfies
\begin{gather}
e^{\alpha x /2} f \in C([0,T_{0}];H^{1}_{x,\xi}), \quad   e^{\alpha x /2} (1+|\xi_{1}|)^{-1} f \in C^{1}([0,T_{0}];L_{x,\xi}^{2}). 
\label{Class_solution}
\end{gather}
Furthermore, the solution $f$ satisfies the following:
\begin{subequations}\label{localpro1}
\begin{gather}
f \in {\cal X}_{e}(T_{0},\alpha), 
\label{localpro1a}\\
{\rm supp} f \subset \{ (t, x,\xi) \in \overline{[0,T_{0}]  \times \! {\mathbb R}_{+}  \times  {\mathbb R}^{3}} \, | \,  \xi_{1} \leq R +C_{0} t \}, 
\label{localpro1b}\\
\begin{aligned}
&\| e^{\alpha x /2} f(t)\|_{H_{x,\xi}^1}^2 
+\frac{\alpha}{2} \sum_{k=0}^{1} \int_0^te^{C_0(t-\tau)} \left\||\xi_1|^{1/2} \nabla_{x,\xi}^{k} (e^{\alpha x /2} f)(\tau) \chi(-\xi_{1})   \right\|_{L_{x,\xi}^2}^2 \,d\tau
\\
& \leq e^{C_0 t}\| e^{\alpha x /2} f_{0}\|_{H_{x,\xi}^1}^2 
+\int_0^t e^{C_0(t-\tau)}\|e^{\alpha x /2} \hat{F}(\tau)\|_{H_{x,\xi}^1}^2\,d\tau \quad \text{for $t \in [0,T_{0}]$},
\label{localpro1c}
\end{aligned}
\end{gather}
\end{subequations}
where $C_0$ is a positive constant. 
\end{lem}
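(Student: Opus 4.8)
The plan is to solve the linearized transport problem \eqref{l1} by the method of characteristics, and then upgrade the regularity via an energy estimate that simultaneously yields the support propagation \eqref{localpro1b} and the weighted inequality \eqref{localpro1c}. First I would fix the given $\hat\phi$ with $\sup_{t}\|\hat\phi(t)\|_{H^3_x}\le\delta_0$ small, so that $b(t,x):=\pd_x\Phi^s(x)+\pd_x\hat\phi(t,x)$ is a bounded Lipschitz coefficient (using \eqref{decay1} and the Sobolev embedding $H^3_x\hookrightarrow C^2$); then the characteristic ODE system $\dot X=\Xi_1$, $\dot\Xi_1=b(t,X)$, $\dot\Xi'=0$ is globally well-posed and generates a flow. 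Along characteristics the equation \eqref{leq1} becomes a linear scalar ODE for $f$ with source $\hat F$ and multiplicative coefficient $\tfrac{\Xi_1-u_\infty}{2\theta_\infty}b(t,X)$, which is integrable; so $f$ is obtained by the Duhamel formula along the flow. The characteristic boundary point $(x,\xi_1)=(0,0)$ is handled exactly as indicated in Section~3: because $\supp f_0$ and $\supp\hat F$ stay away from $\{|x|^2+|\xi_1|^2<s^2\}$ and $\{\xi_1>R\}$, the flow carries this information forward, giving $X^2+\Xi_1^2$ bounded below along trajectories emanating from the data and $\Xi_1\le R+C_0t$ with $C_0:=\|b\|_{L^\infty}$; this is precisely \eqref{localpro1b}, and it also shows $f$ vanishes near the degenerate point so no regularity is lost there. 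Uniqueness is immediate from the characteristic representation.

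Next I would establish the regularity claims \eqref{Class_solution}, \eqref{localpro1a} and the estimate \eqref{localpro1c}. The cleanest route is an $L^2$ energy estimate for $g:=e^{\alpha x/2}f$: multiply the equation satisfied by $g$ (which is \eqref{leq1} conjugated by $e^{\alpha x/2}$, producing an extra term $-\tfrac{\alpha}{2}\xi_1 g$) by $g$ and integrate over ${\mathbb R}_+\times{\mathbb R}^3$. The transport term $\xi_1\pd_x g$ contributes a boundary term $-\tfrac12\int_{\xi_1<0}|\xi_1|\,|g(t,0,\xi)|^2\,d\xi\le0$ (using the inflow condition $g(t,0,\xi)=0$ for $\xi_1>0$) plus, after integration by parts with the weight, the crucial dissipative term $+\tfrac{\alpha}{2}\int\xi_1 |g|^2\,dx\,d\xi$, whose negative part $\tfrac{\alpha}{2}\||\xi_1|^{1/2}g\chi(-\xi_1)\|_{L^2}^2$ is exactly what appears on the left of \eqref{localpro1c}; the $\pd_{\xi_1}$-term and the multiplicative term are controlled by $C_0\|g\|_{L^2}^2$ using boundedness of $b$ and of $\xi_1$ on $\supp f$ (here the upper bound $\xi_1\le R+C_0t$ and $\theta_\infty,u_\infty$ fixed matter), and the source is absorbed by Cauchy--Schwarz. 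Differentiating the equation in $x$ and in $\xi_j$ gives the same structure for $\nabla_{x,\xi}g$, modulo commutator terms of lower order that are again bounded by $C_0\|\nabla_{x,\xi}g\|_{L^2}^2+C\|g\|_{H^1_{x,\xi}}^2$ (the $\pd_{\xi_1}$ of $\xi_1\pd_x$ produces $\pd_x g$, etc.); summing over $k=0,1$ and applying Gronwall's inequality yields \eqref{localpro1c} with the $e^{C_0 t}$ and $e^{C_0(t-\tau)}$ factors. Continuity in time, i.e. \eqref{Class_solution} and membership in ${\cal X}_e(T_0,\alpha)$, follows from the standard density/weak-continuity plus norm-continuity argument together with the equation itself for the $C^1$-in-$t$ statement on $e^{\alpha x/2}(1+|\xi_1|)^{-1}f$.

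I expect the main obstacle to be the a~priori control of the commutator terms arising when differentiating \eqref{leq1}, in particular those generated by the $\xi_1$-weighted multiplicative term $\tfrac{\xi_1-u_\infty}{2\theta_\infty}b\,f$ and by the weight $e^{\alpha x/2}$ interacting with $\xi_1\pd_x$: the factor $\xi_1$ is unbounded in principle, and it is only the carefully arranged support property $\supp f\subset\{\xi_1\le R+C_0t\}$ (enforced by the characteristic construction, not merely assumed) that keeps all these terms of size $C_0\|\cdot\|_{H^1_{x,\xi}}^2$ with a constant depending on $R$, $\alpha$, and on the fixed parameters $u_\infty,\theta_\infty$. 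One must also be careful that the boundary term at $x=0$ from the $x$-differentiated equation still has a favorable sign (or vanishes): here the characteristic boundary structure at $\xi_1=0$ would normally cause a loss of one derivative, but since $f$ and its derivatives vanish in a neighborhood of $(x,\xi_1)=(0,0)$, the boundary integral splits into an $\{\xi_1>0\}$ part that is zero by the boundary condition and an $\{\xi_1<0\}$ part with the good sign, with no contribution from the degenerate set. Establishing rigorously that the solution obtained by characteristics is genuinely $H^1$ (and not merely that the formal energy identity holds) requires either a regularization of the coefficients $b$ and source $\hat F$ followed by passage to the limit, or a direct verification that $\nabla_{x,\xi}$ of the characteristic flow depends $L^2$-continuously on the data — this technical point, routine but delicate at the characteristic boundary, is where I expect most of the work to lie.
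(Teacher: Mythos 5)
Your overall strategy (exploit the fact that the data are supported away from the degenerate corner $(x,\xi_1)=(0,0)$, then run a weighted $H^1$ energy estimate with the boundary dissipation on $\{\xi_1<0\}$ and Gronwall) matches the paper's, although your construction differs: the paper extends $\Phi^s,\hat\phi,\hat F,f_0$ to the whole line in $x$, truncates the coefficient $\xi_1$ to a bounded $\Xi_1(\xi_1;K)$, solves the resulting initial value problem by standard hyperbolic theory, and then proves by divergence-theorem arguments on moving domains that the solution vanishes on $\mathbb R_-\times\mathbb R^3_+$ (recovering the boundary condition) and on $\{|\xi_1|\ge K-1\}$ (so $\Xi_1=\xi_1$ on the support and the modified equation coincides with \eqref{leq1}). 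Your characteristics-on-the-half-line route could in principle replace that construction, and your observation that the protected ball around the corner propagates is the same mechanism as the paper's shrinking radius $s(t)=(2c_0+\tilde s)e^{-t/2}-2c_0$, which is what forces $T_0$ to depend on $s$.

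There is, however, a genuine gap in your energy estimate. You claim the multiplicative term $\tfrac{\xi_1-u_\infty}{2\theta_\infty}(\partial_x\Phi^s+\partial_x\hat\phi)f$ is ``controlled by $C_0\|g\|_{L^2}^2$ using boundedness of $\ldots$ $\xi_1$ on $\supp f$.'' But the hypotheses \eqref{lF1}--\eqref{lini1} give only an \emph{upper} bound $\xi_1\le R$; there is no lower bound, so $|\xi_1|$ is unbounded on $\supp f$ as $\xi_1\to-\infty$ and the term cannot be dominated by $C_0\|g\|_{L^2}^2$. The correct argument, and the reason the smallness hypotheses $\Phi_b\le\delta_0$ and $\sup_t\|\hat\phi(t)\|_{H^3_x}\le\delta_0$ appear in the statement at all, is that on $\{\xi_1<0\}$ this contribution is bounded by $\tfrac{C_*\delta_0}{\theta_\infty}\bigl\||\xi_1|^{1/2}g\,\chi(-\xi_1)\bigr\|_{L^2_{x,\xi}}^2$ and must be \emph{absorbed} into the dissipative term $\alpha\bigl\||\xi_1|^{1/2}g\,\chi(-\xi_1)\bigr\|_{L^2_{x,\xi}}^2$ generated by the weight $e^{\alpha x}$, with $\delta_0$ chosen small relative to $\alpha\theta_\infty$; this is also precisely why only the factor $\alpha/2$, not $\alpha$, survives on the left of \eqref{localpro1c}. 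Relatedly, your existence construction must cope with this unboundedness: the paper first works with data additionally supported in $\{\xi_1\ge -L\}$ (so all coefficients are effectively bounded), proves the energy estimate with a constant \emph{independent of} $L$, and only then removes the lower cutoff by approximation. Without the absorption mechanism your constant $C_0$ would degenerate as the lower support bound is removed, and the estimate \eqref{localpro1c} as stated would not follow.
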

\begin{proof} 
We show this lemma in Appendix \ref{SB}, since the proof is long.
\end{proof}

\begin{lem}\label{sovl2}
Let $\alpha \in [0,1]$. Suppose that $e^{\alpha x /2} \hat{n} \in C([0,\hat{T}];H_{x}^{1})$ for $\hat{T}>0$.
Then there exists a constant $\delta_{1}>0$ such that if $\Phi_{b} \leq \delta_{1}$ and $\sup_{t \in [0,\hat{T}]}\|e^{\alpha x /2} \hat{n}(t)\|_{H_{x}^{1}} \leq \delta_{1}$, 
the boundary value problem \eqref{l2} has a unique solution $\phi$ that satisfies $e^{\alpha x /2} \phi \in C([0,\hat{T}];H_{x}^{3})$.
\end{lem}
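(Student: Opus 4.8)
The plan is to solve the boundary value problem \eqref{l2} separately for each fixed $t \in [0,\hat T]$, treating it as a semilinear elliptic two-point problem on the half-line, and then to upgrade the resulting pointwise-in-$t$ solutions to a curve that is continuous in $t$. For the pointwise problem I would use a contraction-mapping argument in the weighted space $\{\phi \, | \, e^{\alpha x/2}\phi \in H^3_x\}$, exploiting the structural fact $n_e'(0) = -1$ from \eqref{n_e} to make the nonlinearity effectively quadratically small.

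Concretely, rewrite \eqref{leq2} as in \eqref{pe3}, namely
\[
\pd_{xx}\phi - \phi = \hat{n} + {\cal N}_1[\phi], \qquad {\cal N}_1[\phi] := -\Bigl(1 + \int_0^1 n_e'(\Phi^s + \theta\phi)\,d\theta\Bigr)\phi,
\]
and let ${\cal S}$ denote the solution operator of the linear problem $\pd_{xx} w - w = g$ on $\mathbb{R}_+$ with $w(0)=0$, $w(\infty)=0$, given by convolution against the half-line Green's function $G(x,y) = \tfrac12\bigl(e^{-|x-y|} - e^{-(x+y)}\bigr)$. Since the kernel is dominated by $e^{-|x-y|}$ and $\alpha \le 1$ lies comfortably below the threshold $2$ at which the weight $e^{\alpha x/2}$ would cease to be subordinate to this decay, a Schur-type estimate shows that ${\cal S}$ is bounded from $\{g \, | \, e^{\alpha x/2} g \in H^1_x\}$ into $\{w \, | \, e^{\alpha x/2} w \in H^3_x\}$; the same weighted bound can alternatively be read off from the a priori estimates in Lemma \ref{ell1} combined with a routine continuity argument.

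The heart of the matter is the nonlinear estimate. Because $n_e \in C^2$ and $n_e'(0) = -1$, one has $|1 + n_e'(s)| \le C|s|$ for $|s|$ small; together with $0 \le \Phi^s \le \Phi_b$ and the decay \eqref{decay0}, and the one-dimensional embedding $H^1_x \hookrightarrow L^\infty_x$, this yields
\begin{align*}
\|e^{\alpha x/2}{\cal N}_1[\phi]\|_{H^1_x} &\le C\bigl(\Phi_b + \|e^{\alpha x/2}\phi\|_{H^1_x}\bigr)\|e^{\alpha x/2}\phi\|_{H^1_x},\\
\|e^{\alpha x/2}({\cal N}_1[\phi_1] - {\cal N}_1[\phi_2])\|_{H^1_x} &\le C\bigl(\Phi_b + \|e^{\alpha x/2}\phi_1\|_{H^1_x} + \|e^{\alpha x/2}\phi_2\|_{H^1_x}\bigr)\|e^{\alpha x/2}(\phi_1 - \phi_2)\|_{H^1_x}.
\end{align*}
Hence the map $\phi \mapsto {\cal S}(\hat{n} + {\cal N}_1[\phi])$ leaves invariant the ball $\{\|e^{\alpha x/2}\phi\|_{H^3_x} \le 2C\|e^{\alpha x/2}\hat{n}\|_{H^1_x}\}$ and is a contraction on it once $\delta_1$ — bounding both $\Phi_b$ and $\sup_{t}\|e^{\alpha x/2}\hat{n}(t)\|_{H^1_x}$ — is small enough; Banach's fixed point theorem then produces, for each $t$, a unique small $\phi(t,\cdot)$ with $e^{\alpha x/2}\phi(t,\cdot) \in H^3_x$. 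Uniqueness in the full class follows from the monotonicity $\pd_\phi(n_e(\Phi^s + \phi)) = n_e'(\Phi^s + \phi) < 0$ of \eqref{n_e} via the energy identity already used to prove \eqref{ellineq0} in Lemma \ref{ell1}, and the $H^3$ regularity is clear from the equation itself once $\phi$ is known in weighted $H^1$.

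Finally, for the continuity in time I would subtract the equations at times $t$ and $t'$, apply the weighted elliptic estimate of Lemma \ref{ell1} to $\phi(t) - \phi(t')$, and absorb the nonlinear difference using the Lipschitz bound above (legitimate since $\Phi_b$ is small), obtaining $\|e^{\alpha x/2}(\phi(t) - \phi(t'))\|_{H^3_x} \le C\|e^{\alpha x/2}(\hat{n}(t) - \hat{n}(t'))\|_{H^1_x}$; thus $e^{\alpha x/2}\phi \in C([0,\hat T];H^3_x)$ whenever $e^{\alpha x/2}\hat{n} \in C([0,\hat T];H^1_x)$. I expect the only genuine (and still mild) obstacle to be keeping the exponential weight $e^{\alpha x/2}$ under control simultaneously through the Green's-function representation and through the products defining ${\cal N}_1$ and its derivative — but the hypothesis $\alpha \le 1$ is precisely what guarantees the weighted boundedness of ${\cal S}$, and the exponential decay of $\Phi^s$ and $\pd_x\Phi^s$ from \eqref{decay0} handles the remaining weighted products.
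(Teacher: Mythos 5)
Your argument is correct, and it is worth noting that the paper does not actually prove Lemma \ref{sovl2} in-house: its ``proof'' is a one-line deferral to subsection 3.1 of \cite{M.S.1}. Your contraction-mapping construction is therefore a legitimate, self-contained way to supply the missing details, and you have correctly isolated the structural points that make it work: the rewriting \eqref{pe3} so that the linear part is $\pd_{xx}-1$; the fact that $n_e'(0)=-1$ together with $0\le\Phi^s\le\Phi_b$ and $H^1_x\hookrightarrow L^\infty_x$ makes ${\cal N}_1$ quadratically small; the observation that the half-line Green's kernel decays like $e^{-|x-y|}$ while the weight only grows like $e^{\alpha x/2}$ with $\alpha\le 1<2$, so the solution operator is bounded on the weighted spaces; and uniqueness in the full class from the monotonicity $n_e'<0$, which is exactly the coercivity already exploited in the proof of \eqref{ellineq0}. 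The continuity-in-$t$ step by subtracting the equations at two times and absorbing the Lipschitz-small nonlinear difference is also the standard and correct conclusion. Two cosmetic remarks: the Green's function should carry a minus sign, $G(x,y)=-\tfrac12\bigl(e^{-|x-y|}-e^{-(x+y)}\bigr)$, which of course does not affect any estimate; and in bounding $\pd_x{\cal N}_1$ the term $\bigl(\int_0^1 n_e''\,\theta\,\pd_x\phi\,d\theta\bigr)\phi$ must be estimated by placing the $L^\infty$ norm on $\phi$ rather than on $\pd_x\phi$ in order to close at the $H^1$ level as you claim --- this works, but is worth stating explicitly. Compared with deriving existence from the a priori bounds of Lemmas \ref{ell0}--\ref{ell1} via a monotone-operator or continuation argument (the route suggested by the reference), your fixed-point scheme is more elementary and gives the Lipschitz dependence $\phi=\phi[\hat n]$ for free, which is precisely what the time-continuity step needs.
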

\begin{proof}
This lemma can be shown in much the same way as in subsection 3.1 in \cite{M.S.1}. 
\end{proof}

We define the set ${\cal X}_{e}(T,\alpha,K) \times {\cal Y}_{e}(T,\alpha,L)$ to be the collection of the elements $(\hat{f},\hat{\phi})$ satisfying
\begin{gather*}
(\hat{f},\hat{\phi}) \in  {\cal X}_{e}(T,\alpha) \times {\cal Y}_{e}(T,\alpha),
\\
\sup_{t \in [0,T]} \|e^{\alpha x/ 2} \hat{f} (t) \|_{H^{1}_{x,\xi}}^{2} 
+ \frac{\alpha}{2} \int_{0}^{T} \left\| e^{\alpha x/ 2} |\xi_{1}|^{1/2}  \hat{f}(\tau) \chi(-\xi_{1})   \right\|_{L^{2}_{x,\xi}}^{2} \,d\tau \leq K^{2},
\\
\sup_{t \in [0,T]} \| e^{\alpha x/ 2} \hat{\phi} (t) \|_{H^{3}_{x}}^{2} \leq L^{2}.
\end{gather*}
Let us show that there exist positive constants $T$, $K$, and $L$ such that
${\cal X}_{e}(T,\alpha,K) \times {\cal Y}_{e}(T,\alpha,L)$ is invariant under the mapping 
$A:(\hat{f},\hat{\phi}) \mapsto (f,\phi)$ defined by solving \eqref{l1} and \eqref{l2} 
with $\hat{F}=-(\pd_{\xi_{1}}F^{s}) M_{\infty}^{-1/2} \partial_{x} \hat{\phi}$ and $\hat{n}= \int_{\mathbb R^{3}}  M_{\infty}^{1/2} \hat{f} d\xi$.

\begin{lem}\label{invariant}
Let $\alpha \in (0,1)$, $s \in (0,r] $, and $R \geq 1$ for $r>0$ being in \eqref{cutoff0}.
There exist constants $\delta_{\sharp} \in (0,1]$, $K_{\sharp} \geq 1$, $L_{\sharp} \geq 1$, and $T_{\sharp} \in (0,1]$
such that for any $\delta \in (0,\delta_{\sharp}] $ and $T \in (0,T_{\sharp}]$,
if the initial data $f_{0}$ satisfies \eqref{lini1}, $\Phi_{b}+\|e^{\alpha x /2} f_{0}\|_{H_{x,\xi}^{1}} \leq \delta$ holds,
and $(\hat{f},\hat{\phi})$ belongs to ${\cal X}_{e}(T,\alpha,K_{\sharp}\delta) \times {\cal Y}_{e}(T,\alpha,L_{\sharp}\delta)$,
then the problem \eqref{l1} with $\hat{F}=-(\pd_{\xi_{1}}F^{s}) M_{\infty}^{-1/2} \partial_{x} \hat{\phi}$ has a unique solution $f \in {\cal X}_{e}(T,\alpha,K_{\sharp}\delta)$, and the problem \eqref{l2} with $\hat{n}= \int_{\mathbb R^{3}}  M_{\infty}^{1/2} \hat{f} d\xi$ has a unique solution $\phi \in {\cal Y}_{e}(T,\alpha,L_{\sharp}\delta)$.
\end{lem}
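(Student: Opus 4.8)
The plan is to exhibit explicit constants $\delta_\sharp, K_\sharp, L_\sharp, T_\sharp$ for which the invariance holds, and the natural order is: (1) first control $\phi$ via the elliptic solver, (2) then control $f$ via the linear transport solver, each time feeding the given a priori bounds on $(\hat f,\hat\phi)$ through the source terms. For the $\phi$-estimate, I would apply Lemma~\ref{sovl2} with $\hat n = n[\hat f] = \int_{\mathbb R^3} M_\infty^{1/2}\hat f\,d\xi$: by Lemma~\ref{nmlem} (estimate \eqref{nm3} with $\alpha$ and $k=0,1$), $\|e^{\alpha x/2}\hat n\|_{H^1_x} \le C\|e^{\alpha x/2}\hat f\|_{H^1_{x,\xi}} \le C K_\sharp\delta$, so choosing $\delta_\sharp$ small (so $C K_\sharp \delta_\sharp \le \delta_1$ with $\delta_1$ from Lemma~\ref{sovl2}) gives a unique solution $\phi$ with $e^{\alpha x/2}\phi\in C([0,T];H^3_x)$. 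To upgrade this membership to the quantitative bound $\sup_{t}\|e^{\alpha x/2}\phi(t)\|^2_{H^3_x}\le L_\sharp^2\delta^2$, I would invoke the elliptic estimate \eqref{ellineq3} (with $k=0,1$) from Lemma~\ref{ell1}, which gives $\|e^{\alpha x/2}\phi\|_{H^3_x}^2 \le C\sum_{i=0}^1\|e^{\alpha x/2}\partial_x^i \hat n\|_{L^2_x}^2 \le C\|e^{\alpha x/2}\hat f\|_{H^1_{x,\xi}}^2 \le C K_\sharp^2\delta^2$. Hence $L_\sharp := (C^{1/2}+1)K_\sharp$ (say) works, provided $\delta_\sharp$ is small enough that the smallness hypotheses of Lemmas~\ref{ell1} and \ref{sovl2} are met.

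For the $f$-estimate, I would apply Lemma~\ref{sovl1} with the same $\alpha$, with $s$ as given, $R$ as given, and with the transport coefficient $\hat\phi$ and source $\hat F = -(\partial_{\xi_1}F^s)M_\infty^{-1/2}\partial_x\hat\phi$. The hypotheses \eqref{lasp1} must be checked: \eqref{lphi1} and the $H^3$-smallness of $\hat\phi$ follow from $(\hat f,\hat\phi)\in {\cal Y}_e(T,\alpha,L_\sharp\delta)$ after shrinking $\delta_\sharp$ so that $L_\sharp\delta_\sharp \le \delta_0$ (the $\delta_0$ of Lemma~\ref{sovl1}); the support and $H^1_{0,\Gamma}$ conditions on $\hat F$ in \eqref{lF1} follow from \eqref{r2} (which puts $\supp(\partial_{\xi_1}F^s)\subset\{\xi_1\le -r\}\subset\{\xi_1\le R\}$ since $R\ge 1>0$, and away from the origin in $(x,\xi_1)$ since... actually one only needs $\xi_1\le R$ and $|x|^2+|\xi_1|^2\ge s^2$; for $\xi_1\le -r\le -s$ the latter is automatic) together with the bounds \eqref{Fs1*} on $\nabla_\xi(\partial_{\xi_1}F^s/M_\infty^{1/2})$ in $L^2_\xi$; \eqref{lini1} is the standing hypothesis on $f_0$. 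This produces a unique solution $f\in {\cal X}_e(T_0,\alpha)$ satisfying \eqref{localpro1}. For the quantitative bound, I would feed the energy inequality \eqref{localpro1c} the estimate $\|e^{\alpha x/2}\hat F(\tau)\|_{H^1_{x,\xi}}^2 \le C\|e^{\alpha x/2}\partial_x\hat\phi(\tau)\|_{H^1_x}^2 \le C L_\sharp^2\delta^2$ — here using \eqref{Fs1*} and the $\xi$-support property \eqref{r2} of $F^s$ (so that the $M_\infty^{-1/2}$ weight against $\partial_{\xi_1}F^s$ stays in $L^2_\xi$, and $x$-derivatives land on $\partial_x\hat\phi$, which is controlled in $H^1_x$, i.e. through $\|e^{\alpha x/2}\hat\phi\|_{H^3_x}$). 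Then \eqref{localpro1c} gives
\begin{gather*}
\sup_{t\in[0,T]}\|e^{\alpha x/2}f(t)\|_{H^1_{x,\xi}}^2 + \frac{\alpha}{2}\int_0^T\big\||\xi_1|^{1/2}\nabla_{x,\xi}^k(e^{\alpha x/2}f)(\tau)\chi(-\xi_1)\big\|_{L^2_{x,\xi}}^2\,d\tau \\
\le e^{C_0 T}\|e^{\alpha x/2}f_0\|_{H^1_{x,\xi}}^2 + T e^{C_0 T}\, C L_\sharp^2\delta^2.
\end{gather*}
Using $\|e^{\alpha x/2}f_0\|_{H^1_{x,\xi}}\le\delta$, this is bounded by $e^{C_0 T}(1 + T C L_\sharp^2)\delta^2$; since $L_\sharp$ depends only on $K_\sharp$ and $K_\sharp$ is ours to fix, I set $K_\sharp := 2$ (so $K_\sharp^2=4$), then choose $T_\sharp\le 1$ small enough that $e^{C_0 T_\sharp}(1+T_\sharp C L_\sharp^2)\le 4$ for all $T\le T_\sharp$, which closes the bound $f\in {\cal X}_e(T,\alpha,K_\sharp\delta)$. (One must double-check that $\nabla_{x,\xi}$ in \eqref{localpro1c} produces only $x$- and $\xi$-derivatives of $\hat F$, hence at worst $\partial_x$ and $\nabla_\xi$ on $(\partial_{\xi_1}F^s)M_\infty^{-1/2}$, both covered by \eqref{M3*}–\eqref{Fs1*}, against $\partial_x\hat\phi$ controlled in $H^2_x\subset H^1_x$ — so in fact $\|\hat\phi\|_{H^3_x}$ suffices, matching the ${\cal Y}_e$-norm.) Finally, the note that $L_\sharp\ge 1$ and $K_\sharp\ge 1$ is arranged by taking the maxima with $1$ if needed.

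**Main obstacle.** The constants $K_\sharp, L_\sharp, T_\sharp, \delta_\sharp$ are interdependent, and the delicate point is the order in which they must be fixed: $L_\sharp$ is forced by the elliptic estimate once $K_\sharp$ is chosen (it cannot be shrunk afterward), so the transport source bound $\|\hat F\|\lesssim L_\sharp\delta$ carries a factor $L_\sharp$ that is \emph{not} small, and the only way to absorb it back into $K_\sharp^2 = 4$ is to buy smallness from the time $T$ via the $Te^{C_0T}$ factor in \eqref{localpro1c}. Hence $T_\sharp$ must be chosen \emph{last}, after $K_\sharp$ and $L_\sharp$ are frozen, and small enough to beat $CL_\sharp^2$; simultaneously $\delta_\sharp$ must be chosen small enough to satisfy \emph{all} the smallness hypotheses — those of Lemmas~\ref{ell1}, \ref{sovl1}, \ref{sovl2} — with the inflated arguments $K_\sharp\delta$ and $L_\sharp\delta$. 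The bookkeeping is routine but must be presented in the correct dependency order ($K_\sharp$, then $L_\sharp$, then $\delta_\sharp$, then $T_\sharp$) to avoid circularity; none of the estimates themselves are hard, as they are all quoted from Section~\ref{S4} and Lemmas~\ref{sovl1}–\ref{sovl2}.
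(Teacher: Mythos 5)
The paper itself does not give a detailed proof of Lemma~\ref{invariant}; it simply asserts that the result ``can be shown by the energy method'' and refers to the (more involved) arguments of Propositions~\ref{apriori1} and~\ref{apriori2}. Your reconstruction is essentially what that remark points to, and it is correct in structure and in substance: you first solve the elliptic problem \eqref{l2} via Lemma~\ref{sovl2} and quantify $\phi$ via Lemma~\ref{nmlem} (\eqref{nm3}) and the elliptic estimate \eqref{ellineq3}, then solve the transport problem \eqref{l1} via Lemma~\ref{sovl1} and quantify $f$ via the energy inequality \eqref{localpro1c}, and you correctly identify the key bookkeeping point, namely that the constants must be fixed in the order $K_\sharp$, then $L_\sharp$, then $\delta_\sharp$, then $T_\sharp$, with $T_\sharp$ chosen last so that the factor $T e^{C_0 T}$ absorbs the non-small factor $L_\sharp^2$ coming from the source $\hat F = -(\partial_{\xi_1}F^s)M_\infty^{-1/2}\partial_x\hat\phi$. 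The verification of \eqref{lF1} via \eqref{r2} (so that $\supp\hat F\subset\{\xi_1\le -r\}$, which implies both the distance-from-origin condition and the $\xi_1\le R$ condition, and makes the $H^1_{0,\Gamma}$ membership automatic) and via the $L^2_\xi$ bounds on $\partial_{\xi_1}F^s/M_\infty^{1/2}$, its $x$-derivative, and its $\xi$-gradient (i.e. \eqref{M0}, \eqref{M2*}, \eqref{M3*}, \eqref{Fs1*}) is also correct.

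One small slip worth flagging: your displayed inequality reads as though you can take $\sup_{t\in[0,T]}$ of the first term of \eqref{localpro1c} and the integral over $[0,T]$ simultaneously. What \eqref{localpro1c} actually gives is that \emph{each} of $\sup_{t\in[0,T]}\|e^{\alpha x/2}f(t)\|_{H^1_{x,\xi}}^2$ and $\frac{\alpha}{2}\int_0^T\|e^{\alpha x/2}|\xi_1|^{1/2}f(\tau)\chi(-\xi_1)\|_{L^2_{x,\xi}}^2\,d\tau$ is bounded by $e^{C_0T}\|e^{\alpha x/2}f_0\|_{H^1_{x,\xi}}^2 + \int_0^T e^{C_0(T-\tau)}\|e^{\alpha x/2}\hat F(\tau)\|_{H^1_{x,\xi}}^2\,d\tau$, so their sum is bounded by \emph{twice} that quantity. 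This only changes the condition on $T_\sharp$ to $2e^{C_0T_\sharp}(1+T_\sharp C L_\sharp^2)\le K_\sharp^2$, which is still satisfiable for small $T_\sharp$ once $K_\sharp>\sqrt{2}$; your choice $K_\sharp=2$ works, so the argument closes as intended. This is not a gap in the approach, only a bookkeeping constant to insert.
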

\begin{proof}
This lemma can be shown by the energy method. We omit the proof, since it is simpler than those of Propositions \ref{apriori1} and \ref{apriori2}.
\end{proof}

In order to show Proposition \ref{local1}, it is sufficient to show Lemma \ref{local2} below. 

\begin{lem}\label{local2}
Let $\alpha \in (0,1)$, $s \in (0,r] $, and $R \geq 1$ for $r>0$ being in \eqref{cutoff0}.
There exist constants $\delta_{*} \in (0,1]$, $K_{*} \geq 1$, $L_{*} \geq 1$, and $T_{*} \in (0,1]$
such that if the initial data $f_{0}$ satisfies \eqref{lini1} and $d:=\Phi_{b}+\|e^{\alpha x /2} f_{0}\|_{H_{x,\xi}^{1}} \leq \delta_{*}$ holds,
then the nonlinear problem \eqref{ree0} has a unique solution $(f,\phi) \in {\cal X}_{\rm e}(T_{*},\alpha) \times {\cal Y}_{\rm e}(T_{*},\alpha)$. Furthermore, $(f,\phi) \in {\cal X}_{\rm e}(T_{*},\alpha,K_{*}d) \times {\cal Y}_{\rm e}(T_{*},\alpha,L_{*}d)$ holds.
\end{lem}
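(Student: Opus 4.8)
The plan is to construct the solution of the nonlinear problem \eqref{ree0} by a standard iteration argument built on the mapping $A:(\hat f,\hat\phi)\mapsto(f,\phi)$ introduced above, and then verify that $A$ is a contraction on a suitable complete metric space. First I would fix the constants: take $K_*=K_\sharp$, $L_*=L_\sharp$, $\delta_*=\delta_\sharp$, and $T_*=T_\sharp$ as provided by Lemma \ref{invariant}, possibly shrinking $T_*$ further to gain contractivity (see below). For initial data $f_0$ satisfying \eqref{lini1} with $d:=\Phi_b+\|e^{\alpha x/2}f_0\|_{H^1_{x,\xi}}\le\delta_*$, Lemma \ref{invariant} shows that the set $\mathcal{X}_{\rm e}(T_*,\alpha,K_*d)\times\mathcal{Y}_{\rm e}(T_*,\alpha,L_*d)$ is invariant under $A$ (note $K_*d\le K_\sharp\delta_\sharp$, so the hypotheses of Lemma \ref{invariant} are met with this choice of the small parameter $d$ in place of $\delta$).

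Next I would set up the iteration: start from $(\hat f^{(0)},\hat\phi^{(0)})=(0,0)$ (or the constant extension of the data) and define $(\hat f^{(j+1)},\hat\phi^{(j+1)}):=A(\hat f^{(j)},\hat\phi^{(j)})$, which by invariance stays in the ball. To show $\{(\hat f^{(j)},\hat\phi^{(j)})\}_j$ is Cauchy, I would estimate the differences $g^{(j)}:=\hat f^{(j+1)}-\hat f^{(j)}$ and $\chi^{(j)}:=\hat\phi^{(j+1)}-\hat\phi^{(j)}$ in a lower-regularity norm — namely the $L^2_{x,\xi}$ norm for $g$ (weighted by $e^{\alpha x/2}$) and $H^2_x$ for $\chi$ — since the difference of two transport equations loses one derivative through the $\partial_x\hat\phi\,\partial_{\xi_1}f$ term. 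The difference equation for $g^{(j)}$ is a linear transport equation of the form $\mathcal{L}g^{(j)}=\hat F_1+\hat F_2$, where $\hat F_1$ collects the terms involving $\partial_x\chi^{(j-1)}$ acting on the previous iterate (controlled by $\|\chi^{(j-1)}\|_{H^3_x}$ times the $H^1$ bound on $\hat f^{(j)}$, which is $\le K_*d$) and $\hat F_2=-(\partial_{\xi_1}F^s)M_\infty^{-1/2}\partial_x\chi^{(j)}$; for the Poisson difference $\chi^{(j)}$ one uses the elliptic estimate \eqref{ellineq0} (applied to the difference equation, whose right side is the difference of the source terms $n[\,\cdot\,]$) to get $\|\chi^{(j)}\|_{H^3_x}\le C\|e^{\alpha x/2}g^{(j-1)}\|_{L^2_{x,\xi}}$ via Lemma \ref{nmlem}. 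Running the energy estimate \eqref{localpro1c} (in its $L^2$, i.e.\ $k=0$, form) over $[0,T_*]$ on $g^{(j)}$ and absorbing the dissipative boundary term, one obtains
\begin{equation*}
\sup_{[0,T_*]}\|e^{\alpha x/2}g^{(j)}(t)\|_{L^2_{x,\xi}}
\le C(e^{C_0T_*}-1)^{1/2}\,\big(\text{const}\big)\sup_{[0,T_*]}\|e^{\alpha x/2}g^{(j-1)}(t)\|_{L^2_{x,\xi}},
\end{equation*}
and after shrinking $T_*$ so that the prefactor is $<1/2$, the map is a contraction in this weaker norm. Hence the iterates converge in $C([0,T_*];L^2_{x,\xi})\times C([0,T_*];H^2_x)$ to a limit $(f,\phi)$, and the uniform higher-regularity bounds (membership in the ball of $\mathcal{X}_{\rm e}\times\mathcal{Y}_{\rm e}$) are preserved in the limit by weak-$*$ compactness; passing to the limit in the (linear-in-the-unknown-at-each-step) equations shows $(f,\phi)$ solves \eqref{ree0}. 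Uniqueness in $\mathcal{X}_{\rm e}(T_*,\alpha)\times\mathcal{Y}_{\rm e}(T_*,\alpha)$ follows from the same difference estimate applied to two putative solutions together with a Gronwall argument.

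The main obstacle I expect is the loss of derivative in the contraction step: the natural fixed-point space is $H^1_{x,\xi}\times H^3_x$ (needed so that $\partial_x\hat\phi\,\partial_{\xi_1}f$ and the source $(\partial_{\xi_1}F^s)M_\infty^{-1/2}\partial_x\hat\phi$ make sense and so that Lemma \ref{invariant} applies), but the difference of two solutions can only be controlled one derivative lower, so one must carefully work in two norms simultaneously — the high norm for the invariant ball and the low norm for contraction — and check that the low-norm Cauchy property plus high-norm boundedness yields a genuine limit solving the equation (a now-standard but technically delicate Bona–Smith-type argument). A secondary point requiring care is propagating the support condition \eqref{localpro1b}: each iterate's support expands at the finite speed $C_0 t$ in the $\xi_1$-direction, so on the fixed short interval $[0,T_*]\subset[0,1]$ the supports stay inside $\{\xi_1\le R+C_0\}$ and bounded away from the characteristic point $(0,0,\xi')$ by the $|x|^2+|\xi_1|^2\ge s^2$ condition — this must be checked uniformly in $j$ so that the limiting solution inherits the same support structure, which is exactly what is needed to invoke Proposition \ref{local1}'s hypotheses downstream.
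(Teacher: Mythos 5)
Your overall strategy --- Picard iteration using the mapping $A$ from Lemma \ref{invariant}, invariance of the ball ${\cal X}_{\rm e}(T,\alpha,K_\sharp\delta)\times{\cal Y}_{\rm e}(T,\alpha,L_\sharp\delta)$, a Cauchy estimate in the lower-regularity norm $C([0,T];L^2_{x,\xi})\times C([0,T];H^2_x)$, and then passage to the limit --- is the same as the paper's. The difference is in how the limit is upgraded to the full solution class.

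Your claim that ``membership in the ball of $\mathcal{X}_{\rm e}\times\mathcal{Y}_{\rm e}$ is preserved in the limit by weak-$*$ compactness'' is not quite right as stated: Banach--Alaoglu on the uniform bounds gives only $e^{\alpha x/2}f\in L^\infty(0,T_*;H^1_{x,\xi})$ (and similarly for $\phi$), whereas $\mathcal{X}_{\rm e}$ and $\mathcal{Y}_{\rm e}$ require genuine time-continuity as well as the $C^1$-in-time and $L^2_tL^2_{x,\xi}$-dissipation properties --- none of these follow from weak-$*$ compactness. The paper closes this gap not by a Bona--Smith regularization but by a bootstrap: once the limit $(f,\phi)$ exists as an $L^\infty$-in-time solution, one observes that $f$ solves the \emph{linear} problem \eqref{l1} with frozen data $\hat\phi=\phi$ and $\hat F=-(\pd_{\xi_1}F^s)M_\infty^{-1/2}\pd_x\phi$, so Lemma \ref{sovl1} applies and delivers precisely the $\mathcal{X}_{\rm e}(T_*,\alpha)$ regularity, the support property \eqref{localpro1b}, and the quantitative bound \eqref{localpro1c}; likewise Lemma \ref{sovl2} with $\hat n=n[f]$ upgrades $\phi$ to $\mathcal{Y}_{\rm e}(T_*,\alpha)$. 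This re-application also forces a more restrictive choice of constants than you propose: the paper takes $\delta_*=\min\{\delta_\sharp,\ \delta_0/L_\sharp,\ \delta_1/(\sqrt{\rho_\infty}K_\sharp)\}$ and $T_*=\min\{T_\sharp,T_0\}$ so that the smallness hypotheses of Lemmas \ref{sovl1} and \ref{sovl2} are actually met by the limit ($\sup_t\|\phi(t)\|_{H^3_x}\le L_\sharp\delta_*\le\delta_0$, $\sup_t\|n(t)\|_{H^1_x}\le\sqrt{\rho_\infty}K_\sharp\delta_*\le\delta_1$), which your choice $\delta_*=\delta_\sharp$, $T_*=T_\sharp$ does not guarantee. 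Once this replacement for the weak-$*$ step is made, the rest of your argument, including the contraction in the lower norm and the uniqueness by the difference estimate, lines up with the paper.
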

\begin{proof}
Let $\delta_{\sharp} \in (0,1]$, $K_{\sharp} \geq 1$, $L_{\sharp} \geq 1$, and $T_{\sharp} \in (0,1]$ 
be the same constants being in Lemma~\ref{invariant}.
We define the successive approximation sequence $\{(f^{k},\phi^{k})\}_{k=0}^\infty$ by $(f^{0},\phi^{0})=(0,0)$ for $k=0$, and for $k \geq 1$, 
\begin{gather*}
\pd_{t} f^{k} + \xi_{1}\partial_{x} f^{k} + (\partial_{x} \Phi^{s} + \partial_{x}\phi^{k-1}) \partial_{\xi_{1}}f^{k} 
-  \frac{\xi_{1}-u_{\infty}}{2\theta_{\infty}}  (\partial_{x} \Phi^{s} + \partial_{x}\phi^{k-1})   f^{k} = \frac{\pd_{\xi_{1}}F^{s}}{ M_{\infty}^{1/2}} \partial_{x} \phi^{k-1},
\\
f^{k}(0,x,\xi) = f_{0}(x,\xi), \quad
f^{k}(t,0,\xi) = 0, \ \xi_{1}>0, \quad
\lim_{x \to\infty} f^{k}(t,x,\xi) = 0
\end{gather*}
and
\begin{gather*}
\partial_{xx} \phi^{k} =  \int_{\mathbb R^{3}}  M_{\infty}^{1/2} f^{k-1} d\xi - (n_{e}(\Phi^{s}+\phi^{k})-n_{e}(\Phi^{s})), 
\\
\phi^{k}(t,0)=0, \quad
\lim_{x \to\infty} \phi^{k}(t,x) = 0.
\end{gather*}
Lemma \ref{invariant} ensures that for all $k \geq 1$, $\delta \in (0,\delta_{\sharp}]$, and $T \in (0,T_{\sharp}]$,
the approximation $(f^{k},\phi^{k})$ is well-defined and belongs to ${\cal X}_{\rm e}(T,\alpha,K_{\sharp}\delta)$ $\times {\cal Y}_{\rm e}(T,\alpha,L_{\sharp}\delta)$ if $d \leq \delta$ holds.

By the standard energy method, it is seen by taking $\delta_{\sharp}$ and $T_{\sharp}$ appropriately small if necessary that
$\{(e^{\alpha x /2 }f^{k},e^{\alpha x /2 }\phi^{k})\}$ is a Cauchy sequence in $C([0,T_{\sharp}];L_{x,\xi}^{2}) \! \times \! C([0,T_{\sharp}];H_{x}^{2})$. 
Hence, there exists $(f,\phi)$ with
$(e^{\alpha x /2 }f,e^{\alpha x /2 }\phi) \in C([0,T_{\sharp}];L_{x,\xi}^{2}) \times C([0,T_{\sharp}];H_{x}^{2})$ such that
as $k \to \infty$,
\begin{gather*}
(e^{\alpha x /2 }f^{k},e^{\alpha x /2 }\phi^{k}) \to (e^{\alpha x /2 }f,e^{\alpha x /2 }\phi) \quad
\text{in $C([0,T_{\sharp}];L_{x,\xi}^{2}) \times C([0,T_{\sharp}];H_{x}^{2})$.} 
\end{gather*}
Using this, the Banach--Alaoglu theorem, and the fact that $\{(f^{k},\phi^{k})\} \subset {\cal X}_{\rm e}(T_{\sharp},\alpha,K_{\sharp}\delta) \times {\cal Y}_{\rm e}(T_{\sharp},\alpha,L_{\sharp}\delta)$, we conclude that
\begin{gather*}
e^{\alpha x/ 2} f \in L^{\infty}(0,T_{\sharp};H_{x,\xi}^{1}), \quad
\sup_{t \in [0,T_{\sharp}]} \|e^{\alpha x/ 2} f (t) \|_{H^{1}_{x,\xi}}  \leq K_{\sharp} \delta,
\\
e^{\alpha x/ 2} \phi \in L^{\infty}(0,T_{\sharp};H_{x}^{3}), \quad
\sup_{t \in [0,T_{\sharp}]} \| e^{\alpha x/ 2} \phi(t)  \|_{H^{3}_{x}} \leq L_{\sharp} \delta.
\end{gather*}
It is straightforward to see that $(f,\phi)$ is a solution of the nonlinear problem \eqref{ree0}.

Let us investigate the regularity of $(f,\phi)$. 
It is clear that $f$ solves the linear problem \eqref{l1} with $\hat{\phi}=\phi$ and $\hat{F}=-(\pd_{\xi_{1}}F^{s}) M_{\infty}^{-1/2}\partial_{x} \phi$.
We set 
\begin{gather*}
\delta_{*}:= \min\left\{\delta_{\sharp}, \frac{\delta_{0}}{L_{\sharp}}, \frac{\delta_{1}}{\sqrt{\rho_{\infty}}K_{\sharp}} \right\}, \quad T_{*}:= \min\left\{T_{\sharp}, T_{0}\right\},
\end{gather*}
where $\rho_{\infty}$, $\delta_{0}$, $\delta_{1}$, and $T_{0}$ are the same constants being in \eqref{nm3} and Lemmas \ref{sovl1}--\ref{sovl2}.
Now taking $\delta=d \leq \delta_{*}$ and applying Lemma \ref{sovl1} to the problem \eqref{l1} with $\hat{\phi}=\phi$ and $\hat{F}=-(\pd_{\xi_{1}}F^{s}) M_{\infty}^{-1/2}\partial_{x} \phi$,
we conclude that $f$ satisfies \eqref{localpro1} with $T_{0}=T_{*}$. 
Hence, $f \in {\cal X}_{\rm e}(T_{*},\alpha,K_{*}d)$ holds by letting $K_{*}=K_{\sharp}$. 
Similarly, we set $L_{*}=L_{\sharp}$ and then see from Lemma \ref{sovl2} that $\phi \in {\cal Y}_{\rm e}(T_{*},\alpha,L_{*}d)$.

It is straightforward to show the uniqueness of the solution $(f,\phi)$ of the problem \eqref{ree0}.
The proof is complete.
\end{proof}

\section{A Priori Estimate on the Stability}\label{AE}
This section is devoted to the proof of Proposition \ref{apriori1} which gives the stability theorem.
We will choose $\gamma>0$ at the end of this section, 
and all constants $c$ and $C$ in this section are independent of $\gamma$.
We first study the location of the support of $f$.
To this end, we use the following equality:
\begin{align}
& \pd_{t} |f|^{2} + \partial_{x} (\xi_{1} |f|^{2}) 
+  \partial_{\xi_{1}} \{ (\pd_{x}\Phi^{s} +\pd_{x} \phi) \vert f \vert^2\}
\notag \\
&= - 2\frac{\pd_{\xi_{1}}F^{s}}{M_{\infty}^{1/2}} (\partial_{x} \phi) f
+  \frac{\xi_{1}-u_{\infty}}{\theta_{\infty}} (\partial_{x} \Phi^{s} + \partial_{x}\phi)|f|^{2}.
\label{suppes1}
\end{align}
This immediately follows from multiplying $2f$ to \eqref{ree1}. 
We note that
\begin{gather}\label{N2}
N_{0,0}(T) \leq  N_{\beta,0}(T) \leq N_{\beta,\gamma}(T),
\end{gather}
where $N_{\beta,\gamma}(T)$ is defined in \eqref{N0}.

\begin{lem}\label{alem1}
Suppose that the same assumption as in Proposition \ref{apriori1} holds.
For any $\gamma>0$, there exists a constant $\delta_{1}=\delta_{1}(\gamma)>0$ such that 
if $\Phi_{b}+N_{\beta,\gamma}(T)  \leq \delta_{1}$, then \eqref{aes0} holds.
\end{lem}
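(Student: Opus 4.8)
The plan is to establish \eqref{aes0} by tracking the $\xi_1$-support of $f$ through the transport structure of \eqref{ree1}. Morally this is a finite-drift statement for the characteristics $\dot X=\Xi_1,\ \dot\Xi_1=\partial_x\Phi^s(X)+\partial_x\phi,\ \dot\Xi'=0$, but because $e^{\beta x/2}f\in C([0,T];H^1_{x,\xi})$ is not pointwise continuous and there is a boundary at $x=0$, I would implement it as a moving-domain $L^2$ estimate built on the identity \eqref{suppes1}. The structural facts I would exploit, all visible in \eqref{ree1}--\eqref{suppes1}, are: (i) by \eqref{r2} the coefficient $M_\infty^{-1/2}\partial_{\xi_1}F^s$ of the inhomogeneity is supported in $\{\xi_1\le -r\}$, so on $\{\xi_1>-r\}$ the equation carries no source; (ii) $\partial_x\Phi^s<0$, so along a characteristic $\Xi_1$ is raised only through the small term $\partial_x\phi$, and in addition $\tfrac{\xi_1-u_\infty}{\theta_\infty}\,\partial_x\Phi^s<0$ on $\{\xi_1>-r\}$ since $-u_\infty=|u_\infty|>r$; (iii) on $\{x=0\}$ the trace of $f$ vanishes for $\xi_1>0$ by \eqref{rebc1}, while for $\xi_1<0$ the boundary contribution $\xi_1|f|^2$ already has a good sign, so the boundary at $x=0$ is handled automatically with no fuss about the characteristic degeneracy at $(x,\xi_1)=(0,0)$.

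The steps, in order: \emph{Step 1 (drift bound).} By Sobolev embedding in $x$, the elliptic estimate \eqref{ellineq0}, and \eqref{nm3},
\[
\|\partial_x\phi(\tau)\|_{L^\infty_x}\le C\|\phi(\tau)\|_{H^3_x}\le C\|n[f](\tau)\|_{H^1_x}\le C\|f(\tau)\|_{H^1_{x,\xi}}\le C\,N_{\beta,\gamma}(T)\,e^{-\gamma\tau/2},
\]
where $\Phi_b+N_{\beta,\gamma}(T)\le\delta_1$ is assumed small enough to apply Lemma \ref{ell1}; hence $\int_0^T\|\partial_x\phi(\tau)\|_{L^\infty_x}\,d\tau\le \tfrac{2C}{\gamma}N_{\beta,\gamma}(T)\le\tfrac{2C}{\gamma}\delta_1$, and I fix $\delta_1=\delta_1(\gamma)$ so that this is $\le\varepsilon$. \emph{Step 2 (moving domain).} Set $\Lambda(t):=-r+\varepsilon+\int_0^t\|\partial_x\phi(\tau)\|_{L^\infty_x}\,d\tau$, so $\Lambda$ is nondecreasing, $\Lambda(0)=-r+\varepsilon$, and $\Lambda(t)\le -r+2\varepsilon<0$; let $E(t):=\iint_{\{x>0,\ \xi_1>\Lambda(t)\}}|f(t)|^2$. \emph{Step 3 (energy inequality).} Differentiating $E$ via Reynolds transport and \eqref{suppes1}: the $x$-flux produces a nonpositive boundary contribution at $x=0$ (fact (iii)); the $\xi_1$-flux at $\xi_1=\Lambda(t)$ together with the moving-boundary term $-\dot\Lambda$ gives $\iint(\partial_x\Phi^s+\partial_x\phi-\dot\Lambda)|f|^2$ there, which is $\le 0$ by $\dot\Lambda=\|\partial_x\phi\|_{L^\infty_x}$ and $\partial_x\Phi^s<0$; the source term vanishes on $\{\xi_1>\Lambda(t)\}\subset\{\xi_1>-r\}$ (fact (i)); and in $\tfrac{\xi_1-u_\infty}{\theta_\infty}(\partial_x\Phi^s+\partial_x\phi)|f|^2$ the $\partial_x\Phi^s$ part is $\le 0$ (fact (ii)) while the $\partial_x\phi$ part is $\le C\|\partial_x\phi(t)\|_{L^\infty_x}|f|^2$ on $\supp f\cap\{\xi_1>\Lambda(t)\}$, where $\tfrac{\xi_1-u_\infty}{\theta_\infty}$ is bounded since $\supp f\subset\{\xi_1\le R+C_0t\}$ on $[0,T]$ by the time-local construction (cf. \eqref{localpro1b}). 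This gives $\tfrac{d}{dt}E(t)\le C\|\partial_x\phi(t)\|_{L^\infty_x}E(t)$. \emph{Step 4 (conclusion).} Gronwall together with $E(0)=\iint_{\{\xi_1>-r+\varepsilon\}}|f_0|^2=0$ (by \eqref{supp0}) forces $E\equiv 0$, i.e. $\supp f(t)\subset\{\xi_1\le\Lambda(t)\}\subset\{\xi_1\le -r+2\varepsilon\}$ for all $t\in[0,T]$, which is \eqref{aes0}.

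The step I expect to be the main obstacle is the drift bound in Step 1, specifically its \emph{uniformity in $T$}: a crude estimate only gives $\int_0^T\|\partial_x\phi\|_{L^\infty_x}\,d\tau\lesssim N_{\beta,\gamma}(T)\,T$, which is useless as $T\to\infty$, and this is exactly why the hypothesis is phrased with $N_{\beta,\gamma}$ (carrying the temporal weight $e^{-\gamma t/2}$) rather than with $N_{\beta,0}$ — the weight makes the integral converge to $\tfrac{2C}{\gamma}N_{\beta,\gamma}(T)$, at the price of letting $\delta_1$ depend on $\gamma$ through a factor like $\gamma\varepsilon/(2C)$. Everything else is careful bookkeeping: keeping the flux and boundary signs straight in the Reynolds computation, and using $|u_\infty|>r$ together with the local-theory upper bound on the $\xi_1$-support to tame the $\tfrac{\xi_1-u_\infty}{\theta_\infty}$ factor in the zeroth-order term.
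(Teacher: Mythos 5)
Your strategy is essentially the paper's own: a moving-domain $L^2$ estimate built on \eqref{suppes1}, with a lower $\xi_1$-boundary that creeps upward by at most $\ve$ because the drift $\partial_x\phi$ is integrable in time thanks to the $e^{-\gamma t/2}$ weight in $N_{\beta,\gamma}$ (this is exactly why $\delta_1$ depends on $\gamma$), the boundary term at $x=0$ having a good sign, and the inhomogeneity $M_\infty^{-1/2}\partial_{\xi_1}F^s\,\partial_x\phi\,f$ vanishing on $\{\xi_1>-r\}$ by \eqref{r2}. The paper uses a prescribed lower boundary $\Xi^-_L(\tau)=-(r-\ve)+\ve\tfrac{\gamma}{2}\int_0^\tau e^{-\gamma s/2}ds$ and compares its speed with the pointwise bound $\|\partial_x\phi(t)\|_{L^\infty_x}\le C\delta_1 e^{-\gamma t/2}$, rather than your solution-dependent $\Lambda(t)$; the two are interchangeable.

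There is, however, one genuine gap, and you half-acknowledge it yourself: to run Gronwall on the zeroth-order term $\tfrac{\xi_1-u_\infty}{\theta_\infty}(\partial_x\Phi^s+\partial_x\phi)|f|^2$ you need the factor $\tfrac{\xi_1-u_\infty}{\theta_\infty}$ bounded on $\supp f$, and you obtain this by citing the support bound \eqref{localpro1b} from the ``time-local construction.'' That bound is \emph{not} among the hypotheses of Lemma \ref{alem1} (equivalently, of Proposition \ref{apriori1}): the assumption is only that $(f,\phi)\in{\cal X}_{\rm e}(T,\beta)\times{\cal Y}_{\rm e}(T,\beta)$ solves \eqref{ree0} with $f_0$ satisfying \eqref{supp0}, and an a priori estimate invoked inside a continuation argument cannot import properties of a particular construction of the solution. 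The paper closes this hole without any upper support information: it also truncates the domain from above with a moving boundary $\Xi^+_L(\tau)=2L-\tau$ whose speed $-1$ dominates $|\partial_x\Phi^s+\partial_x\phi|\le 1$ (so the flux there has a good sign), runs Gronwall with an $L$-dependent constant $C(L)$ on the now-bounded $\xi_1$-region --- which is harmless since the conclusion is $f\equiv 0$ there --- and then exhausts $L\to\infty$. If you replace your appeal to \eqref{localpro1b} by this upper truncation and exhaustion, your argument is complete.
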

\begin{proof}
First it is seen from \eqref{decay0}, \eqref{nm3}, \eqref{ellineq0}, \eqref{N2}, and $\Phi_{b}+N_{\beta,\gamma}(T)  \leq \delta_{1} \ll 1$ that 
\begin{gather}\label{phimax}
\sup_{(t,x) \in [0,T] \times {\mathbb R}_{+} } |\pd_{x}\Phi^{s} (x) +\pd_{x}\phi (t,x)| \leq C \delta_{1} \leq 1, \quad 
\sup_{x \in {\mathbb R}_{+} } |\pd_{x}\phi (t,x)| \leq C \delta_{1} e^{-\gamma t/2},
\end{gather}
where $C>0$ is a constant independent of $\beta$, $\gamma$, $\delta_{1}$, $t$, and $T$.
For any $L>T$ and $t \in [0,T]$, we set
\begin{gather*}
\Omega_{L}(t):=\{ (\tau,x,\xi)  \, | \,  \tau \in [0,t], \ x \in \overline{{\mathbb R}_{+}}, \  \xi \in D_{L}(\tau) \},
\\
D_{L}(\tau):=\{ \xi \, | \,  \Xi^{-}_{L}(\tau) \leq \xi_{1} \leq \Xi^{+}_{L}(\tau), \ \xi' \in \mathbb R^{2}\},
\\
\Xi^{-}_{L}(\tau):=-(r-\ve) +  \ve\frac{\gamma}{2} \int_{0}^{\tau} e^{-\gamma s/2} ds, \quad
\Xi^{+}_{L}(\tau):=2L - \tau, 
\\
\Gamma_{L}^{\pm}(t):=\left\{ (\tau,x,\xi) \, | \, \tau \in [0,t], \ x \in \overline{{\mathbb R}_{+}}, \  \xi_{1} = \Xi^{\pm}_{L}(\tau), \ \xi' \in \mathbb R^{2} \right\}, 
\\
N^{\pm}_{L}(\tau):= \sqrt{1+|\partial_{\tau}  \Xi^{\pm}_{L}(\tau)|^{2}},
\end{gather*}
where $r$ and $\ve$ are the same constants being in the assumption of Proposition \ref{apriori1}.
We note that 
\begin{gather}\label{Xi0}
 -(r-\ve) \leq \Xi^{-}_{L}(\tau) \leq -(r-2\ve), \quad  L \leq \Xi^{+}_{L} (\tau) \leq 2L.
\end{gather}

It is clear that $F^{s}=0$ on $\Omega_{L}(t)$ and $f_{0}=0$ on $\overline{\mathbb R_{+}} \times D_{L}(0)$ owing to \eqref{r2} and \eqref{supp0}.
Integrating \eqref{suppes1} over $\Omega_{L}(t)$ and applying the divergence theorem with \eqref{rebc1}, we see that
\begin{align}
&\int_{\mathbb R_{+}}\!\!\int_{D_{L}(t)} |f|^{2}(t) dxd\xi 
+ \left. \int_{0}^{t}\int_{D_{L}(t)} \vert \xi_1\vert \vert f \vert^2 \chi(-\xi_{1})  d\tau d\xi \right|_{x=0}
\notag \\
&\quad + \int\!\! \int_{\Gamma_{L}^{+}(t)} \frac{1}{N^{+}_{L}} \left\{(\pd_{x}\Phi^{s} +\pd_{x} \phi) - \partial_{\tau}  \Xi^{+}_{L}\right\} |f|^{2}  dS
\notag \\
&\quad + \int\!\! \int_{\Gamma_{L}^{-}(t)} \frac{1}{N^{-}_{L}}  \left\{\partial_{\tau}  \Xi^{-}_{L}- (\pd_{x}\Phi^{s} +\pd_{x} \phi) \right\} |f|^{2}  dS
 \notag \\
& = \int_{0}^{t}\!\! \int_{\mathbb R_{+}}\!\!\int_{D_{L}(\tau)} \frac{\xi_{1}-u_{\infty}}{\theta_{\infty}} (\partial_{x} \Phi^{s} + \partial_{x}\phi) |f|^{2}d\tau dxd\xi
\notag \\
& \leq C(L) \int_{0}^{t}\!\! \int_{\mathbb R_{+}}\!\!\int_{D_{L}(\tau)} |f|^{2}(\tau) d\tau dxd\xi,
\label{suppes2}
\end{align}
where  $C(L)>0$ is a constant depending on $L$, and we have used \eqref{phimax} in deriving the inequality.
It is obvious that the second term on the left hand side is nonnegative.
We also see from \eqref{phimax} and $\pd_{x} \Phi^{s}(x)<0$
that the integrands of the third and fourth terms on the left hand side are nonnegative for suitably small $\delta_{1}$
according to $\ve$ and $\gamma$ as follows:
\begin{align*}
&\frac{1}{N^{+}_{L}}  \left\{(\pd_{x}\Phi^{s} +\pd_{x} \phi) - \partial_{\tau}  \Xi^{+}_{L}\right\} |f|^{2}
\\
&= \frac{1}{N^{+}_{L}}  \left\{(\pd_{x}\Phi^{s} +\pd_{x} \phi) +1 \right\} |f|^{2} 
\geq \frac{1}{N^{+}_{L}}  \left(1 - |\pd_{x}\Phi^{s}+\pd_{x}\phi| \right) |f|^{2} 
\geq 0
\end{align*}
and
\begin{align*}
&\frac{1}{N^{-}_{L}}  \left\{\partial_{\tau}  \Xi^{-}_{L}- (\pd_{x}\Phi^{s} +\pd_{x} \phi) \right\} |f|^{2}
\\
&= \frac{1}{N^{-}_{L}}  \left\{\ve\frac{\gamma}{2} e^{-\gamma \tau/2} - (\pd_{x}\Phi^{s} +\pd_{x} \phi) \right\} |f|^{2}
\geq  \frac{1}{N^{-}_{L}}  \left\{\ve\frac{\gamma}{2} e^{-\gamma \tau/2} - C \delta_{1} e^{-\gamma t/2} \right\} |f|^{2}
\geq 0.
\end{align*}
Now applying Gronwall's inequality to \eqref{suppes2}, we conclude that
\begin{gather*}
\int_{\mathbb R_{+}}\!\!\int_{D_{L}(t)} |f|^{2}(t) dxd\xi =0 \quad \text{for $t \in [0,T]$}.
\end{gather*}
On the other hand, it is seen from \eqref{Xi0} that 
\begin{gather*}
\tilde{D}_{L}:=\{ \xi \in {\mathbb R}^{3} \, | \, -(r-2\ve)  \leq \xi_{1}  \leq L  \} \subset D_{L}(t).
\end{gather*}
Using this and the arbitraryness of $L>0$, we arrive at \eqref{aes0}.
\end{proof}

We estimate the $L^{2}_{x,\xi}$-norm of $f$ and $\pd_{x} f$ by assuming \eqref{aes0}.

\begin{lem}\label{alem2}
Suppose that the same assumption as in Proposition \ref{apriori1} and also \eqref{aes0} hold. 
There exists a constant $\delta_{2}>0$ such that if $\Phi_{b} + N_{0,0}(T)  \leq \delta_{2}$, then the following holds:
\begin{align}
&e^{\gamma t} \Vert e^{\beta x/2}  f (t)\Vert^2_{L^2_{x,\xi}} 
+ e^{\gamma t} \Vert  e^{\beta x/2} \pd_{x} f (t) \Vert^2_{L^2_{x,\xi}}
+ \frac{e^{\gamma t} }{\theta_{\infty}}\Vert e^{\beta x/2}   n (t) \Vert^2_{L^2_{x}}
\notag \\
&\quad + c_{1} \int_{0}^{t} 
e^{\gamma \tau} \left\Vert e^{\beta x/2} |\xi_{1}|^{1/2} f (\tau)\right\Vert^2_{L^2_{x,\xi}} 
\!\!+ e^{\gamma \tau} \left\Vert  e^{\beta x/2} |\xi_{1}|^{1/2}  \pd_{x} f (\tau) \right\Vert^2_{L^2_{x,\xi}}
\!\!+  e^{\gamma \tau} \|e^{\beta x/2} n (\tau) \|_{L^{2}_{x}}^{2} 
d\tau
\notag \\
& \leq \Vert e^{\beta x/2}  f_{0}\Vert^2_{L^2_{x,\xi}} 
+ \Vert  e^{\beta x/2} \pd_{x} f_{0} \Vert^2_{L^2_{x,\xi}}
+ \frac{1}{\theta_{\infty}}\Vert e^{\beta x/2}   n (0) \Vert^2_{L^2_{x}}
\notag \\
&\quad + \gamma \int_{0}^{t} 
e^{\gamma \tau} \Vert e^{\beta x/2}  f (\tau) \Vert^2_{L^2_{x,\xi}} 
+ e^{\gamma \tau} \Vert  e^{\beta x/2} \pd_{x} f (\tau) \Vert^2_{L^2_{x,\xi}}
+ \frac{e^{\gamma \tau}}{\theta_{\infty}} \|e^{\beta x/2} n (\tau) \|_{L^{2}_{x}}^{2} 
d\tau
\notag \\
& \quad + C_{1} \sqrt{\Phi_b + N_{0,0}(T)}  \sum_{k=0}^{1}
\int_{0}^{t} e^{\gamma \tau} 
\left\| e^{\beta x/2} |\xi_{1}|^{1/2} \nabla_{x,\xi}^{k} f (\tau) \right\|_{L^{2}_{x,\xi}}^{2}
d\tau
\label{aes2}
\end{align}
for any $\gamma>0$, where $c_{1}$ and $C_{1}$ are positive constants independent of $\gamma$, $\Phi_b$, $f_{0}$, $t$, and $T$.
\end{lem}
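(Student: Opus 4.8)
The plan is to derive a weighted $L^2$ energy identity for $f$ (and for $\pd_x f$) together with an $L^2$ estimate for the hydrodynamic density $n$, and then to combine the three with suitable weights, the factor $e^{\gamma t}$, and the elliptic estimates of Section \ref{ES}. First I multiply \eqref{ree1} by $2e^{\beta x}f$ and integrate over ${\mathbb R}_+\times{\mathbb R}^3$. The transport term $\xi_1\pd_x f$ produces, after integration by parts in $x$, the boundary contribution $-\int_{{\mathbb R}^3}\xi_1|e^{\beta x/2}f|^2|_{x=0}\,d\xi$ and the bulk term $-\beta\|\,|\xi_1|^{1/2}\,\cdot\,\|^2$ split according to the sign of $\xi_1$; since by \eqref{aes0} the $\xi$-support of $f$ sits in $\{\xi_1\le -r+2\ve\}\subset{\mathbb R}^3_-$ (using $\ve<r/2$ so that $-r+2\ve<0$), both of these have the \emph{good} sign and give the dissipative terms $\|e^{\beta x/2}|\xi_1|^{1/2}f\|^2$ on the left. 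The drift term $(\pd_x\Phi^s+\pd_x\phi)\pd_{\xi_1}f$ integrates by parts in $\xi_1$ to zero (the field is $\xi$-independent), the boundary term at $\xi_1=\pm\infty$ vanishes by the support/decay, and at the characteristic point it is harmless. The remaining terms are the coupling $-2(\pd_{\xi_1}F^s)M_\infty^{-1/2}(\pd_x\phi)f$ and the right-hand side $\frac{\xi_1-u_\infty}{\theta_\infty}(\pd_x\Phi^s+\pd_x\phi)|f|^2$; the latter, the source of the $\theta_\infty^{-1}$ difficulty, is not estimated crudely but is kept and ultimately absorbed via the density equation (see below). The same procedure applied to $\pd_x$ of \eqref{ree1} — after commuting $\pd_x$ through, which produces extra terms involving $\pd_{xx}\Phi^s$, $\pd_{xx}\phi$, $\pd_x\left(\frac{\pd_{\xi_1}F^s}{M_\infty^{1/2}}\right)$, each controlled by \eqref{decay0}, \eqref{M3*}, \eqref{Fs1*}, \eqref{ellineq3} — gives the analogous identity for $\pd_x f$ with dissipation $\|e^{\beta x/2}|\xi_1|^{1/2}\pd_x f\|^2$.

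Next I handle the $\theta_\infty^{-1}$ term. Multiplying the continuity equation \eqref{mass1} by $\frac{2}{\theta_\infty}e^{\beta x}n$ and integrating by parts in $x$ gives $\frac{d}{dt}\frac1{\theta_\infty}\|e^{\beta x/2}n\|_{L^2_x}^2=\frac{2}{\theta_\infty}\int e^{\beta x}m\,\pd_x n+\tfrac{\beta}{\theta_\infty}(\cdots)$, and writing $m=u_\infty n+(m-u_\infty n)$ one splits off a term $\frac{2u_\infty}{\theta_\infty}\int e^{\beta x}n\pd_x n$ which, after another integration by parts, is $-\frac{\beta u_\infty}{\theta_\infty}\|e^{\beta x/2}n\|^2$; since $u_\infty<0$ this is $+\frac{\beta|u_\infty|}{\theta_\infty}\|e^{\beta x/2}n\|^2\ge0$, a \emph{positive} contribution — this is the good term that, via \eqref{asp1} and Lemma \ref{nmlem}, dominates. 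The cross term $\frac2{\theta_\infty}\int e^{\beta x}(m-u_\infty n)\pd_x n$ is bounded using \eqref{nm1}: $\|e^{\beta x/2}(m-u_\infty n)\|_{L^2_x}^2\le\rho_\infty\theta_\infty\|e^{\beta x/2}f\|_{L^2_{x,\xi}}^2$, so this is $\lesssim (\rho_\infty/\theta_\infty)^{1/2}\|e^{\beta x/2}f\|\cdot\|e^{\beta x/2}\pd_x n\|$, absorbable. Crucially, rewriting $\frac{\xi_1-u_\infty}{\theta_\infty}\pd_x\Phi^s|f|^2$ and integrating $\frac{\xi_1-u_\infty}{\theta_\infty}M_\infty^{1/2}$ against the equation, one sees (this is where the weight $M_\infty^{-1/2}$ in \eqref{pert1} matters, cf. \eqref{M0}) that the leading part of the bad term exactly matches, up to lower order, the structure produced by the $n$-equation; the $\mu_\infty$-dependent remainders come from the $\pd_{\xi_1}(M_\infty\psi-M_\infty)$ piece in \eqref{M0} and are controlled by $\|(\pd_{\xi_1}(M_\infty\psi-M_\infty))M_\infty^{-1/2}\chi(-\xi_1)\|_{L^2_\xi}=\mu_\infty$ together with the elliptic bounds \eqref{ellineq1}, \eqref{ellineq2} on $\pd_x\phi,\pd_{xx}\phi$ in terms of $\|e^{\beta x/2}n\|$. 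Finally, all terms that are quadratic in $f$ but multiplied by $\pd_x\phi$ or $\pd_x\Phi^s$ or by $\phi$ itself are genuinely cubic/small: by \eqref{phimax}-type bounds (from \eqref{decay0}, \eqref{nm3}, \eqref{ellineq0}) they carry a factor $\lesssim\sqrt{\Phi_b+N_{0,0}(T)}$ and land in the last line of \eqref{aes2}.

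Then I multiply the $f$-identity and the $\pd_x f$-identity by $e^{\gamma t}$, add $\frac1{\theta_\infty}e^{\gamma t}\times$(the $n$-identity), and integrate in $t$ from $0$ to $t$. The time-derivative of $e^{\gamma t}(\cdots)$ produces $e^{\gamma t}\frac{d}{dt}(\cdots)+\gamma e^{\gamma t}(\cdots)$, the second piece being exactly the $+\gamma\int_0^t e^{\gamma\tau}(\cdots)\,d\tau$ term on the right of \eqref{aes2}. The accumulated dissipation from the transport terms and from the positive $\frac{\beta|u_\infty|}{\theta_\infty}\|e^{\beta x/2}n\|^2$ term gives the left-hand integral with some $c_1>0$; here one uses \eqref{asp1} to guarantee that after subtracting all the $\mu_\infty$- and $\rho_\infty/\theta_\infty$-dependent error terms (estimated by Cauchy--Schwarz with small constants), a strictly positive multiple of $\int_0^t e^{\gamma\tau}\|e^{\beta x/2}n(\tau)\|_{L^2_x}^2\,d\tau$ survives — this is the single place where the precise algebraic form of \eqref{asp1} is consumed, and the constant $c_1$ can be chosen independent of $\gamma$ because $\gamma$ only enters through the manifestly separated $+\gamma\int_0^t(\cdots)$ term. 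Taking $\Phi_b+N_{0,0}(T)\le\delta_2$ small enough absorbs the cubic remainders into the dissipation, leaving the claimed inequality.

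The main obstacle is the bookkeeping around the $\theta_\infty^{-1}$ term: one must verify that the bad term $\frac{\xi_1-u_\infty}{\theta_\infty}\pd_x\Phi^s|f|^2$ and the coupling $-2(\pd_{\xi_1}F^s)M_\infty^{-1/2}(\pd_x\phi)f$, after using the decomposition \eqref{M0} and the hydrodynamic identity \eqref{mass1}, recombine into the positive quantity $\frac{\beta|u_\infty|}{\theta_\infty}\|e^{\beta x/2}n\|^2$ plus errors whose sizes are exactly $\frac{\rho_\infty}{\theta_\infty}$, $\frac{\mu_\infty}{\beta}$, and $\mu_\infty$ in the pattern of \eqref{asp1}; getting the Cauchy--Schwarz splittings with the right constants so that \eqref{asp1} yields a net positive coefficient is the delicate part. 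Everything else — integration by parts, sign of boundary terms via \eqref{aes0}, commutator terms for $\pd_x f$, and the smallness of cubic terms — is routine given the preliminaries of Section \ref{S4}.
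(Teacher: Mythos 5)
Your proposal follows essentially the same route as the paper: weighted $L^2$ energy identities for $f$ and $\partial_x f$, dissipation via the $e^{\beta x}$-weight and the support constraint \eqref{aes0}, the hydrodynamic reformulation via the $M_\infty^{-1/2}$ weight and \eqref{M0}, the elliptic estimates of Lemma~\ref{ell1} and the continuity equation \eqref{mass1}, and finally the quadratic form whose positive definiteness is exactly the assumption \eqref{asp1}. One small point of imprecision: the paper does not multiply \eqref{mass1} separately by $\frac{2}{\theta_\infty}e^{\beta x}n$ and add; instead, after summing the $f$- and $\partial_x f$-identities, it replaces $\partial_{xx}\phi-\phi$ by $n+\mathcal N_1$ via \eqref{pe3}, uses $\partial_x m=-\partial_t n$ on the term paired with $\partial_x m$ to produce $\frac{1}{\theta_\infty}\frac{d}{dt}\|e^{\beta x/2}n\|^2$, and integrates by parts the term paired with $u_\infty\partial_x n$ to produce the good terms $\frac{|u_\infty|}{\theta_\infty}|n(t,0)|^2+\frac{\beta|u_\infty|}{\theta_\infty}\|e^{\beta x/2}n\|^2$; your description of splitting off $\frac{2u_\infty}{\theta_\infty}\int e^{\beta x}n\,\partial_x n$ from the continuity identity alone, and reading the result as a positive dissipative contribution, glosses over a sign cancellation with the $\frac{2\beta}{\theta_\infty}\int e^{\beta x}nm\,dx$ piece, though when carried out carefully it lands in the same place. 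This is a difference of bookkeeping, not of substance.
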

\begin{proof}
We start from deriving some equalities.
Multiplying $2e^{\beta x} f$ to \eqref{ree1} and using \eqref{M0} yields
\begin{align*}
&\partial_{t} (e^{\beta x}\vert  f \vert^2 )
+ \partial_{x} (\xi_{1} e^{\beta x}  \vert f \vert^2)
-\beta \xi_1 e^{\beta x}  \vert f \vert^2
+\partial_{\xi_{1}} \{e^{\beta x} (\pd_{x}\Phi^{s} +\pd_{x} \phi) \vert f \vert^2\}
\\
&- 2\pd_{x} \left( e^{\beta x} \phi \frac{\xi_{1}-u_{\infty}}{\theta_{\infty}} M_{\infty}^{1/2} f \right)
+  2e^{\beta x} \phi \frac{\xi_{1}-u_{\infty}}{\theta_{\infty}} M_{\infty}^{1/2} \pd_{x} f
+ 2\beta e^{\beta x} \phi \frac{\xi_{1}-u_{\infty}}{\theta_{\infty}} M_{\infty}^{1/2} f 
\\
&+2e^{\beta x} (\partial_{x} \phi) \frac{\pd_{\xi_{1}}(M_{\infty}\psi-M_{\infty})}{M_{\infty}^{1/2}} f
= {\cal R}_{1},
\end{align*}
where
\begin{gather*}
 {\cal R}_{1} :=- 2e^{\beta x} (\pd_{x} \phi ) \frac{\pd_{\xi_{1}}(F^{s}-M_{\infty}\psi)}{M_{\infty}^{1/2}}  f
 + e^{\beta x} \frac{\xi_{1}-u_{\infty}}{\theta_{\infty}} (\partial_{x} \Phi^{s} + \partial_{x}\phi)  \vert f \vert^2.
\end{gather*}
Integrate this over $\mathbb R_{+} \times \mathbb{R}^3$ and use \eqref{rebc1}--\eqref{rebc4} and \eqref{aes0} to obtain
\begin{align}
&\dfrac{d}{dt} \Vert e^{\beta x/2} f (t) \Vert^2_{L^2_{x,\xi}} 
+\left. \int_{\mathbb{R}^3_-} \vert \xi_1\vert \vert  f \vert^2 d\xi \right|_{x=0}
+ \beta  \int_{\mathbb{R}_{+}} \! \int_{\mathbb{R}^3_{-}}  e^{\beta x}  | \xi_1 | |f|^2  dxd\xi 
\notag \\
& + \frac{2}{\theta_{\infty}}  \int_{\mathbb{R}_{+}}  e^{\beta x} \phi \pd_{x} m dx
- \frac{2u_{\infty}}{\theta_{\infty}}  \int_{\mathbb{R}_{+}}  e^{\beta x} \phi \pd_{x} n dx
 +\frac{2\beta}{\theta_{\infty}}  \int_{\mathbb{R}_{+}} e^{\beta x} \phi m dx 
-  \frac{2u_{\infty}\beta}{\theta_{\infty}}  \int_{\mathbb{R}_{+}} e^{\beta x}  \phi n dx 
\notag \\ 
&  + 2 \int_{\mathbb{R}_{+}} \! \int_{\mathbb{R}^3} e^{\beta x} (\partial_{x} \phi)  \frac{\pd_{\xi_{1}}(M_{\infty}\psi-M_{\infty})}{M_{\infty}^{1/2}} f d\xi dx
=\int_{\mathbb{R}_{+}} \! \int_{\mathbb{R}^3} {\cal R}_{1} dxd\xi,
\label{bes1}
\end{align}
where $n$ and $m$ are defined in \eqref{nm0}.
Then, we differentiate \eqref{ree1} with respect to $x$,
multiply $2e^{\beta x} \pd_{x} f$ to the resultant equation and use \eqref{M0}, and arrive at
\begin{align*}
&\partial_{t} (e^{\beta x}\vert  \pd_{x} f \vert^2 )
+ \partial_{x} (\xi_{1} e^{\beta x}  \vert \pd_{x} f \vert^2)
-\beta \xi_1 e^{\beta x}  \vert \pd_{x} f\vert^2
+\partial_{\xi_{1}} \{e^{\beta x} (\pd_{x}\Phi^{s} +\pd_{x} \phi) \vert \pd_{x} f \vert^2\}
\\
&-   2(\pd_{xx}\phi)  e^{\beta x}  \frac{\xi_{1}-u_{\infty}}{\theta_{\infty}} M_{\infty}^{1/2} \pd_{x} f
+2e^{\beta x} (\partial_{xx} \phi) \frac{\pd_{\xi_{1}}(M_{\infty}\psi-M_{\infty})}{M_{\infty}^{1/2}} \pd_{x} f
=  {\cal R}_{2},
\end{align*}
where 
\begin{align*}
{\cal R}_{2}:= & - 2e^{\beta x} (\pd_{xx}\Phi^{s} +\pd_{xx} \phi) \pd_{\xi_{1}} f \pd_{x} f 
 - 2e^{\beta x} \pd_{x} \left \{\frac{\pd_{\xi_{1}}(F^{s}-M_{\infty}\psi)}{M_{\infty}^{1/2}}  \pd_{x} \phi \right\}\pd_{x} f 
 \\
& +e^{\beta x} \frac{\xi_{1}-u_{\infty}}{\theta_{\infty}} \pd_{x} \left \{  (\partial_{x} \Phi^{s} + \partial_{x}\phi) f \right\} \pd_{x} f .
\end{align*}
Integrate this over $\mathbb R_{+} \times \mathbb{R}^3$, 
and use \eqref{rebc1}--\eqref{rebc4} and \eqref{aes0}  to obtain
\begin{align}
&\dfrac{d}{dt} \Vert e^{\beta x/2}   \pd_{x} f (t) \Vert^2_{L^2_{x,\xi}} 
+\left.  \int_{\mathbb{R}^3_-} \vert \xi_1\vert \vert \pd_{x} f \vert^2 d\xi \right|_{x=0}
+ \beta  \int_{\mathbb{R}_{+}} \! \int_{\mathbb{R}^3_{-}} e^{\beta x} |\xi_1| |\pd_{x}f|^2  dxd\xi 
\notag \\
& - \frac{2}{\theta_{\infty}}  \int_{\mathbb{R}_{+}}  e^{\beta x} \pd_{xx}\phi \pd_{x} m dx
+ \frac{2u_{\infty}}{\theta_{\infty}}  \int_{\mathbb{R}_{+}}  e^{\beta x} \pd_{xx}\phi \pd_{x} n dx
\notag \\
&  + 2\int_{\mathbb{R}_{+}} \! \int_{\mathbb{R}^3} e^{\beta x} (\partial_{xx} \phi)  \frac{\pd_{\xi_{1}}(M_{\infty}\psi-M_{\infty})}{M_{\infty}^{1/2}} \pd_{x} f d\xi dx
=\int_{\mathbb{R}_{+}} \! \int_{\mathbb{R}^3} {\cal R}_{2} dxd\xi.
\label{bes2}
\end{align}
Adding up \eqref{bes1} and \eqref{bes2} yields
\begin{align}
&\dfrac{d}{dt} (\Vert e^{\beta x/2}   f (t) \Vert^2_{L^2_{x,\xi}} 
+\Vert e^{\beta x/2}   \pd_{x} f (t) \Vert^2_{L^2_{x,\xi}})
+\left. \int_{\mathbb{R}^3_-} \vert \xi_1\vert \vert f \vert^2 +\vert \xi_1\vert \vert \pd_{x} f \vert^2  d\xi \right|_{x=0}
\notag\\
&\quad  + \beta  \int_{\mathbb{R}_{+}} \! \int_{\mathbb{R}^3_{-}}  e^{\beta x}  |\xi_1| ( |f|^2+|\pd_{x}f|^2)  dxd\xi 
\notag\\
&\quad - \frac{2}{\theta_{\infty}}  \int_{\mathbb{R}_{+}}  e^{\beta x} (\pd_{xx}\phi - \phi)\pd_{x} m dx
+ \frac{2u_{\infty}}{\theta_{\infty}}  \int_{\mathbb{R}_{+}}  e^{\beta x} (\pd_{xx}\phi - \phi) \pd_{x} n dx
\notag \\
&=- \frac{2\beta}{\theta_{\infty}}  \int_{\mathbb{R}_{+}} e^{\beta x} \phi (m-u_{\infty}n) dx  
- 2\int_{\mathbb{R}_{+}} \! \int_{\mathbb{R}^3}  e^{\beta x} (\partial_{x} \phi)  \frac{\pd_{\xi_{1}}(M_{\infty}\psi-M_{\infty})}{M_{\infty}^{1/2}} f d\xi dx
\notag \\
&\quad 
- 2\int_{\mathbb{R}_{+}} \! \int_{\mathbb{R}^3}  e^{\beta x} (\partial_{xx} \phi)  \frac{\pd_{\xi_{1}}(M_{\infty}\psi-M_{\infty})}{M_{\infty}^{1/2}} \pd_{x} f d\xi dx
 +\int_{\mathbb{R}_{+}} \! \int_{\mathbb{R}^3} {\cal R}_{1} + {\cal R}_{2} dxd\xi.
\label{bes3}
\end{align}


Let us find \footnote{In \eqref{bes3} we have the dissipative term, i.e. the third term on the left hand side, which is arisen by employing the weight function $e^{\beta x/ 2}$. But the terms having $\theta_{\infty}^{-1}$ in \eqref{bes3} are still problematic to study the case $\theta_{\infty} \ll 1$.}some good contributions from the forth and fifth terms on the left hand side of \eqref{bes3}.
First we rewrite the forth term by using \eqref{mass1} and \eqref{pe3} as 
\begin{align*}
& -\frac{2}{\theta_{\infty}}  \int_{\mathbb{R}_{+}}  e^{\beta x} (\pd_{xx}\phi - \phi)\pd_{x} m dx
=  - \frac{2}{\theta_{\infty}}  \int_{\mathbb{R}_{+}}  e^{\beta x} (n + {\cal N}_{1}) \pd_{x} m dx
\\
& = \frac{1}{\theta_{\infty}}\dfrac{d}{dt} \Vert e^{\beta x/2}   n (t) \Vert^2_{L^2_{x}} 
-\frac{2}{\theta_{\infty}}  \int_{\mathbb{R}_{+}}  e^{\beta x} {\cal N}_{1} \pd_{x} m dx.
\end{align*}
The fifth term can be rewritten by using \eqref{pe3} as 
\begin{align*}
&\frac{2u_{\infty}}{\theta_{\infty}}  \int_{\mathbb{R}_{+}}  e^{\beta x} (\pd_{xx}\phi - \phi) \pd_{x} n dx
=\frac{2u_{\infty}}{\theta_{\infty}}  \int_{\mathbb{R}_{+}}  e^{\beta x} (n+{\cal N}_{1}) \pd_{x} n dx
\\
&=\frac{|u_{\infty}|}{\theta_{\infty}}  |n|^{2}(t,0) + \frac{|u_{\infty}|}{\theta_{\infty}}\beta\Vert e^{\beta x/2} n \Vert^2_{L^2_{x}} 
+\frac{2u_{\infty}}{\theta_{\infty}}\int_{\mathbb{R}_{+}} e^{\beta x}   {\cal N}_{1} \pd_{x} n dx.
\end{align*}
%
%
Plugging these two into \eqref{bes3}, we have the \footnote{In the derivation of \eqref{bes4}, the weight function $M^{-1/2}_{\infty}$ in \eqref{pert1} plays an essential role. Indeed, without this weight function, 
we cannot write some integrals in \eqref{bes1} and \eqref{bes2} by using hydrodynamic variables $n$ and $m$, and therefore we cannot find the dissipative term, i.e. the last term on the left hand side of \eqref{bes4}, which is bigger for lower temperature $\theta_{\infty} \ll 1$.}following equality:
\begin{align}
&\dfrac{d}{dt} \left(\Vert e^{\beta x/2}   f (t) \Vert^2_{L^2_{x,\xi}} 
+\Vert e^{\beta x/2}   \pd_{x} f (t) \Vert^2_{L^2_{x,\xi}} 
+ \frac{1}{\theta_{\infty}}  \Vert e^{\beta x/2}   n (t) \Vert^2_{L^2_{x}} \right)
\notag \\
&\quad +\left. \left(\int_{\mathbb{R}^3_-} \vert \xi_1\vert \vert f \vert^2 
+\vert \xi_1\vert \vert \pd_{x} f \vert^2  d\xi 
+\frac{|u_{\infty}|}{\theta_{\infty}}|n|^{2}\right)\right|_{x=0}
\notag \\
&\quad  + \beta  \int_{\mathbb{R}_{+}} \! \int_{\mathbb{R}^3_{-}}  e^{\beta x} |\xi_1| ( |f|^2+|\pd_{x}f|^2)  dxd\xi 
+\frac{|u_{\infty}|}{\theta_{\infty}} \beta \| e^{\beta x/2} n \|_{L^{2}_{x}}^{2}
\notag\\
&= - \frac{2\beta}{\theta_{\infty}}  \int_{\mathbb{R}_{+}} e^{\beta x} \phi (m-u_{\infty}n) dx   
 - 2\int_{\mathbb{R}_{+}} \! \int_{\mathbb{R}^3} e^{\beta x} (\partial_{x} \phi)  \frac{\pd_{\xi_{1}}(M_{\infty}\psi-M_{\infty})}{M_{\infty}^{1/2}} f d\xi dx
\notag\\
&\quad - 2\int_{\mathbb{R}_{+}} \! \int_{\mathbb{R}^3} e^{\beta x} (\partial_{xx} \phi) \frac{\pd_{\xi_{1}}(M_{\infty}\psi-M_{\infty})}{M_{\infty}^{1/2}} \pd_{x} f d\xi dx
+ \int_{\mathbb{R}_{+}} \! \int_{\mathbb{R}^3} {\cal R}_{1} + {\cal R}_{2} dxd\xi
\notag\\
&\quad 
+ \frac{2}{\theta_{\infty}}  \int_{\mathbb{R}_{+}}  e^{\beta x} {\cal N}_{1} (\pd_{x} m -u_{\infty}\pd_{x} n)  dx.
\label{bes4}
\end{align}

Let us estimate the right hand side of \eqref{bes4} term by term. Set $ \delta:= \Phi_b + N_{0,0}(T)$.
The first term is estimated by using  \eqref{nm1} with $(\alpha,k)=(\beta,0)$, \eqref{ellineq1}, and \eqref{nm3} with $(\alpha,k)=(\beta,0)$ as 
\begin{align*}
&\left| 2\frac{\beta}{\theta_{\infty}}  \int_{\mathbb{R}_{+}} e^{\beta x} \phi (m-u_{\infty}n) dx \right|
\notag \\
& \leq 2\frac{\beta\rho_{\infty}^{1/2}}{\theta_{\infty}^{1/2}} \|e^{\beta x/2} \phi \|_{L^{2}_{x}} \| e^{\beta x/2} f \|_{L^{2}_{x,\xi}}
\notag \\
& \leq 2\frac{\beta\rho_{\infty}^{1/2}}{\theta_{\infty}^{1/2}} (1+\eta(\beta)) \|e^{\beta x/2} n \|_{L^{2}_{x}} \| e^{\beta x/2} f \|_{L^{2}_{x,\xi}}
+ C \delta^{1/2} \|e^{\beta x/2} n \|_{L^{2}_{x}}\| e^{\beta x/2} f \|_{L^{2}_{x,\xi}}
\notag \\
& \leq 2\frac{\beta\rho_{\infty}^{1/2}}{\theta_{\infty}^{1/2}} (1+\eta(\beta)) \|e^{\beta x/2} n \|_{L^{2}_{x}} \| e^{\beta x/2} f \|_{L^{2}_{x,\xi}}
+ C \delta^{1/2} \| e^{\beta x/2} f \|_{L^{2}_{x,\xi}}^{2}
\notag \\
& \leq 2\frac{\beta\rho_{\infty}^{1/2}}{\theta_{\infty}^{1/2}} \frac{1+\eta(\beta)}{(r - 2\ve)^{1/2}} \|e^{\beta x/2} n \|_{L^{2}_{x}} \left\| e^{\beta x/2} |\xi_{1}|^{1/2}  f \right\|_{L^{2}_{x,\xi}}
+ \frac{C \delta^{1/2} }{r - 2\ve} \left\| e^{\beta x/2} |\xi_{1}|^{1/2} f \right\|_{L^{2}_{x,\xi}}^{2},
\end{align*}
where we have also used \eqref{aes0} and the fact that $|\xi_{1}| > r-2\ve$ holds on the support of $f$
in deriving  the last inequality.
Recall the definition of $\mu_{\infty}$ in \eqref{mu_eta}.
We estimate the second and third terms by \eqref{ellineq1}, \eqref{nm3} with $(\alpha,k)=(\beta,0)$, and \eqref{aes0} as follows:
\begin{align*}
&\left| 2 \int_{\mathbb{R}_{+}} \! \int_{\mathbb{R}^3}  e^{\beta x} (\partial_{x} \phi) \frac{\pd_{\xi_{1}}(M_{\infty}\psi-M_{\infty})}{M_{\infty}^{1/2}} f d\xi dx \right|
\\
& \leq 2 \mu_{\infty} \| e^{\beta x/2} \pd_{x} \phi \|_{L^{2}_{x}} \| e^{\beta x/2} f \|_{L^{2}_{x,\xi}}
\\
& \leq 2 \mu_{\infty} (1+\eta(\beta))^{1/2}  \| e^{\beta x/2} n \|_{L^{2}_{x}} \| e^{\beta x/2} f \|_{L^{2}_{x,\xi}}
+ C \mu_{\infty} \delta^{1/2} \|e^{\beta x/2} n \|_{L^{2}_{x}} \| e^{\beta x/2} f \|_{L^{2}_{x,\xi}}
\\
& \leq 2 \mu_{\infty} \frac{(1+\eta(\beta))^{1/2}}{(r - 2\ve)^{1/2}} \| e^{\beta x/2} n \|_{L^{2}_{x}}
\left\| e^{\beta x/2} |\xi_{1}|^{1/2}  f \right\|_{L^{2}_{x,\xi}}
+\frac{C \mu_{\infty} \delta^{1/2}}{r - 2\ve}\left\| e^{\beta x/2} |\xi_{1}|^{1/2}  f \right\|_{L^{2}_{x,\xi}}^{2}.
\end{align*}
Similarly as above, using \eqref{aes0}, \eqref{nm3}, and \eqref{ellineq2}, we observe that
\begin{align*}
&\left| 2 \int_{\mathbb{R}_{+}} \! \int_{\mathbb{R}^3} e^{\beta x} (\partial_{xx} \phi) \frac{\pd_{\xi_{1}}(M_{\infty}\psi-M_{\infty})}{M_{\infty}^{1/2}} \pd_{x} f d\xi dx \right|
\\
& \leq 2 \mu_{\infty} \| e^{\beta x/2} \phi_{xx} \|_{L^{2}_{x}}  \| e^{\beta x/2} \pd_{x} f \|_{L^{2}_{x,\xi}}
\\
& \leq 2 \{1+\beta^{2}(1+\eta(\beta))^{2} \}^{1/2} \mu_{\infty} \| e^{\beta x/2} n \|_{L^{2}_{x}} \| e^{\beta x/2} \pd_{x} f \|_{L^{2}_{x,\xi}}
\\
& \quad +  C \mu_{\infty} \delta^{1/2}  \|e^{\beta x/2} n \|_{L^{2}_{x}} \| e^{\beta x/2} \pd_{x}f \|_{L^{2}_{x,\xi}}
\\
& \leq \frac{2 \{1+\beta^{2}(1+\eta(\beta))^{2} \}^{1/2} \mu_{\infty}}{(r - 2\ve)^{1/2}}  \| e^{\beta x/2} n \|_{L^{2}_{x}}
\left\| e^{\beta x/2} |\xi_{1}|^{1/2} \pd_{x} f \right\|_{L^{2}_{x,\xi}}
\\
& \quad +\frac{C \mu_{\infty} \delta^{1/2}}{r - 2\ve} \sum_{k=0}^{1} \left\| e^{\beta x/2} |\xi_{1}|^{1/2} \nabla_{x,\xi}^{k} f \right\|_{L^{2}_{x,\xi}}^{2}.
\end{align*}
It is straightforward to see from \eqref{decay0}, \eqref{aes0}, \eqref{M2*}, \eqref{M3*}, \eqref{ellineq0}, and \eqref{ellineq3} that 
\begin{align*}
\left|\int_{\mathbb{R}_{+}} \! \int_{\mathbb{R}^3} {\cal R}_{1} + {\cal R}_{2} dxd\xi\right|
&\leq  C\delta\sum_{k=0}^{1} 
\left\| e^{\beta x/2} (1+|\xi_{1}|^{1/2}) \nabla_{x,\xi}^{k} f \right\|_{L^{2}_{x,\xi}}^{2}
\\
&\leq   \frac{C \delta}{r - 2\ve}
\sum_{k=0}^{1} \left\| e^{\beta x/2} |\xi_{1}|^{1/2} \nabla_{x,\xi}^{k} f \right\|_{L^{2}_{x,\xi}}^{2}.
\end{align*}
Furthermore, by using \eqref{nm1} and \eqref{N1}, it is seen that
\begin{align*}
 \left| \frac{2}{\theta_{\infty}}  \int_{\mathbb{R}}  e^{\beta x} {\cal N}_{1} (\pd_{x} m -u_{\infty}\pd_{x} n)  dx \right|
& \leq  C\delta \left(\| e^{\beta x/2} f \|_{L^{2}_{x,\xi}}^{2}
+ \| e^{\beta x/2} \pd_{x}f  \|_{L^{2}_{x,\xi}}^{2} \right)
\\
&\leq   \frac{C\delta}{r - 2\ve}
\sum_{k=0}^{1} \left\| e^{\beta x/2} |\xi_{1}|^{1/2} \nabla_{x,\xi}^{k} f \right\|_{L^{2}_{x,\xi}}^{2}.
\end{align*}
Substituting these estimates into \eqref{bes4} and recalling \eqref{aes0}, we arrive at
\begin{align}
&\dfrac{d}{dt} \left(\Vert e^{\beta x/2}   f (t) \Vert^2_{L^2_{x,\xi}} 
+\Vert e^{\beta x/2}   \pd_{x} f (t) \Vert^2_{L^2_{x,\xi}} 
+ \frac{1}{\theta_{\infty}}\Vert e^{\beta x/2}   n (t) \Vert^2_{L^2_{x}} \right)
\notag \\
&\quad +\left.   \left(\int_{\mathbb{R}^3_-} \vert \xi_1\vert \vert f \vert^2 
+\vert \xi_1\vert \vert \pd_{x} f \vert^2  d\xi 
+\frac{|u_{\infty}|}{\theta_{\infty}}|n|^{2}\right)\right|_{x=0} +\beta{\cal D}
\notag \\
&\leq C \delta^{1/2} 
\sum_{k=0}^{1} \left\| e^{\beta x/2} |\xi_{1}|^{1/2} \nabla_{x,\xi}^{k} f \right\|_{L^{2}_{x,\xi}}^{2},
\label{bes5}
\end{align}
where 
\begin{align*}
{\cal D}&:= \left\| e^{\beta x/2} |\xi_{1}|^{1/2}  f \right\|_{L^{2}_{x,\xi}}^{2}
+\left\| e^{\beta x/2} |\xi_{1}|^{1/2}  \pd_{x} f \right\|_{L^{2}_{x,\xi}}^{2}
+ \frac{|u_{\infty}|}{\theta_{\infty}} \| e^{\beta x/2} n \|_{L^{2}_{x}}^{2}
\\
& \quad - 2\left( \frac{\rho_{\infty}^{1/2}}{\theta_{\infty}^{1/2}}(1+\eta(\beta))^{1/2} + \frac{\mu_{\infty}}{\beta}  \right) \frac{(1+\eta(\beta))^{1/2}}{(r - 2\ve)^{1/2}} \|e^{\beta x/2} n \|_{L^{2}_{x}} \left\| e^{\beta x/2} |\xi_{1}|^{1/2}  f \right\|_{L^{2}_{x,\xi}}
\\
& \quad - \frac{2 \{1+\beta^{2}(1+\eta(\beta))^{2} \}^{1/2}\mu_{\infty}}{\beta(r - 2\ve)^{1/2}}\| e^{\beta x/2} n \|_{L^{2}_{x}} \left\| e^{\beta x/2} |\xi_{1}|^{1/2}  \pd_{x} f \right\|_{L^{2}_{x,\xi}}.
\end{align*}
Let us write ${\cal D}$ in the quadratic form ${\cal D} = {}^{t}w D w$,
where
\begin{align*}
w&:=\begin{bmatrix}
\left\| e^{\beta x/2} |\xi_{1}|^{1/2} f \right\|_{L^{2}_{x,\xi}}
\\
\left\| e^{\beta x/2} |\xi_{1}|^{1/2}  \pd_{x} f \right\|_{L^{2}_{x,\xi}}
\\ 
\|e^{\beta x/2} n \|_{L^{2}_{x}}
\end{bmatrix},
\quad 
D:=\begin{bmatrix}
1 & 0 &  -d_{1}
\\
0 & 1 &  -d_{2}
\\
-d_{1} & -d_{2} & \frac{|u_{\infty}|}{\theta_{\infty}}
\end{bmatrix},
\\
d_{1}&:=\left( \frac{\rho_{\infty}^{1/2}}{\theta_{\infty}^{1/2}}(1+\eta(\beta))^{1/2} + \frac{\mu_{\infty}}{\beta}  \right) \frac{(1+\eta(\beta))^{1/2}}{(r - 2\ve)^{1/2}},
\quad
d_{2}:= \frac{\{1+\beta^{2}(1+\eta(\beta))^{2} \}^{1/2}\mu_{\infty}}{\beta(r - 2\ve)^{1/2}}.
\end{align*}
The symmetric matrix $D$ is positive definite if and only if the assumption \eqref{asp1} holds.
Therefore, multiplying \eqref{bes5} by $e^{\gamma t}$ and integrating it over $[0,t]$,
we conclude the desired estimate \eqref{aes2}. 
\end{proof}

We also need to derive the estimate of $\nabla_{\xi}f$.

\begin{lem}\label{alem3}
Suppose that the same assumption as in Proposition \ref{apriori1} and also \eqref{aes0} hold. 
There exists a constant $\delta_{3}>0$ such that if $\Phi_{b}+N_{0,0}(T)  \leq \delta_{3}$, then the following holds:
\begin{align}\label{aes3}
& e^{\gamma t} \Vert e^{\beta x/2} \nabla_{\xi} f (t) \Vert^2_{L^2_{x,\xi}} 
+ \frac{\beta}{2}  \int_{0}^{t} e^{\gamma \tau} \left\| e^{\beta x/2} |\xi_{1}|^{1/2} \nabla_{\xi} f (\tau) \right\|_{L^{2}_{x,\xi}}^{2} d\tau
\notag \\
&\leq  \Vert e^{\beta x/2} \nabla_{\xi} f_{0} \Vert^2_{L^2_{x,\xi}} 
+\gamma \int_{0}^{t} e^{\gamma \tau} \Vert e^{\beta x/2} \nabla_{\xi} f (\tau) \Vert^2_{L^2_{x,\xi}} d\tau
\notag\\
& \quad + C_{2}  \int_{0}^{t} e^{\gamma \tau} \left\Vert e^{\beta x/2} |\xi_{1}|^{1/2} f (\tau)\right\Vert^2_{L^2_{x,\xi}} 
\!\!+ e^{\gamma \tau} \left\Vert  e^{\beta x/2} |\xi_{1}|^{1/2}  \pd_{x} f (\tau) \right\Vert^2_{L^2_{x,\xi}} d\tau
\end{align}
for any $\gamma>0$, where $C_{2}$ is a positive constant independent of $\gamma$, $\Phi_b$, $f_{0}$, $t$, and $T$.
\end{lem}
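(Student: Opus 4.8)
The plan is to run a weighted $L^2$ energy estimate on the $\xi$-derivatives of $f$, in the same spirit as Lemma \ref{alem2}. First I would differentiate \eqref{ree1} with respect to $\xi_j$ for $j=1,2,3$ (legitimate after the usual mollification of the local solution), multiply the resulting equation by $2e^{\beta x}\pd_{\xi_j}f$, sum over $j$, and integrate over $\mathbb{R}_+\times\mathbb{R}^3$. Since the coefficient $\pd_x\Phi^s+\pd_x\phi$ of $\pd_{\xi_1}f$ does not depend on $\xi$, the second-order term $(\pd_x\Phi^s+\pd_x\phi)\pd_{\xi_1}\pd_{\xi_j}f$ integrates to zero in $\xi_1$; the transport term $\xi_1\pd_x\pd_{\xi_j}f$, after integration by parts in $x$ against the weight, produces the nonnegative boundary contribution $\int_{\mathbb{R}^3_-}|\xi_1||\nabla_\xi f|^2 d\xi|_{x=0}$ (the $\xi_1>0$ part vanishes by \eqref{rebc1} together with \eqref{aes0}), which we discard, plus the volume term $\beta\int\int_{\mathbb{R}_+\times\mathbb{R}^3}|\xi_1|e^{\beta x}|\nabla_\xi f|^2$; this is the sought dissipative term, and it is nonnegative precisely because $\xi_1\le-r+2\ve<0$ on $\supp f$ by \eqref{aes0}. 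The differentiation also generates three source terms on the right, namely the commutator term $-\delta_{1j}\pd_x f$, the term $-\pd_{\xi_j}(\pd_{\xi_1}F^s/M_\infty^{1/2})\pd_x\phi$, and $\tfrac{\delta_{1j}}{2\theta_\infty}(\pd_x\Phi^s+\pd_x\phi)f$, together with the coupled term $\tfrac{\xi_1-u_\infty}{2\theta_\infty}(\pd_x\Phi^s+\pd_x\phi)\pd_{\xi_j}f$.

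Next I would estimate these remainder terms, systematically using that $|\xi_1|>r-2\ve$ on $\supp f$ (so $(1+|\xi_1|)^2\le C(r,\ve)|\xi_1|$ there) and that $|\pd_x\Phi^s+\pd_x\phi|\le C\delta$, where $\delta:=\Phi_b+N_{0,0}(T)$, by \eqref{phimax}. The commutator term is the crucial one: writing $2|\pd_x f\,\pd_{\xi_1}f|\le\epsilon|\xi_1||\pd_{\xi_1}f|^2+C\epsilon^{-1}(r-2\ve)^{-2}|\xi_1||\pd_x f|^2$ and taking $\epsilon$ a small multiple of $\beta$, its contribution is bounded by a small fraction of $\beta\|e^{\beta x/2}|\xi_1|^{1/2}\nabla_\xi f\|_{L^2_{x,\xi}}^2$ plus $C\|e^{\beta x/2}|\xi_1|^{1/2}\pd_x f\|_{L^2_{x,\xi}}^2$, the latter already appearing on the right of \eqref{aes3}. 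For the $F^s$ term I would apply the Cauchy--Schwarz inequality in $\xi$ together with \eqref{Fs1*}, and then \eqref{ellineq3} and \eqref{nm3} with $(\alpha,k)=(\beta,0)$, to obtain the bound $C\|e^{\beta x/2}n\|_{L^2_x}\|e^{\beta x/2}\nabla_\xi f\|_{L^2_{x,\xi}}\le C\|e^{\beta x/2}|\xi_1|^{1/2}f\|_{L^2_{x,\xi}}\|e^{\beta x/2}|\xi_1|^{1/2}\nabla_\xi f\|_{L^2_{x,\xi}}$, which splits by Young's inequality into an absorbable dissipative piece and a $C\|e^{\beta x/2}|\xi_1|^{1/2}f\|_{L^2_{x,\xi}}^2$ piece. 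The remaining two $\theta_\infty^{-1}$-weighted terms are controlled by $C\delta(\|e^{\beta x/2}|\xi_1|^{1/2}f\|_{L^2_{x,\xi}}^2+\|e^{\beta x/2}|\xi_1|^{1/2}\nabla_\xi f\|_{L^2_{x,\xi}}^2)$, the constant being allowed to depend on the fixed parameter $\theta_\infty$.

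Finally, for $\delta$ sufficiently small all the $|\xi_1|^{1/2}$-weighted $\nabla_\xi f$ contributions on the right can be absorbed into the dissipative term, which leaves the differential inequality
\[
\tfrac{d}{dt}\|e^{\beta x/2}\nabla_\xi f\|_{L^2_{x,\xi}}^2+\tfrac{\beta}{2}\|e^{\beta x/2}|\xi_1|^{1/2}\nabla_\xi f\|_{L^2_{x,\xi}}^2\le C(\|e^{\beta x/2}|\xi_1|^{1/2}f\|_{L^2_{x,\xi}}^2+\|e^{\beta x/2}|\xi_1|^{1/2}\pd_x f\|_{L^2_{x,\xi}}^2).
\]
Multiplying by $e^{\gamma t}$ — which is where the term $\gamma\int_0^t e^{\gamma\tau}\|e^{\beta x/2}\nabla_\xi f\|_{L^2_{x,\xi}}^2 d\tau$ in \eqref{aes3} originates — and integrating over $[0,t]$ yields the claim, with $C_2$ independent of $\gamma$, $\Phi_b$, $f_0$, $t$, and $T$. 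I expect the main obstacle to be the commutator term $-\delta_{1j}\pd_x f$: it carries a full $x$-derivative and is not small, so controlling it genuinely relies on the support localization \eqref{aes0} (which is why that estimate is a hypothesis of this lemma) and on carefully balancing the Young-inequality constant against the dissipative coefficient $\beta$; a secondary point is the rigorous justification of the $\xi_j$-differentiation and of the integrations by parts, which is handled by the regularity class ${\cal X}_{\rm e}(T,\beta)$ and the boundedness of $\supp f$ in the $\xi_1$-direction, exactly as for Lemma \ref{alem2}.
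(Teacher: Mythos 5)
Your proposal follows the same route as the paper: differentiate \eqref{ree1} in $\xi$, take a weighted $L^2$ energy estimate, exploit the support constraint \eqref{aes0} to turn the weight-induced term $\beta|\xi_1|e^{\beta x}|\nabla_\xi f|^2$ into a dissipative contribution, bound the commutator term $-\pd_x f\,\pd_{\xi_1}f$ by Young's inequality and absorb it, and control the $F^s$-term via \eqref{Fs1*}, \eqref{ellineq3}, and \eqref{nm3}, converting all remaining pieces to $|\xi_1|^{1/2}$-weighted norms through \eqref{aes0}. This is essentially the paper's proof (the paper packages the source terms into a single remainder $\cal R_3$, while you itemize them, but the estimates used are the same), and the argument is correct.
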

\begin{proof}
Differentiate \eqref{ree1} with respect to $\xi_{i}$,
multiply $2 e^{\beta x} \pd_{\xi_{i}} f$ to the resultant equation, and sum them up for $i=1,2,3$.
Then we have
\begin{align*}
&\partial_{t} (e^{\beta x}\vert  \nabla_{\xi} f \vert^2 )
+ \partial_{x} (\xi_{1} e^{\beta x}  \vert \nabla_{\xi} f \vert^2)
-\beta \xi_1 e^{\beta x}  \vert \nabla_{\xi}  f\vert^2
+\partial_{\xi_{1}} \{e^{\beta x} (\pd_{x}\Phi^{s} +\pd_{x} \phi) \vert \nabla_{\xi} f \vert^2\}
={\cal R}_{3},
\end{align*}
where 
\begin{align*}
{\cal R}_{3}&:=  - 2e^{\beta x}  \pd_{x}f  \pd_{\xi_{1}} f 
- 2e^{\beta x}  \pd_{x}\phi  \nabla_{\xi}\left(\frac{\pd_{\xi_{1}}F^{s}}{M_{\infty}^{1/2}}\right) \cdot  \nabla_{\xi} f 
\\
& \quad + e^{\beta x} (\partial_{x} \Phi^{s} + \partial_{x}\phi)   \nabla_{\xi} \left( \frac{\xi_{1}-u_{\infty}}{\theta_{\infty}} f\right) \cdot \nabla_{\xi} f.
\end{align*}
Integrate this over $\mathbb R_{+} \times \mathbb{R}^3$ 
and use \eqref{rebc1}--\eqref{rebc4} and \eqref{aes0}  to obtain
\begin{align}
&\dfrac{d}{dt} \Vert e^{\beta x/2}   \nabla_{\xi} f (t) \Vert^2_{L^2_{x,\xi}} 
+\left.  \int_{\mathbb{R}^3_-} \vert \xi_1\vert \vert  \nabla_{\xi} f \vert^2 d\xi \right|_{x=0}
+ \beta  \int_{\mathbb{R}_{+}} \! \int_{\mathbb{R}^3_{-}}  e^{\beta x} |\xi_1| | \nabla_{\xi} f|^2  dxd\xi 
\notag \\
&=\int_{\mathbb{R}_{+}} \! \int_{\mathbb{R}^3} {\cal R}_{3}  dxd\xi.
\label{hes1}
\end{align}

Set $ \delta:= \Phi_b + N_{0,0}(T)$. It is easy to see from \eqref{nm3}, \eqref{Fs1*}, \eqref{ellineq0}, and \eqref{ellineq3}  with $k=0$ that
\begin{align*}
&\left|\int_{\mathbb{R}_{+}} \! \int_{\mathbb{R}^3} {\cal R}_{3}  dxd\xi\right|
\\
&\leq  ( \lambda +  C\delta ) \left\| e^{\beta x/2} (1+|\xi_{1}|^{1/2})  \nabla_{\xi} f  \right\|_{L^{2}_{x,\xi}}^{2} 
 + C\lambda^{-1} \left( \Vert e^{\beta x/2} f \Vert^2_{L^2_{x,\xi}} + \Vert e^{\beta x/2}   \pd_{x} f  \Vert^2_{L^2_{x,\xi}} \right)
\\
&\leq \frac{ \lambda +  C\delta }{r - 2\ve} \left\| e^{\beta x/2} |\xi_{1}|^{1/2} \nabla_{\xi} f  \right\|_{L^{2}_{x,\xi}}^{2} 
\!\!\! + \frac{C\lambda^{-1}}{r - 2\ve} \left( \!
\left\Vert e^{\beta x/2}  |\xi_{1}|^{1/2} f \right\Vert^2_{L^2_{x,\xi}} 
\!\!\! +  \left\Vert  e^{\beta x/2} |\xi_{1}|^{1/2}  \pd_{x} f \right\Vert^2_{L^2_{x,\xi}} \! \right)\!,
\end{align*}
where $\lambda>1$ is a constant to be determined later, and we have also used \eqref{aes0} and the fact that $|\xi_{1}| > r-2\ve$ holds on the support of $f$ in deriving  the last inequality. 
Substitute this into \eqref{hes1}, multiply the resultant inequality by $e^{\gamma t}$, and integrate it over $[0,t]$.
Then taking $\lambda$ and $\delta$ small enough, we arrive at \eqref{aes3}. The proof is complete.
\end{proof}

We are now in a position to prove Proposition \ref{apriori1}.

\begin{proof}[Proof of Proposition \ref{apriori1}]
First we set 
\begin{align*}
\gamma&:=\frac{1}{2}\min\{1, r-2\ve\} \min\left\{ \frac{c_{1}}{4}, \ c_{1}\theta_{\infty}, \frac{\beta}{4} \right\}>0,
\\
\delta_{0}&:=\min\left\{ \delta_{1}(\gamma), \ \delta_{2}, \ \delta_{3},  \left(\frac{c_{1}}{4C_{1}}\right)^{2}, \left(\frac{c_{1}\beta}{8C_{2}}\right)^{2}  \right\}>0,
\end{align*}
where $\delta_{1}=\delta_{1}(\gamma)$, $\delta_{2}$, $\delta_{3}$, $c_{1}$, $C_{1}$, and $C_{2}$ are the same constants being in Lemmas \ref{alem1}--\ref{alem3}.
Suppose that $\Phi_{b}+N_{\beta,\gamma}(T)  \leq \delta_{0}$ and recall \eqref{N2}.
Applying Lemma \ref{alem1} with the above $\gamma>0$,  we have \eqref{aes0}.
Now we can use Lemmas \ref{alem2} and \ref{alem3}.
Multiplying $\eqref{aes3}$ by  $c_{1}/(2C_{2})$ and then adding it to \eqref{aes2}, we obtain
\begin{align*}
&e^{\gamma t} E(t) +  \frac{c_{1}}{4}  \int_{0}^{t} e^{\gamma \tau} \left\Vert e^{\beta x/2} |\xi_{1}|^{1/2} f (\tau)\right\Vert^2_{L^2_{x,\xi}} 
\!\!+ e^{\gamma \tau} \left\Vert  e^{\beta x/2} |\xi_{1}|^{1/2}  \pd_{x} f (\tau) \right\Vert^2_{L^2_{x,\xi}} d\tau
\notag \\
&\quad + c_{1} \int_{0}^{t}   e^{\gamma \tau} \|e^{\beta x/2} n (\tau) \|_{L^{2}_{x}}^{2} d\tau
+ \frac{c_{1}\beta}{8C_{2}}  \int_{0}^{t} e^{\gamma \tau} \left\| e^{\beta x/2} |\xi_{1}|^{1/2}  \nabla_{\xi} f  (\tau)\right\|_{L^{2}_{x,\xi}}^{2} d\tau
\notag \\
& \leq E(0) + \gamma \int_{0}^{t} e^{\gamma t} E(\tau) d\tau,
\end{align*}
where 
\begin{gather*}
E(t):=\Vert e^{\beta x/2} f (t) \Vert^2_{L^2_{x,\xi}} 
+\Vert e^{\beta x/2}   \pd_{x} f (t) \Vert^2_{L^2_{x,\xi}} 
+ \frac{1}{\theta_{\infty}}\Vert e^{\beta x/2}   n (t) \Vert^2_{L^2_{x}} 
+ \frac{c_{1}}{2C_{2}} \Vert e^{\beta x/2} \nabla_{\xi} f(t) \Vert^2_{L^2_{x,\xi}}.
\end{gather*}
Using \eqref{aes0} and the fact that $|\xi_{1}| > r-2\ve$ holds on the support of $f$, we see that
\begin{align*}
e^{\gamma t} E(t) 
+ \left( \min\{1, r-2\ve\} \min\left\{ \frac{c_{1}}{4}, \ c_{1}\theta_{\infty}, \frac{\beta}{4} \right\} -\gamma \right)\int_{0}^{t} e^{\gamma t} E(\tau) d\tau
\leq E(0),
\end{align*}
which gives \eqref{aes1}. 
Finally, \eqref{ellaes1} immediately follows from \eqref{nm0}, \eqref{nm3}, and  \eqref{ellineq3}.
The proof is complete.
\end{proof}

\section{A Priori Estimate on the Instability}\label{AE2}

We show Proposition \ref{apriori2} which leads to the instability theorem.

\begin{proof}[Proof of Proposition \ref{apriori2}]

We first show \eqref{aesin0}.
Let $\delta_{0}<\min\{R_{1}^{2}/8,r^{2}/8\}$ so that $\Phi_{b}=\delta<\min\{R_{1}^{2}/8,r^{2}/8\}$,
and set
\begin{gather*}
{{\Omega}}:=\left\{(x,\xi) \in {\mathbb R}_{+}\times{\mathbb R}^{3} \, \left| \, {\Xi}^{-}(x) < \xi_{1} < {\Xi}^{+}(x) \right.\right\},
\\
{\Xi}^{-}(x):= -\sqrt{2\Phi^{s}(x) + \frac{r^{2}}{2}\left(1-\frac{1}{2}e^{-\beta x/2}\right)}, \quad
{\Xi}^{+}(x):= \sqrt{2\Phi^{s}(x) + \frac{R_{1}^{2}}{4}(1+e^{-\beta x/2})},
\\
{\Gamma}^{\pm}:=\left\{(x,\xi) \in {\mathbb R}_{+} \times{\mathbb R}^{3} \, | \, \xi_{1}={\Xi}^{\pm}(x) \right\}, \quad
{N}^{\pm}(x):=\sqrt{1+|\partial_{x} {\Xi}^{\pm}(x)|^{2}}
\end{gather*}
for $r$, $R_{1}$, and $\beta$ being in \eqref{cutoff0}, \eqref{g0}, and the assumption of Proposition \ref{apriori2}, respectively.
We see from \eqref{decay0} and \eqref{g0} that 
\begin{gather}\label{instasupp2}
\supp g_{0} \cap \Omega =  \emptyset, \quad
{\Xi}^{-}(x) \leq -r/2, \quad 
{\Xi}^{+}(x) \geq R_{1}/2.
\end{gather}
Let us show that $f=0$ on $[0,T] \times \Omega$, which leads to \eqref{aesin0}.
It is clear that $F^{s}=f_{0}=\delta g_{0}=0$ on $\Omega$ owing to \eqref{r2} and \eqref{instasupp2}.
Integrating \eqref{suppes1} over $[0,t] \times \Omega$, and using the divergence theorem, we see that
\begin{align}
&\int\!\!\int_{{\Omega}} |f|^{2}(t) dxd\xi 
+ \left. \int_{0}^{t}\!\!\int_{(\Xi^{-}(0),\Xi^{+}(0)) \times \mathbb R^{2}}  \vert \xi_1\vert \vert f \vert^2  \chi(-\xi_{1}) d\tau d\xi \right|_{x=0}
\notag \\
&\quad +\int_{0}^{t} \!\! \int_{{\Gamma}^{+}} \frac{1}{N^{+}} \left\{(\pd_{x}\Phi^{s} +\pd_{x} \phi)   - \Xi^{+}\partial_{x} \Xi^{+} \right\} |f|^{2}  d\tau dS  
\notag \\
&\quad + \int_{0}^{t} \!\! \int_{{\Gamma}^{-}} \frac{1}{N^{-}} \left\{\Xi^{-} \partial_{x} \Xi^{-} - (\pd_{x}\Phi^{s} +\pd_{x} \phi)  \right\} |f|^{2}  d\tau dS 
\notag \\
& = \int_{0}^{t}\!\! \int\!\!\int_{{\Omega}} \frac{\xi_{1}-u_{\infty}}{\theta_{\infty}} (\partial_{x} \Phi^{s} + \partial_{x}\phi)   |f|^{2}d\tau dxd\xi
\notag \\
& \leq C \int_{0}^{t}\!\! \int\!\!\int_{{\Omega}} |f|^{2}(\tau) d\tau dxd\xi,
\label{instasupp3}
\end{align}
where we have used $|\partial_{x}\phi (t,x)| \leq C$, which follows from $\phi \in {\cal Y}_{\rm e}(T,\beta)$,  in deriving the inequality.
It is obvious that the second term on the left hand side is nonnegative.
We also see by using \eqref{nm3}, \eqref{ellineq0}, \eqref{N2}, $\Phi_{b}=\delta \leq \delta_{0}$, and $N_{0,\beta}(T) \leq \ve_{0}$ that the integrants of the third and fourth terms on the left hand side are nonnegative
for suitably small $\ve_{0}$ as follows:
\begin{align*}
&\frac{1}{N^{+}}  \left\{ (\pd_{x}\Phi^{s} +\pd_{x} \phi)  -\Xi^{+}\partial_{x} \Xi^{+}  \right\}  |f|^{2}
\\
&=\frac{1}{N^{+}} \left\{\pd_{x} \phi + \frac{\beta R_{1}^{2}}{16} e^{-\beta x/2}  \right\} |f|^{2}
\geq  \frac{1}{N^{+}} \left\{\frac{\beta R_{1}^{2}}{16} e^{-\beta x/2} -\ve_{0} Ce^{-\beta x/2}  \right\} |f|^{2}
\geq 0
\end{align*}
and
\begin{align*}
&\frac{1}{N^{-}}  \left\{\Xi^{-} \partial_{x} \Xi^{-} - (\pd_{x}\Phi^{s} +\pd_{x} \phi) \right\}  |f|^{2}
\\
&=\frac{1}{N^{-}} \left\{\frac{\beta r^{2}}{16} e^{-\beta x/2}  -\pd_{x} \phi  \right\} |f|^{2}
 \geq  \frac{1}{N^{-}} \left\{\frac{\beta r^{2}}{16} e^{-\beta x/2} -\ve_{0} Ce^{-\beta x/2} \right\} |f|^{2} 
\geq 0.
\end{align*}
Now applying Gronwall's inequality to \eqref{instasupp3}, we conclude that $f=0$ on $[0,T] \times \Omega$. 
Thus \eqref{aesin0} holds.

Next let us prove \eqref{aesin2}. Let $L >2 R_{2}$ and $\delta_{0}< R_{2}^{2}$, and set
\begin{gather*}
{{\Omega_{L}}}:=\left\{(x,\xi) \in {\mathbb R}_{+}\times{\mathbb R}^{3} \, \left| \, {\Xi}^{-}_{L}(x) < \xi_{1} < {\Xi}^{+}_{L}(x) \right.\right\},
\\
{\Xi}^{-}_{L}(x):= \sqrt{2\Phi^{s}(x) + 2R_{2}^{2}\left(1-\frac{1}{2}e^{-\beta x/2}\right)}, \quad
{\Xi}^{+}_{L}(x):= \sqrt{2\Phi^{s}(x) + L^{2}(1+e^{-\beta x/2})}
\end{gather*}
for $R_{2}$ and $\beta$ being in \eqref{g0} and the assumption of Proposition \ref{apriori2}, respectively.
We see from \eqref{decay0} and \eqref{g0} that 
\begin{gather*}
\supp g_{0} \cap \Omega_{L} =  \emptyset, \quad  {\Xi}^{-}_{L}(x) \leq 2R_{2}, \quad {\Xi}^{+}_{L}(x) \geq L.
\end{gather*}
In order to show \eqref{aesin2}, it is sufficient to show that $f=0$ on $[0,T] \times \Omega_{L}$, 
since $L$ is an arbitrary constant with $L >2 R_{2}$.
Similarly as the proof of \eqref{aesin0}, we integrate \eqref{suppes1} over $[0,t] \times \Omega_{L}$, use the divergence theorem and \eqref{r2}, and choose small $\ve_{0}$ independent of $L$.
Then, applying Gronwall's inequality, we can see that $f=0$ on $[0,T] \times \Omega_{L}$.
Thus \eqref{aesin2} holds.

We complete the proof by showing \eqref{aesin1}.
Let $\delta_{0}<R_{1}^{2}/8$ so that $\Phi_{b}=\delta<R_{1}^{2}/8$, and set
\begin{gather*}
{\tilde{\Omega}}:=\left\{(x,\xi) \in {\mathbb R}_{+}\times{\mathbb R}^{3} \, \left| \,   \tilde{\Xi}^{-}(x) < \xi_{1} < \tilde{\Xi}^{+}(x) \right.\right\},
\\
 \tilde{\Xi}^{-}(x):= \sqrt{2\Phi^{s}(x) + \frac{R_{1}^{2}}{4}(1+e^{-\beta x/2})}, \quad
\tilde{\Xi}^{+}(x):= \sqrt{2\Phi^{s}(x) + R_{2}^{2}(4-e^{-\beta x/2})},
 \\
 \tilde{\Gamma}^{\pm}:=\left\{(x,\xi) \in {\mathbb R}_{+} \times{\mathbb R}^{3} \, | \, \xi_{1}=\tilde{{\Xi}}^{\pm}(x) \right\},   \quad
\tilde{N}^{\pm}(x):=\sqrt{1+|\partial_{x} \tilde{\Xi}^{\pm}(x)|^{2}}.
\end{gather*}
We see from \eqref{decay0} and \eqref{g0} that 
\begin{gather}\label{instasupp1}
\supp g_{0} \cap {\tilde{\Omega}} = \supp g_{0}  \neq \emptyset.
\end{gather}
It is clear that $F^{s}=0$ in $\tilde{\Omega}$ owing to \eqref{r2}.
Let $\gamma$ be a positive constant to be determined later.
Multiplying \eqref{suppes1} by $e^{-\gamma t}e^{\beta x}$, integrating it over $[0,t] \times {\tilde{\Omega}}$, and using the divergence theorem, we see that
\begin{align}
&e^{-\gamma t} \int\!\!\int_{\tilde{\Omega}} e^{\beta x}|f|^{2}(t) dxd\xi 
\notag \\
& = \int\!\!\int_{\tilde{\Omega}} e^{\beta x}|f_{0}|^{2} dxd\xi 
+ \beta \int_{0}^{t}\!\!\int\!\!\int_{\tilde{\Omega}} e^{-\gamma \tau} e^{\beta x} \vert \xi_1\vert  \vert f \vert^2 d\tau dx d\xi 
\notag \\
& \quad - \gamma  \int_{0}^{t}\!\!\int\!\!\int_{\tilde{\Omega}}e^{-\gamma \tau} e^{\beta x} \vert f \vert^2 d\tau dx d\xi 
\notag \\
&\quad +  \int_{0}^{t} \!\! \int_{\tilde{\Gamma}^{+}} \frac{e^{-\gamma \tau} e^{\beta x}}{\tilde{N}^{+}}  \left\{\tilde{\Xi}^{+} \partial_{x} \tilde{\Xi}^{+} - (\pd_{x}\Phi^{s} +\pd_{x} \phi)  \right\} |f|^{2}  d\tau dS  
\notag \\
&\quad +  \int_{0}^{t}\!\!\int_{\tilde{\Gamma}^{-}}  \frac{e^{-\gamma \tau} e^{\beta x}}{\tilde{N}^{-}}  \left\{(\pd_{x}\Phi^{s} +\pd_{x} \phi) - \tilde{\Xi}^{-}  \partial_{x} \tilde{\Xi}^{-} \right\}  |f|^{2}  d\tau dS  
\notag \\
& \quad +  \int_{0}^{t}\!\! \int\!\!\int_{\tilde{\Omega}} e^{-\gamma \tau}e^{\beta x} \frac{\xi_{1}-u_{\infty}}{\theta_{\infty}}(\partial_{x} \Phi^{s} + \partial_{x}\phi) |f|^{2}d\tau dxd\xi.
\label{instaes1}
\end{align}
Let us estimate the terms on the right hand side from below. 
By the fact that $|\xi_{1}| \geq R_{1}/2>0 $ on $\tilde{\Omega}$,
the second term can be estimated as
\begin{align*}
\beta  \int_{0}^{t}\!\!\int\!\!\int_{\tilde{\Omega}} e^{-\gamma \tau} e^{\beta x} \vert \xi_1\vert  \vert f \vert^2 d\tau dx d\xi 
\geq  \frac{\beta R_{1}}{2}  \int_{0}^{t}\!\!\int\!\!\int_{\tilde{\Omega}} e^{-\gamma \tau} e^{\beta x} \vert f \vert^2 d\tau dx d\xi.
\end{align*}
Using \eqref{nm3}, \eqref{ellineq0},  $\Phi_{b}=\delta \leq \delta_{0}$, and $N_{0,\beta}(T) \leq \ve_{0}$ as well as taking $\delta_{0}$ and $\ve_{0}$ small enough,
we see that the integrants of the fourth and fifth terms on the right hand side are nonnegative as follows:
\begin{align*}
& \frac{e^{-\gamma \tau} e^{\beta x}}{\tilde{N}^{+}}  \left\{\tilde{\Xi}^{+} \partial_{x} \tilde{\Xi}^{+} - (\pd_{x}\Phi^{s} +\pd_{x} \phi)  \right\}  |f|^{2} 
\\
&= \frac{e^{-\gamma \tau} e^{\beta x}}{\tilde{N}^{+}}  \left\{\frac{\beta R_{2}^{2}}{4} e^{-\beta x/2} -\pd_{x} \phi  \right\} |f|^{2} 
 \geq \frac{e^{-\gamma \tau} e^{\beta x}}{\tilde{N}^{+}}  \left\{\frac{\beta R_{2}^{2}}{4} e^{-\beta x/2} -\ve_{0} Ce^{-\beta x/2}  \right\} |f|^{2} 
\geq 0
\end{align*}
and
\begin{align*}
& \frac{e^{-\gamma \tau} e^{\beta x}}{\tilde{N}^{-}}   \left\{(\pd_{x}\Phi^{s} +\pd_{x} \phi) - \tilde{\Xi}^{-} \partial_{x}  \tilde{\Xi}^{-} \right\}  |f|^{2}
\\
&= \frac{e^{-\gamma \tau} e^{\beta x}}{\tilde{N}^{-}}  \left\{\pd_{x} \phi + \frac{\beta R_{1}^{2}}{16} e^{-\beta x/2}   \right\} |f|^{2}
\geq  \frac{e^{-\gamma \tau} e^{\beta x}}{\tilde{N}^{-}}  \left\{\frac{\beta R_{1}^{2}}{16} e^{-\beta x/2} -\ve_{0} Ce^{-\beta x/2} \right\} |f|^{2}
\geq 0.
\end{align*}
Furthermore, we observe from \eqref{decay0} and $\Phi_{b}=\delta \leq \delta_{0}$ that
\begin{align*}
&  \int_{0}^{t}\!\! \int\!\!\int_{\tilde{\Omega}} e^{-\gamma \tau}e^{\beta x} \frac{\xi_{1}-u_{\infty}}{\theta_{\infty}}(\partial_{x} \Phi^{s} + \partial_{x}\phi) |f|^{2}d\tau dxd\xi
\\
&\geq -  (\delta_{0}+\ve_{0}) C \int_{0}^{t}\!\! \int\!\!\int_{\tilde{\Omega}} e^{-\gamma \tau} e^{\beta x}|f|^{2}d\tau dxd\xi.
\end{align*}
Substituting these into \eqref{instaes1} and taking $\delta_{0}$, $\ve_{0}$, and $\gamma$ small enough,
we arrive at 
\begin{align}\label{instaes5}
\int\!\!\int_{\tilde{\Omega}} e^{\beta x}  |f|^{2}(t) dxd\xi 
\geq e^{\gamma t}\int\!\!\int_{\tilde{\Omega}} e^{\beta x}  |f_{0}|^{2} dxd\xi \quad \text{for $t \in [0,T]$}.
\end{align}
The desired estimate \eqref{aesin1} immediately follows from \eqref{instasupp1}, \eqref{instaes5}, 
and $f_{0} = \delta g_{0}$.
The proof is complete.
\end{proof}

\medskip

\noindent
{\bf Acknowledgment.} 
M. Suzuki was supported by JSPS KAKENHI Grant Numbers 18K03 364 and 21K03308.

\begin{appendix}

\section{A Selection of Constants with \eqref{asp1}}\label{SA}
We find constants $\beta \in (0,1)$, $\rho_{\infty}>0$, $u_{\infty} < 0$,  $\theta_{\infty}>0$, and $\mu_{\infty}=\mu_{\infty}(\rho_{\infty},u_{\infty},\theta_{\infty})>0$ so that \eqref{asp1} holds, i.e. 
\begin{gather*}
\frac{|u_{\infty}|}{\theta_{\infty}}- 
\left( \frac{\rho_{\infty}^{1/2}}{\theta_{\infty}^{1/2}}(1+\eta(\beta))^{1/2} + \frac{\mu_{\infty}}{\beta}  \right)^{2} \frac{1+\eta(\beta)}{r - 2\ve}
-  \frac{\{1+\beta^{2}(1+\eta(\beta))^{2} \}\mu_{\infty}^{2}}{\beta^{2}{(r - 2\ve) }}>0
\end{gather*}
by assuming either the condition (i) or (ii) in Theorem \ref{mainThm}.
Note that 
\begin{gather}\label{etacon1}
\eta(\beta) \to 0 \ \ \text{as $\beta \to 0$.} 
\end{gather}

First suppose that the condition (i) holds, i.e. $r-2\ve\geq1$, $|u_{\infty}|>1$, and $\theta_{\infty} \ll 1$.
Let us fix $u_{\infty}$.
Owing to the assumption \eqref{netrual1}, the constant $\rho_{\infty}$ is determined uniquely for each $\theta_{\infty}$, and  so does $\mu_{\infty}$. Therefore, it is essential to find $\beta$ and $\theta_{\infty}$.
From \eqref{netrual1}, it is seen that
\begin{gather}\label{rhocon1}
\rho_{\infty} =\rho_{\infty}(\theta_{\infty}) \to 1 \ \  \text{as $\theta_{\infty} \to 0$.} 
\end{gather}
This gives a bound $\rho_{\infty} \in (1/2,2)$. 
By the direct computation with the aid of this bound, we see that 
\begin{gather}\label{mucon1}
\mu_{\infty}=\mu_{\infty}(\theta_{\infty}) \to 0 \ \  \text{as $\theta_{\infty} \to 0$.} 
\end{gather}
Using \eqref{etacon1}, \eqref{rhocon1}, and  the conditions $r-2\ve\geq1$ and $|u_{\infty}|>1$, 
we deduce the following for some $\beta=\beta(r,\ve,u_{\infty}) \in (0,1)$ and any $\theta_{\infty} \ll 1$:
\begin{gather*}
\frac{|u_{\infty}|}{\theta_{\infty}}- 
\left( \frac{\rho_{\infty}^{1/2}}{\theta_{\infty}^{1/2}}(1+\eta(\beta))^{1/2}  \right)^{2} \frac{1+\eta(\beta)}{r - 2\ve}>0.
\end{gather*}
Consequently, using \eqref{mucon1} and taking $\theta_{\infty} \ll 1$ according to $\beta$,
we have the desired constants with \eqref{asp1} for any $\theta_{\infty} \ll 1$.
In addition, it is worth pointing out that the constants allow that the condition \eqref{Bohm2} holds.
Indeed, $F^{\infty}=M_{\infty}\psi$ with \eqref{netrual1} converges to a delta function as $\theta_{\infty} \to 0$.
This fact and $|u_{\infty}|>1$ ensures \eqref{Bohm2} for $\theta_{\infty} \ll 1$.

Next suppose that the condition (ii) holds, i.e. $|u_{\infty}| \gg 1$. Let us set $\beta=1/2$ and fix $\theta_{\infty}$.
Due to \eqref{netrual1}, the constants $\rho_{\infty}$ and $\mu_{\infty}$ are
determined uniquely for each $u_{\infty}$. It is essential to find $u_{\infty}$.
It is seen by the direct computation that 
\begin{gather*}
\rho_{\infty} = \rho_{\infty} (u_{\infty}) \to 1, \ \ \mu_{\infty}=\mu_{\infty} (u_{\infty}) \to 0 \ \  \text{as $u_{\infty} \to \infty$}. 
\end{gather*}
This gives the bounds of $\rho_{\infty}$ and $\mu_{\infty}$, 
and hence we have the desired constants with \eqref{asp1} for any $|u_{\infty}| \gg 1$.
It is also clear that \eqref{Bohm2} holds for $|u_{\infty}| \gg 1$.

\section{The Solvability of Linearized Problem}\label{SB}

This section is devoted to the proof of Lemma \ref{sovl1} on the solvability of the linearized problem \eqref{l1}, i.e.
\begin{gather*}
{\cal L} f :=\pd_{t} f + \xi_{1}\partial_{x} f + (\partial_{x} \Phi^{s} + \partial_{x}\hat{\phi}) \partial_{\xi_{1}}f 
-  \frac{\xi_{1}-u_{\infty}}{2\theta_{\infty}}  (\partial_{x} \Phi^{s} + \partial_{x}\hat{\phi})   f = \hat{F},
\\
f(0,x,\xi) = f_{0}(x,\xi), \quad
f(t,0,\xi) = 0, \ \xi_{1}>0, \quad
\lim_{x \to\infty} f(t,x,\xi) = 0.
\end{gather*}

The problems \eqref{l1} has a characteristic boundary condition at $(x,\xi_{1},\xi')=(0,0,\xi')$, 
where a loss of regularity of solutions may occur. 
If nothing is done, we need to analyze carefully the regularity of solutions.
To avoid this analysis, we reduce the initial--boundary value problem \eqref{l1} 
to an initial value problem as follows. 
By using the assumption in Lemma \ref{sovl1}, we can have extensions 
$\tilde{\Phi}^{s}=\tilde{\Phi}^{s}(x)$ and $\tilde{\phi}=\tilde{\phi}(t,x)$
of the functions $\Phi^{s}$ and $\hat{\phi}$ such that
\begin{subequations}\label{extension1}
\begin{align}
\tilde{\Phi}^s(x)=\Phi(x) \quad & \text{if $x>0$}, 
\\
\tilde{\phi}(t,x)=\hat{\phi}(t,x) \quad &\text{if $(t,x)\in [0,\hat{T}] \times\R_+$}
\end{align}
and
\begin{gather}
\tilde{\Phi}^{s} \in {\cal B}^2(\mathbb R), \quad
\sum_{k=0}^2\sup_{x\in\overline{\mathbb R_{+}}}|\D_x^k\tilde{\Phi}^s(x)| \leq \tilde{C}\Phi_b,
\\
\tilde{\phi} \in C([0,\hat{T}];{L}^{2}(\mathbb R)) \cap L^{\infty}(0,\hat{T};{H}^{3}(\mathbb R)), \quad
\sup_{t \in [0,\hat{T}]} \| \tilde{\phi} \|_{{H}^{3}(\mathbb R)} \leq \tilde{C}\sup_{t \in [0,\hat{T}]} \| \hat{\phi} \|_{H^{3}(\mathbb R_{+})},
\end{gather}
\end{subequations}
where $\tilde{C}>0$ is a constant.
Similarly, we can also find some extensions $\tilde{F}$ and $\tilde{f}_{0}$ of 
the functions $\hat{F}$ and $f_{0}$ such that the domains of $\tilde{F}$ and $\tilde{f}_{0}$ are $[0,\hat{T}] \times\R\times\R^3$ and $\R\times\R^3$, respectively, and further
\begin{gather*}
\begin{aligned}
\tilde{F}(t,x,\xi) = \hat{F}(t,x,\xi) \quad &\text{if $(t,x,\xi)\in [0,\hat{T}] \times\R_+\times\R^3$}, 
\\
\tilde{f}_0(x,\xi) = f_0(x,\xi) \quad & \text{if $(x,\xi)\in\R_+\times\R^3$}.
\end{aligned}
\end{gather*}
We will mention some more properties of $\tilde{F}$ and $\tilde{f}_{0}$ in subsection \ref{SSB_Proof}.
Let us consider the following initial value problem:
\begin{gather*}
\pd_{t} f 
+ \xi_{1}\partial_{x} f 
+ \! (\partial_{x} \tilde{\Phi}^{s}+\partial_{x}\tilde{\phi}) \partial_{\xi_{1}}f 
- \frac{\xi_{1}-u_{\infty}}{2\theta_{\infty}}
(\partial_{x} \tilde{\Phi}^{s} + \partial_{x}\tilde{\phi}) f \!=\! \tilde{F}, 
\ \ \! (t, x, \xi) \! \in [0,\hat{T}] \! \times \R \times \R^3\!\!,
\\
f(0,x,\xi) = \tilde{f}_{0}(x,\xi), \ \ (x, \xi) \in  \R \times \R^3.
\end{gather*}
If we can find a solution $f$ of this initial value problem such that
\begin{gather*}
f(t, x, \xi)=0, \quad (t, x,\xi) \in [0,T_{0}]  \times  {\mathbb R}_{-}  \times  {\mathbb R}^{3}_{+},
\end{gather*}
which implies that $f(t) \in H^{1}_{0,\Gamma}$,
then the solution $f$ (restricted its domain into $ [0,T_{0}]  \times \R_{+} \times \R^3$) solves the problem \eqref{l1} as well.


However, there is still another difficulty that the coefficient $\frac{\xi_{1}-u_{\infty}}{2\theta_{\infty}}$ 
of the equation of the above initial value problem is unbounded.
To resolve this issue, we make use of another initial value problem modified by replacing 
the coefficient $\xi_{1}$ by the following smooth function $\Xi_{1}=\Xi_{1}(\xi_{1})$:
\begin{gather}\label{Xi1}
\Xi_1(\xi_1)=\Xi_1(\xi_1;K):=KX(K^{-1}\xi_1),
\end{gather}
where $K>0$ is a parameter and the function $X \in {\cal B}^2(\R)$ with 
\begin{gather*}
0 \leq X'(\xi_{1}) \leq 1, \quad
X(\xi_1)=\left\{
\begin{array}{ll}
2 & \hbox{if} \quad \xi_1\geq 3, \\
\xi_1 & \hbox{if} \quad |\xi_1|\leq 1, \\
-2 & \hbox{if} \quad \xi_1\leq -3.
\end{array}\right.
\end{gather*}
Note that
\begin{gather*}
{\rm sgn}\Xi_1(\xi_1)={\rm sgn}\xi_1, \quad
|\Xi_1(\xi_1)|\leq |\xi_1|, \quad
\Xi(\xi_1)=\left\{
\begin{array}{ll}
2K & \hbox{if} \quad \xi_1\geq 3K, \\
\xi_1 & \hbox{if} \quad |\xi_1|\leq K, \\
-2K & \hbox{if} \quad \xi_1\leq -3K.
\end{array}\right.
\end{gather*}
The modified problem is written as
\begin{subequations}\label{l1BB}
\begin{gather}
\pd_{t} f 
+ \Xi_{1}\partial_{x} f 
+ \! (\partial_{x} \tilde{\Phi}^{s}+\partial_{x}\tilde{\phi}) \partial_{\xi_{1}}f 
-  \frac{\Xi_{1}\!-\!u_{\infty}}{2\theta_{\infty}}
(\partial_{x} \tilde{\Phi}^{s} + \partial_{x}\tilde{\phi})f 
\! = \! \tilde{F}, 
\ \ \! (t, x, \xi) \! \in [0,\hat{T}] \! \times \R \times \R^3\!\!,
\label{leq1BB}\\
f(0,x,\xi) = \tilde{f}_{0}(x,\xi), \ \ (x, \xi) \in  \R \times \R^3.
\label{libc1BB}
\end{gather}
\end{subequations}
This can be solved easily by a standard theory of hyperbolic equations. For more details, see subsection \ref{SSB_Estimate}.

This section is organized as follows. 
First we discuss the properties of solutions of the modified problem \eqref{l1BB} in subsection \ref{SSB_Estimate}.
Subsection \ref{SSB_Proof} provides a priori estimates of the solution of the linearized problem \eqref{l1}.
In particular, one of the a priori estimates immediately gives the uniqueness of solutions of \eqref{l1}.
In subsection \ref{SSB_Proof}, 
we seek a solution $f$ of the modified problem \eqref{l1BB} by assuming some stronger conditions than the original conditions \eqref{lF1} and \eqref{lini1} in Lemma \ref{sovl1}, and show that the solution $f$ also solves the linearized problem \eqref{l1} and satisfies \eqref{localpro1}. Then we relax the stronger conditions into the original conditions \eqref{lF1} and \eqref{lini1}.

In our proof, we use the next fundamental lemma.

\begin{lem}\label{lem_BLem}
Let $\alpha\geq 0$. Suppose that $f$ and $\{f_j\}$ satisfies
\begin{gather*}
e^{\alpha x/2}f_j\to e^{\alpha x/2}f \quad
\hbox{in} \quad
C([0,T_0];H^1_{x,\xi}) \quad
\hbox{as} \quad
j\to\infty. 
\end{gather*}
For any $k=0,1$ and $t\in[0,T_0]$, there holds that
\begin{align*}
& \int_0^te^{C_0(t-\tau)} \left\||\xi_1|^{1/2} \nabla_{x,\xi}^{k} 
(e^{\alpha x /2} f)(\tau) \chi(-\xi_{1})\right\|_{L_{x,\xi}^2}^2 \,d\tau 
\notag\\
& \leq \varliminf_{j\to 0}\int_0^te^{C_0(t-\tau)} 
\left\||\xi_1|^{1/2} \nabla_{x,\xi}^{k} 
(e^{\alpha x /2} f_j)(\tau) \chi(-\xi_{1})\right\|_{L_{x,\xi}^2}^2 \,d\tau.
\end{align*}
\end{lem}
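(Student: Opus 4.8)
The plan is to recognize Lemma~\ref{lem_BLem} as a standard lower-semicontinuity statement for weighted $L^2$-norms along a strongly convergent sequence, and to prove it by Fatou's lemma applied carefully in the time variable, after extracting a subsequence converging pointwise in $(x,\xi)$ for almost every $\tau$. First I would fix $k\in\{0,1\}$ and $t\in[0,T_0]$, set $C_0>0$ as in Lemma~\ref{sovl1}, and introduce the nonnegative weight $w(\tau,x,\xi):=e^{C_0(t-\tau)}|\xi_1|\,\chi(-\xi_1)$ together with the functions $g_j:=\nabla_{x,\xi}^k(e^{\alpha x/2}f_j)$ and $g:=\nabla_{x,\xi}^k(e^{\alpha x/2}f)$. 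By hypothesis $e^{\alpha x/2}f_j\to e^{\alpha x/2}f$ in $C([0,T_0];H^1_{x,\xi})$, hence in particular $g_j\to g$ in $C([0,T_0];L^2_{x,\xi})$, so that
\[
\int_0^t \|g_j(\tau)-g(\tau)\|_{L^2_{x,\xi}}^2\,d\tau \to 0.
\]
Thus $g_j\to g$ in $L^2((0,t)\times\mathbb{R}_+\times\mathbb{R}^3)$, and after passing to a subsequence (not relabeled) we obtain $g_j\to g$ a.e.\ on $(0,t)\times\mathbb{R}_+\times\mathbb{R}^3$.

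Next I would apply Fatou's lemma to the integral over $(0,t)\times\mathbb{R}_+\times\mathbb{R}^3$ with respect to the measure $d\tau\,dx\,d\xi$. Along the a.e.-convergent subsequence, $w\,|g_j|^2\to w\,|g|^2$ a.e.\ since $w\geq 0$ is a fixed measurable function, so Fatou gives
\[
\int_0^t\!\!\int_{\mathbb{R}_+}\!\!\int_{\mathbb{R}^3} w\,|g|^2\,d\xi\,dx\,d\tau
\leq \varliminf_{j\to\infty}\int_0^t\!\!\int_{\mathbb{R}_+}\!\!\int_{\mathbb{R}^3} w\,|g_j|^2\,d\xi\,dx\,d\tau.
\]
Rewriting each side in the notation of the statement (that is, $\int_0^t e^{C_0(t-\tau)}\||\xi_1|^{1/2}g(\tau)\chi(-\xi_1)\|_{L^2_{x,\xi}}^2\,d\tau$, and likewise with $g_j$), this is precisely the claimed inequality along the chosen subsequence. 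Finally, since the liminf over the full sequence is bounded below by the liminf over any subsequence, and the left-hand side does not depend on $j$, the inequality for the subsequence upgrades to the inequality with $\varliminf_{j\to\infty}$ over the full sequence; this is the standard ``subsequence of a subsequence'' argument. (Here I read $\varliminf_{j\to 0}$ in the statement as $\varliminf_{j\to\infty}$.)

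The main obstacle—which is mild—is justifying the passage to an a.e.-convergent subsequence: strong $L^2$-in-time convergence of $g_j$ to $g$ in $C([0,T_0];L^2_{x,\xi})$ yields convergence in $L^2$ of the product space, hence a.e.\ convergence of a subsequence on $(0,t)\times\mathbb{R}_+\times\mathbb{R}^3$, which is exactly what Fatou needs; no uniform integrability or domination is required because Fatou only needs nonnegativity of the integrands, which holds since $w\geq0$. One should also note that all integrands are genuinely measurable: $g_j,g$ are $L^2$ functions on the product space (using that $H^1_{x,\xi}$-valued continuity makes $(\tau,x,\xi)\mapsto g_j(\tau,x,\xi)$ jointly measurable), and $w$ is measurable as a product of continuous and indicator factors. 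No further hypotheses on $f_j$ beyond the stated convergence are used, so the lemma holds as stated.
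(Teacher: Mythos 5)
Your proposal is correct in substance but follows a genuinely different route from the paper's. The paper avoids subsequences altogether: it first inserts the cutoff $\chi(\xi_1+\ell)$, so that the weight $|\xi_1|\chi(-\xi_1)\chi(\xi_1+\ell)$ is bounded by $\ell$ and the truncated norm $\||\xi_1|^{1/2}(e^{\alpha x/2}f_j)(\tau)\chi(-\xi_1)\chi(\xi_1+\ell)\|_{L^2_{x,\xi}}^2$ genuinely converges for every fixed $\tau$ thanks to the uniform $H^1_{x,\xi}$ convergence; Fatou is then applied only in the $\tau$ variable along the full sequence, the cutoff is dropped on the right-hand side, and the monotone convergence theorem in $\ell$ recovers the untruncated left-hand side. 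Your product-space Fatou after extracting an a.e.-convergent subsequence is equally viable and somewhat more direct, and your measurability remarks are fine.

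However, your final upgrading step is misstated: the liminf over the full sequence is bounded \emph{above}, not below, by the liminf over any subsequence, so the inequality you obtain along your a.e.-convergent subsequence is a priori \emph{weaker} than the claimed one and does not ``upgrade'' for the reason you give. The standard fix --- which you name but do not execute in the correct order --- is to first choose a subsequence $\{j_m\}$ along which the integrals converge to $\varliminf_{j\to\infty}$ of the full sequence, and only then extract from $\{j_m\}$ a further sub-subsequence converging a.e.\ (possible since the whole sequence converges in $L^2$ of the product space); Fatou along that sub-subsequence then produces exactly the liminf of the full sequence on the right-hand side. With this reordering your argument is complete.
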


\begin{proof}[Proof of Lemma \ref{lem_BLem}]
We prove only the case $k=0$, since another case $k=1$ can be shown similarly.
We observe by Fatou's lemma that for any $\ell>0$,
\begin{align*}
& \int_0^te^{C_0(t-\tau)} \left\||\xi_1|^{1/2} (e^{\alpha x /2} f)(\tau) 
\chi(-\xi_{1})\chi(\xi_1+\ell)\right\|_{L_{x,\xi}^2}^2 \,d\tau 
\notag\\
& \leq \varliminf_{j\to 0}\int_0^te^{C_0(t-\tau)} 
\left\||\xi_1|^{1/2} (e^{\alpha x /2} f_j)(\tau) 
\chi(-\xi_{1})\chi(\xi_1+\ell)\right\|_{L_{x,\xi}^2}^2 \,d\tau 
\notag\\
& \leq \varliminf_{j\to 0}\int_0^te^{C_0(t-\tau)} 
\left\||\xi_1|^{1/2} (e^{\alpha x /2} f_j)(\tau) 
\chi(-\xi_{1}) \right\|_{L_{x,\xi}^2}^2 \,d\tau. 
\end{align*}
Letting $\ell \to \infty$ and applying the monotone convergence theorem,
we arrive at the desired inequality. 
\end{proof}

\subsection{The modified problem}\label{SSB_Support}

The subsection is devoted to the study of the modified problem \eqref{l1BB}. 
We abbreviate the spaces $L^{2}({\mathbb R}\times{\mathbb R}^{3})$, 
and $H^{k}({\mathbb R}\times{\mathbb R}^{3})$
by $\tilde{L}^{2}_{x,\xi}$ and $\tilde{H}^{k}_{x,\xi}$, respectively. 

First we show the solvability.

\begin{lem}\label{lem_BExistence}
Let $\alpha\geq 0$. 
Suppose that $\tilde{\Phi}^{s}$ and $\tilde{\phi}$ satisfies \eqref{extension1}
as well as $\tilde{F}$ and $\tilde{f}_{0}$ satisfies
\begin{gather}\label{Ff0}
e^{\alpha x /2} \tilde{F} \in C([0,\hat{T}];\tilde{H}^{1}_{x,\xi}), \quad
e^{\alpha x /2} \tilde{f}_{0} \in \tilde{H}^{1}_{x,\xi} 
\end{gather}
for some constant $\hat{T}>0$. 
Then the initial value problem \eqref{l1BB}
has a unique solution $f$ that satisfies 
\begin{gather}
e^{\alpha x /2} f \in C([0,\hat{T}];\tilde{H}^{1}_{x,\xi})
\cap C^1([0,\hat{T}];\tilde{L}^{2}_{x,\xi}). 
\label{BClass_solution}
\end{gather}
\end{lem}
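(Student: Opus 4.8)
\textbf{Proof proposal for Lemma \ref{lem_BExistence}.}

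The plan is to solve the modified initial value problem \eqref{l1BB} by the method of characteristics combined with energy estimates, exploiting the fact that after replacing $\xi_1$ by the bounded function $\Xi_1$, the equation \eqref{leq1BB} is a genuinely linear, first-order hyperbolic equation on the whole space $\R \times \R^3$ with Lipschitz coefficients of at most linear growth. First I would check that the coefficient vector field
$b(t,x,\xi) := (\Xi_1(\xi_1),\, \partial_x\tilde\Phi^s(x) + \partial_x\tilde\phi(t,x),\, 0,\, 0)$
and the zeroth-order coefficient $c(t,x,\xi) := \frac{\Xi_1(\xi_1)-u_\infty}{2\theta_\infty}(\partial_x\tilde\Phi^s(x)+\partial_x\tilde\phi(t,x))$ are, by virtue of \eqref{extension1}, bounded together with their first $x,\xi$-derivatives uniformly on $[0,\hat T]\times\R\times\R^3$ (here $\Xi_1\in{\cal B}^2$ and $|\Xi_1(\xi_1)|\le|\xi_1|$, while $\partial_x\tilde\Phi^s$ and $\partial_x\tilde\phi$ together with $\partial_{xx}$ are controlled by the $H^3$-bound on $\tilde\phi$ and the ${\cal B}^2$-bound on $\tilde\Phi^s$, after a Sobolev embedding $H^2(\R)\hookrightarrow {\cal B}^0(\R)$ in the $x$-variable). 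Consequently the characteristic ODE system $\dot X = \Xi_1(\Xi_{1\text{-comp}}),\ \dot \Xi_1\text{-flow}\dots$ generates a global flow on $[0,\hat T]$, and one constructs the solution along characteristics as $f(t,x,\xi) = \tilde f_0(\text{backward flow}) \exp(\int c)\; +\;$ a Duhamel term involving $\tilde F$.

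The substantive part is the regularity and integrability statement \eqref{BClass_solution}, i.e. that the weighted function $e^{\alpha x/2} f$ lies in $C([0,\hat T];\tilde H^1_{x,\xi})\cap C^1([0,\hat T];\tilde L^2_{x,\xi})$. The cleanest route is an energy method: set $g := e^{\alpha x/2} f$; then $g$ solves an equation of the same form as \eqref{leq1BB} with an extra zeroth-order term $-\tfrac{\alpha}{2}\Xi_1(\xi_1) g$ coming from commuting the weight through $\Xi_1\partial_x$, and with source $e^{\alpha x/2}\tilde F$. Multiplying by $2g$ and integrating over $\R\times\R^3$, the transport terms $\partial_x(\Xi_1 g^2)$ and $\partial_{\xi_1}((\partial_x\tilde\Phi^s+\partial_x\tilde\phi) g^2)$ integrate to zero (no boundary now, since the domain is all of $\R\times\R^3$), and the remaining terms are bounded by $C\|g\|_{\tilde L^2}^2$ using the uniform bounds on $\Xi_1/\theta_\infty \cdot (\partial_x\tilde\Phi^s+\partial_x\tilde\phi)$ and on $\alpha\Xi_1$ --- note $|\alpha\Xi_1(\xi_1)|$ is \emph{not} bounded, but $\alpha\Xi_1 g^2$ under the integral against $2g\cdot(-\tfrac\alpha2\Xi_1 g) $ actually cancels against $\partial_x(\Xi_1 g^2)$'s weight contribution, or one simply absorbs $|\Xi_1|\le|\xi_1|$ into a $|\xi_1|$-weighted dissipation that has a favorable sign; for mere $\tilde L^2$-boundedness it suffices that $\alpha \ge 0$ and to use Grönwall after discarding the good term. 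This gives $\|g(t)\|_{\tilde L^2_{x,\xi}}^2 \le e^{C_0 t}\|g(0)\|_{\tilde L^2_{x,\xi}}^2 + \int_0^t e^{C_0(t-\tau)}\|e^{\alpha x/2}\tilde F(\tau)\|_{\tilde L^2_{x,\xi}}^2\,d\tau$. One then differentiates \eqref{leq1BB} in each of $x,\xi_1,\xi_2,\xi_3$, obtaining equations of the same structural type for $\partial g$ with commutator source terms that are linear in $\nabla_{x,\xi} g$ with bounded coefficients (here one uses the ${\cal B}^2$-regularity of $\Xi_1$ and $\tilde\Phi^s$ and the $H^3\hookrightarrow {\cal B}^1$-type control on $\tilde\phi$), and repeats the energy estimate to close a bound on $\|g(t)\|_{\tilde H^1_{x,\xi}}$ via Grönwall. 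Existence of a function with this regularity follows by a standard approximation/mollification argument (mollify $\tilde f_0$, $\tilde F$ and the coefficients, solve smoothly, pass to the limit using the uniform energy bounds and weak-* compactness), and $C^1$-in-time regularity into $\tilde L^2_{x,\xi}$ is read off directly from the equation \eqref{leq1BB} since every term on the right of $\partial_t f = -\Xi_1\partial_x f - \dots$ lies in $C([0,\hat T];\tilde L^2_{x,\xi})$ once $e^{\alpha x/2}f\in C([0,\hat T];\tilde H^1_{x,\xi})$ (the factor $(\Xi_1-u_\infty)/(2\theta_\infty)$ is bounded, so no loss there).

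Uniqueness is immediate from the $\tilde L^2$ energy estimate applied to the difference of two solutions with zero data: Grönwall forces the difference to vanish. The main obstacle I anticipate is purely bookkeeping rather than conceptual: verifying that all coefficients appearing after differentiating the equation and after inserting the weight $e^{\alpha x/2}$ are controlled uniformly on $[0,\hat T]\times\R\times\R^3$ --- in particular handling the unbounded factor $\alpha \Xi_1(\xi_1)$ carefully, which is precisely why the truncation $\Xi_1$ (with $|\Xi_1|\le|\xi_1|$ and $\Xi_1'$ bounded) was introduced, and why the weighted dissipative term $\tfrac{\alpha}{2}\Xi_1\chi(-\xi_1)g^2$ has the right sign. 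I would state these coefficient bounds once as a preliminary estimate and then the three energy estimates ($\tilde L^2$, first derivatives, time derivative) go through routinely.
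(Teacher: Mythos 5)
Your overall route is the same as the paper's: pass to $g=e^{\alpha x/2}f$, observe that the resulting initial value problem on all of $\R\times\R^3$ has bounded coefficients (using $\Xi_1,\tilde\Phi^s\in{\cal B}^2(\R)$ and $\tilde\phi\in C([0,\hat T];{\cal B}^2(\R))$ via Sobolev/Gagliardo--Nirenberg), and then invoke standard linear hyperbolic theory; the paper simply cites Mizohata, Chapter 6, at this point, whereas you sketch the underlying $\tilde L^2$ and $\tilde H^1$ energy estimates and the mollification argument, which is a legitimate way to supply that black box.

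There is, however, one confused passage you should fix. You assert that ``$|\alpha\Xi_1(\xi_1)|$ is \emph{not} bounded'' and then propose two workarounds. This is false: by \eqref{Xi1}, $\Xi_1(\xi_1)=KX(K^{-1}\xi_1)$ with $|X|\le 2$, so $|\Xi_1|\le 2K$ uniformly --- making \emph{every} coefficient of \eqref{leq1BB} bounded is precisely the purpose of the truncation, and it is why the paper can dispose of Lemma \ref{lem_BExistence} in three lines. Moreover, both of your proposed rescues would fail if the coefficient really were unbounded: on the whole line $\int_{\R}\partial_x(\Xi_1 g^2)\,dx=0$, so there is no remaining ``weight contribution'' to cancel against (the term $-\tfrac{\alpha}{2}\Xi_1 g$ \emph{is} that contribution); and the sign of $-\alpha\Xi_1 g^2$ is unfavorable exactly on $\{\xi_1<0\}$, where $\Xi_1<0$, so one cannot ``discard the good term and Gr\"onwall'' --- the dissipative sign structure you have in mind belongs to the half-line problem of Lemma \ref{lem_BEstimate}, not to the whole-space modified problem. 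Since $\Xi_1$ is in fact bounded, the term is harmless, Gr\"onwall applies directly, and the rest of your argument (differentiated energy estimates, approximation, reading off $C^1$-in-time from the equation) is sound and yields \eqref{BClass_solution}.
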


\begin{proof}
Set $g=e^{\alpha x/2}f$. It is easy to see that $g$ solves the following initial value problem:
\begin{gather*}
\pd_{t} g 
+ \Xi_{1}\partial_{x} g 
+ (\partial_{x} \tilde{\Phi}^{s}+\partial_{x}\tilde{\phi}) \partial_{\xi_{1}}g 
-  \frac{\Xi_{1}-u_{\infty}}{2\theta_{\infty}}
(\partial_{x} \tilde{\Phi}^{s} + \partial_{x}\tilde{\phi})g 
-\frac{\alpha}{\,2\,}\Xi_1 g
= e^{\alpha x/2}\tilde{F}, 
\\
g(0,x,\xi) = e^{\alpha x/2}\tilde{f}_{0}(x,\xi).
\end{gather*}
This problem is obviously  equivalent to \eqref{l1BB}.
Note that $\Xi_1\in {\cal B}^2(\R)$ and $\tilde{\Phi}^s\in {\cal B}^2(\R)$.
Furthermore, we see from Gagliardo--Nirenberg's inequality that 
$\tilde{\phi} \in C([0,\hat{T}];{\cal B}^2(\R))$.
Therefore, all the coefficients in \eqref{l1BB} are bounded.
A standard theory of hyperbolic equations (for instance, see Chapter 6 in \cite{S.M.1}) ensures the unique existence of solutions 
$g\in C([0,\hat{T}]; \tilde{H}^1_{x,\xi}) \cap C^1([0,\hat{T}]; \tilde{L}^2_{x,\xi})$.
The proof is complete.
\end{proof}

Let us investigate the location of the support of the solution $f$ of \eqref{l1BB}.
The results are summarized in Lemmas \ref{lem_BSupport1} and \ref{lem_BSupport2}.
Here the set $D(\tilde{s})$ and constant $c_{0}$ are defined as
\begin{gather}
D(\tilde{s}):=\{(x,\xi)\in\R\times\R^3\, \left| \,
|x|^2+|\xi_1|^2< \tilde{s}^2\right\}
\cup (\R_-\times(-\tilde{s},\infty)\times\R^2) \quad \text{for $\tilde{s} \geq 0$},
\label{BDefinition_D0}
\\
c_0:=\sup_{x\in\mathbb R}|\D_x\tilde{\Phi}^s(x)|
+\sup_{(t,x)\in[0,\hat{T}]\times\mathbb R}|\D_x\tilde{\phi}(t,x)|.
\label{BDefinition_c0}
\end{gather}

\begin{lem}\label{lem_BSupport1}
Let $\alpha \geq 0$ and $\tilde{s}>0$. 
Suppose that $\tilde{\Phi}^{s}$ and $\tilde{\phi}$ satisfies \eqref{extension1}
as well as $\tilde{F}$ and $\tilde{f}_{0}$ satisfies \eqref{Ff0} and
\begin{gather*}
\tilde{F}(t, x,\xi)=0, \quad (t, x, \xi) \in [0,\hat{T}] \times D(\tilde{s}),
\\
\tilde{f}_{0}(x,\xi)=0, \quad (x, \xi) \in D(\tilde{s})
\end{gather*}
for some $\hat{T}>0$.
If $f$ is a solution of the modified problem \eqref{l1BB} and satisfies
\begin{gather}\label{reg_modified}
e^{\alpha x/2} f\in C([0,\hat{T}];\tilde{L}^2_{x,\xi})
\cap L^\infty(0,\hat{T};\tilde{H}^1_{x,\xi}), \ \
e^{\alpha x/2} (1+|\xi_1|)^{-1}f\in W^{1,\infty}(0,\hat{T};\tilde{L}^2_{x,\xi}),
\end{gather}
then there holds that
\begin{gather}\label{BSupport_claim}
f=0 \quad \text{on $\Omega(T_0;s(t))$},
\end{gather}
where
\begin{gather*}
\Omega(t_0;s(t)):=\{(t,x,\xi)\in[0,t_0]\times\R\times\R^3\,\left|\,
(x,\xi)\in D(s(t))\right.\} \quad
\text{for \ $t_0\in[0,T_0]$},
\\
T_{0}=T_0(\tilde{s}):=\min\Bigl\{\hat{T},\,
2\log\Bigl(\frac{2c_0+\tilde{s}}{2c_0}\Bigr)\Bigr\}, \qquad
s(t):=(2c_0+\tilde{s})e^{-t/2}-2c_0.
\end{gather*}
\end{lem}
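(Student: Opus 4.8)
\textbf{Proof plan for Lemma \ref{lem_BSupport1}.}

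The plan is to establish the finite-speed-of-propagation property for the modified transport equation \eqref{leq1BB} by an energy estimate on a shrinking family of domains, so that the zero set of the data is preserved (and enlarges) along the flow. First I would multiply \eqref{leq1BB} by $2f$ to obtain an equality of the same shape as \eqref{suppes1}, namely
\begin{align*}
\pd_t|f|^2 + \pd_x(\Xi_1|f|^2) + \pd_{\xi_1}\{(\pd_x\tilde\Phi^s+\pd_x\tilde\phi)|f|^2\}
= \frac{\Xi_1-u_\infty}{\theta_\infty}(\pd_x\tilde\Phi^s+\pd_x\tilde\phi)|f|^2 + 2\tilde F f,
\end{align*}
which is legitimate under the regularity \eqref{reg_modified} since $e^{\alpha x/2}(1+|\xi_1|)^{-1}f\in W^{1,\infty}(0,\hat T;\tilde L^2_{x,\xi})$ controls the time derivative and the factor $\Xi_1$ on the spatial-transport term is bounded. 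Because $\tilde F$ and $\tilde f_0$ vanish on $D(\tilde s)$ and on $\Omega(\hat T;\tilde s)$ respectively, the right-hand source term drops out of the integral over the moving region $\Omega(t_0;s(t))$.

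Next I would integrate this equality over $\Omega(t;s(\cdot))$ and apply the divergence theorem, being careful about the two distinct pieces of $\pd D(\tilde s)$ in \eqref{BDefinition_D0}: the spherical cap $\{|x|^2+|\xi_1|^2=s(t)^2\}$ and the half-space wall $\{x\le 0,\ \xi_1=-s(t)\}$ together with the coordinate hyperplane $\{x=0,\ \xi_1>-s(t)\}$. The boundary terms produced are of the form (outward flux of the vector field $(\,|f|^2,\ \Xi_1|f|^2,\ (\pd_x\tilde\Phi^s+\pd_x\tilde\phi)|f|^2,0,0\,)$ in the $(t,x,\xi_1)$ directions) minus $\pd_t(\text{moving boundary})\,|f|^2$. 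The choice $s(t)=(2c_0+\tilde s)e^{-t/2}-2c_0$, i.e. $s'(t)=-\tfrac12(s(t)+2c_0)$, is exactly engineered so that on the spherical cap the normal speed dominates the transport velocity: the relevant sign condition reads $-s'(t)\cdot(\text{unit normal component}) \ge |\Xi_1|+|\pd_x\tilde\Phi^s+\pd_x\tilde\phi|$ on $|x|^2+|\xi_1|^2=s(t)^2$, which follows from $|\Xi_1(\xi_1)|\le|\xi_1|\le s(t)$ and $|\pd_x\tilde\Phi^s+\pd_x\tilde\phi|\le c_0$ since $\tfrac12(s(t)+2c_0)\ge s(t)$ whenever $s(t)\le 2c_0$ and more carefully $\tfrac12(s(t)+2c_0)\ge \tfrac{|\xi_1|+|x|\cdot 0}{\sqrt{|x|^2+|\xi_1|^2}}\,s(t)+c_0$; the half-space wall $\xi_1=-s(t)$ is handled the same way using $\Xi_1(-s(t))<0$; and on the coordinate piece $\{x=0,\xi_1>-s(t)\}$ the $x$-flux is $\Xi_1|f|^2$ with $\Xi_1>0$ exactly where the flow enters the region, so that term has the favorable sign too (this is precisely the boundary dissipation exploited elsewhere in the paper). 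Thus all boundary integrals on the lateral surfaces are nonnegative and only the source term $\dfrac{\Xi_1-u_\infty}{\theta_\infty}(\pd_x\tilde\Phi^s+\pd_x\tilde\phi)|f|^2$ remains, which on $\Omega(t_0;s(t))$ is bounded by $C(\tilde s)\,|f|^2$ because $|\Xi_1|\le s(t)\le \tilde s$ there and $|\pd_x\tilde\Phi^s+\pd_x\tilde\phi|\le c_0$.

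This yields
\begin{align*}
\int\!\!\int_{D(s(t))}|f|^2(t)\,dxd\xi \le C(\tilde s)\int_0^t \int\!\!\int_{D(s(\tau))}|f|^2(\tau)\,dxd\xi\,d\tau,
\end{align*}
and Gronwall's inequality forces $\int\!\!\int_{D(s(t))}|f|^2(t)\,dxd\xi=0$ for all $t\in[0,T_0]$, i.e. \eqref{BSupport_claim}. The definition of $T_0(\tilde s)$ is just the requirement $s(T_0)\ge 0$, so that the region $D(s(t))$ stays nonempty and well-defined throughout $[0,T_0]$. The main obstacle is the careful bookkeeping of the boundary flux on the non-smooth portion of $\pd D(\tilde s)$ — in particular checking that the moving spherical cap indeed propagates faster than the characteristics (this is where the specific ODE for $s(t)$ enters) and that the fixed hyperplane $\{x=0\}$ contributes a sign-definite (entering-flow) term rather than an uncontrolled one; the interior source term is routine once the finite speed is set up. One should also note that since $e^{\alpha x/2}f$ has the stated regularity but we only need the estimate on the bounded region $D(s(t))$ where $x$ ranges over a bounded set, no weight is needed and all manipulations are justified by approximation (e.g. regularizing $f$ in the manner of Lemma \ref{lem_BLem}, or directly by the DiPerna--Lions commutator argument, since the coefficients lie in ${\cal B}^2$).
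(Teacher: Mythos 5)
Your strategy is the paper's: multiply \eqref{leq1BB} by $2f$, integrate the resulting divergence identity over the shrinking region, use the ODE $-s'(t)=s(t)/2+c_0$ to make every lateral boundary flux nonnegative, and conclude with Gronwall. However, two steps fail as written. The first is your assertion that $D(s(t))$ is a bounded region where ``$x$ ranges over a bounded set'' so that no weight is needed: by \eqref{BDefinition_D0}, $D(s(t))$ contains all of $\mathbb{R}_-\times(-s(t),\infty)\times\mathbb{R}^2$, hence is unbounded in $x\to-\infty$, in $\xi_1\to+\infty$, and in $\xi'$ (you even list the half-space wall among the boundary pieces, so this contradicts your own setup). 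For $\alpha>0$ — the case used later in the construction — the hypothesis $e^{\alpha x/2}f\in C([0,\hat{T}];\tilde{L}^2_{x,\xi})$ gives no $L^2$ control of $f$ itself on $\{x<0\}$, where the weight degenerates; consequently $\int_{D(s(t))}|f|^2$ need not be finite, and neither the divergence theorem on the unbounded region nor the Gronwall step is justified. The paper's proof intersects $\Omega(t_0;s(t))$ with the moving half-spaces $\{x\ge -k+\ell t\}$ and $\{\xi_1\le \ell-c_0t\}$, whose speeds $\ell$ and $c_0$ are chosen so that the two additional boundary fluxes are also nonnegative (e.g. $(\ell-\Xi_1)|f|^2/\sqrt{\ell^2+1}\ge0$ because $|\Xi_1|\le|\xi_1|\le\ell$ on the truncated domain), runs Gronwall on each bounded truncation, and then lets $k,\ell\to\infty$. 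Some such cutoff is indispensable.

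The second problem is your verification of the sign on the spherical cap, which is the crux of the lemma. The condition is not $-s'(t)\ge|\Xi_1|+c_0$: with only $|\Xi_1|\le|\xi_1|\le s(t)$ that would require $-s'\ge s+c_0$, which your $s(t)$ does not satisfy (it gives $-s'=s/2+c_0$). The outward flux on $\{|x|^2+|\xi_1|^2=s(t)^2\}$ is
\begin{equation*}
\frac{1}{\sqrt{s'(t)^2+1}}\Bigl\{-s'(t)+\frac{x\,\Xi_1}{s(t)}+\frac{\xi_1}{s(t)}(\pd_x\tilde{\Phi}^s+\pd_x\tilde{\phi})\Bigr\}|f|^2,
\end{equation*}
and the decisive estimate is $\bigl|x\,\Xi_1/s\bigr|\le |x||\xi_1|/s\le (|x|^2+|\xi_1|^2)/(2s)=s/2$ together with $|\xi_1|/s\le 1$; only with this arithmetic--geometric mean bound is $-s'=s/2+c_0$ exactly sufficient. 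Your ``more careful'' displayed inequality does not reduce to this and is false on the portion of the cap where $|\xi_1|$ is close to $s(t)$. Both gaps are repairable, but as submitted the argument does not close.
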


\begin{proof}
For any $\ell>\max\{\tilde{s},c_0\hat{T}\}$, $k>\hat{T}\ell$, and $t_0\in[0,T_0]$, 
we define $\Omega_{k,\ell}(t_0)$ by
\begin{gather*}
\Omega_{k,\ell}(t_0):=\Omega(t_0;s(t))
\cap\{(t,x,\xi)\in[0,t_0]\times\R\times\R^3\,|\,
x\geq -k+\ell t,\,\xi_1\leq \ell-c_0t\}.
\end{gather*}
Note that $\Omega_{k,\ell}(t_0)$ can be expressed as 
\begin{gather*}
\Omega_{k,\ell}(t_0)
=\{(t,x,\xi)\in[0,t_0]\times\R\times\R^3\,|\,
(x,\xi)\in D_{k,\ell}(t)\}, 
\\
D_{k,\ell}(t)
:=D(s(t))\cap\{(x,\xi)\in\R\times\R^3\,|\,
x\geq -k+\ell t,\,\xi_1\leq\ell-c_0t\}.
\end{gather*}
Furthermore, it is easy to see that $\partial\Omega_{k,\ell}(t_0)
=\Gamma_1\cup\Gamma_2\cup\cdots\cup\Gamma_7$, 
where
\begin{align*}
& \Gamma_1:=\{(t,x,\xi)\in\partial\Omega_{k,\ell}(t_0)\,|\,
t=t_0\}
=\{t_0\}\times D_{k,\ell}(t_0), 
\\
& \Gamma_2:=\{(t,x,\xi)\in\partial\Omega_{k,\ell}(t_0)\,|\,
t=0\}
=\{0\}\times D_{k,\ell}(0), 
\\
& \Gamma_3:=\{(t,x,\xi)\in\partial\Omega_{k,\ell}(t_0)\,|\,
\xi_1=-s(t)\}, 
\\
& \Gamma_4:=\{(t,x,\xi)\in\partial\Omega_{k,\ell}(t_0)\,|\,
\xi_1=\ell-c_0t\}, 
\\
& \Gamma_5:=\{(t,x,\xi)\in\partial\Omega_{k,\ell}(t_0)\,|\,
x=-k+\ell t\}, 
\\
& \Gamma_6:=\{(t,x,\xi)\in\partial\Omega_{k,\ell}(t_0)\,|\,
x=0,\,\xi_1\geq 0\}, 
\\
& \Gamma_7:=\{(t,x,\xi)\in\partial\Omega_{k,\ell}(t_0)\,|\,
|x|^2+|\xi_1|^2=s(t)^2\}. 
\end{align*}

It is clear that $\tilde{F}=0$ holds on $\Omega_{k,\ell}(t_0) \subset[0,\hat{T}]\times D(\tilde{s})$. 
Multiply \eqref{leq1BB} by $2f$ and integrate it over $\Omega_{k,\ell}(t_0)$ to obtain
\begin{align}
& \int_{\Omega_{k,\ell}(t_0)}
\partial_{t} \vert  f \vert^2
+ \partial_{x} (\Xi_{1} \vert f \vert^2)
+\partial_{\xi_{1}} 
\{(\pd_{x}\tilde{\Phi}^{s} +\pd_{x} \tilde{\phi}) \vert f \vert^2\}
dtdxd\xi
\notag\\
& =-\int_{\Omega_{k,\ell}(t_0)}
(\pd_{x}\tilde{\Phi}^{s} +\pd_{x} \tilde{\phi})
\frac{\Xi_1-u_\infty}{\theta_\infty}\vert f \vert^2dtdxd\xi
\notag\\
& \leq C \int_0^{t_0}\Bigl(\int_{D_{k,\ell}(t)}|f(t,x,\xi)|^2dxd\xi\Bigr)dt,
\label{BEquation}
\end{align}
where $C>0$ is a constant.
Applying the divergence theorem to the left hand side, we have 
\begin{gather}
\int_{\Omega_{k,\ell}(t_0)}
\partial_{t} \vert  f \vert^2
+ \partial_{x} (\Xi_{1} \vert f \vert^2)
+\partial_{\xi_{1}} 
\{(\pd_{x}\tilde{\Phi}^{s} +\pd_{x} \tilde{\phi}) \vert f \vert^2\}
dtdxd\xi
=\sum_{i=1}^7\int_{\Gamma_i}\vec{\cal F}\cdot\vec{n}\,dS,
\label{BEquation2}
\end{gather}
where $\vec{n}=\vec{n}(t,x,\xi)\in\R^5$  denotes the outer normal vector of $\partial\Omega_{k,\ell}(t_0)$, 
and the vector function $\vec{\cal F}=\vec{\cal F}(t,x,\xi)\in\R^5$ is defined as
\begin{gather*}
\vec{\cal F}(t,x,\xi) :=\Bigl(|f|^2,\,\Xi_1|f|^2,\,
(\D_x\tilde{\Phi}^s+\D_x\tilde{\phi})|f|^2,0,0\Bigr)(t,x,\xi).
\end{gather*}
We claim that the integrals on $\Gamma_{i}$ are nonnegative for $i =2,\ldots,7$.
From this claim, \eqref{BEquation}, and \eqref{BEquation2}, we obtain
\begin{gather*}
\int_{D_{k,\ell}(t_0)}|f(t_0,x,\xi)|^2dxd\xi 
= \int_{\Gamma_1}\vec{\cal F}\cdot\vec{n}\,dS
\leq C\int_0^{t_0}\Bigl(\int_{D_{k,\ell}(t)}|f(t,x,\xi)|^2dxd\xi\Bigr)dt,
\end{gather*}
which together with Gronwall's inequality gives
\begin{gather*}
\int_{\Omega_{k,\ell}(t_0)}|f(t,x,\xi)|^2dtdxd\xi
=\int_0^{t_0}\Bigl(\int_{D_{k,\ell}(t)}|f(t,x,\xi)|^2dxd\xi\Bigr)dt
=0. 
\end{gather*}
Therefore, \eqref{BSupport_claim} follows from the arbitrariness of $\ell$ and $k$.

We complete the proof by showing the claim.
By the assumption that $\tilde{f}_0=0$ on $D_{k,\ell}(0) \subset D(\tilde{s})$,
the integral on $\Gamma_{2}$ is written as 
\begin{gather*}
\int_{\Gamma_2}\vec{\cal F}\cdot\vec{n}\,dS
=-\int_{D_{k,\ell}(0)}|f(0,x,\xi)|^2dxd\xi
=-\int_{D_{k,\ell}(0)}|\tilde{f}_0(x,\xi)|^2dxd\xi
=0.
\end{gather*}
The integrant $\vec{\cal F}\cdot\vec{n}$ on $\Gamma_{3}$ is estimated by using $-s'(t)=s(t)/2+c_0$, \eqref{BDefinition_c0}, and the fact $s(t)\geq0$ if $t \leq T_{0}$ as
\begin{gather*}
\vec{\cal F}\cdot\vec{n}
=\frac{-s'(t)-(\D_x\tilde{\Phi}^s+\D_x\tilde{\phi})}{\sqrt{s'(t)^2+1}}|f|^2
=\frac{s(t)/2+c_0-(\D_x\tilde{\Phi}^s+\D_x\tilde{\phi})}{\sqrt{s'(t)^2+1}}|f|^2 \geq 0.
\end{gather*}
Thus we see that $\int_{\Gamma_3}\vec{\cal F}\cdot\vec{n}\,dS\geq 0$.
Similarly, $\int_{\Gamma_4}\vec{\cal F}\cdot\vec{n}\,dS\geq 0$ holds.
Concerning the integral on $\Gamma_{5}$, we observe from
the fact  $|\Xi_1|\leq|\xi_1|\leq \ell -c_{0} t  \leq \ell$ that
\begin{gather*}
\int_{\Gamma_5} \vec{\cal F}\cdot\vec{n} \,dS
=\int_{\Gamma_5} \frac{\ell-\Xi_1}{\sqrt{\ell^2+1}}|f|^2 \,dS \geq 0.
\end{gather*}
It is clear that $\int_{\Gamma_6}\vec{\cal F}\cdot\vec{n}\,dS\geq 0$.
The integrant $\vec{\cal F}\cdot\vec{n}$ on $\Gamma_{7}$ can be estimated as
\begin{align}
\vec{\cal F}\cdot\vec{n}
&=\frac{1}{\sqrt{s'(t)^2+1}}\Bigl\{-s'(t)+\frac{x\Xi_1}{s(t)}
+\frac{\xi_1}{s(t)}(\D_x\tilde{\Phi}^s+\D_x\tilde{\phi})\Bigr\}|f|^2
\notag\\
&\geq \frac{1}{\sqrt{s'(t)^2+1}}\left\{-s'(t) -\left |\frac{x\Xi_1}{s(t)}\right|
- \left|\frac{\xi_1}{s(t)}(\D_x\tilde{\Phi}^s+\D_x\tilde{\phi})\right|\right\}|f|^2.
\label{Gamma7}
\end{align}
It is seen that for $(t,x,\xi)\in\Gamma_7$, 
\begin{gather*}
\Bigl|\frac{x\Xi_1}{s(t)}\Bigr|
\leq\frac{|x||\xi_1|}{s(t)}
\leq\frac{|x|^2+|\xi_1|^2}{2s(t)}
=s(t)/2, 
\\
\Bigl|\frac{\xi_1}{s(t)}(\D_x\tilde{\Phi}^s+\D_x\tilde{\phi})\Bigr|
\leq\frac{|\xi_1|}{s(t)}(|\D_x\tilde{\Phi}^s|+|\D_x\tilde{\phi}|)
\leq |\D_x\tilde{\Phi}^s|+|\D_x\tilde{\phi}|
\leq c_0. 
\end{gather*}
Plugging these and $-s'(t)=s(t)/2+c_0$ into \eqref{Gamma7}, 
we arrive at $\int_{\Gamma_7}\vec{\cal F}\cdot\vec{n}\,dS\geq 0$.
The proof is complete.
\end{proof}

\begin{lem}\label{lem_BSupport2}
Let $\alpha \geq 0$. 
Suppose that $\tilde{\Phi}^{s}$ and $\tilde{\phi}$ satisfies \eqref{extension1}
as well as $\tilde{F}$ and $\tilde{f}_{0}$ satisfies \eqref{Ff0} and
\begin{gather*}
{\rm supp}\tilde{F} 
\subset [0,\hat{T}]\times\R\times[-L,R]\times\R^2, \qquad
{\rm supp}\tilde{f}_0\subset \R\times[-L,R]\times\R^2,
\end{gather*}
where $R,L\in\R$ with $-L<R$.
If $f$ is a solution of the modified problem \eqref{l1BB} and satisfies \eqref{reg_modified},
then there holds that
\begin{gather*}
{\rm supp}f
\subset\{(t,x,\xi)\in [0,\hat{T}]\times\R\times\R^3 \,|\,
-L-c_0t\leq\xi_1\leq R+c_0t\},
\end{gather*}
where $c_{0}$ is defined in \eqref{BDefinition_c0}. 
\end{lem}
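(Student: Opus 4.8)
The plan is to reduce the statement to the two one-sided support bounds
\[
f(t,x,\xi)=0 \ \text{ for } \xi_1 > R + c_0 t,
\qquad
f(t,x,\xi)=0 \ \text{ for } \xi_1 < -L - c_0 t ,
\]
which together give the claim. Both are instances of the same finite-speed-of-propagation argument — the reflection $\xi_1\mapsto-\xi_1$ carries one into the other, only changing the signs of $\Xi_1$, of $\partial_x\tilde\Phi^s+\partial_x\tilde\phi$, and of $u_\infty$, which is immaterial below — so I would only carry out the first. Fix $t_0\in[0,\hat T]$ and, for large parameters $\ell$ and $k$, I would work on the bounded set
\[
\Omega_{k,\ell}(t_0):=\{(t,x,\xi)\in[0,t_0]\times\R\times\R^3 \,:\, R+c_0 t\le\xi_1\le\ell-c_0 t,\ |x|\le k-\ell t\},
\]
with time-slices $D_{k,\ell}(t)$. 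Since $\xi_1\ge R$ throughout $\Omega_{k,\ell}(t_0)$, the support hypotheses force $\tilde F=0$ on $\Omega_{k,\ell}(t_0)$ and $\tilde f_0=0$ on $D_{k,\ell}(0)$.

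Next I would multiply \eqref{leq1BB} by $2f$ to obtain the local balance law
\[
\partial_t|f|^2+\partial_x(\Xi_1|f|^2)+\partial_{\xi_1}\bigl\{(\partial_x\tilde\Phi^s+\partial_x\tilde\phi)|f|^2\bigr\}=(\partial_x\tilde\Phi^s+\partial_x\tilde\phi)\,\frac{\Xi_1-u_\infty}{\theta_\infty}\,|f|^2 ,
\]
whose right-hand side is bounded by $C|f|^2$ on $\Omega_{k,\ell}(t_0)$ (using $|\Xi_1|\le 2K$ and $|\partial_x\tilde\Phi^s+\partial_x\tilde\phi|\le c_0$ from \eqref{BDefinition_c0}). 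Integrating over $\Omega_{k,\ell}(t_0)$ and applying the divergence theorem to the flux field $\vec{\cal F}:=(|f|^2,\ \Xi_1|f|^2,\ (\partial_x\tilde\Phi^s+\partial_x\tilde\phi)|f|^2,\ 0,\ 0)$ — justified by the regularity \eqref{reg_modified}, exactly as in the proof of Lemma \ref{lem_BSupport1} — I would arrive at
\[
\int_{D_{k,\ell}(t_0)}|f(t_0)|^2\,dx\,d\xi+\sum_i\int_{\Gamma_i}\vec{\cal F}\cdot\vec n\,dS\le C\int_0^{t_0}\!\!\int_{D_{k,\ell}(t)}|f(t)|^2\,dx\,d\xi\,dt ,
\]
where $\Gamma_i$ runs over the lateral faces $\{t=0\}$, $\{\xi_1=R+c_0 t\}$, $\{\xi_1=\ell-c_0 t\}$, and $\{x=\pm(k-\ell t)\}$.

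The crux is to verify that each of these lateral boundary integrals is nonnegative. The face $\{t=0\}$ contributes nothing because $\tilde f_0=0$ there; on the two moving $\xi_1$-faces the outward normal is $\propto(c_0,0,\pm1,0,0)$, so the integrand is $(c_0\pm(\partial_x\tilde\Phi^s+\partial_x\tilde\phi))|f|^2/\sqrt{c_0^2+1}\ge0$ by $|\partial_x\tilde\Phi^s+\partial_x\tilde\phi|\le c_0$; and on the two $x$-faces the outward normal is $\propto(\ell,\pm1,0,0,0)$, giving integrand $(\ell\pm\Xi_1)|f|^2/\sqrt{\ell^2+1}\ge0$ once $\ell$ is chosen so large that $\ell\ge|\Xi_1|$ on $\Omega_{k,\ell}(t_0)$ (for instance $\ell\ge 2K$, since $|\Xi_1|\le 2K$). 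Discarding all lateral terms and setting $\Psi(t_0):=\int_0^{t_0}\!\!\int_{D_{k,\ell}(t)}|f(t)|^2$ leaves $\Psi'(t_0)\le C\Psi(t_0)$ with $\Psi(0)=0$, so $\Psi\equiv0$ and $\int_{D_{k,\ell}(t_0)}|f(t_0)|^2=0$ for all $t_0$; letting $k,\ell\to\infty$ makes $D_{k,\ell}(t)$ exhaust $\{\xi_1>R+c_0 t\}$, which proves the first one-sided bound, and the second follows identically. I expect the only delicate point — the step I would check most carefully — to be the geometry of $\Omega_{k,\ell}(t_0)$: the $x$-truncation must retreat at speed $\ell$ so the $x$-faces shed no $L^2$ mass, while the $\xi_1$-faces must travel at exactly speed $c_0$ so that the two sign conditions hold simultaneously. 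Everything else is routine bookkeeping, parallel to Lemma \ref{lem_BSupport1}.
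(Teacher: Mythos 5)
Your proof is correct and is exactly the "similarly as the proof of Lemma~\ref{lem_BSupport1}" argument the paper alludes to (the paper gives no details for this lemma): you integrate the $2f$-weighted balance law over a bounded space-time region whose $\xi_1$-faces retreat at speed $c_0$ and whose $x$-faces retreat at speed $\ell\ge 2K$, verify every lateral flux term is nonnegative, apply Gronwall, and exhaust by letting $k,\ell\to\infty$. The geometry (a moving box rather than the shrinking disc-plus-quadrant used in Lemma~\ref{lem_BSupport1}) is the natural adaptation, and the sign checks on each face are right.
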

\begin{proof}
We can show this lemma similarly as the proof of Lemma \ref{lem_BSupport1}.
\end{proof}

\subsection{A priori estimates of solutions of the linearized problem}\label{SSB_Estimate}

In this subsection, we drive a priori estimates of solutions of the linearized problem \eqref{l1}.

\begin{lem}\label{lem_BSupport20}
Let $\alpha \geq 0$, $\hat{T}>0$, $\hat{\phi} \in C([0,\hat{T}];L^{2}_{x}) \cap L^{\infty}(0,\hat{T};H^{3}_x)$,
$e^{\alpha x /2} \hat{F} \in C([0,\hat{T}];H^{1}_{0,\Gamma})$, and
$e^{\alpha x /2} f_{0} \in H^{1}_{0,\Gamma}$.
Suppose that $\hat{F}$ and $f_{0}$ satisfies
\begin{gather*}
{\rm supp}\hat{F}
\subset \overline{ [0,\hat{T}]\times\R_+\times(-\infty,R]\times\R^2 }, \quad
{\rm supp}f_0\subset \overline{\R_+\times(-\infty,R]\times\R^2}
\end{gather*}
for some $R\in\R$.
If $f$ is a solution of the linearized problem \eqref{l1} and satisfies 
\begin{gather*}
e^{\alpha x /2} f\in C([0,\hat{T}];L^2_{x,\xi})
\cap L^\infty(0,\hat{T};H^1_{0,\Gamma}), \quad
e^{\alpha x /2} (1+|\xi_1|)^{-1}f\in W^{1,\infty}(0,\hat{T};L^2_{x,\xi}),
\end{gather*}
then there holds that
\begin{gather*}
{\rm supp}f
\subset\{(t,x,\xi)\in \overline{ [0,\hat{T}]\times\R_+\times\R^3} \,|\,
\xi_1\leq R+\hat{c}_0t\},
\end{gather*}
where the constant $\hat{c}_0$ is defined as
\begin{gather*}
\hat{c}_0:=\sup_{x\in\overline{\mathbb R_+}}|\D_x\Phi^s(x)|
+\sup_{(t,x)\in[0,\hat{T}]\times\overline{\mathbb R_+}}|\D_x\hat{\phi}(t,x)|. 
\end{gather*}
\end{lem}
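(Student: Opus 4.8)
\textbf{Proof proposal for Lemma \ref{lem_BSupport20}.}

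The plan is to reduce this finite-speed-of-propagation statement for the original linearized problem \eqref{l1} to the corresponding statement for the modified problem \eqref{l1BB}, which has already been handled in Lemma \ref{lem_BSupport2}. First I would fix $R$ and, using the hypotheses \eqref{extension1}-type conditions together with the support assumptions on $\hat F$ and $f_0$, choose extensions $\tilde\Phi^s$, $\tilde\phi$, $\tilde F$, $\tilde f_0$ to the whole space as in \eqref{extension1}, arranging in addition that the $\xi_1$-support of $\tilde F$ and $\tilde f_0$ still satisfies $\xi_1 \le R$ (this is possible since extension in $x$ and $t$ need not touch the $\xi_1$ variable). The key point is that $\Xi_1(\xi_1) = \xi_1$ for $|\xi_1| \le K$: since $f$ has $\xi_1$-support bounded above and we are only tracking propagation to the right in $\xi_1$ on a bounded time interval $[0,\hat T]$, by choosing the truncation parameter $K$ in \eqref{Xi1} large enough (larger than $R + \hat c_0 \hat T$ and larger than the a priori $\xi_1$-support bound of the given solution $f$ on $[0,\hat T]$), the solution $f$ of \eqref{l1}, extended by the reduction in Section \ref{SB} to the whole space, also solves the modified problem \eqref{l1BB} with that $K$ on the relevant region. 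Equivalently, one argues that the extension of $f$ coincides with the solution produced by Lemma \ref{lem_BExistence}, which is unique.

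Next I would verify that the constant $\hat c_0$ appearing in the statement coincides (or can be bounded by) the constant $c_0$ of \eqref{BDefinition_c0} for the chosen extensions. Since $\tilde\Phi^s = \Phi^s$ on $\overline{\mathbb R_+}$ and $\tilde\phi = \hat\phi$ on $[0,\hat T]\times\overline{\mathbb R_+}$, the suprema of $|\D_x\tilde\Phi^s|$ and $|\D_x\tilde\phi|$ over $\mathbb R$ can be made arbitrarily close to the corresponding suprema over $\overline{\mathbb R_+}$ by choosing the extensions carefully (or one simply works with $c_0 \ge \hat c_0$ and notes the cone only gets wider, which is still acceptable for an upper bound on the support — though to get exactly $\hat c_0$ one should construct the extension so as not to increase the derivative norms, using a standard Sobolev extension that is bounded in $\mathcal B^2$ and a further mollification/truncation so that the $x<0$ part carries no larger gradient). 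Then Lemma \ref{lem_BSupport2}, applied with $-L$ replaced by $-\infty$ (equivalently, taking $L \to \infty$, which is harmless since that lemma's lower bound on $\xi_1$-support plays no role here), gives
\begin{gather*}
\supp \tilde f \subset \{ (t,x,\xi) \in [0,\hat T]\times\R\times\R^3 \mid \xi_1 \le R + c_0 t \}.
\end{gather*}
Restricting back to $x>0$ and recalling $\tilde f = f$ there, and that on this region $\Xi_1 = \xi_1$ by the choice of $K$ so that $f$ genuinely solves \eqref{l1}, yields $\supp f \subset \{ \xi_1 \le R + \hat c_0 t \}$ (after the bookkeeping on $c_0$ versus $\hat c_0$).

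The main obstacle I expect is the bookkeeping around the truncation parameter $K$: one must know a priori that the given solution $f$ of \eqref{l1} has $\xi_1$-support bounded above on $[0,\hat T]$ before one can assert it solves the modified problem for a suitable $K$. This can be arranged by a bootstrap — first apply Lemma \ref{lem_BSupport2} with whatever $K$ one used to construct the solution to get \emph{some} upper bound $R + c_0 t$ with the $c_0$ for that $K$, which is already finite; since $|\Xi_1|\le|\xi_1|$ the propagation speed estimate in Lemma \ref{lem_BSupport2} is uniform in $K$, so the bound $R + c_0\hat T$ holds for all $K$, and then one fixes $K > R + c_0\hat T$ so that $\Xi_1 = \xi_1$ throughout $\supp f$ — and consequently the regularity hypotheses on $f$ listed in the statement are exactly those needed to invoke Lemma \ref{lem_BSupport2} (the $W^{1,\infty}$-in-time condition on $(1+|\xi_1|)^{-1} f$ is what licenses the use of the divergence theorem in that lemma's proof). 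The remaining details are routine.
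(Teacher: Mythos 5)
Your proposal and the paper's proof diverge: the paper proves Lemma \ref{lem_BSupport20} \emph{directly} on the half-space, by the same divergence-theorem computation as in Lemma \ref{lem_BSupport1} but run over the region $\{x>0,\ \xi_1>R+\hat c_0 t\}$ (truncated in $\xi_1$ from above and in $x$ as in that proof, with the boundary condition at $x=0$, $\xi_1>0$ and the sign of the flux $-\xi_1|f|^2\geq 0$ at $x=0$, $\xi_1<0$ handling the spatial boundary); the paper explicitly omits this as ``easier than Lemma \ref{lem_BSupport1}.'' Your route instead tries to transfer the statement from the whole-space modified problem \eqref{l1BB} via Lemma \ref{lem_BSupport2}, and this route has a genuine gap.

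The gap is the identification of the \emph{given} solution $f$ of \eqref{l1} with (the restriction of) a solution of \eqref{l1BB}. The lemma hypothesizes an arbitrary solution $f$ of the half-space problem with the stated regularity --- not the particular solution constructed in Section \ref{SSB_Proof} --- so there is no truncation parameter $K$ ``used to construct the solution,'' and no canonical extension of $f$ to $x<0$: extending by zero fails to solve the whole-space equation because the trace $f(t,0,\xi)$ need not vanish for $\xi_1<0$, producing a distributional source supported on $\{x=0\}$. The alternative you suggest --- construct a whole-space solution $\tilde f$ from extended data and argue $\tilde f|_{x>0}=f$ ``by uniqueness'' --- requires uniqueness for the half-space problem \eqref{l1}, which is precisely what Lemma \ref{lem_BSupport20} is used to establish (Remark \ref{remB}); the unbounded coefficient $\tfrac{\xi_1-u_\infty}{2\theta_\infty}(\partial_x\Phi^s+\partial_x\hat\phi)$ prevents a naive Gronwall uniqueness argument without first localizing the $\xi_1$-support, so the circularity is real, not cosmetic. (Two smaller points: Lemma \ref{lem_BSupport2} as stated requires a finite lower bound $-L$ on the $\xi_1$-support of the data, which your data do not satisfy, so even in the whole-space setting you would be re-running its proof with only the upper surface rather than citing it; and matching $c_0$ of \eqref{BDefinition_c0} to $\hat c_0$ exactly, rather than up to a constant, needs the careful extension you sketch.) The fix is simply to abandon the reduction and run the divergence-theorem argument of Lemma \ref{lem_BSupport1} directly on $\{x>0,\ R+\hat c_0 t<\xi_1<\ell-\hat c_0 t\}$ and let $\ell\to\infty$; since the data vanish there, Gronwall gives exactly zero regardless of the $\ell$-dependence of the constants.
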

\begin{proof}
We omit the proof, since it is easier than that of Lemma \ref{lem_BSupport1}.
\end{proof}

\begin{rem}\label{remB}
{\rm 
The uniqueness stated in Lemma \ref{sovl1} immediately follows from Lemma \ref{lem_BSupport20}.
}
\end{rem}

We derive another a priori estimate. 
To this end, we define the set $\Gamma$ and the function space $H^2_{0,\Gamma}$ as follows:
\begin{align}
\Gamma&:=\{0\}\times\overline{\R_+^3}
=\{(x,\xi)\in\R\times\R^3\,|\,
x=0,\,\xi_1\geq 0\},
\label{BDefinition_Gamma}\\
H^2_{0,\Gamma}&:=\overline{\ C^\infty_{0,\Gamma} \ }^{{}_{H^2_{x,\xi}}}, \quad
C^\infty_{0,\Gamma}
:=\{f\in C_0^\infty(\overline{\R_+\times\R^3})\,|\,
{\rm supp}f\cap\Gamma=\emptyset\}.
\notag
\end{align}

\begin{lem}\label{lem_BEstimate}
Let $\alpha>0$, $R_0>0$, and $L_0>0$. 
Suppose that $\hat{\phi} \in C([0,\hat{T}];L^{2}_{x}) \cap L^{\infty}(0,\hat{T};H^{3}_x)$,
$e^{\alpha x /2} \hat{F} \in C([0,\hat{T}];H^{1}_{0,\Gamma})$, and
$e^{\alpha x /2} f_{0} \in H^{1}_{0,\Gamma}$ for some $\hat{T}>0$.
Assume that $f$ is a solution of the problem \eqref{l1} and satisfies
\begin{gather}
e^{\alpha x/2}f\in C([0,T_0];H^2_{0,\Gamma})
\cap C^1([0,T_0];H^1_{x,\xi}), 
\notag\\
{\rm supp}f
\subset\{(t,x,\xi)
\in\overline{[0,T_0]\times\R_+\times\R^3}\,|\,
-L_0\leq\xi_1\leq R_0\}. 
\label{Baes0}
\end{gather}
Then there exists a constant $\delta_{0} \in (0,1]$ depending only on $\alpha$, $\theta_\infty$, and $C$ being in \eqref{decay0}
such that if $\Phi_{b} \leq \delta_{0}$ and $\sup_{t \in [0,\hat{T}]} \| \hat{\phi} (t) \|_{H^{3}_{x}} \leq \delta_{0}$, the solution $f$ satisfies \eqref{localpro1c} in which a constant $C_0$ is independent of $L_0$.
\end{lem}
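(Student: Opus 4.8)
The plan is to derive \eqref{localpro1c} by a weighted $L^2$ energy estimate on $f$ and $\nabla_{x,\xi} f$, exploiting the outflow boundary term and the weight $e^{\alpha x}$. First I would multiply \eqref{leq1} by $2 e^{\alpha x} f$ and integrate over $\R_+ \times \R^3$. The transport terms produce, after integration by parts in $x$ and $\xi_1$, a boundary contribution $\left.\int_{\R^3} \xi_1 |f|^2 \, d\xi\right|_{x=0}$; using the boundary condition $f(t,0,\xi)=0$ for $\xi_1 > 0$, this reduces to $-\left.\int_{\R^3_-} |\xi_1| |f|^2 \, d\xi\right|_{x=0} \le 0$, which we discard (or keep as a good term). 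The key gain is the term $-\alpha \int \xi_1 e^{\alpha x} |f|^2$, which on the region $\xi_1 < 0$ is $+\alpha \int_{\R^3_-} |\xi_1| e^{\alpha x} |f|^2 \ge 0$, giving precisely the dissipative term appearing on the left of \eqref{localpro1c}; on $\xi_1 > 0$ it is bounded by $C_0 \|e^{\alpha x/2} f\|_{L^2_{x,\xi}}^2$ since $|\xi_1| \le R_0$ there, using the support condition \eqref{Baes0}. The remaining terms — those involving $\partial_x \Phi^s + \partial_x \hat\phi$ and the factor $\tfrac{\xi_1-u_\infty}{2\theta_\infty}$ — are all bounded by $C_0 \|e^{\alpha x/2} f\|_{L^2_{x,\xi}}^2$, because on $\supp f$ one has $-L_0 \le \xi_1 \le R_0$ so $|\xi_1 - u_\infty|$ is bounded, and $\|\partial_x \Phi^s\|_{L^\infty} + \|\partial_x \hat\phi\|_{L^\infty} \le C(\Phi_b + \delta_0) \le 1$ for $\delta_0$ small (using \eqref{decay0} and Gagliardo–Nirenberg to control $\|\partial_x\hat\phi\|_{L^\infty}$ by $\|\hat\phi\|_{H^3_x}$).

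Next I would differentiate \eqref{leq1} in $x$ and in each $\xi_i$, multiply by $2 e^{\alpha x} \partial_x f$ (resp.\ $2 e^{\alpha x} \partial_{\xi_i} f$), and integrate. The principal-part structure is identical, so the same boundary term (nonnegative after dropping), the same dissipative $+\alpha \int_{\R^3_-} |\xi_1| e^{\alpha x} |\nabla^1_{x,\xi} f|^2$, and an analogous $C_0$-bounded error on $\xi_1 > 0$ appear. The commutator terms generated by differentiating the coefficients $\partial_x \Phi^s + \partial_x \hat\phi$, $\xi_1$, and $\tfrac{\xi_1-u_\infty}{2\theta_\infty}$ against $f$, $\partial_{\xi_1} f$ are all controlled by $C_0 \|e^{\alpha x/2} f\|_{H^1_{x,\xi}}^2$: the derivative $\partial_x f$ of the transport coefficient $\xi_1$ costs nothing (it is constant in $x$), the worst term is the one where $\tfrac{\xi_1-u_\infty}{2\theta_\infty}$ multiplies $\partial_x\{(\partial_x\Phi^s+\partial_x\hat\phi) f\}$, again bounded since $|\xi_1-u_\infty|$ and $\|\partial_{xx}\Phi^s\|_{L^\infty} + \|\partial_{xx}\hat\phi\|_{L^\infty}$ are bounded on the support. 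Regarding the forcing $\hat F$, each occurrence contributes $2\int e^{\alpha x} \nabla^k \hat F \cdot \nabla^k f \le \|e^{\alpha x/2}\hat F\|_{H^1_{x,\xi}}^2 + \|e^{\alpha x/2} f\|_{H^1_{x,\xi}}^2$. Summing the three identities gives
\[
\frac{d}{dt} \|e^{\alpha x/2} f\|_{H^1_{x,\xi}}^2 + \frac{\alpha}{2}\sum_{k=0}^1 \left\| |\xi_1|^{1/2}\nabla^k_{x,\xi}(e^{\alpha x/2} f)\chi(-\xi_1)\right\|_{L^2_{x,\xi}}^2 \le C_0 \|e^{\alpha x/2} f\|_{H^1_{x,\xi}}^2 + \|e^{\alpha x/2}\hat F\|_{H^1_{x,\xi}}^2,
\]
with $C_0$ depending only on $\alpha$, $\theta_\infty$, $R_0$, $L_0$, and the constant in \eqref{decay0} — in particular independent of $L_0$ in the direction that matters, since $-L_0 \le \xi_1$ only enters through $|\xi_1 - u_\infty| \le |u_\infty| + \max\{L_0, R_0\}$; one checks that the bad term requiring $|\xi_1|$ large only occurs with the sign-definite dissipative contribution, so $C_0$ can be taken independent of $L_0$ by absorbing the $\xi_1 < 0$, $|\xi_1|$-large part into the left side. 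Finally I would apply Gronwall's inequality in the integrating-factor form: multiply by $e^{-C_0 t}$, integrate from $0$ to $t$, and rearrange to obtain exactly \eqref{localpro1c}.

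The main obstacle is the careful bookkeeping of the claim that $C_0$ is \emph{independent of $L_0$}. The dissipative term $+\alpha\int_{\R^3_-} |\xi_1| e^{\alpha x}|f|^2$ is the only place the large negative values $\xi_1 \approx -L_0$ help; every error term that carries a factor of $|\xi_1|$ (namely those coming from $-\alpha\xi_1 e^{\alpha x}|f|^2$ and from $\tfrac{\xi_1-u_\infty}{2\theta_\infty}(\partial_x\Phi^s+\partial_x\hat\phi)|f|^2$) must be split at $\xi_1 = 0$: on $\xi_1 > 0$ use $|\xi_1| \le R_0$, and on $\xi_1 < 0$ use $|\partial_x\Phi^s + \partial_x\hat\phi| \le C(\Phi_b+\delta_0) \le \tfrac{\alpha}{4}$ (after shrinking $\delta_0$) so that the error is \emph{absorbed} into the dissipative term rather than bounded by it. This threshold-absorption is what forces $\delta_0$ to depend on $\alpha$, $\theta_\infty$, and $C$ from \eqref{decay0}, as stated, and it is the one step where the estimate genuinely uses the smallness of $\Phi_b$ and $\|\hat\phi\|_{H^3_x}$ rather than merely their boundedness. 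Everything else is routine once the support localization \eqref{Baes0} and the regularity $e^{\alpha x/2} f \in C^1([0,T_0];H^1_{x,\xi}) \cap C([0,T_0];H^2_{0,\Gamma})$ are invoked to justify the integrations by parts.
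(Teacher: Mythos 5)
Your proposal is correct and follows essentially the same route as the paper: a weighted $L^2$ estimate for $f$ and for its first derivatives (using $f(t)\in H^2_{0,\Gamma}$ to kill the boundary term at the derivative level), with the $|\xi_1|$-weighted error from the $\frac{\xi_1-u_\infty}{2\theta_\infty}(\partial_x\Phi^s+\partial_x\hat\phi)$ term on $\xi_1<0$ absorbed into the dissipative term by taking $\delta_0$ small relative to $\alpha$ and $\theta_\infty$, followed by Gronwall in integrating-factor form. The one caveat is that your first paragraph's claim that these terms are bounded by $C_0\|e^{\alpha x/2}f\|^2_{L^2_{x,\xi}}$ because ``$|\xi_1-u_\infty|$ is bounded on the support'' would make $C_0$ depend on $L_0$; the split at $\xi_1=0$ with absorption that you describe in your final paragraph is the correct (and the paper's) mechanism, so that earlier sentence should simply be replaced by it.
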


\begin{proof}
First it is seen from \eqref{decay0} that
\begin{gather}\label{phismall}
\sum_{k=0}^2\Bigl(\sup_{x\in\mathbb R} |\D_x^k{\Phi}^s(x)|
+\sup_{(t,x)\in[0,T]\times\mathbb R}|\D_x^k\hat{\phi}(t,x)|\Bigr)
\leq 3(C+C_{s}) \delta_{0}=:C_{*}\delta_{0},
\end{gather}
where $C>0$ is the same constant being in \eqref{decay0} and $C_{s}$ is the best constant of Sobolev's inequality.
Multiplying \eqref{leq1} by $2e^{\alpha x} f$, integrating it over $\R_+\times\R^3$, and using \eqref{libc1},
we have
\begin{align}
&\dfrac{d}{dt} \Vert e^{\alpha x/2} f (t) \Vert^2_{L^2_{x,\xi}} 
+\left. \int_{\mathbb{R}^3_-} 
\vert \xi_1\vert \vert  f \vert^2 d\xi \right|_{x=0}
+\alpha\int_{\mathbb R_{+}}\int_{\mathbb R_-^3}|\xi_1|
e^{\alpha x}|f|^2 dxd\xi
\notag \\
& 
=\alpha\int_{\mathbb R_{+}}\int_{\mathbb R^3_{+}}\xi_1e^{\alpha x/2}|f|^2 dxd\xi
-\int_{\mathbb R_{+}}\int_{\mathbb R^3}e^{\alpha x}(\D_x\Phi^s+\D_x\hat{\phi})
\frac{\xi_1-u_\infty}{\theta_\infty}\vert f \vert^2 dxd\xi
\notag \\
& \quad +2\int_{\mathbb R_{+}}\int_{\mathbb R^3}e^{\alpha x}\hat{F}fdxd\xi
\notag \\
&=:I_1+I_2+I_3.
\label{Baes1}
\end{align}
Let us estimate the terms $I_{1}$, $I_{2}$, and $I_{3}$. From \eqref{Baes0}, it is seen that
\begin{align*}
I_1 
& \leq \alpha R_0\int_{\mathbb R_{+}}\int_{\mathbb R_+^3} e^{\alpha x}|f|^2 dxd\xi
\leq \alpha R_0\|e^{\alpha x/2}f\|_{L^2_{x,\xi}}^2.
\end{align*}
We observe from \eqref{Baes0} and \eqref{phismall} that
\begin{align*}
I_2
& \leq \frac{R_{0}C_{*}\delta_{0}}{\theta_\infty}\int_{\mathbb R_{+}}\int_{\mathbb R_+^3} e^{\alpha x}|f|^2 dxd\xi
+ \frac{C_{*}\delta_{0}}{\theta_\infty}\int_{\mathbb R_{+}}\int_{\mathbb R_-^3} |\xi_1| e^{\alpha x}|f|^2 dxd\xi
\\
& \quad +\frac{|u_\infty|C_{*}\delta_{0}}{\theta_\infty}\int_{\mathbb R_{+}}\int_{\mathbb R^3} e^{\alpha x}|f|^2 dxd\xi
\\
& \leq\frac{(|u_\infty|+R_0)C_{*}\delta_{0}}{\theta_\infty} \|e^{\alpha x/2}f\|_{L^2_{x,\xi}}^2
+\frac{C_{*}\delta_{0}}{\theta_\infty} \left\||\xi_1|^{1/2}(e^{\alpha x/2}f)\chi(-\xi_1)\right\|_{L^2_{x,\xi}}^2.
\end{align*}
Moreover, it is clear that 
$I_3\leq \|e^{\alpha x/2}f\|_{L^2_{x,\xi}}^2+\|e^{\alpha x/2}\hat{F}\|_{L^2_{x,\xi}}^2$.
Substituting these into \eqref{Baes1} gives
\begin{align}
&\dfrac{d}{dt} \Vert e^{\alpha x/2} f (t) \Vert^2_{L^2_{x,\xi}} 
+\alpha
\left \| |\xi_1|^{1/2}e^{\alpha x/2} f (t)\chi(-\xi_1) \right\|_{L^2_{x,\xi}}^2
\notag\\
& \leq \frac{C_{*}\delta_{0}}{\theta_\infty}
\left\| |\xi_1|^{1/2}(e^{\alpha x/2} f) (t)\chi(-\xi_1) \right\|_{L^2_{x,\xi}}^2
+C_0\|e^{\alpha x/2}f(t)\|_{L^2_{x,\xi}}^2
+\|e^{\alpha x/2}\hat{F}(t)\|_{L^2_{x,\xi}}^2,
\label{BEstimate0}
\end{align}
where $C_{0}>0$ is a constant depending on $R_{0}$ but independent of $L_{0}$.

We multiply \eqref{leq1} by $e^{\alpha x/2}$, apply the operator $\nabla_{x,\xi}$, 
and multiply it by $2\nabla_{x,\xi}(e^{\alpha x/2} f)$.
Furthermore, we integrate the resultant equality over $\R_+\times\R^3$, 
use the boundary condition $\nabla_{x,\xi}(e^{\alpha x/2}f)(t,0,\xi)\chi(\xi_{1})=0$ 
which comes from $f(t)\in H^2_{0,\Gamma}$,
and then estimate similarly as above to obtain
\begin{align}
&\dfrac{d}{dt} \Vert \nabla_{x,\xi}(e^{\alpha x/2} f)(t) \Vert^2_{L^2_{x,\xi}} 
+\alpha
\left\| |\xi_1|^{1/2}
\nabla_{x,\xi}(e^{\alpha x/2} f) (t)\chi(-\xi_1) \right\|_{L^2_{x,\xi}}^2
\notag\\
& \leq \frac{C_{*}\delta_{0}}{\theta_\infty} \sum_{k=0}^1
\left\| |\xi_1|^{1/2} \nabla_{x,\xi}^k(e^{\alpha x/2} f) (t)\chi(-\xi_1) \right\|_{L^2_{x,\xi}}^2
+C_0\|e^{\alpha x/2}f(t)\|_{H^1_{x,\xi}}^2
\notag\\
& 
\quad +\|\nabla_{x,\xi}(e^{\alpha x/2}\hat{F})(t)\|_{L^2_{x,\xi}}^2.
\label{BEstimate1}
\end{align}
Combining \eqref{BEstimate0} and \eqref{BEstimate1} leads to
\begin{align*}
&\dfrac{d}{dt} \Vert e^{\alpha x/2} f(t) \Vert^2_{H^1_{x,\xi}} 
+\Bigl(\alpha-\frac{2C_{*}\delta_{0}}{\theta_\infty}\Bigr)
\sum_{k=0}^1\bigl\| |\xi_1|^{1/2}
\nabla_{x,\xi}^k(e^{\alpha x/2} f) (t)\chi(-\xi_1) \bigr\|_{L^2_{x,\xi}}^2
\notag\\
& \leq C_0\|e^{\alpha x/2}f(t)\|_{H^1_{x,\xi}}^2
+\|e^{\alpha x/2}\hat{F}(t)\|_{H^1_{x,\xi}}^2.
\end{align*}
Letting $\delta_{0}$ be small enough and applying Gronwall's inequality, 
we conclude \eqref{localpro1c}.
\end{proof}

\subsection{The solvability of the linearized problem}\label{SSB_Proof}

In this section, we show Lemma \ref{l1} on the solvability of the linearized problem \eqref{l1}.
Lemma \ref{lem_BSupport20} immediately ensures the uniqueness of the solution.
What is left is to show the existence and the properties \eqref{localpro1}.

\begin{proof}[Proof of Lemma \ref{sovl1}]
Let $\alpha>0$, $s>0$, $R>0$, and \eqref{lphi1} holds.
We use the extensions $\tilde{\Phi}^{s}$ and $\tilde{\phi}$ in \eqref{extension1}.
Our proof is divided into two steps.
In the first step, we show the existence of solutions of the linearized problem \eqref{l1}
and the properties \eqref{localpro1} 
by assuming the following stronger conditions than \eqref{lF1} and \eqref{lini1}:
\begin{gather*}
e^{\alpha x/2}\hat{F}\in C([0,\hat{T}];H^1_{0,\Gamma}), 
\quad
{\rm supp}\hat{F}  \subset [0,\hat{T}]\times \Omega_{s,d,L,R},
\\
e^{\alpha x/2}f_0\in H^1_{0,\Gamma}, 
\quad
{\rm supp}f_0 \subset \Omega_{s,d,L,R},
\end{gather*}
where the domain $\Omega_{s,d,L,R}$ is defined for $d>0$, $L>0$, and $\Gamma$ in \eqref{BDefinition_Gamma} as
\begin{gather*}
\Omega_{s,d,L,R}:=\{(x,\xi)\in\overline{\R_+\times\R^3}\,|\,
|x|^2+|\xi_1|^2\geq s^2,\,
{\rm dist}(\{(x,\xi)\},\Gamma)\geq d,\,-L\leq\xi_1\leq R\}.
\end{gather*}
In the second step, we show the existence and properties \eqref{localpro1} 
under the original assumptions \eqref{lF1} and \eqref{lini1} in Lemma \ref{sovl1}
by making use of the conclusion of the first step.

\medskip

\noindent
\textsc{First step}: \ 
Suppose that $\hat{F}$ and $f_{0}$ satisfies the above stronger conditions.
We show that the linearized problem \eqref{l1} has a solution with \eqref{Class_solution} and \eqref{localpro1}.
First we choose extensions $\tilde{F}=\tilde{F}(t,x,\xi)$ and $\tilde{f}_0=\tilde{f}_0(x,\xi)$
of $\hat{F}$ and $f_{0}$ such that
\begin{gather*}
e^{\alpha x/2}\tilde{F}\in C([0,\hat{T}];\tilde{H}^1_{x,\xi}), \quad
\tilde{F}(t,x,\xi)=\hat{F}(t,x,\xi) \quad
\hbox{if $x>0$}, 
\\
{\rm supp}\tilde{F}
\subset\{(t,x,\xi)\in[0,\hat{T}]\times\R\times\R^3\,|\,
(x,\xi)\in D(s/2)^c,\,
-2L<\xi_1<2R\}, \\
e^{\alpha x/2}\tilde{f}_0\in \tilde{H}^1_{x,\xi}, \quad
\tilde{f}_0(x,\xi)=f_0(x,\xi) \quad
\hbox{if $x>0$}, 
\\
{\rm supp}\tilde{f}_0
\subset\{(x,\xi)\in\R\times\R^3\,|\,
(x,\xi)\in D(s/2)^c,\,
-2L<\xi_1<2R\},
\end{gather*}
where the set $D(\tilde{s})$ is defined in \eqref{BDefinition_D0}.
Furthermore, for the constant $c_0>0$ defined in \eqref{BDefinition_c0}, 
we take the parameter $K$ of the function $\Xi_1=\Xi_1(\xi_1;K)$ defined in \eqref{Xi1} as 
\begin{gather*}
K:=\max\{2R+c_0\hat{T},2L+c_0\hat{T}\}+1.
\end{gather*}
Now applying Lemma \ref{lem_BExistence}, 
we have a unique solution $f$ with \eqref{BClass_solution} of the modified problem \eqref{l1BB}.
Moreover, Lemma \ref{lem_BSupport1} with $\tilde{s}=s/2$ and Lemma \ref{lem_BSupport2} ensure that
\begin{gather}
f=0 \quad
\hbox{on} \quad
[0,T_0]\times\R_-\times\R_+^3,
\label{Support_solution}
\\
f=0 \quad
\hbox{on} \quad
[0,T_0]\times\{(x,\xi)\in\R\times\R^3\,|\,
|\xi_1|\geq K-1\}
\label{Support_solution2},
\end{gather} 
where $T_{0}=T_{0}(s/2)$ is defined in Lemma \ref{lem_BSupport1}.

From now on we prove that the solution $f$ (restricted its domain into $ [0,T_{0}]  \times \R_{+} \times \R^3$) solves the linearized problem \eqref{l1}, and satisfies \eqref{Class_solution} and \eqref{localpro1}. We see that $f$ satisfies the conditions $f(0,x,\xi)=f_0(x,\xi)$ and $\lim_{x\to\infty}f(t,x,\xi)=0$
and the regularity \eqref{Class_solution}, since $f$ is a solution of \eqref{l1BB} with \eqref{BClass_solution}, and $\tilde{f}_0(x,\xi)=f_0(x,\xi)$ if $x>0$. 
The boundary condition $f(t,0,\xi)=0$ for $\xi\in\R_+^3$ holds from \eqref{Support_solution}.
Let us show that $f$ solves \eqref{leq1}.
It follows from \eqref{Support_solution2} that $\Xi_1=\Xi_1(\xi_1;K)=\xi_1$ on $\supp f \cup \supp \hat{F}$.
Hence, $f$ solves the equation replaced $\Xi_{1}$ by $\xi_1$ in \eqref{leq1BB},
and the replaced equation is exactly the same as \eqref{leq1}. 

What is left is to show \eqref{localpro1}. 
First Lemma \ref{lem_BSupport20} ensures \eqref{localpro1b}.
It is obvious that \eqref{localpro1a} follows from 
\eqref{Class_solution}, \eqref{localpro1b}, and \eqref{localpro1c}.
Therefore, it suffices to show \eqref{localpro1c}.
Recall that $f$ satisfies  \eqref{localpro1b}, \eqref{BClass_solution}, \eqref{Support_solution}, and \eqref{Support_solution2}.
Using a standard modifier, we can find a sequence $\{f_j\}$ such that
$e^{\alpha x/2}f_j\in C([0,T_0];H^2_{0,\Gamma}) \cap C^1([0,T_0];H^1_{x,\xi})$,
\begin{gather*}
\supp f_{j}  \subset\{(t,x,\xi)
\in\overline{[0,T_0]\times\R_+\times\R^3}\,|\,
-K \leq\xi_1\leq R+C_{0}T_{0}+1\},
\end{gather*}
and as $j \to \infty$,
\begin{align*}
e^{\alpha x/2}f_j\to e^{\alpha x/2}f \quad
& \hbox{in} \quad
C([0,T_0];H^1_{x,\xi})
\cap C^1([0,T_0];L^2_{x,\xi}),
\\
e^{\alpha x/2} {\cal L} f_{j} \to e^{\alpha x/2}\hat{F}  \quad
& \hbox{in} \quad
C([0,T_0];H^1_{x,\xi}),
\end{align*}
where the operator $\cal L$ is defined in \eqref{leq1}.
Applying Lemma \ref{lem_BEstimate} to $f_{j}$,
and then taking $\varliminf_{j\to 0}$ with the aid of Lemma \ref{lem_BLem},
we conclude \eqref{localpro1c}.

\medskip

\noindent
\textsc{Second step}: \ 
Suppose that $\hat{F}$ and $f_{0}$ satisfies 
the original assumptions \eqref{lF1} and \eqref{lini1} in Lemma \ref{sovl1}.
We can find sequences $\{\hat{F}_j\}$ and $\{f_{0,j}\}$ such that
\begin{align*}
& e^{\alpha x/2}\hat{F}_j\in C([0,\hat{T}];H^1_{0,\Gamma}), \quad 
 {\rm supp}\hat{F}_j \subset [0,\hat{T}] \times \Omega_{s/2,d_{j},L_{j},2R}, \quad
\\
& e^{\alpha x/2}\hat{F}_j\to e^{\alpha x/2}\hat{F} \ \
\hbox{in} \ \
C([0,\hat{T}];H^1_{x,\xi}) \ \ 
\hbox{as} \ \ 
j\to\infty, 
\\
& e^{\alpha x/2}f_{0,j}\in H^1_{0,\Gamma}, \quad
 {\rm supp}f_{0,j} \subset \Omega_{s/2,d_{j},L_{j},2R}, \quad
e^{\alpha x/2}f_{0,j}\to e^{\alpha x/2}f_0 \ \
\hbox{in} \ \
H^1_{x,\xi} \ \
\hbox{as} \ \
j\to\infty, 
\end{align*}
where $d_j>0$ with $\lim_{j\to\infty} d_{j} =0$ and $L_j>0$ with $\lim_{j\to\infty} L_{j} = \infty$.
Now we see by the conclusion of the first step that the linearized problem \eqref{l1} with $\hat{F}=\hat{F}_j$ and $f_{0}=f_{0,j}$ has a solution $f_j$ with
$e^{\alpha x /2} f_j \in C([0,T_0];H^{1}_{x,\xi})
\cap C^1([0,T_0];L^{2}_{x,\xi})$
for $T_{0}=T_{0}(s/4)$.
Applying Lemma \ref{lem_BEstimate} to the difference $f_j-f_\ell$
with the aid of the modifier used in the end of the first step,
we see that $\{e^{\alpha x/2}f_j\}$ is a Cauchy sequence in $C([0,T_0];H^1_{x,\xi})$,
and hence it converges to some function $g\in C([0,T_0];H^1_{x,\xi})$.
Let $f:=e^{-\alpha x/2}g$. It is easy to know that $f$ is a solution with \eqref{Class_solution} 
of the linearized problem \eqref{l1}.

It remains to show \eqref{localpro1}. 
Lemma \ref{lem_BSupport20} ensures \eqref{localpro1b}.
It is obvious that \eqref{localpro1a} follows from 
\eqref{Class_solution}, \eqref{localpro1b}, and \eqref{localpro1c}.
Therefore, it suffices to show \eqref{localpro1c}.
By the conclusion of the first step, we have the estimate
\eqref{localpro1c} with $f=f_j$, $\hat{F}=\hat{F}_j$, and $f_0=f_{0,j}$.
Then taking $\varliminf_{j\to 0}$ with the aid of Lemma \ref{lem_BLem},
we conclude \eqref{localpro1c}.
The proof is complete.
\end{proof}

\end{appendix}

\providecommand{\bysame}{\leavevmode\hbox to3em{\hrulefill}\thinspace}
\providecommand{\MR}{\relax\ifhmode\unskip\space\fi MR }

\end{document}